\documentclass[11pt, a4paper, reqno]{amsart}

\usepackage{graphicx}
\usepackage{amsmath}
\usepackage{amssymb}
\usepackage{amsthm}
\usepackage[utf8]{inputenc}
\usepackage[T1]{fontenc}
\usepackage{amsfonts}
\usepackage{mathtools}
\usepackage{mathrsfs}
\usepackage{enumitem}
\usepackage{xfrac}
\usepackage{bbm}
\usepackage{xcolor}
\usepackage{lmodern}
\usepackage{xparse}
\usepackage[
backend=biber,
style=alphabetic,
sorting=nyt
]{biblatex}
\usepackage{geometry}
\usepackage{blindtext}
\usepackage{stmaryrd} 
\usepackage{url}
\usepackage{bbold}
\usepackage{tikz}\usetikzlibrary{arrows.meta, cd, decorations.markings, 3d, shapes.geometric}
\usepackage{pgfplots}\pgfplotsset{compat=1.18}
\usepackage{standalone}
\usepackage[hidelinks]{hyperref}
\usepackage[title]{appendix}
\usepackage{cleveref}
\usepackage{comment}

\title[]{Compactness of Moduli Spaces of Gradient Flow Lines in the Uniform Topology}
\author[Tom Stalljohann]{Tom Stalljohann$^\ast$}
\thanks{$\ast$ Universit\"at Heidelberg, \url{tstalljohann@mathi.uni-heidelberg.de}}
\date{2026}

\newtheorem{theorem}{Theorem}[section]
\newtheorem*{theorem*}{Theorem}
\newtheorem{MainThm}{Theorem}

\newtheorem{lemma}[theorem]{Lemma}
\newtheorem*{lemma*}{Lemma}
\newtheorem{corollary}[theorem]{Corollary}
\newtheorem*{corollary*}{Corollary}
\newtheorem{MainCor}[MainThm]{Corollary}
\newtheorem{proposition}[theorem]{Proposition}
\newtheorem*{proposition*}{Proposition}

\newtheorem*{claim}{Claim}

\theoremstyle{definition}
\newtheorem{definition}[theorem]{Definition}
\newtheorem*{definition*}{Definition}
\newtheorem*{convention}{Convention}

\theoremstyle{remark}
\newtheorem{remark}[theorem]{Remark}

\newtheorem{example}[theorem]{Example}

\newenvironment{proofClaim}{ \proof[Proof of the claim]}{\endproof}

\newcommand{\Forall}[0]{\forall\,}

\newcommand{\R}[0]{\mathbb{R}}

\newcommand{\N}[0]{\mathbb{N}}

\newcommand{\bbS}[0]{\mathbb{S}}

\newcommand{\eps}[0]{\varepsilon}
\newcommand{\comma}[0]{\, , \, }
\newcommand{\Cinfty}[0]{C^{\infty}}
\newcommand{\Cinftyloc}[0]{\Cinfty_{\mathrm{loc}}}

\newcommand{\ie}[0]{i.e.\ }

\newcommand{\cf}[0]{cf.\ }

\newcommand{\set}[2]{\left\{\,#1 \,\big| \, #2 \right\}}
\newcommand{\Bigset}[2]{\left\{\,#1 \,\Big| \, #2 \right\}}

\newcommand{\id}[0]{\mathrm{id}}

\newcommand{\Dcal}[0]{\mathcal{D}}
\newcommand{\Czeroloc}[0]{C^0_{\mathrm{loc}}}
\newcommand{\specgap}[0]{\mathfrak{S}^{+}_{f,G}(Z)}
\newcommand{\Crit}[0]{\mathrm{Crit}}
\newcommand{\moduli}[0]{\mathcal{M}}
\newcommand{\moduliData}[0]{\mathcal{M}_{(E_0,I)}}
\newcommand{\Rbar}[0]{\overline{\R}}

\newcommand{\modulihat}[0]{\widehat{\mathcal{M}}}
\newcommand{\BcalMorse}[0]{\mathcal{B}_{\delta}(Z_{-},Z_{+})}
\newcommand{\gaux}[0]{\Bar{g}}

\newcommand{\actionH}[0]{\mathcal{A}_{H}}
\newcommand{\Nloopspace}[0]{\mathcal{N}}
\newcommand{\Ucal}[0]{\mathcal{U}}
\newcommand{\Vcal}[0]{\Vcal{V}}

\newcommand{\loopspace}[0]{\mathscr{L}}

\newcommand{\shift}[1]{\mathsf{sh}^{#1}}
\newcommand{\Asf}[0]{\mathsf{A}}
\newcommand{\Asfxzerobar}[0]{\Asf_{\Bar{x}_0}}
\newcommand{\Psf}[0]{\mathsf{P}}
\newcommand{\Qsf}[0]{\mathsf{Q}}
\newcommand{\xzerobar}[0]{\Bar{x}_{0}}
\newcommand{\Ucalxzerobar}[0]{\Ucal(\xzerobar)}
\newcommand{\Lhat}[0]{\widehat{L}}
\newcommand{\Hhat}[0]{\widehat{H}}
\newcommand{\Asfhat}[0]{\widehat{\Asf}}
\newcommand{\Zhat}[0]{\widehat{Z}}
\newcommand{\Psfhat}[0]{\widehat{\Psf}}
\newcommand{\Qsfhat}[0]{\widehat{\Qsf}}

\numberwithin{equation}{section}

\begin{document}

\begin{abstract}
We prove a compactness result for gradient flow lines in a general set-up which comprises both the situation of Morse gradient flow lines as well as Floer cylinders converging to a critical submanifold respectively. For the compactness result we have to impose two conditions. Both are readily verified in the Morse case but establishing the second condition in the Floer case poses a technical challenge and relies on an exponential decay estimate for Floer cylinders, with coefficient function continuously depending on the initial loop. This is a result of independent interest. 
\end{abstract}

\maketitle

\section{Introduction}
\label{sec: Introduction}

\subsection{Main Compactness Result}
\label{subsec: Main Compactness Result}

\noindent Let $M$ be a metrizable topological space which we keep fixed throughout.
Suppose we are moreover given
\begin{enumerate}[label=(\arabic*)]
    \item continuous functions $f , G \in C^0(M,\R)$ with $G \geq 0 \,$,
    \item a compact subset $Z \subseteq G^{-1}(0)$ on which $f$ is constant with value $\zeta \in \R \,$.
\end{enumerate}

\begin{definition}[Spectral Gap]
\label{def: specgap}
The \textit{spectral gap $\specgap$ (for positive ends)} is
    \begin{align*}
    \specgap := \inf\Bigset{\zeta - f(x) }{\begin{array}{c} x \in G^{-1}(0) \backslash Z \\
    \text{with } f(x) \leq \zeta
    \end{array}} \in [0,\infty] \,\, .
    \end{align*}
    (We follow the convention that the infimum of the empty set is $+\infty\,$.)
\end{definition}

\noindent By a closed interval we mean an interval that is a closed subset of $\R$. For convenience, we do not consider single points to be intervals. That is, the closed intervals are of the form $[a,b] \,$, with $a < b \,$, or $(-\infty,a]$ or $[a,\infty)\,$, for $a \in \R \,$, or the entire real line $\R$.

\begin{definition}
\label{def: energy}
For $\gamma \in C^0(I,M) \,$, defined on a closed interval $I \subseteq \R\,$, its \textit{energy} $E(\gamma)$ is defined by
    \[
    E(\gamma) := \int_I G \circ \gamma(s)  \, ds \,\,\in [0,\infty] \,\, .
    \]
\end{definition}

\begin{definition}[Class of Gradient Flow Lines]
\label{def: class of gradient flow lines}

A \textit{class of $(f,G)$-gradient flow lines} (or just \textit{gradient flow line class}) is a family $\Gamma= \{\Gamma(I)\}_I$ of sets, indexed over the closed intervals $I \subseteq \R \,$, so that
\begin{enumerate}[label=(\arabic*)]
    \item\label{it: gradient flow line class - contained in Ccal(I)} $\Gamma(I) \subseteq C^0(I,M)$ for every $I \,$,
    \item\label{it: gradient flow line class - restriction} for every pair $J \subseteq I$ of closed intervals, the restriction $\gamma \mapsto \gamma|_J$ gives a well-defined map $\Gamma(I) \rightarrow \Gamma(J) \,$,
    \item\label{it: gradient flow line class - translations} $\Gamma$ is translation invariant, that is
    \[
    \set{\gamma(\,\cdot + s_0)}{\gamma \in \Gamma(I)} = \Gamma(I-s_0) \qquad \Forall I  \comma s_0 \in \R \,\, , 
    \]
\end{enumerate}
and so that moreover for every closed interval $I$ and every $\gamma \in \Gamma(I)$ it holds
\begin{enumerate}[label=(\arabic*), resume]
    \item\label{it: gradient flow line class - gradient flow line eq} $f \circ \gamma \in C^1(I,\R)$ and $(f \circ \gamma)'(s) = G \circ \gamma(s)$ for all $s \in I \,$.
\end{enumerate}
An element $\gamma \in \Gamma(I)$ is called \textit{$\Gamma$-gradient flow line on $I$} (or just \textit{gradient flow line} if $\Gamma$ is understood).
\end{definition}

\begin{remark}
\label{rmk: gradient flow line}
Given a $\Gamma$-gradient flow line $\gamma : I \rightarrow M \,$.
\begin{enumerate}[label=(\roman*)]
    \item\label{it: gradient flow line - decreasing} Since $G$ is nonnegative, the function $f \circ \gamma$ is increasing.
    \item\label{it: gradient flow line - constant} Let $J \subseteq I$ be a closed subinterval. Then $f \circ \gamma$ is constant on $J$ if and only if $\gamma(J) \subseteq G^{-1}(0) \,$.
    \item\label{it: gradient flow line - energy} Suppose $I = [a,b]$ is compact.
    The energy of $\gamma$ is 
    \[
    E(\gamma) = \int_a^b (f \circ \gamma)'(s) \, ds = f\circ \gamma(b)- f \circ \gamma(a) \geq 0 \,\, .
    \]
    If $I = [a,\infty)$ is unbounded, then $E(\gamma) = \lim_{s \rightarrow \infty} f \circ \gamma(s) - f \circ \gamma(a) \,$, where $\lim_{s\rightarrow\infty} f \circ \gamma(s)$ may be $+\infty \,$.
\end{enumerate}
\end{remark}

\begin{example}
\label{ex: gradient flow line}
\begin{enumerate}[label=(\roman*)]
    \item\label{it: example gradient flow line - entire class}
    For every closed interval $I \,$, let $\Gamma(I)$ be the set consisting of those $\gamma \in C^0(I,M)$ for which \ref{it: gradient flow line class - gradient flow line eq} in Definition \ref{def: class of gradient flow lines} holds. Then $\Gamma $ is a gradient flow line class.
   \item\label{it: example gradient flow line - restricting to subset} Let $\Gamma$ be a gradient flow line class and $S \subseteq M$ be a subset, endowed with the subspace topology. For every closed interval $I$ define
   \[
   \Gamma_S(I) := \set{\gamma \in \Gamma(I)}{\gamma(I) \subseteq S} \,\, .
   \]
   Then $\Gamma_S$ is both a class of $(f,G)$- and of $(f|_{S}, G|_S)$-gradient flow lines.
   \item Let $(M,g)$ be a Riemannian manifold and $f \in \Cinfty(M)$ be a smooth function. Let  $G(x) := |\nabla f(x)|^2$ be the square of the norm of the gradient, so that $G^{-1}(0) = \Crit(f) \,$. In particular, $f$ is constant on $Z \subseteq \Crit(f)$ if $Z$ is connected. Now let $\Gamma(I)$ be the set of gradient flow lines of $f$, that is
    \[
    \Gamma(I) := \set{\gamma \in \Cinfty(I,M)}{\gamma' = \nabla f\circ \gamma} \,\, .
    \]
    Then $\Gamma$ is a class of $(f,G)$-gradient flow lines since
    \[
    (f \circ \gamma)'(s) = |\nabla f \circ \gamma(s) |^2 = |\gamma'(s)|^2 \qquad \Forall s \in I \comma \gamma \in \Gamma(I) \,\, .
    \]
    The energy of $\gamma \in \Gamma(I)$ is the usual energy $E(\gamma) = \int_I |\gamma'(s)|^2 \, ds \,$.
    \item Let $(W,\omega)$ be a symplectic manifold that is symplectically aspherical. The space of contractible loops $M := \Cinfty_{\mathrm{contr}}(\bbS^1,W)$ with the $\Cinfty$-topology is metrizable. Given a Hamiltonian $H : W \times \bbS^1 \rightarrow \R \,$, consider the Hamiltonian action functional $f := \mathcal{A}_H : M \rightarrow \R$ defined via
    \[
    \mathcal{A}_H(x) := \int_{\mathbbm{D}^2} \overline{x}^*\omega - \int_0^1 H_t(x(t)) \, dt \,\, ,
    \]
where $\overline{x} : \mathbbm{D}^2 \rightarrow W$ is an arbitrary choice of filling disk, \ie $\overline{x}|_{\bbS^1} = x \,$. A fixed one-periodic family $J=(J_t)_{t \in \bbS^1}$ of $\omega$-compatible almost complex structures on $W$ induces an $L^2$-metric on $M$, which at $x \in M$ is given by
\[
\langle \xi_1, \xi_2 \rangle_{x} :=  \int_0^1 \omega_{x}(\xi_1, J_t(x) \, \xi_2) \, dt \qquad \Forall \xi_1,\xi_2 \in \Cinfty(\bbS^1,x^*TW) \,\, .
\]
The gradient of $\mathcal{A}_H$ at $x$ with respect to this metric is
\[
\nabla \mathcal{A}_H (x) = - J_t (x) \, \big(\dot x  - X_{H_t} \circ x \big) \in \Cinfty(\bbS^1,x^*TW) \,\, ,
\]
where we use the sign convention $\omega(\,\cdot\comma X_{H_t}) = d H_t$ for the Hamiltonian vector field $X_{H_t} \,$.
Let $I \subseteq \R$ be a closed interval and identify a map $I \rightarrow M$ with a cylinder $I \times \bbS^1 \rightarrow W \,$. Then the gradient flow lines of $\mathcal{A}_H$ on $I$ are precisely the solutions $u \in \Cinfty(I \times \bbS, W)$ of the Floer equation
\begin{equation}
\label{eq: Floer}
\partial_s u + J_t(u) \, (\partial_t u - X_{H_t}(u)) = 0 
\end{equation}
for which $u(s,\,\cdot\,)$ is contractible for some (and hence every) $s \in I \,$.
Now define $G(x) := |\nabla \mathcal{A}_H(x)|_{L^2}^2 \,$. Then
\begin{align*}
\Gamma(I) := \Bigset{\gamma : I \rightarrow M}{ \begin{array}{c}
   u_\gamma : I \times \bbS^1 \rightarrow W \text{ satisfies Eq.~\eqref{eq: Floer},}   \\ 
  \text{where }  u_\gamma(s,t) := \gamma(s)(t) 
\end{array}} 
\end{align*}
defines a class of $(f,G)$-gradient flow lines. The energy of $\gamma \in \Gamma(I)$ is the usual energy $E(u_\gamma) = \int_{I \times \bbS^1} |\partial_s u_\gamma|^2_{J_t} \, ds \, dt$ considered in Floer theory.
\end{enumerate}
\end{example}

\noindent Back to the general setting, for $E_0 \geq 0$ and an unbounded interval $I = [a,\infty)$ we define the moduli space 
\begin{align}
\label{eq: moduli space def}
\moduli_{(E_0,I)} := \moduli_{(E_0,I)}(\Gamma,Z) := \Bigset{\gamma \in \Gamma(I)}{\begin{array}{c}
    E(\gamma) \leq E_0 \text{ and}  \\
   \text{the limit } \gamma(\infty) \in Z \text{ exists} 
\end{array}} 
\end{align}
of $\Gamma$-gradient flow lines on $I$ of energy bounded by $E_0$ and converging to a point in $Z$. In Section \ref{subsec: Assumption A1 and A2} we introduce two conditions \ref{it: ass A - Czeroloc convergence} and \ref{it: ass A - shortening gradient flow lines}. Our main theorem then reads as follows.

\begin{MainThm}
\label{mthm: Compactness moduli spaces}
Suppose $0 \leq E_0 < \specgap$ and $I= [a,\infty)$ satisfy \ref{it: ass A - Czeroloc convergence} and \ref{it: ass A - shortening gradient flow lines}. Then for every sequence $(\gamma_n)_{n \in \N} \subseteq \moduliData$ there exists a subsequence which converges in $C^0(I,M)$ to some $\gamma \in \moduliData \,$.
\end{MainThm}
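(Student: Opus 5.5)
We have not yet seen the precise formulations of \ref{it: ass A - Czeroloc convergence} and \ref{it: ass A - shortening gradient flow lines}. The plan below assumes that the first gives $\Czeroloc$-subsequential convergence of energy-bounded gradient flow lines to a gradient flow line. It assumes the second is a uniform statement of the form: a flow line on a long interval whose energy is small, and which ends near $Z$, stays uniformly close to $Z$ (a ``shortening'' or no-escape property).

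\emph{Local limit.} Apply \ref{it: ass A - Czeroloc convergence} to $(\gamma_n)$ to pass to a subsequence converging uniformly on compact subsets of $I$ to some $\gamma \in \Gamma(I)$. By Remark \ref{rmk: gradient flow line}\ref{it: gradient flow line - energy} and continuity of $f$, for every $b$ we have $E(\gamma|_{[a,b]}) = \lim_n \big(f(\gamma_n(b)) - f(\gamma_n(a))\big) \le E_0$, so $E(\gamma)\le E_0$.

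\emph{The limit lies in $\moduliData$.} We must show that $\gamma(\infty)$ exists and lies in $Z$. Since $\gamma_n(\infty)\in Z$, we have $f(\gamma_n(s))\to\zeta$ as $s\to\infty$ and $f\circ\gamma_n\le \zeta$. Passing to the limit gives $f\circ\gamma\le\zeta$, and $f\circ\gamma$ is increasing with $\int G\circ\gamma<\infty$. Hence there are times $s_k\to\infty$ with $G(\gamma(s_k))\to 0$.

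The key dichotomy uses the spectral gap. Energy at least $\specgap$ would be needed to reach $Z$ from another point of $G^{-1}(0)$ lying below level $\zeta$, yet every $\gamma_n$ has energy at most $E_0<\specgap$. The flow lines $\gamma_n$ therefore cannot ``break'' at an intermediate critical point. Concretely, $\zeta - f(\gamma_n(a))\le E_0$, so $f\circ\gamma_n$ stays in $[\zeta-E_0,\zeta]$. Any accumulation point of $G^{-1}(0)$ reached along the way has $f$-value $\ge\zeta-E_0>\zeta-\specgap$, and so it must lie in $Z$. To make this rigorous I would use \ref{it: ass A - shortening gradient flow lines}. Given a neighbourhood $U$ of the compact set $Z$, it yields $T$ and $\delta$ such that any $\gamma_n$ and any $s\ge T$ satisfy $\gamma_n(s)\in U$, uniformly in $n$. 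The uniformity comes from the energy tails $\zeta - f(\gamma_n(s))$, which must be small once $\gamma_n(s)$ is near $G^{-1}(0)$, because of the gap.

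\emph{Uniform convergence.} This tail estimate is the main obstacle. We need, for each $\eps>0$ (measured in a fixed metric on $M$), some $T$ such that $\gamma_n([T,\infty))$ and $\gamma([T,\infty))$ lie in an $\eps$-neighbourhood of the single points $\gamma_n(\infty)$ and $\gamma(\infty)$ respectively, uniformly in $n$. I would first establish uniform smallness of the energy tails $\sup_n \big(\zeta - f(\gamma_n(T))\big)\to 0$ as $T\to\infty$. I would argue by contradiction: a sequence of times $T_n\to\infty$ with tail energy $\ge c>0$ yields, via translation invariance and \ref{it: ass A - Czeroloc convergence}, a limiting flow line through levels in $[\zeta-E_0,\zeta-c]$. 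Its own asymptotics produce a point of $G^{-1}(0)\setminus Z$ at level in $(\zeta-\specgap,\zeta)$, contradicting the definition of $\specgap$. With small uniform tail energy, \ref{it: ass A - shortening gradient flow lines} then converts this into small uniform oscillation on $[T,\infty)$. Finally, $\Czeroloc$ convergence on $[a,T]$ together with the uniform tails gives convergence in $C^0(I,M)$ by a standard $\eps/3$ argument, and this also shows that $\gamma(\infty)=\lim_n\gamma_n(\infty)\in Z$.
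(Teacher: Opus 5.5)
There is a genuine gap, and it sits in the step that carries the actual content of the theorem: upgrading ``the tail of $\gamma_n$ lies near $Z$'' to ``the tail of $\gamma_n$ lies near the single point $\gamma_n(\infty)$'', uniformly in $n$.

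Your guessed version of \ref{it: ass A - shortening gradient flow lines} is not the hypothesis. The actual condition is local at each $z\in Z$, sequential, and phrased in a \emph{coarse} metric. It provides an open $U\ni z$ and a metric $d$ on $U$, inducing a topology coarser than that of $M$. It then asserts $d(\gamma_n(s_n),\gamma_n(\infty))\to 0$ whenever $s_n\to\infty$, $\gamma_n\in\moduliData$, $\gamma_n([s_n,\infty])\subseteq U$ and $\gamma_n(s_n)$ converges to a point of $Z\cap U$. It says nothing about energy tails. Moreover, $d$-smallness gives no smallness in a metric inducing the topology of $M$. So ``\ref{it: ass A - shortening gradient flow lines} converts small tail energy into small uniform oscillation on $[T,\infty)$'' is not a step you can take. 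Small energy near $Z$ does not by itself prevent drift along $Z$, and excluding that drift is exactly what the hypothesis is for.

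What is missing is the argument of Lemma \ref{lem: uniform time for pt in Z}:
\begin{enumerate}
\item Use compactness of $Z$ to pass to a subsequence with $\gamma_n(\infty)\to z$.
\item Take $U,d$ from \ref{it: ass A - shortening gradient flow lines} at $z$, and an open $V\ni z$ with $\overline{V}\subseteq U$.
\item Let $s_n$ be the infimum of the times after which $\gamma_n$ stays in $V$. If the $s_n$ were unbounded, then along a subsequence $s_n\to\infty$, $\gamma_n(s_n)\in\partial V$ and $\gamma_n([s_n,\infty])\subseteq \overline{V}\subseteq U$.
\item Applying \ref{it: ass A - Czeroloc convergence} to the translates, together with Lemma \ref{lem: uniform time for nbhd of Z}, gives $\gamma_n(s_n)\to z_*\in Z\cap\partial V$.
\item Now \ref{it: ass A - shortening gradient flow lines} and coarseness of $d$ give $d(z_*,z)=0$, contradicting $z_*\in\partial V$, $z\in V$.
\end{enumerate}
The coarse metric enters only through this contradiction. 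That is why you should aim for ``eventually inside every neighbourhood $V$ of $z$'' rather than an $\eps$-oscillation estimate in a fixed metric.

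There are two smaller problems.

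First, the uniform confinement $\gamma_n([T,\infty))\subseteq U$ for a neighbourhood $U$ of $Z$ comes from \ref{it: ass A - Czeroloc convergence} and the gap (Lemmas \ref{lem: delta constant existence} and \ref{lem: uniform time for nbhd of Z}), not from \ref{it: ass A - shortening gradient flow lines}. Also, the implication you need is the reverse of the one you state: small tail energy forces $\gamma_n(s)\in U$. The reason is that translates converge to a flow line $\gamma$ with $f\circ\gamma(a)=\zeta$ and $f\circ\gamma\le\zeta$. Such a $\gamma$ is constant, hence lies in $G^{-1}(0)$ at level $\zeta$, hence lies in $Z$.

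Second, your contradiction argument for uniform smallness of the tail energies does not close as written. The forward asymptotics of the limit of the translates $\gamma_n(\,\cdot\,+T_n-a)$ lie in $Z$, by Lemma \ref{lem: gamma(s_n) converges to pt in Z} and the gap. So no point of $G^{-1}(0)\setminus Z$ appears there. You would have to use backward asymptotics of a limit taken on the growing intervals $[2a-T_n,\infty)$. The paper's route is simpler:
\begin{enumerate}
\item Take the untranslated $\Czeroloc$-limit $\gamma$, for which $f\circ\gamma\to\zeta$ by Lemma \ref{lem: gamma(s_n) converges to pt in Z}.
\item Choose $\bar s$ with $f\circ\gamma(\bar s)\ge\zeta-\delta/2$.
\item Conclude using pointwise convergence and monotonicity of $f\circ\gamma_n$.
\end{enumerate}
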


\noindent The uniform convergence in $C^0(I,M)$ is independent of the used metric on $M$ (see Definition \ref{def: uniform convergence}). Observe that $\moduliData$ embeds naturally into $C^0([a,\infty],M)$ and so it makes sense to speak of compactness of $\moduliData$ with respect to the compact-open topology on $C^0([a,\infty],M)$.
Since $M$ is metrizable, the compact-open topology on $C^0([a,\infty],M)$ is metrizable. So the next corollary is just a reformulation of Theorem \ref{mthm: Compactness moduli spaces}.

\begin{MainCor}
\label{mcor: Compactness moduli spaces}
Suppose $0 \leq E_0 < \specgap $ and $I= [a,\infty)$ satisfy \ref{it: ass A - Czeroloc convergence} and \ref{it: ass A - shortening gradient flow lines}. Then $\moduliData$ is compact with respect to the compact-open topology on $C^0([a,\infty],M) \,$.
\end{MainCor}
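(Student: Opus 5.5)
The plan is to deduce the corollary directly from Theorem~\ref{mthm: Compactness moduli spaces}. The strategy is to reduce compactness to sequential compactness and then to identify convergence in the compact-open topology with the uniform convergence that the theorem provides.

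First, I make the embedding explicit. Every $\gamma \in \moduliData$ has a limit $\gamma(\infty) \in Z$. Setting $\bar\gamma|_{[a,\infty)} := \gamma$ and $\bar\gamma(\infty) := \gamma(\infty)$ therefore gives a continuous map $\bar\gamma : [a,\infty] \to M$ on the compact space $[a,\infty] \cong [0,1]$. The assignment $\gamma \mapsto \bar\gamma$ is injective, since $\bar\gamma$ restricts back to $\gamma$. We regard $\moduliData$ as a subset of $C^0([a,\infty],M)$ via this map and give it the subspace topology.

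Next, I fix a metric $d$ on $M$ compatible with its topology, which exists because $M$ is metrizable. Since $[a,\infty]$ is compact, the compact-open topology on $C^0([a,\infty],M)$ coincides with the topology of uniform convergence induced by
\[
d_\infty(\alpha,\beta) := \sup_{s \in [a,\infty]} d(\alpha(s),\beta(s)) \,\, .
\]
This is the standard fact for compact domains and metric targets. In particular $C^0([a,\infty],M)$ is metrizable, so the subspace $\moduliData$ is compact if and only if it is sequentially compact.

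Let $(\gamma_n)_{n\in\N} \subseteq \moduliData$ be an arbitrary sequence. Theorem~\ref{mthm: Compactness moduli spaces} yields a subsequence, still denoted $\gamma_n$, and a limit $\gamma \in \moduliData$ with $\gamma_n \to \gamma$ in $C^0(I,M)$. By Definition~\ref{def: uniform convergence} this means $\sup_{s \in [a,\infty)} d(\gamma_n(s),\gamma(s)) \to 0$ for our chosen metric $d$; the definition is independent of the choice of metric.

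The only point that needs a short argument is that this supremum over $[a,\infty)$ equals the supremum over $[a,\infty]$ of the extended maps. The function $s \mapsto d(\bar\gamma_n(s),\bar\gamma(s))$ is continuous on $[a,\infty]$, and $[a,\infty)$ is dense in $[a,\infty]$. Hence
\[
d(\bar\gamma_n(\infty),\bar\gamma(\infty)) = \lim_{s\to\infty} d(\gamma_n(s),\gamma(s)) \leq \sup_{s\in[a,\infty)} d(\gamma_n(s),\gamma(s)) \,\, ,
\]
and therefore $d_\infty(\bar\gamma_n,\bar\gamma) = \sup_{[a,\infty)} d(\gamma_n,\gamma) \to 0$. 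So $\bar\gamma_n \to \bar\gamma$ in the compact-open topology, with limit $\bar\gamma$ lying in (the image of) $\moduliData$.

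Thus $\moduliData$ is sequentially compact, hence compact. I expect no real obstacle here: all the analytic content sits in Theorem~\ref{mthm: Compactness moduli spaces}. The only care needed is in the identification of the two topologies and in the treatment of the endpoint $\infty$ just described.
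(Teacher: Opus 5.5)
Your proposal is correct and follows the paper's argument: the paper likewise observes that $\moduliData$ embeds into the metrizable space $C^0([a,\infty],M)$, so compactness reduces to sequential compactness, which is exactly what Theorem~\ref{mthm: Compactness moduli spaces} provides. Your endpoint argument is harmless but unnecessary, because Definition~\ref{def: uniform convergence} already defines convergence in $C^0(I,M)$ as convergence of the extensions $\bar\gamma_n \to \bar\gamma$ in the compact-open topology on $C^0([a,\infty],M)$.
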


\subsection{Motivation and Outlook}
\label{subsec: Motivation and Outlook}

\noindent The above compactness theorem is the result of the author's attempts to adapt a homotopy stretching argument for Morse-Bott gradient flow lines by Albers-Hein \cite{Albers_Hein} to the setting in which the Rabinowitz action functional takes the place of the Morse-Bott function. Such a homotopy stretching argument relies fundamentally on compactness of a suitably defined moduli space, which in the Morse-Bott case is established by Albers-Hein in \cite[Thm.~3.7]{Albers_Hein}.

Trying to adapt a Morse theoretic proof for compact moduli spaces to Floer theory naturally led to the general framework of gradient flow line classes, which encode both classical gradient flow lines as well as Floer cylinders. Of course, a compactness result as stated in Theorem \ref{mthm: Compactness moduli spaces} in such a general set-up needs somewhat strong assumptions to begin with. These are encapsulated in assumptions \ref{it: ass A - Czeroloc convergence} and \ref{it: ass A - shortening gradient flow lines}. To be able to apply Theorem \ref{mthm: Compactness moduli spaces} both in the (finite-dimensional) Morse and the (infinite-dimensional) Floer setting, we have to verify those two assumptions in both settings respectively. 

Broadly speaking, assumption \ref{it: ass A - Czeroloc convergence} is ensured by the existence of a  $\Cinftyloc$-convergent subsequence of a sequence of Morse trajectories respectively Floer cylinders. In the Morse case this is by a bootstrapping argument and in the Floer case by Gromov compactness (which is standard, although nontrivial). The actual crux to apply Theorem \ref{mthm: Compactness moduli spaces} is the second condition \ref{it: ass A - shortening gradient flow lines}. We establish it for Morse-Bott functions, by quoting an exponential decay result for gradient flow lines of Morse-Bott functions. Importantly, we need an exponential decay estimate in which the coefficient in front of the exponential depends continuously on the initial point of the gradient flow line. This unsurprisingly holds true in the Morse-Bott situation because we are working with the flow of a vector field. 

Here is the crucial point: To establish \ref{it: ass A - shortening gradient flow lines} in the Floer case, under a standard Morse-Bott assumption on the Hamiltonian (see assumption \ref{it: MB}), one again wants an exponential decay estimate for Floer cylinders with coefficient function in front of the exponential map depending continuously on the initial point of the cylinder in the loopspace. Since the $L^2$-gradient of the action functional is not an actual vector field, such a result is far less obvious than in the Morse case. It still holds true though and this will be our second main result, stated in Theorem \ref{mthm: Floer - exponential decay}.

For the sake of brevity, in this work we restrict the verification of the assumptions \ref{it: ass A - Czeroloc convergence} and \ref{it: ass A - shortening gradient flow lines} and the application of Theorem \ref{mthm: Compactness moduli spaces} to the classical Hamiltonian action functional. However, one can analogously proceed with the Rabinowitz action functional instead. As mentioned above, in an upcoming work, we will be concerned with a homotopy stretching argument for the latter functional and, to this end, will make use of Theorem \ref{mthm: Compactness moduli spaces}.

\subsection{Organization of the Paper}
\label{subsec: Organization Paper}

\noindent In Subsection \ref{subsec: Uniform convergence and Coarse Metrics} we review the uniform convergence appearing in Theorem \ref{mthm: Compactness moduli spaces} and define the notion of a \textit{coarse metric}. The latter will reappear in Subsection \ref{subsec: Assumption A1 and A2}, in which we introduce the crucial assumptions \ref{it: ass A - Czeroloc convergence} and \ref{it: ass A - shortening gradient flow lines} for Theorem \ref{mthm: Compactness moduli spaces}. Because \ref{it: ass A - shortening gradient flow lines} is a quite strong assumption, we state a sufficient condition for it in Subsection \ref{subsec: Criterion for ass A - shortening gradient flow lines}. The proof of Theorem \ref{mthm: Compactness moduli spaces} will then be given in Subsection \ref{subsec: Proof of Compactness Result for moduli spaces}. To conclude this abstract general section, we present a slight generalization (Theorem \ref{thm: Compactness moduli spaces - variant}) of Theorem \ref{mthm: Compactness moduli spaces} in Subsection \ref{subsec: Variants of Compactness Thm}, which often is needed when working in the Floer setting.

Section \ref{sec: Morse - Illustration} deals with the finite-dimensional Morse-Bott case on a Riemannian manifold. Both conditions \ref{it: ass A - Czeroloc convergence} and \ref{it: ass A - shortening gradient flow lines} can be checked in a standard way, as we explain in Subsection \ref{subsec: Morse-Bott functions (finite dim)}, so that Theorem \ref{mthm: Compactness moduli spaces} immediately reproves a well-known compactness result about gradient flow lines converging to a critical submanifold of the Morse-Bott function (Corollary \ref{cor: Morse setting - compactness moduli space}). Often the compactness of a moduli space of interest is meant with respect to the topology of an ambient Banach manifold of loops. In Subsection \ref{subsec: Morse setting - ambient Banach mfd} we illustrate how one easily obtains such compactness from the $C^0$-compactness in Theorem \ref{mthm: Compactness moduli spaces} and exponential decay (which is needed to ensure condition \ref{it: ass A - shortening gradient flow lines} anyway). The section is concluded in Subsection \ref{subsec: Optimality of Upper Bound for Energy} with an example showing that the upper bound for the energy in Theorem \ref{mthm: Compactness moduli spaces} is optimal.

Section \ref{sec: Floer - Illustration} is concerned with illustrating Theorem \ref{mthm: Compactness moduli spaces} in the Floer case. Subsection \ref{subsec: Floer - MB set-up for ass A shortening flow lines} introduces the general setting, the relevant Morse-Bott assumption \ref{it: MB} and states Theorem \ref{mthm: Floer - exponential decay}. In Subsection \ref{subsec: Floer - symplectic homology setting}, we then apply Theorem \ref{mthm: Floer - exponential decay} to verify that conditions \ref{it: ass A - Czeroloc convergence} and \ref{it: ass A - shortening gradient flow lines} in the natural set-up for symplectic homology are satisfied. This will allow us to conclude the compactness result in Theorem \ref{thm: Floer - compactness result symplectic homology}.

The proof of Theorem \ref{mthm: Floer - exponential decay} is extremely technical and for this reason deferred to Section \ref{sec: Exponential Decay - proof}. It comprises a substantial part of the paper.

\subsection{Acknowledgments}
\label{subsec: Acknowledgments}

Foremost, I am indebted to my supervisor, P.~Albers, who originally got me in touch with the intriguing homotopy stretching argument which motivated the present compactness result. Second, I would like to express my deep gratitude to R.~Siefring for pointing out and kindly explaining to me in great detail and outstanding clarity the diagonalization argument for exponential decay of higher order derivatives of Floer cylinders. I would also like to thank both L.~Dahinden, A.~Fauck and J.~Wi\'sniewska for helpful discussions.

\section{The Compactness Result for Moduli Spaces}
\label{sec: Compactness Result for Moduli Spaces}

\subsection{Uniform convergence and Coarse Metrics}
\label{subsec: Uniform convergence and Coarse Metrics}
 Let $M$ be a metrizable topological space.
 Given a closed interval $I \subseteq \R \,$, we denote by $\Czeroloc(I,M)$ the topological space of continuous functions $C^0(I,M)$ endowed with the compact-open topology.

\begin{definition}[Uniform convergence]
\label{def: uniform convergence}
Given an unbounded interval $I = [a,\infty)$ and continuous functions $\gamma_n, \gamma : I \rightarrow M \comma n \in \N \,$, for which the limits $\gamma_n(\infty) := \lim_{s \rightarrow \infty} \gamma_n(s)$ and $\gamma(\infty)$ in $M$ exist. Let $\bar{\gamma}_n , \bar{\gamma} : [a,\infty] \rightarrow M$ be the unique continuous extensions of $\gamma_n$ respectively $\gamma \,$.
We define
\begin{align*}
    \gamma_n \overset{n}{\rightarrow} \gamma \text{ in } C^0(I,M) \quad : \Longleftrightarrow \quad \bar{\gamma}_n \overset{n}{\rightarrow} \bar{\gamma} \text{ in } C^0([a,\infty],M) \,\, .
\end{align*}
Here $C^0([a,\infty],M)$ is endowed with the compact-open topology.
\end{definition}

\noindent The above definitions are independent of any metric on $M$. However, the presence of a metric makes matters more explicit, as the next remark shows.

\begin{remark}
\label{rmk: C^0 uniform convergence}
Let $d_M$ be a metric inducing the topology on $M$ and $I = [a,\infty)$ be an unbounded interval.
\begin{enumerate}[label=(\roman*)]
    \item $\gamma_n \overset{n}{\rightarrow} \gamma$ in $\Czeroloc(I,M)$ if and only if $\gamma_n$ converges uniformly to $\gamma$ (with respect to $d_M$) on every compact subinterval of $I$.
    \item Suppose the limits $\gamma_n(\infty)$ and $\gamma(\infty)$ exist. Then $\gamma_n \overset{n}{\rightarrow} \gamma$ in $C^0(I,M)$ if and only if $\gamma_n$ converges uniformly to $\gamma$ (with respect to $d_M$) on $I$.
\end{enumerate}
\end{remark}

\noindent Recall that a topology $\mathcal{O}_1$ is called \textit{coarser} than a topology $\mathcal{O}_2$ if $\mathcal{O}_1 \subseteq \mathcal{O}_2 \,$.

\begin{definition}[Coarse Metric]
\label{def: coarse metric}
A \textit{coarse metric} $d : M \times M \rightarrow [0,\infty)$ on $M$ is a metric whose induced topology is coarser than the given topology on $M$. 
\end{definition}

\begin{convention}
\label{convention: coarse metric} When working with a coarse metric, all topological notions on $M$ (like openness, continuity, convergence etc.) without further qualification refer to the given (original) topology on $M$.
\end{convention}

\begin{remark}
\label{rmk: coarse metric}
\begin{enumerate}[label=(\roman*)]
\item Let $S \subseteq M$ be a subset endowed with the subspace topology. If $d$ is a coarse metric on $M$, then $d|_{S \times S}$ is a coarse metric on $S$.
\item A metric $d$ on $M$ is a coarse metric on $M$ if and only if for every $x \in M$ and every sequence $(x_n)_n \subseteq M$
\begin{align*}
    x_n \overset{n}{\rightarrow} x \text{ in the given topology on } M \quad \Longrightarrow \quad d(x_n,x) \overset{n}{\rightarrow} 0 \,\, .
\end{align*}
The ``if''-direction uses that $M$ is first-countable.
\end{enumerate}
\end{remark}

\subsection{Conditions \ref{it: ass A - Czeroloc convergence} and \ref{it: ass A - shortening gradient flow lines}}
\label{subsec: Assumption A1 and A2}

Suppose we are in the situation of Subsection \ref{subsec: Main Compactness Result}. That is we have fixed a metrizable topological space $M$ together with continuous functions $f : M \rightarrow \R$ and $G : M \rightarrow [0,\infty)$ and moreover a compact subset $Z \subseteq G^{-1}(0) $ on which $f|_Z \equiv \zeta \,$. Moreover, we fix a class $\Gamma$ of $(f,G)$-gradient flow lines. In the following, by a gradient flow line we always mean a $\Gamma$-gradient flow line. For $E_0 \geq 0$ and an unbounded interval $I = [a,\infty) \,$, recall from \eqref{eq: moduli space def} the definition of the moduli space
\begin{align*}
\moduli_{(E_0,I)} := \moduli_{(E_0,I)}(\Gamma,Z) := \Bigset{\gamma \in \Gamma(I)}{\begin{array}{c}
    E(\gamma) \leq E_0 \text{ and}  \\
   \text{the limit } \gamma(\infty) \in Z \text{ exists} 
\end{array}} 
\end{align*}
For $E_0 \geq 0$ and $I=[a,\infty)$ consider the following two assumptions:

{
\let\realItem\item 
\makeatletter
\NewDocumentCommand\myItem{ o }{%
   \IfNoValueTF{#1}%
      {\realItem}
      {\realItem[#1]\def\@currentlabel{#1}}
}
\makeatother

\setlist[enumerate]{
    before=\let\item\myItem,       
    label=\textnormal{(\arabic*)}, 
    widest=(2')                    
}

\begin{enumerate}
    \item[(A1$_{(E_0,I)}$)]\label{it: ass A - Czeroloc convergence} For every sequence $(\gamma_n)_{n \in \N} \subseteq \Gamma(I)$ of gradient flow lines with energy $E(\gamma_n) \leq E_0$ for every $n \in \N \,$, there exists a subsequence $(\gamma_{n_k})_{k \in \N}$ converging in $\Czeroloc(I,M)$ to a gradient flow line $\gamma \in \Gamma(I) \,$.
    \item[(A2$_{(E_0,I)}$)]\label{it: ass A - shortening gradient flow lines} For every $z \in Z$ there exist an open neighborhood $U \subseteq M$ of $z$ and a coarse metric $d : U \times U \rightarrow [0,\infty)$ on $U$ with the following significance:\\
    Let $(s_n)_{n \in \N} \subseteq I$ be a sequence tending to infinity. 
    Let $(\gamma_n)_{n \in \N} \subseteq \moduliData$ be a sequence in the moduli space with $\gamma_n(s) \in U$ for every $s_n \leq s \leq \infty $ and $n \in \N$. Then
    \begin{align*}
       (\gamma_n(s_n))_n \text{ converges to some point in } Z \cap U \,\,\, \Longrightarrow \,\,\, d(\gamma_n(s_n), \gamma_n(\infty)) \overset{n}{\rightarrow} 0 \,\, . 
    \end{align*}
\end{enumerate}
}

\noindent We remind the reader of our stipulated convention that, in the presence of a coarse metric, all topological notions without further qualification are with respect to the original topology. So, in \ref{it: ass A - shortening gradient flow lines}, $U$ being open and the limit $\lim_{n \rightarrow \infty} \gamma_n(s_n)$ are both with respect to the original topology on $M$.
In light of the above and the definition of uniform convergence (Definition \ref{def: uniform convergence}), we now repeat our main theorem.

{ \theoremstyle{theorem}
\newtheorem*{MainThmA}{Theorem \ref{mthm: Compactness moduli spaces}}
\begin{MainThmA}
Suppose $0 \leq E_0 < \specgap$ and $I= [a,\infty)$ satisfy \ref{it: ass A - Czeroloc convergence} and \ref{it: ass A - shortening gradient flow lines}. Then for every sequence $(\gamma_n)_{n \in \N} \subseteq \moduliData$ there exists a subsequence which converges in $C^0(I,M)$ to some $\gamma \in \moduliData \,$.
\end{MainThmA}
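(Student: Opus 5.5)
The plan is to first extract a $\Czeroloc$-limit using \ref{it: ass A - Czeroloc convergence}, then upgrade it to uniform convergence on all of $I$. The bridge is the spectral gap, which rules out energy escaping to infinity, combined with \ref{it: ass A - shortening gradient flow lines}, which controls the tails.

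\emph{Step 1 (local limit).} Given $(\gamma_n)\subseteq\moduliData$, I will apply \ref{it: ass A - Czeroloc convergence} to pass to a subsequence with $\gamma_n\to\gamma$ in $\Czeroloc(I,M)$ for some $\gamma\in\Gamma(I)$. Each $\gamma_n$ has $f\circ\gamma_n(\infty)=\zeta$, so $f\circ\gamma_n(s)\ge \zeta-E_0$ for all $s$. By Fatou, $E(\gamma)\le\liminf E(\gamma_n)\le E_0$. Since $f\circ\gamma$ is increasing and bounded above by $\zeta$, the limit $\lim_{s\to\infty} f\circ\gamma(s)$ exists.

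\emph{Step 2 (the limit of $\gamma$ exists and lies in $Z$).} This is the main obstacle. I would argue by contradiction on the tails, using compactness of $Z$ and finitely many neighborhoods $U_z$ from \ref{it: ass A - shortening gradient flow lines}.

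\begin{claim}
For every neighborhood $V$ of $Z$ there are $S$ and $N$ with $\gamma_n([S,\infty])\subseteq V$ for all $n\ge N$.
\end{claim}

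Suppose the claim fails. Then there are $t_n\to\infty$ (or $t_n$ bounded) and points $\gamma_n(t_n)\notin V$. In the bounded case, and also to produce $\gamma(\infty)$, I would use a translation/energy argument. Translate, $\tilde\gamma_n:=\gamma_n(\cdot+t_n)$, which by \ref{it: gradient flow line class - translations} are gradient flow lines on $[a-t_n,\infty)$. Applying \ref{it: ass A - Czeroloc convergence} on intervals $[-k,\infty)$ with a diagonal argument gives a limit flow line $\tilde\gamma$ on $\R$ of energy $\le E_0$. Its $f$-values increase from some $\alpha$ to some $\omega\le\zeta$, with $\omega-\alpha\le E_0$.

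I then need that $\tilde\gamma$ is asymptotic to $G^{-1}(0)$ at both ends. This follows from finite energy together with the local convergence of shifts, since a sequence $\tilde\gamma(\cdot+\sigma_k)$ subconverges to a flow line with constant $f$, which lies in $G^{-1}(0)$ by Remark \ref{rmk: gradient flow line}. The forward limit points then have $f$-value $\omega\le \zeta$ and lie in $G^{-1}(0)$. By the spectral gap and $E_0<\specgap$, either they lie in $Z$, or $\zeta-\omega\ge\specgap>E_0$. The latter contradicts $f\circ\gamma_n\ge\zeta-E_0$ after passing to limits.

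Hence $\omega=\zeta$ (or the forward limit set lies in $Z$). Then the whole energy of $\gamma_n$ after $t_n$ tends to $0$, and one shows that $\tilde\gamma$ is constant and lies in $G^{-1}(0)$ with $f=\zeta$, hence in $Z$ up to the gap. This contradicts $\gamma_n(t_n)\notin V$ for $V$ open. The delicate points are making ``limit set'' precise without a metric, and handling energy concentration between $t_n$ and $\infty$. Here I would use a lower bound $G\ge c>0$ on compact pieces away from $G^{-1}(0)$, available only sequentially, so I would again argue with sequences and \ref{it: ass A - Czeroloc convergence}.

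\emph{Step 3 (uniform tails via \ref{it: ass A - shortening gradient flow lines}).} Cover $Z$ by finitely many $U_{z_i}$ with coarse metrics $d_i$. By the claim and a Lebesgue-number-type argument, after subsequences each tail $\gamma_n([S,\infty])$ lies in one fixed $U=U_{z_i}$. Fix a metric $d_M$. Suppose uniform convergence fails. Then there are $\eps>0$ and $s_n\to\infty$ with $d_M(\gamma_n(s_n),\gamma(s_n))\ge\eps$ or $d_M(\gamma_n(s_n),\gamma_n(\infty))\ge\eps$.

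Using the claim and compactness of $Z$, I pass to $\gamma_n(s_n)\to z'\in Z\cap U$. Condition \ref{it: ass A - shortening gradient flow lines} gives $d(\gamma_n(s_n),\gamma_n(\infty))\to0$, and $\gamma_n(\infty)$ subconverges in $Z$. Since $d$ is coarse but a genuine metric on $U$, I would use that on the compact set $Z\cap \overline{\text{small nbhd}}$ the two topologies agree, by compactness plus Hausdorffness. This lets me identify the limits and conclude $\gamma_n(\infty)\to z'$ and $\gamma(\infty)=z'$, with tails uniformly close.

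Combining this with uniform convergence on $[a,S]$ yields $\gamma_n\to\gamma$ in $C^0(I,M)$, and $\gamma\in\moduliData$.
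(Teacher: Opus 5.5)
There is a genuine gap in Step 3. The sentence ``by the claim and a Lebesgue-number-type argument, after subsequences each tail $\gamma_n([S,\infty])$ lies in one fixed $U=U_{z_i}$'' does not follow. Your claim only confines the tails to a thin neighborhood of all of $Z$. A Lebesgue number only captures sets of small diameter, and smallness of the tails is exactly what you are trying to prove.

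If $Z$ is, say, a circle, nothing in your argument stops $\gamma_n|_{[S,\infty)}$ from sliding along $Z$ through several $U_{z_i}$. Then (A2) cannot be applied at your arbitrary times $s_n$, because it requires $\gamma_n(s)\in U$ for \emph{all} $s\in[s_n,\infty]$. Ruling out this drift is the whole content of (A2), and the missing idea is to choose the times $s_n$ adapted to the endpoints:
\begin{itemize}
\item Let $z=\lim\gamma_n(\infty)$, which exists after a subsequence by compactness of $Z$.
\item Take $U,d$ from (A2) at $z$ and an open $V\ni z$ with $\overline{V}\subseteq U$.
\item Set $s_n:=\inf\{s\ge a:\gamma_n([s,\infty))\subseteq V\}$.
\end{itemize}
Suppose $\sup_n s_n=\infty$. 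Along a subsequence, $\gamma_n([s_n,\infty])\subseteq\overline{V}\subseteq U$ and $\gamma_n(s_n)\in\partial V$. Your claim then gives a limit $z_*\in Z\cap\partial V$. Now (A2), the triangle inequality and coarseness of $d$ give $d(z_*,z)=0$, contradicting $z\in V$. Hence $\sup_n s_n<\infty$. This yields both the existence of $\gamma(\infty)=z$, which your Step 3 uses without ever establishing it, and uniform convergence. No comparison of topologies on compact sets is needed. This is the paper's Lemma on a uniform time for a point of $Z$.

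There is also a logical slip in Step 2. From $\omega=\zeta$ you cannot conclude that the energy of $\gamma_n$ after $t_n$ tends to $0$. That energy converges to $\zeta-f\circ\tilde\gamma(0)$, whereas $\omega$ only controls $f\circ\tilde\gamma(s)$ as $s\to+\infty$.

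The repair is to run your spectral-gap argument at the \emph{backward} end as well. Backward shifts of $\tilde\gamma$ subconverge, by (A1) and translation invariance, to a flow line with constant $f$-value $\alpha\in[\zeta-E_0,\zeta]$. That flow line lies in $G^{-1}(0)$, hence in $Z$ because $E_0<\specgap$, so $\alpha=\zeta$. Then $f\circ\tilde\gamma\equiv\zeta$, so $\tilde\gamma(0)\in Z$, contradicting $\gamma_n(t_n)\notin V$.

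The paper reaches the same claim differently. It uses the untranslated limit $\gamma$, which comes close to $Z$ at some time $\bar s$. Monotonicity of $f\circ\gamma_n$ then bounds $\zeta-f\circ\gamma_n(s)$ by a small $\delta$ for all $s\ge\bar s$. A separate lemma shows that such a bound forces $\gamma_n(s)$ into the given neighborhood of $Z$.

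With either repair, the vague lower bound ``$G\ge c>0$'' is unnecessary. The bounded-$t_n$ case also never occurs, since negating the claim directly produces $t_n\to\infty$.
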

}

\noindent By assumption \ref{it: ass A - Czeroloc convergence}, every sequence of gradient flow lines in $\moduliData$ converges up to a subsequence in $\Czeroloc(I,M)$ to some gradient flow line in $\Gamma(I)$. So the non-trivial part of the theorem is the assertion that the limit again lies in $\moduliData$ and that the convergence is uniform on $I$.
\\

\subsection{Sufficient Condition for \ref{it: ass A - shortening gradient flow lines}}
\label{subsec: Criterion for ass A - shortening gradient flow lines}

\noindent The next, essentially trivial, lemma gives a sufficient condition for \ref{it: ass A - shortening gradient flow lines}.

\begin{lemma}
\label{lem: shortening gradient flow lines criterion}
Let $I = [a,\infty)$ and $E_0 \geq 0$ be fixed.
Suppose for every $z \in Z$ there exist an open neighborhood $U \subseteq M$ of $z$, a coarse metric $d$ on $U$ and a continuous function $\Xi : U \rightarrow \R $ vanishing on $Z \cap U$ with the following significance:
For every interval $J=[b,\infty) \subseteq I$ and $\gamma \in \moduli_{(E_0,J)}$ with $\gamma(s) \in U$ for every $b \leq s \leq \infty \,$, it holds
\[
d(\gamma(b) , \gamma(\infty)) \leq \Xi(\gamma(b)) \,\, .
\]
Then condition \ref{it: ass A - shortening gradient flow lines} holds.
\end{lemma}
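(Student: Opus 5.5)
The plan is to take, for each $z \in Z$, the same neighborhood $U$ and coarse metric $d$ given by the hypothesis. I will then check the implication in \ref{it: ass A - shortening gradient flow lines} directly, by applying the hypothesized estimate to shifted copies of the $\gamma_n$.

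Let $(s_n)_n \subseteq I$ tend to infinity, and let $(\gamma_n)_n \subseteq \moduliData$ satisfy $\gamma_n(s) \in U$ for all $s_n \leq s \leq \infty$. Suppose $x_n := \gamma_n(s_n)$ converges in $M$ to some $z' \in Z \cap U$.

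For each $n$, set $J_n := [s_n,\infty) \subseteq I$ and consider the restriction $\gamma_n|_{J_n}$. By property \ref{it: gradient flow line class - restriction} in Definition \ref{def: class of gradient flow lines}, this restriction lies in $\Gamma(J_n)$. Its limit at infinity is $\gamma_n(\infty) \in Z$.

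To see that $\gamma_n|_{J_n} \in \moduli_{(E_0,J_n)}$, I need its energy bound. Since $G \geq 0$, we have
\[
E(\gamma_n|_{J_n}) = \int_{s_n}^\infty G\circ\gamma_n \leq E(\gamma_n) \leq E_0 .
\]
By assumption $\gamma_n(s) \in U$ for all $s_n \le s \le \infty$, so the hypothesis of the lemma applies with $b = s_n$. It gives
\[
d(\gamma_n(s_n),\gamma_n(\infty)) \leq \Xi(\gamma_n(s_n)) = \Xi(x_n).
\]

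Now $\Xi$ is continuous on $U$ in the original topology, $x_n \to z'$ in $U$, and $\Xi$ vanishes on $Z \cap U$. Hence $\Xi(x_n) \to \Xi(z') = 0$. Because $d \geq 0$, this gives $d(\gamma_n(s_n),\gamma_n(\infty)) \to 0$, which is the required conclusion of \ref{it: ass A - shortening gradient flow lines}.

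The argument is essentially bookkeeping, and I do not expect a real obstacle. The one point that needs care is that the restriction genuinely lies in the moduli space on the subinterval $J_n$. This requires the restriction axiom, the nonnegativity of $G$ for the energy bound, and the fact that restriction does not change the limit at infinity.

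If $d(x_n,\cdot)$ is only defined for $x_n \in U$, this is not an issue: for every $n$, both $x_n = \gamma_n(s_n)$ and $\gamma_n(\infty)$ lie in $U$ by assumption. Convergence of $x_n$ is taken in the original topology, as the Convention requires, and continuity of $\Xi$ is used in that same topology.
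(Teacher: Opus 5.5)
Your proof is correct and follows the paper's argument essentially verbatim: restrict $\gamma_n$ to $[s_n,\infty)$, note that this restriction lies in $\mathcal{M}_{(E_0,[s_n,\infty))}$, apply the hypothesis with $b=s_n$, and conclude using continuity of $\Xi$ and $\Xi(z')=0$. Your check of the energy bound via $G\geq 0$ just spells out a step the paper states without comment. Your opening paragraph speaks of ``shifted copies'', but the argument itself correctly uses restrictions, not shifts.
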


\noindent Following our convention, $\Xi$ is meant to be continuous with respect to the original topology on $M$.

\begin{proof}
    For given $z \in Z$, choose a neighborhood $U \subseteq M$ of $z$, a coarse metric $d$ and a function $\Xi$ as in the assumptions. Given sequences $(\gamma_n)_n \subseteq \moduliData$ and $(s_n)_n$ as in \ref{it: ass A - shortening gradient flow lines} and so that $\gamma_n(s_n)$ converges to some point $z_1 \in Z \cap U \,$. Then $\Xi(\gamma_n(s_n))$ converges to $\Xi(z_1) = 0 \,$. Moreover $\gamma_n|_{[s_n,\infty)} $ is an element of $\moduli_{(E_0,\,[s_n,\infty))} \,$. Hence
    \[
    d(\gamma_n(s_n),\gamma_n(\infty)) \leq \Xi(\gamma_n(s_n)) \overset{n}{\rightarrow} 0 \,\, .
    \]
\end{proof}

\subsection{Proof of Theorem \ref{mthm: Compactness moduli spaces}}
\label{subsec: Proof of Compactness Result for moduli spaces}

This section is devoted to the proof of Theorem \ref{mthm: Compactness moduli spaces}. We fix, once and for all, $E_0 \geq 0$ with $E_0 < \specgap$ and $I=[a,\infty)$ for which assumptions \ref{it: ass A - Czeroloc convergence} and \ref{it: ass A - shortening gradient flow lines} hold. In particular the spectral gap is positive
\[
0 < \specgap \leq \infty \,\, .
\]

\begin{lemma}
\label{lem: action value estimate}
For every gradient flow line $\gamma \in \moduliData$ it holds 
\[
\zeta - E_0 \leq f \circ \gamma(s) \leq \zeta \qquad \Forall s \geq a \,\, .
\]
\end{lemma}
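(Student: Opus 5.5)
The argument combines the monotonicity of $f \circ \gamma$ from Remark \ref{rmk: gradient flow line}\ref{it: gradient flow line - decreasing} with continuity of $f$ at the endpoint $\gamma(\infty) \in Z$. The energy identity from Remark \ref{rmk: gradient flow line}\ref{it: gradient flow line - energy} then closes the estimate. No spectral gap assumption is needed for this statement.

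Fix $\gamma \in \moduliData$. Since the limit $\gamma(\infty) \in Z$ exists in $M$ and $f$ is continuous, we get
\[
\lim_{s \rightarrow \infty} f \circ \gamma(s) = f(\gamma(\infty)) = \zeta,
\]
because $f|_Z \equiv \zeta$. In particular this limit is finite. By Definition \ref{def: class of gradient flow lines}\ref{it: gradient flow line class - gradient flow line eq}, $(f\circ\gamma)' = G \circ \gamma \geq 0$, so $f \circ \gamma$ is increasing on $I = [a,\infty)$. Hence
\[
f \circ \gamma(s) \leq \lim_{t \rightarrow \infty} f \circ \gamma(t) = \zeta \qquad \text{for all } s \geq a,
\]
which is the upper bound.

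For the lower bound, fix $s \geq a$. The restriction $\gamma|_{[s,\infty)}$ is a gradient flow line by Definition \ref{def: class of gradient flow lines}\ref{it: gradient flow line class - restriction}. Since $G \geq 0$, its energy is at most $E(\gamma)$, so
\[
\int_s^\infty G\circ\gamma \leq E(\gamma) \leq E_0.
\]
By the unbounded case of Remark \ref{rmk: gradient flow line}\ref{it: gradient flow line - energy}, this energy equals $\zeta - f\circ\gamma(s)$. Therefore $\zeta - f \circ \gamma(s) \leq E_0$, which is the lower bound.

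There is no real obstacle here. The only point requiring care is that the limit of $f\circ\gamma$ is the finite value $\zeta$, and not something merely bounded or $+\infty$. This is exactly what continuity of $f$ together with $\gamma(\infty)\in Z$ provides.
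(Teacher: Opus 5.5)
Your proof is correct and follows essentially the same route as the paper: the upper bound comes from monotonicity of $f\circ\gamma$ together with $\lim_{s\to\infty} f\circ\gamma(s)=\zeta$, and the lower bound comes from the energy identity. The only cosmetic difference is that the paper uses $\zeta - f\circ\gamma(a) = E(\gamma)\leq E_0$ and then monotonicity, whereas you apply the energy identity directly to the tail $[s,\infty)$; this is equally valid.
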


\begin{proof}
    Recall that $f \circ \gamma$ is increasing and that $\lim_{s \rightarrow \infty } f \circ \gamma(s) = \zeta$ since $\gamma(\infty) \in Z$. The bound on the energy gives
    \[
    f \circ \gamma(\infty) - f \circ \gamma(a) = E(\gamma) \leq E_0
    \]
    and this finishes the proof.
\end{proof}

\begin{lemma}
\label{lem: gamma(s_n) converges to pt in Z}
Let $\gamma \in \Gamma(I)$ be a gradient flow line with $E(\gamma) \leq E_0$ and 
\[
\zeta - E_0 \leq f \circ \gamma(s) \leq \zeta \qquad \Forall s \geq a \,\, .
\]
\begin{enumerate}[label=(\alph*)]
    \item\label{it: gamma( . + s_n) converges to pt in Z} Given a sequence $(s_n)_{n \in \N} \subseteq [0,\infty)$ tending to infinity. Then a subsequence of $\gamma(\,\cdot\, + s_n)$ converges in $\Czeroloc(I,M)$ to a gradient flow line $\bar{\gamma}  \in \Gamma(I)$ whose image is contained in $Z$.
    \item\label{it: gamma(s_n) converges to pt in Z} There exists a sequence $(s_n)_{n \in \N} \subseteq I$ tending to infinity so that $\gamma(s_n)$ converges to a point in $Z$.
\end{enumerate}

\end{lemma}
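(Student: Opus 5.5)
Part (a) rests on the translation invariance of $\Gamma$ together with assumption \ref{it: ass A - Czeroloc convergence}. Part (b) follows from (a) by evaluating at a point.

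For (a), set $\gamma_n := \gamma(\,\cdot\,+s_n)$. Since $s_n \geq 0$, the translated interval $I+s_n$ is contained in $I$. Restriction (property \ref{it: gradient flow line class - restriction}) maps $\gamma|_{I+s_n}$ into $\Gamma(I+s_n)$. Translation invariance \ref{it: gradient flow line class - translations} then gives $\gamma_n \in \Gamma(I)$. The energy satisfies $E(\gamma_n) = \int_{a+s_n}^\infty G\circ\gamma \leq E(\gamma) \leq E_0$. Hence \ref{it: ass A - Czeroloc convergence} yields a subsequence converging in $\Czeroloc(I,M)$ to some $\bar\gamma \in \Gamma(I)$.

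It remains to show $\bar\gamma(I) \subseteq Z$.
\begin{itemize}
\item \emph{$f\circ\bar\gamma$ is constant.} The function $f\circ\gamma$ is increasing and bounded above by $\zeta$, so it has a limit $\zeta' := \lim_{s\to\infty} f\circ\gamma(s)$ with $\zeta' \in [\zeta-E_0,\zeta]$. For fixed $s\in I$, continuity of $f$ and pointwise convergence give $f\circ\bar\gamma(s) = \lim_k f\circ\gamma(s+s_{n_k}) = \zeta'$. So $f\circ\bar\gamma \equiv \zeta'$.
\item \emph{$\bar\gamma$ lies in $G^{-1}(0)$.} By Remark \ref{rmk: gradient flow line}\ref{it: gradient flow line - constant}, constancy of $f\circ\bar\gamma$ gives $\bar\gamma(I)\subseteq G^{-1}(0)$.
\item \emph{$\bar\gamma$ lies in $Z$.} Take any $x = \bar\gamma(s)$. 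Then $x \in G^{-1}(0)$ and $f(x) = \zeta' \leq \zeta$. Suppose $x \notin Z$. Then by the definition of the spectral gap, $\specgap \leq \zeta - f(x) = \zeta-\zeta' \leq E_0$. This contradicts $E_0 < \specgap$. Hence $x\in Z$.
\end{itemize}
The only genuinely nonobvious step is this use of the spectral gap, and it is immediate once constancy of $f\circ\bar\gamma$ is observed. I expect no real obstacle. The main care is checking that the translated curves genuinely lie in $\Gamma(I)$ and satisfy the energy bound, so that \ref{it: ass A - Czeroloc convergence} applies.

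For (b), choose $s_n := a' + n$ with $a' := \max(a,0)$, so that $s_n\in I\cap[0,\infty)$ and $s_n \to \infty$. Apply (a) to this sequence and obtain a subsequence with $\gamma(\,\cdot\,+s_{n_k}) \to \bar\gamma$ in $\Czeroloc(I,M)$. Convergence in the compact-open topology implies pointwise convergence, so evaluating at $a$ gives $\gamma(a+s_{n_k}) \to \bar\gamma(a) \in Z$. The sequence $(a+s_{n_k})_k$ lies in $I$, tends to infinity, and is the desired sequence.
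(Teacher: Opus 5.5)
Your proof is correct. Its overall structure matches the paper's: translate and restrict to get curves in $\Gamma(I)$ with energy at most $E_0$, apply \ref{it: ass A - Czeroloc convergence}, show $f\circ\bar\gamma$ is constant, then invoke the spectral gap. The difference lies in how you show that $f\circ\bar\gamma$ is constant.

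The paper argues by contradiction. If $f\circ\bar\gamma(b)-f\circ\bar\gamma(a)=\eps>0$, pointwise convergence gives $f\circ\gamma(b+s_n)-f\circ\gamma(a+s_n)\geq \eps/2$ for all large $n$. Passing to a subsequence with pairwise disjoint intervals $[a+s_{n_k},b+s_{n_k}]$ and summing the energy over finitely many of them then exceeds $E(\gamma)$.

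You instead note that $f\circ\gamma$ is increasing and bounded above by $\zeta$, so it has a limit $\zeta'$. Every pointwise limit of translates therefore satisfies $f\circ\bar\gamma\equiv\zeta'$. This is shorter and identifies the constant value explicitly, which makes the spectral-gap step immediate via $\zeta-\zeta'\leq E_0$.

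The paper's argument is phrased only in terms of finiteness of energy, the form familiar from Morse theory. Your argument needs no more than that: by Remark~\ref{rmk: gradient flow line}\ref{it: gradient flow line - energy}, $E(\gamma)<\infty$ is equivalent to convergence of the increasing function $f\circ\gamma$.

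Your explicit choice $s_n=\max(a,0)+n$ in (b) simply spells out what the paper calls an obvious consequence of (a).
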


\begin{proof}
Part \ref{it: gamma(s_n) converges to pt in Z} is obviously a direct consequence of part \ref{it: gamma( . + s_n) converges to pt in Z}.
\\
The proof of part \ref{it: gamma( . + s_n) converges to pt in Z} follows the lines of \cite[Thm.~3.7]{Albers_Hein}. Let $(s_n)_n$ be sequence of positive numbers tending to infinity. By properties \ref{it: gradient flow line class - restriction} and \ref{it: gradient flow line class - translations} in the definition of a gradient flow line class, the curves $\gamma(\,\cdot\,+s_n)|_I $ are gradient flow lines. Their energy is bounded above by $E_0 \,$.
By assumption \ref{it: ass A - Czeroloc convergence}, up to a subsequence (which we suppress in the notation), the sequence $\gamma(\,\cdot\,+s_n)|_I$ converges to a gradient flow line $\bar{\gamma} \in \Gamma(I)$ in $\Czeroloc(I,M)$.

\begin{claim}
$\bar{\gamma} (I) \subseteq G^{-1}(0) \,$.
\end{claim}

\begin{proofClaim}
 We adopt an argument from \cite[Lemma 2.1]{Morse_Homology_Non-Compact_Manifolds}. By Remark \ref{rmk: gradient flow line} \ref{it: gradient flow line - constant}, it suffices to show that $f \circ \bar\gamma$ is constant on $I$. Assume by contradiction that this is not the case. Then, since $f \circ \bar\gamma$ is increasing, there exists $b > a$ with
 \[
 \eps := f \circ \bar\gamma(b) - f \circ \bar\gamma(a) > 0 \,\, .
 \]
 By pointwise convergence of $\gamma(\,\cdot\, +s_n)$ to $\bar\gamma \,$, there exists $n_1 >0$ so that
 \[
 f \circ \gamma(b + s_n) - f \circ \gamma(a + s_n) \geq \frac{\eps}{2} \qquad \Forall n \geq n_1 \,\, .
 \]
 Now define recursively a sequence $(n_k)_{k \in \N}$ of natural numbers starting at above $n_1$ and with
 \[
 n_{k+1} := \min\set{n > n_k}{s_{n} - s_{n_k} > b - a} \,\, .
 \]
 This is well-defined since $(s_n)_n$ tends to infinity. By definition $a+ s_{n_{k+1}}> b+ s_{n_k}  \,$.
 Choose a natural number $k_0 > 2 E(\gamma) / \eps \,$. We estimate
 \begin{align*}
     E(\gamma) = \int_a^\infty G \circ \gamma (s) \, ds &\geq \sum_{k=1}^{k_0} \int_{a+s_{n_k}}^{b+s_{n_k}} G \circ \gamma(s) \, ds  = \sum_{k=1}^{k_0} \int_{a+s_{n_k}}^{b+s_{n_k}} (f \circ \gamma)' (s) \, ds \\
     &= \sum_{k=1}^{k_0} f \circ \gamma(b+s_{n_k}) - f \circ \gamma(a+s_{n_k}) \\
     &\geq \sum_{k=1}^{k_0} \frac{\eps}{2} >  E(\gamma) \,\, .
 \end{align*}
This contradiction finishes the proof of the claim.
\end{proofClaim}

\noindent Now let $s \geq a$ be arbitrary. By assumption and convergence of $\gamma(s+s_n)$ to $\bar\gamma(s) \,$, we have
\[
\zeta - E_0 \leq  f \circ \bar\gamma(s)  \leq \zeta \,\, .
\]
Using the definition of the spectral gap $\specgap$ and that $E_0 < \specgap \,$, we conclude that $\bar\gamma(s) \in G^{-1}(0)$ lies in $Z$. This finishes the proof of part \ref{it: gamma( . + s_n) converges to pt in Z}.
\end{proof}

\noindent As is common, by a neighborhood of a subset $S \subseteq M$, we mean a subset of $M$ whose interior contains $S$.

\begin{lemma}
\label{lem: delta constant existence}
For every open neighborhood $U \subseteq M$ of $Z$ there exists $\delta = \delta(U) > 0$ so that for every $\gamma \in \moduliData$ it holds
\[
\Forall s \geq a : \quad  \zeta - f \circ \gamma(s) \leq \delta \,\,\, \Longrightarrow \,\,\, \gamma(s) \in U \,\, . 
\]
\end{lemma}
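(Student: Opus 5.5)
We argue by contradiction, using (A1) together with Lemma \ref{lem: action value estimate} and the spectral gap.

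Suppose no such $\delta$ exists. Then for every $n \in \N$ there are $\gamma_n \in \moduliData$ and $t_n \geq a$ with
\[
\zeta - f \circ \gamma_n(t_n) \leq 1/n \qquad \text{but} \qquad \gamma_n(t_n) \notin U \,\, .
\]
We would like to extract a limit of the points $\gamma_n(t_n)$. To do so, consider the translated curves $\tilde\gamma_n := \gamma_n(\,\cdot\, + t_n - a)|_I \,$. By properties \ref{it: gradient flow line class - restriction} and \ref{it: gradient flow line class - translations} these lie in $\Gamma(I)$. Their energy is at most $E(\gamma_n) \leq E_0$, since they are restrictions of $\gamma_n$ to $[t_n,\infty)$ shifted back to start at $a$. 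Moreover $\tilde\gamma_n(a) = \gamma_n(t_n)$.

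By assumption \ref{it: ass A - Czeroloc convergence}, after passing to a subsequence, $\tilde\gamma_n \to \bar\gamma$ in $\Czeroloc(I,M)$ for some $\bar\gamma \in \Gamma(I)$. In particular $\gamma_n(t_n) = \tilde\gamma_n(a) \to x := \bar\gamma(a)$ in $M$. Since $M \setminus U$ is closed, $x \notin U$. By continuity of $f$ we get $f(x) = \zeta$.

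It remains to show $x \in Z$, which contradicts $Z \subseteq U$. We would show that $f \circ \bar\gamma \equiv \zeta$ on $I$. By Lemma \ref{lem: action value estimate} and monotonicity,
\[
\zeta - 1/n \leq f \circ \tilde\gamma_n(s) \leq \zeta \qquad \text{for all } s \geq a \,\, .
\]
Passing to the pointwise limit gives $f \circ \bar\gamma \equiv \zeta$. Then Remark \ref{rmk: gradient flow line} \ref{it: gradient flow line - constant} yields $\bar\gamma(I) \subseteq G^{-1}(0)$, so $x \in G^{-1}(0)$ with $f(x) = \zeta$.

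By the definition of the spectral gap, any $x \in G^{-1}(0) \setminus Z$ with $f(x) \leq \zeta$ satisfies $\zeta - f(x) \geq \specgap > 0$. Here $\zeta - f(x) = 0$, so $x \in Z \subseteq U$, which is the desired contradiction. The positivity $\specgap > 0$ is ensured by $E_0 < \specgap$.

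The main subtlety is producing a convergent subsequence of the points $\gamma_n(t_n)$ in a space $M$ that is not assumed locally compact. The translation trick is what makes this work: it converts the problem into $\Czeroloc$-compactness of flow lines, which (A1) provides. Alternatively, one could cite Lemma \ref{lem: gamma(s_n) converges to pt in Z} \ref{it: gamma( . + s_n) converges to pt in Z}, but that lemma concerns a single curve, so the direct argument above is preferable.
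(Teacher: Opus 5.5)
Your proof is correct and follows the paper's argument essentially step for step. You translate to $\gamma_n(\,\cdot\,+t_n-a)|_I$, extract a $\Czeroloc$-limit via (A1), show that $f$ is constantly $\zeta$ along the limit so its initial point lies in $G^{-1}(0)$, and then use positivity of the spectral gap to place it in $Z\subseteq U$, contradicting $\gamma_n(t_n)\notin U$. The differences are cosmetic: you get constancy from the sandwich $\zeta-1/n\le f\circ\tilde\gamma_n\le\zeta$ rather than from monotonicity of $f\circ\bar\gamma$, and you close via closedness of $M\setminus U$ instead of noting that almost all $\gamma_n(t_n)$ lie in $U$.
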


\begin{proof}
 Fix an arbitrary open neighborhood $U$ of $Z$. Suppose by contradiction that no $\delta$ as claimed exists. Then we can choose sequences $(\delta_n)_n \subseteq (0,\infty)$ and $(\gamma_n)_n \subseteq \moduliData$ as well as $(s_n)_n \subseteq I = [a,\infty)$ so that
 \begin{align*}
   \begin{cases}
    \delta_n \rightarrow 0  \,\,\text{ as } n \rightarrow \infty \, , \\
    0 \leq \zeta - f \circ \gamma_n(s_n) \leq \delta_n \,\, \text{ for all } n \, , \\
    \gamma_n(s_n) \notin U \,\, \text{ for all } n \, .
   \end{cases}  
 \end{align*}
 It follows that $f \circ \gamma_n(s_n)$ tends to $\zeta \,$. Using assumption \ref{it: ass A - Czeroloc convergence}, up to a subsequence (which we suppress in the notation), the sequence of gradient flow lines $\gamma_n(\,\cdot\, + s_n -a)|_I$ converges in $\Czeroloc(I,M)$ to a gradient flow line $\gamma \in \Gamma(I)\,$. Then
 \begin{equation}
 \label{eq: delta constant existence - f(gamma(a)) = zeta}
 f \circ \gamma(a) = \lim_{n \rightarrow \infty} f \circ \gamma_n(s_n) = \zeta \,\, .
 \end{equation}
Due to Lemma \ref{lem: action value estimate}, the functions $f \circ \gamma_n$ are bounded above by $\zeta \,$, hence also $f \circ \gamma \leq \zeta$ by pointwise convergence. Combining this with \eqref{eq: delta constant existence - f(gamma(a)) = zeta} and the fact that $f \circ \gamma : I = [a,\infty) \rightarrow \R$ is increasing since $\gamma$ is a gradient flow line, we infer that $f \circ \gamma$ must be constant. By Remark \ref{rmk: gradient flow line} \ref{it: gradient flow line - constant}, thus $\gamma(a) $ lies in $G^{-1}(0)$. Again by \eqref{eq: delta constant existence - f(gamma(a)) = zeta} and since the spectral gap is positive, $\gamma(a)$ must in fact lie in $Z \,$. Since $\gamma_n(s_n)$ tends to $\gamma(a) \in Z \subseteq U\,$, almost all $\gamma_n(s_n)$ must lie in $U$, contradicting our choices above.
\end{proof}

\begin{lemma}
\label{lem: uniform time for nbhd of Z}
For every open neighborhood $U \subseteq M$ of $Z$ there exists $s_0 \geq a$ with
\[
\Forall \gamma \in \moduliData : \quad \gamma([s_0,\infty)) \subseteq U \,\, .
\]
\end{lemma}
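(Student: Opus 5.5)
We reduce the statement to Lemma \ref{lem: delta constant existence} together with a uniform bound on how long a flow line can stay away from the top level $\zeta$. Fix an open neighborhood $U$ of $Z$ and let $\delta = \delta(U) > 0$ be the constant from Lemma \ref{lem: delta constant existence}. Since $f \circ \gamma$ is increasing for every gradient flow line, once $\zeta - f\circ\gamma(s_1) \leq \delta$ holds at some time $s_1$, it holds for all $s \geq s_1$. Lemma \ref{lem: delta constant existence} then gives $\gamma([s_1,\infty)) \subseteq U$. It therefore suffices to find $s_0 \geq a$ such that
\[
\zeta - f \circ \gamma(s_0) \leq \delta \qquad \Forall \gamma \in \moduliData \,\, .
\]

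We argue by contradiction. Suppose no such $s_0$ exists. Then there are $\gamma_n \in \moduliData$ with $\zeta - f\circ\gamma_n(a+n) > \delta$. By monotonicity, $f\circ\gamma_n \leq \zeta - \delta$ on the whole interval $[a,a+n]$. By Lemma \ref{lem: action value estimate} we also have $f \circ \gamma_n \geq \zeta - E_0$ everywhere. Hence each $\gamma_n$ spends the long interval $[a,a+n]$ in the level band $[\zeta - E_0, \zeta - \delta]$, while its total energy is at most $E_0$.

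We extract a limit via a pigeonhole argument on energy. Split $[a,a+n]$ into $\lfloor n \rfloor$ unit intervals. Their energies sum to at most $E_0$, so for each $m \leq n$ there is a unit interval $[t_n, t_n+1] \subseteq [a,a+n]$ with energy at most $E_0/\lfloor n \rfloor$. Finer: choose $t_n$ with $t_n - a \geq n/3$ and $a+n - t_n \geq n/3$ such that the energy of $\gamma_n$ on $[t_n - L, t_n + L]$ is small. This works because the middle third contains about $n/(6L)$ disjoint windows of length $2L$, so some window has energy at most $6LE_0/n$. Now apply assumption \ref{it: ass A - Czeroloc convergence} to the shifted curves $\gamma_n(\cdot + t_n - a)|_I$, which have energy at most $E_0$. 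After passing to a subsequence they converge in $\Czeroloc(I,M)$ to some $\bar\gamma \in \Gamma(I)$. For fixed $b > a$, the energy of the shifted curve on $[a,b]$ equals $f\circ\gamma_n(t_n + b - a) - f\circ\gamma_n(t_n)$. For $n$ large, a window as above of length growing with $n$ contains this interval, so the energy tends to $0$. By pointwise convergence $f \circ \bar\gamma$ is constant, hence $\bar\gamma(I) \subseteq G^{-1}(0)$ by Remark \ref{rmk: gradient flow line} \ref{it: gradient flow line - constant}. In fact, an even simpler choice suffices: pick the window via a diagonal argument with window lengths $L_n \to \infty$ and $L_n/n \to 0$.

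Finally, the value $f\circ\bar\gamma(a) = \lim f\circ\gamma_n(t_n)$ lies in $[\zeta - E_0, \zeta - \delta]$. So $\bar\gamma(a)$ is a point of $G^{-1}(0)\setminus Z$ (it is not in $Z$ because $f < \zeta$ there) with $0 < \zeta - f(\bar\gamma(a)) \leq E_0 < \specgap$. This contradicts the definition of the spectral gap. The main obstacle is arranging that the limit has vanishing energy on every compact interval. This is handled by the window selection above; the remaining steps are direct applications of earlier lemmas.
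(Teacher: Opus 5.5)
Your proof is correct, but it is organized differently from the paper's. Both arguments start the same way: fix $\delta=\delta(U)$ from Lemma \ref{lem: delta constant existence} and use monotonicity of $f\circ\gamma$.

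The paper then applies \ref{it: ass A - Czeroloc convergence} to the \emph{unshifted} counterexamples $\gamma_n$. Via Lemma \ref{lem: action value estimate} and Fatou's lemma it checks that the $\Czeroloc$-limit $\gamma$ has energy at most $E_0$ and action in $[\zeta-E_0,\zeta]$. It then invokes Lemma \ref{lem: gamma(s_n) converges to pt in Z} \ref{it: gamma(s_n) converges to pt in Z} for this single curve to find a time $\bar s_{k_0}$ with $f\circ\gamma(\bar s_{k_0})\geq\zeta-\delta/2$. Pointwise convergence at that one time, plus monotonicity, then puts $\gamma_n([\bar s_{k_0},\infty))$ into $U$ for large $n$.

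You instead run a uniform-in-$n$ version of the energy-summation argument behind Lemma \ref{lem: gamma(s_n) converges to pt in Z} directly on the sequence. Re-centering at low-energy windows of length $L_n\to\infty$ with $L_n/n\to 0$ forces the limit to be action-constant. Its initial point is then a zero of $G$ with action in $[\zeta-E_0,\zeta-\delta]$, which contradicts $E_0<\specgap$ at once.

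The paper's route is modular. The window bookkeeping collapses to the easy single-curve argument with infinitely many disjoint translates, and Lemma \ref{lem: gamma(s_n) converges to pt in Z} is reused, e.g.\ for Theorem \ref{thm: Compactness moduli spaces - variant}. The cost is using \ref{it: ass A - Czeroloc convergence} twice plus Fatou's lemma. Your route is self-contained, uses the compactness assumption only once, and never needs to locate a limit point in $Z$.

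Do clean up the pigeonhole paragraph. The fixed-$L$ version only controls $b\leq a+L$, and the phrase ``for each $m\leq n$'' does not make sense. It suffices to set $L_n:=\sqrt{n}$. Among the $\lfloor n/L_n\rfloor$ intervals $[a+jL_n,a+(j+1)L_n]\subseteq[a,a+n]$, one of them, say $[t_n,t_n+L_n]$, carries energy at most $E_0/\lfloor n/L_n\rfloor\to 0$. This one-sided window is all you actually use, so neither the middle third nor a diagonal argument is needed.
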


\begin{proof}
Let $U$ be arbitrary but fixed and choose $\delta = \delta(U) > 0$ as in Lemma \ref{lem: delta constant existence}. Suppose by contradiction that no $s_0$ as claimed exists. Then we can choose sequences $(s_n)_{n} \subseteq I$ tending to infinity and $(\gamma_n)_n \subseteq \moduliData$ with $\gamma_n(s_n) \notin U $ for all $n \in \N$. By assumption \ref{it: ass A - Czeroloc convergence}, up to a subsequence (suppressed in the notation), the sequence $\gamma_n$ converges in $\Czeroloc(I,M)$ to a gradient flow line $\gamma \in \Gamma(I) \,$. Due to Lemma \ref{lem: action value estimate} and pointwise convergence $\gamma_n \rightarrow \gamma$, we have
\[
\zeta - E_0 \leq f \circ \gamma(s) \leq \zeta \qquad \Forall s \geq a 
\]
and, by Fatou's lemma and since the $\gamma_n \in \moduliData$ have energy bounded by $E_0 \,$, moreover
\[
E(\gamma) \leq \liminf_{n \rightarrow \infty} E(\gamma_n) \leq E_0 \,\, .
\]
By Lemma \ref{lem: gamma(s_n) converges to pt in Z} \ref{it: gamma(s_n) converges to pt in Z}, there exists a sequence $(\bar{s}_k)_k \subseteq I$ tending to infinity so that $\gamma(\bar{s}_k)$ converges to a point in $Z$. We can thus choose a sufficiently large $k_0 \in \N$ with
\[
\zeta - \frac{\delta}{2} \leq f \circ \gamma(\bar{s}_{k_0}) \leq \zeta \,\, .
\]
By pointwise convergence of $\gamma_n$ to $\gamma \,$, there exists $n_0 \in \N$ so that
\[
\zeta - \delta \leq f \circ \gamma_n(\bar{s}_{k_0}) \leq \zeta \qquad \Forall n \geq n_0 \,\, .
\]
Now $f \circ \gamma_n$ is increasing and hence
\[
\zeta - \delta \leq f \circ \gamma_n (s) \leq \zeta \qquad \Forall s \geq \bar{s}_{k_0} \comma n \geq n_0 \,\, .
\]
By our choice of $\delta$ this implies that $\gamma_n([\bar{s}_{k_0} , \infty)) \subseteq U$ for every $n \geq n_0 \,$. This contradicts the fact that $(s_n)_n$ tends to infinity and satisfies $\gamma_n(s_n) \notin U$ for every $n$.
\end{proof}

\begin{lemma}
\label{lem: uniform time for pt in Z}
Given $z \in Z $ and a sequence $(\gamma_n)_n \subseteq \moduliData$ for which $(\gamma_n(\infty))_n$ converges to $z$. Then for every neighborhood $V \subseteq M$ of $z$ there exist $\bar{s} \geq a$ and $n_0 \in \N$ with
\[
\Forall n \geq n_0 : \quad \gamma_n( [\bar{s}, \infty)) \subseteq V \,\, .
\]
\end{lemma}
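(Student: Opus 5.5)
The plan is to argue by contradiction, combining assumption \ref{it: ass A - shortening gradient flow lines} at the point $z$ with a "last exit time" construction. Replacing $V$ by its interior, we may assume $V$ is open. Let $U$ and the coarse metric $d$ on $U$ be as in \ref{it: ass A - shortening gradient flow lines} for the point $z$. Since $M$ is metrizable, hence regular, we choose an open set $W \ni z$ with $\overline{W} \subseteq U \cap V$.

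Suppose the conclusion fails for $V$. Then there are indices $n_k \to \infty$ and times $s_k \to \infty$ with $\gamma_{n_k}(s_k) \notin V$. We pass to this subsequence and rename it $(\gamma_n)_n$, $(s_n)_n$. Since $\gamma_n(\infty) \to z \in W$ and $W$ is open, $\gamma_n(\infty) \in W$ for large $n$. By continuity of the extension to $[a,\infty]$, each such $\gamma_n$ then lies in $W$ on a tail $[T_n,\infty]$. Define the last exit time
\[
t_n := \sup\set{s \geq a}{\gamma_n(s) \notin W} \,\, .
\]
This supremum is finite and taken over a nonempty set, since $\gamma_n(s_n) \notin V \supseteq W$; hence $t_n \geq s_n \to \infty$. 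By continuity, $\gamma_n(t_n) \in \overline{W} \setminus W$. Moreover, $\gamma_n(s) \in \overline{W} \subseteq U$ for all $t_n \leq s \leq \infty$.

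Next we show that, along a subsequence, $\gamma_n(t_n)$ converges to some $z' \in Z$. First, $f\circ\gamma_n(t_n) \to \zeta$. Indeed, for $\eps>0$ the set $\{f > \zeta - \eps\}$ is an open neighborhood of $Z$. Lemma \ref{lem: uniform time for nbhd of Z} gives $s_0$ with $\gamma([s_0,\infty))$ inside this set for all $\gamma \in \moduliData$. Together with Lemma \ref{lem: action value estimate}, this gives $\zeta - \eps < f\circ\gamma_n(t_n) \leq \zeta$ once $t_n \geq s_0$. Now we argue exactly as in the proof of Lemma \ref{lem: delta constant existence}. By \ref{it: ass A - Czeroloc convergence}, a subsequence of the shifted flow lines $\gamma_n(\,\cdot\,+t_n-a)|_I$ converges in $\Czeroloc(I,M)$ to some $\bar\gamma \in \Gamma(I)$. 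We have $f\circ\bar\gamma(a) = \zeta$ and $f\circ\bar\gamma \leq \zeta$, so $f\circ\bar\gamma$ is constant. Remark \ref{rmk: gradient flow line} \ref{it: gradient flow line - constant} then gives $\bar\gamma(a) \in G^{-1}(0)$, and positivity of the spectral gap gives $z' := \bar\gamma(a) \in Z$. Since $M\setminus W$ is closed and $\overline{W}$ is closed, $z' \in \overline{W}\setminus W \subseteq U$.

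Now \ref{it: ass A - shortening gradient flow lines} applies to the times $t_n \to \infty$ and the flow lines $\gamma_n$, which stay in $U$ on $[t_n,\infty]$ and satisfy $\gamma_n(t_n) \to z' \in Z \cap U$. It yields $d(\gamma_n(t_n),\gamma_n(\infty)) \to 0$. Since $d$ is coarse (Remark \ref{rmk: coarse metric}), we also have $d(\gamma_n(t_n),z') \to 0$ and $d(\gamma_n(\infty),z) \to 0$. The triangle inequality then gives $d(z',z)=0$, so $z'=z \in W$, contradicting $z' \notin W$.

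The main obstacle I expect is this last step. A coarse metric alone cannot convert $d$-smallness into membership in $V$. That is why the proof must identify the limit point $z'$ via the original topology and use $d$ only to show $z'=z$. The last-exit-time construction is precisely what supplies the hypothesis of \ref{it: ass A - shortening gradient flow lines} that $\gamma_n$ stays in $U$ after time $t_n$.
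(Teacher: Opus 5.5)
Your proof is correct and follows essentially the same route as the paper. Both arguments take a last-exit time from a small neighborhood of $z$ with closure in $U$, show that the exit points accumulate at some $z'\in Z$ on the boundary of that neighborhood, and then use \ref{it: ass A - shortening gradient flow lines} with the triangle inequality for the coarse metric to force $z'=z$. The one minor difference is how you show $z'\in Z$: you go through $f\circ\gamma_n(t_n)\to\zeta$ and rerun the spectral-gap argument of Lemma \ref{lem: delta constant existence}, whereas the paper separates the limit point from $Z$ by an open neighborhood $U_Z$ and applies Lemma \ref{lem: uniform time for nbhd of Z} directly.
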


\begin{proof}
   Let $z \in Z$ be fixed and let $(\gamma_n)_n \subseteq \moduliData$ be a sequence for which the limits $\gamma_n(\infty) \in Z$ converge to $z$ as $n \rightarrow \infty \,$. Choose an open neighborhood $U \subseteq M$ of $z$ and a coarse metric $d$ on $U$ as in \ref{it: ass A - shortening gradient flow lines}. It obviously suffices to show the statement of the lemma for a neighborhood basis of $z$. Let $V \subseteq M$ be an arbitrary open neighborhood of $z$ whose closure $\overline{V}$ is contained in $U \,$. (Such $V$ constitute a neighborhood basis of $z$ since $M$ is metrizable). Because $\gamma_n(\infty)$ converge to $z$, there exists $n_0 \in \N$ with $\gamma_n(\infty) \in V$ for $n \geq n_0 \,$. For $n \geq n_0$ define
   \[
   s_n := \inf\set{s \geq a}{ \gamma_n([s, \infty) ) \subseteq V}   \,\, .
   \]
  Notice that $s_n$ is finite. The lemma follows if we manage to show that 
  \[
  \sup_{n \geq n_0} s_n < \infty \,\, .
  \]
  We assume by contradiction that this is not the case. Then, up to extracting a subsequence (which we suppress in the notation), we can assume that $(s_n)_n$ tends to infinity and that $s_n > a$ for every $n$. By definition of $s_n$ we have 
  \[
  \gamma_n([s_n,\infty)) \subseteq \overline{V} \subseteq U
  \]
  and moreover $\gamma_n(s_n) \in \partial V$ since $s_n > a \,$.
\begin{claim}
   A subsequence of $\gamma_n(s_n)$ converges to a point in $Z$.
\end{claim}
\begin{proofClaim}
    By \ref{it: ass A - Czeroloc convergence}, a subsequence (suppressed in the notation) of $\gamma_n(\,\cdot\,+s_n - a)|_I$ converges in $\Czeroloc(I,M)$. In particular $\gamma_n(s_n)$ converges to some point $z_* \in M \,$. Assume by contradiction that $z_* \notin Z \,$. As metrizable space, $M$ is a regular Hausdorff space, so we can choose an open neighborhood $U_Z \subseteq M$ of the compactum $Z$ whose closure does not contain $z_* \,$. By Lemma \ref{lem: uniform time for nbhd of Z}, there exists $\bar{s} \geq a$ with $\gamma_n([\bar{s},\infty)) \subseteq U_Z$ for every $n$. As $(s_n)_n$ tends to infinity, we see that $\gamma_n(s_n) \in U_Z$ for almost all $n$ and hence that $z_* = \lim_n \gamma_n(s_n)$ lies in the closure of $U_Z \,$. Contradiction. 
\end{proofClaim}
\noindent By the claim, up to a further subsequence (suppressed in the notation), we can assume that $\gamma_n(s_n)$ converges to some point $z_* \in Z$. As limit of the sequence $(\gamma_n(s_n))_n$ contained in $\partial V$, also $z_*$ lies in $\partial V$. In particular $z_*$ is distinct from $z \in V \,$. On the other hand, we can estimate 
\begin{align*}
    d(z_*,z) &\leq d(z_*, \gamma_n(s_n)) + d(\gamma_n(s_n) , \gamma_n(\infty)) + d(\gamma_n(\infty), z) \,\, .
\end{align*}
The first and third summand tend to zero as $n \rightarrow \infty$ since $\gamma_n(s_n)$ converges to $z_*$ and $ \gamma_n(\infty)$ converges to $z$ and since $d$ is a coarse metric (see Remark \ref{rmk: coarse metric}). The second summand $d(\gamma_n(s_n),\gamma_n(\infty))$ tends to zero because the coarse metric $d$ has the property described in \ref{it: ass A - shortening gradient flow lines}, using additionally that $\gamma_n([s_n,\infty]) \subseteq \overline{V} \subseteq U$ and again that $\gamma_n(s_n)$ tends to $z_* \in Z \cap U \,$. Thus $d(z_*,z) = 0 \,$, contradicting our above observation that $z_*$ is distinct from $z$. This contradiction finishes the proof.
\end{proof}

\noindent The proof idea of the above Lemma \ref{lem: uniform time for pt in Z} is sketched in Figure \ref{fig: uniform time lemma proof}. The compactness theorem is now an immediate consequence of the lemma.
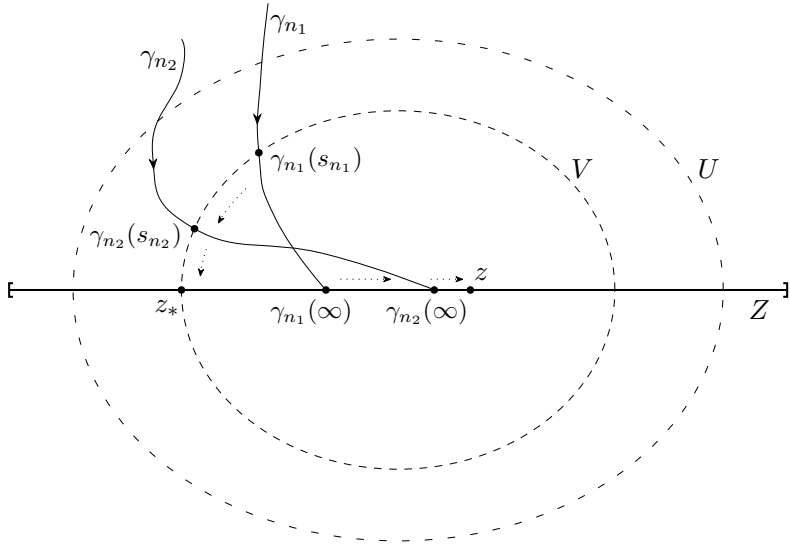
\begin{figure}
    \centering
     \begin{tikzpicture}[scale=1, x=8cm , y=7cm]

\draw[dashed, name=Circle] (0,0) circle [x radius = 3cm, y radius= 2.5cm] ; 
\draw[loosely dashed] (0,0) circle [x radius = 4.5cm, y radius= 3.5cm] ; 

\draw[thick, {Bracket}-{Bracket}] (-5.4cm,0) -- (5.4cm,0) node[below, xshift= -0.4cm] {$Z$} ;
\node[] at (2.55cm,1.7cm) {$V$};
\node[] at (4.3cm,1.7cm) {$U$};

\fill (1cm,0) circle (1.5pt) node[above, xshift=0.15cm] {$z$};
\fill (-3cm,0) circle (1.5pt) node[below, xshift=-0.2cm] {$z_*$} ;
\fill (-1cm,0) circle (1.5pt) node (gamma_one_infty) {};
\node[below, xshift=-0.2cm , node font=\small] at (gamma_one_infty) {$\gamma_{n_1}(\infty)$} ;
\fill (0.5cm,0) circle (1.5pt) node (gamma_two_infty) {};
\node[below, xshift=-0.1cm, node font=\small] at (gamma_two_infty) {$\gamma_{n_2}(\infty)$} ;

\path (3cm,0) arc (0:130:3cm and 2.5cm) node (gamma_one) {};
\fill (gamma_one) circle (1.5pt) node[xshift=0.8cm, yshift=-0.1cm, node font=\small] {$\gamma_{n_1}(s_{n_1})$} ; 
\path (3cm,0) arc (0:160:3cm and 2.5cm) node (gamma_two) {};
\fill (gamma_two) circle (1.5pt) node[node font=\small, left, xshift=-0.05cm,yshift=-0.1cm] {$\gamma_{n_2}(s_{n_2})$}; 

\begin{scope}[decoration={
    markings,
    mark=at position 0.4 with {\arrow{Stealth[]}}}
    ] 
    \draw[postaction={decorate}] plot [smooth, tension = 0.9] coordinates { (-1.8cm, 4cm) (-1.9cm,3cm) (gamma_one) (-1.7cm,1cm) (gamma_one_infty)} ;
\end{scope}
\begin{scope}[decoration={
    markings,
    mark=at position 0.3 with {\arrow{Stealth[]}}}
    ] 
    \draw[postaction={decorate}] plot [smooth, tension = 0.9] coordinates { (-3cm, 3.5cm) (-3cm, 3cm) (-3.4cm, 1.9cm)  (gamma_two) (-1cm,0.5cm) (gamma_two_infty)} ;
\end{scope}
\node at (-1.5cm,3.7cm) {$\gamma_{n_1}$} ;
\node at (-3.3cm,3.2cm) {$\gamma_{n_2}$} ;

\path (2.75cm,0) arc (0:140:2.75cm and 2.2cm) node (ddots_pos_one) {} ;
\path (2.75cm,0) arc (0:165:2.75cm and 2.2cm) node (ddots_pos_two) {} ;
\draw[dotted,-{Stealth[round,scale=0.8]}] (ddots_pos_one) arc (140:155:2.75cm and 2.2cm);
\draw[dotted,-{Stealth[round,scale=0.8]}] (ddots_pos_two) arc (165:175:2.75cm and 2.2cm);

\draw[dotted,-{Stealth[round,scale=0.8]}] (-0.8cm,0.15cm)  --  (-0.1cm,0.15cm)   ;
\draw[dotted,-{Stealth[round,scale=0.8]}] (0.45cm,0.15cm)  --  (0.9cm,0.15cm)   ;

\end{tikzpicture}
    \caption{Illustrating the contradiction in the proof of Lemma \ref{lem: uniform time for pt in Z}. Two trajectories $\gamma_{n_1}$ and $\gamma_{n_2}$ for $n_1 \ll n_2$ are depicted. The entry points $\gamma_{n}(s_{n})$ into $V$ tend to $z_*$ while the end points $\gamma_n(\infty)$ tend to $z$. This contradicts assumption \ref{it: ass A - shortening gradient flow lines}, namely that the distance from $\gamma_n(s_n)$ to $\gamma_n(\infty)$ tends to zero.} 
    \label{fig: uniform time lemma proof}
\end{figure}

\begin{proof}[Proof of Theorem \ref{mthm: Compactness moduli spaces}]
    Given an arbitrary sequence $(\gamma_n)_n \subseteq \moduliData \,$. By \ref{it: ass A - Czeroloc convergence}, a subsequence (suppressed in the notation) of $(\gamma_n)_n$ converges to a gradient flow line $\gamma \in \Gamma(I)$ in $\Czeroloc(I,M)$. By Fatou's lemma
    \[
    E(\gamma) \leq \liminf_{n \rightarrow \infty} E(\gamma_n) \leq E_0 \,\, .
    \]
    Since $Z$ is compact, up to a further subsequence, we can assume that the $\gamma_n(\infty) \in Z$ converge to some $z \in Z$. From Lemma \ref{lem: uniform time for pt in Z}, applied to this $z$ and $(\gamma_n)_n \, $, and pointwise convergence $\gamma_n \rightarrow \gamma \,$, we infer that $\gamma(s) \rightarrow z$ as $s \rightarrow \infty$. Hence $\gamma $ is an element of $\moduliData \,$.
    The uniform convergence $\gamma_n \overset{n}{\rightarrow} \gamma$ in $C^0(I,M)$ follows from $\Czeroloc(I,M)$-convergence and Lemma \ref{lem: uniform time for pt in Z} again.
\end{proof}

\subsection{Variants of the Compactness Theorem}
\label{subsec: Variants of Compactness Thm}

In practical applications one sometimes need slight variations of Theorem \ref{mthm: Compactness moduli spaces}. We list them here for completeness. Their proofs are completely analogous and left to the reader.

\begin{remark}
\label{rmk: compactness thm - negative gradient flow lines and negative ends}
There are analogous versions of Theorem \ref{mthm: Compactness moduli spaces} (with the assumptions therein adapted mutatis mutandis) for
\begin{enumerate}[label=(\roman*)]
    \item gradient flow lines on unbounded intervals of the form $(-\infty,a]$. For this one needs to replace the spectral gap $\specgap$ for positive ends by the spectral gap for negative ends
    \begin{align*}
    \mathfrak{S}^{-}_{f,G}(Z) := \inf\Bigset{f(x)-\zeta}{\begin{array}{c} x \in G^{-1}(0) \backslash Z \\
    \text{with } \zeta \leq f(x)
    \end{array}} \in [0,\infty] \,\, .
    \end{align*}
    \item negative gradient flow lines.
\end{enumerate}
\end{remark}

\noindent In Floer theory, one often cannot show the precise condition \ref{it: ass A - Czeroloc convergence} as the usual rescaling argument to show uniformly bounded gradients (by producing $J$-holomorphic bubbles if gradients explode) needs an arbitrarily small margin away from the boundary of the interval. For this reason, we introduce a modification of condition \ref{it: ass A - Czeroloc convergence}: For intervals $I_1 = [a_1,\infty) \subseteq I = [a,\infty) \,$, consider instead

{
\let\realItem\item 
\makeatletter
\NewDocumentCommand\myItem{ o }{%
   \IfNoValueTF{#1}%
      {\realItem}
      {\realItem[#1]\def\@currentlabel{#1}}
}
\makeatother

\setlist[enumerate]{
    before=\let\item\myItem,       
    label=\textnormal{(\arabic*)}, 
    widest=(2')                    
}

\begin{enumerate}
    \item[(A1'$_{(E_0,I,I_1)}$)]\label{it: ass A variant - Czeroloc convergence} 
    For every sequence $(\gamma_n)_n \subseteq \Gamma(I)$ with energy uniformly bounded by $E_0$ there exists a subsequence converging in $\Czeroloc(I_1,M)$ to some $\gamma\in \Gamma(I_1) \,$.
\end{enumerate}
}

\noindent Replacing assumption \ref{it: ass A - Czeroloc convergence} with \ref{it: ass A variant - Czeroloc convergence}, we obtain the following generalization of Theorem \ref{mthm: Compactness moduli spaces}.

\begin{theorem}
\label{thm: Compactness moduli spaces - variant}
Suppose $0 \leq E_0 < \specgap $ and $I_1=[a_1,\infty) \subseteq I = [a,\infty)$ satisfy \ref{it: ass A variant - Czeroloc convergence} and \ref{it: ass A - shortening gradient flow lines}. Then for every sequence $(\gamma_n)_{n \in \N} \subseteq \moduliData$ there exists a subsequence which converges in $C^0(I_1,M)$ to some $\gamma \in \moduli_{(E_0,I_1)} \,$.
\end{theorem}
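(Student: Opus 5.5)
The plan is to rerun the proof of Theorem \ref{mthm: Compactness moduli spaces} with $I$ replaced by $I_1$ wherever limits are extracted, checking that each lemma survives when (A1) is replaced by (A1'). The basic move is as follows. Given $\gamma_n \in \moduliData$, the restrictions $\gamma_n|_{I_1}$ lie in $\moduli_{(E_0,I_1)}$ by property \ref{it: gradient flow line class - restriction}, since energy only decreases under restriction and the endpoint $\gamma_n(\infty)\in Z$ is unchanged. For translated curves $\gamma_n(\cdot+s_n-a)|_I$ with $s_n \geq a$, these are elements of $\Gamma(I)$ with energy at most $E_0$. So (A1') yields a subsequence converging in $\Czeroloc(I_1,M)$ to some element of $\Gamma(I_1)$. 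This is exactly the compactness used in each lemma; the only change is that the limit lives on $I_1$ rather than on $I$.

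I would then go through the lemmas one by one. Lemma \ref{lem: action value estimate} needs no change. In Lemma \ref{lem: gamma(s_n) converges to pt in Z} the limit $\bar\gamma$ is now defined on $I_1$. The claim argument is unchanged if I pick $b > a_1$ and work with $f\circ\bar\gamma(b)-f\circ\bar\gamma(a_1)$, using intervals $[a_1+s_n,b+s_n]$, which are contained in $[a,\infty)$ for large $n$. The spectral gap step is unchanged. The hypotheses of that lemma should be stated for $\gamma\in\Gamma(I)$, and it will also be applied to limits $\gamma\in\Gamma(I_1)$. That is harmless: restriction and translation let me regard $\gamma(\cdot + a_1 - a)$ as an element of $\Gamma(I)$. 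For Lemma \ref{lem: delta constant existence} I would translate by $s_n - a_1$, with $s_n\ge a$, giving $\gamma_n(\cdot+s_n-a_1)$. Since $s_n-a_1\ge a-a_1$, I would instead use $\gamma_n(\cdot + s_n - a)|_I \in \Gamma(I)$ and evaluate the limit at the point $a_1$. This requires knowing $f\circ\gamma_n(s_n+a_1-a)\to\zeta$, which holds because $f\circ\gamma_n$ is increasing and bounded by $\zeta$. The limit $\gamma$ on $I_1$ has $f\circ\gamma(a_1)=\zeta$, so $f\circ\gamma$ is constant and $\gamma(a_1)\in Z$. This shows the points $\gamma_n(s_n + a_1 - a)$ eventually enter $U$, not the points $\gamma_n(s_n)$.

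That mismatch is the \textbf{main obstacle}: the shift $a_1-a$. The cleanest fix I would try first is to prove all lemmas for the moduli space on $I_1$ together with sequences whose translates are taken inside $\Gamma(I)$. I would phrase every contradiction sequence $s_n$ so that evaluation happens at $a_1$ after translating by $s_n-a_1$, requiring $s_n\geq a_1 + (a - a_1)$, i.e.\ replacing $s_n$ by $s_n' := s_n + (a-a_1)\ge a$ where needed. In Lemma \ref{lem: delta constant existence} I would choose the bad points at times $s_n$ and translate by $s_n - a_1\ge a-a_1$. Here $\gamma_n(\cdot + s_n - a_1)$ restricted to $[a + (a_1 - a), \infty)$ is not in $\Gamma(I)$ unless $s_n\ge a + (a-a_1)$...

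The honest approach is to prove these lemmas only for times $s \geq 2a - a_1$ or sufficiently large times. That suffices, because Lemmas \ref{lem: uniform time for nbhd of Z} and \ref{lem: uniform time for pt in Z} only concern large times. In Lemma \ref{lem: uniform time for pt in Z} the times $s_n\to\infty$ anyway, so the translates $\gamma_n(\cdot+s_n-a_1)|_I$ lie in $\Gamma(I)$ for large $n$. Assumption (A2) applies verbatim, since $\gamma_n\in\moduliData$ and $s_n\in I$.

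Finally, I would conclude as before. (A1') gives $\gamma_n\to\gamma$ in $\Czeroloc(I_1,M)$ with $\gamma\in\Gamma(I_1)$, and Fatou gives $E(\gamma)\le E_0$. Passing to a subsequence with $\gamma_n(\infty)\to z\in Z$, the adapted Lemma \ref{lem: uniform time for pt in Z} gives $\gamma(\infty)=z$, so $\gamma\in\moduli_{(E_0,I_1)}$, and it also upgrades the convergence to $C^0(I_1,M)$.
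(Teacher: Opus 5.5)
Your plan is the paper's: rerun the proof of Theorem~\ref{mthm: Compactness moduli spaces} with (A1$'$) in place of (A1), taking all limits on $I_1$. The paper states the adapted Lemma~\ref{lem: gamma(s_n) converges to pt in Z} directly for $\gamma\in\Gamma(I_1)$, applying (A1$'$) to $\gamma(\cdot+s_n)$ on $[a_1-s_n,\infty)\supseteq I$; this is equivalent to your translation by $a_1-a$.

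You are also right that under (A1$'$) the proof of Lemma~\ref{lem: delta constant existence} only works at times $s\geq a_1$, which the paper's sketch does not mention. However, the threshold you write down is miscomputed. The curve $\gamma_n(\cdot+s_n-a_1)$ is defined on $[a+a_1-s_n,\infty)$, and this interval contains $I$ exactly when $s_n\geq a_1$. So the condition is $s_n\geq a_1$, not $s_n\geq a_1+(a-a_1)=a$. Likewise, restricting to $s\geq 2a-a_1$ changes nothing, since $2a-a_1\leq a$.

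The fix is to state Lemma~\ref{lem: delta constant existence} for $s\geq a_1$: take bad times $s_n\geq a_1$, translate by $s_n-a_1\geq 0$, and evaluate the $\Czeroloc(I_1,M)$-limit at $a_1$. This closes the argument. In Lemma~\ref{lem: uniform time for nbhd of Z} you may choose $\bar{s}_{k_0}\geq a_1$, and in Lemma~\ref{lem: uniform time for pt in Z} the times $s_n$ tend to infinity anyway.
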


\begin{proof}
    The proof is completely analogous to the one of Theorem \ref{mthm: Compactness moduli spaces}. We just point out that one must prove a version of Lemma \ref{lem: gamma(s_n) converges to pt in Z} which asserts that for every sequence $(s_n)_n \subseteq [0,\infty)$ tending to infinity and every $\gamma \in \Gamma(I_1)$ with energy bounded by $E_0$ and $f \circ \gamma $ taking values in $[\zeta-E_0,\zeta] \,$, a subsequence of $\gamma(\,\cdot\,+s_n)$ converges in $\Czeroloc(I_1,M)$ to some $\bar\gamma \in \Gamma(I_1)$ with image contained in $Z$. To this end, apply \ref{it: ass A variant - Czeroloc convergence} to the sequence $\gamma(\,\cdot\,+s_n)$ of gradient flow lines defined on $[a_1-s_n,\infty) \supseteq I$ (for $n$ sufficiently large).
    This version of Lemma \ref{lem: gamma(s_n) converges to pt in Z} suffices for the proof of Lemma \ref{lem: uniform time for nbhd of Z} to go through.
\end{proof}

\section{Illustrating the Compactness Result in the Morse Setting}
\label{sec: Morse - Illustration}

\noindent In this section, we illustrate Theorem \ref{mthm: Compactness moduli spaces} by reproving various compactness results in Morse-Bott theory. In Subsection \ref{subsec: Morse-Bott functions (finite dim)} we show how to verify conditions \ref{it: ass A - Czeroloc convergence} and \ref{it: ass A - shortening gradient flow lines} under a Morse-Bott assumption, so that Corollary \ref{cor: Morse setting - compactness moduli space} will be an immediate consequence of Theorem \ref{mthm: Compactness moduli spaces}. In the analytic approach to Morse theory, as carried out in \cite{M_Schwarz}, one is primarily interested in compactness of moduli spaces with respect to the subspace topology of an ambient Banach manifold of maps. In Subsection \ref{subsec: Morse setting - ambient Banach mfd}, we will illustrate how to easily derive the latter kind of compactness from the compactness in the topology of uniform convergence and an exponential decay estimate. Finally, Subsection \ref{subsec: Optimality of Upper Bound for Energy} contains an example, showing that the upper bound for the energy in Theorem \ref{mthm: Compactness moduli spaces} is optimal.

\subsection{Gradient Flow Lines of Morse-Bott Functions}
\label{subsec: Morse-Bott functions (finite dim)}

Let $(M,g)$ be a (finite dimensional) Riemannian manifold and $f \in \Cinfty(M)$ be a smooth function which is bounded below and proper. (These conditions are automatically fulfilled if $M$ is a closed manifold.) In particular $f$ has compact sublevel sets. Let $Z \subseteq \Crit(f)$ be a closed (\ie compact and without boundary), connected submanifold of $M$ and suppose that $f$ is Morse-Bott along $Z$, that is
\[
T_z Z = \ker \mathrm{Hess}_z(f) \subseteq T_z M \qquad \Forall z \in Z  \,\, .
\]
Since $Z$ is a connected critical component, $f$ is constant on $Z$, say with value $\zeta \in \R \,$. As discussed in Example \ref{ex: gradient flow line}, assigning to each closed interval $I \subseteq \R$ the set
\begin{align}
\label{eq: Morse setting - gradient flow line class}
\set{\gamma \in \Cinfty(I,M)}{\gamma' = \nabla f \circ \gamma  \text{ and } f \circ \gamma \leq \zeta } \,\, ,
\end{align}
defines a gradient flow line class with respect to $f$ and $G := |\nabla f|^2 \,$. The spectral gap $\specgap$ for positive ends (see Definition \ref{def: specgap}) is just the ordinary spectral gap for gradient flow lines ending at $Z$, namely
\begin{align*}
    \mathfrak{S}^+_f(Z) := \inf \set{\zeta - f(x)}{x \in \Crit(f)\backslash Z \text{ with } f(x) \leq \zeta} = \specgap \,\, .
\end{align*}
We also have a spectral gap for gradient flow lines beginning at $Z$, namely
\begin{align*}
    \mathfrak{S}^-_f(Z) := \inf \set{f(x) - \zeta}{x \in \Crit(f)\backslash Z \text{ with } \zeta \leq f(x) } =  \mathfrak{S}^-_{f,G}(Z) 
\end{align*}
and the ordinary spectral gap
\begin{align*}
    \mathfrak{S}_f(Z) := \inf \set{|\zeta - f(x)|}{x \in \Crit(f)\backslash Z}  = \min\{ \mathfrak{S}_f^+(Z), \mathfrak{S}_f^-(Z)\} \,\, .
\end{align*}
Next we ensure that conditions \ref{it: ass A - Czeroloc convergence} and \ref{it: ass A - shortening gradient flow lines} hold for all energy levels $E_0$ and closed intervals $I = [a,\infty)$ with respect to the gradient flow line class given in \eqref{eq: Morse setting - gradient flow line class}.

\begin{lemma}
\label{lem: Morse setting - ass A Czeroloc convergence}
For every $E_0 >0$ and every interval $I=[a,\infty) \,$, condition \ref{it: ass A - Czeroloc convergence} holds. The convergence of the subsequence can even be arranged to be in $\Cinftyloc(I,M) \,$.
\end{lemma}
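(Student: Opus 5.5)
Since $\gamma_n$ solves the autonomous ODE $\gamma_n' = \nabla f \circ \gamma_n$, the plan is to first confine all the curves to a fixed compact set, then get equicontinuity from a uniform bound on $\nabla f$ there, and finally apply Arzel\`a--Ascoli and bootstrap to $\Cinftyloc$.

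\textbf{Compact containment.} Let $(\gamma_n)_n \subseteq \Gamma(I)$ be gradient flow lines for the class \eqref{eq: Morse setting - gradient flow line class} with $E(\gamma_n) \le E_0$. By definition of the class, $f \circ \gamma_n \le \zeta$. Since $f\circ\gamma_n$ is increasing, $f \circ \gamma_n(s) \ge f\circ\gamma_n(a)$ for all $s \ge a$, and $f$ is bounded below. Hence every $\gamma_n(I)$ lies in the sublevel set $K := \{ f \le \zeta\}$, which is compact because $f$ is proper and bounded below. (The energy bound is not even needed for this step.)

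\textbf{Arzel\`a--Ascoli.} Put $C := \max_K |\nabla f|$. Then $|\gamma_n'| \le C$, so the $\gamma_n$ are uniformly Lipschitz with respect to the Riemannian distance. Being also pointwise contained in the compact set $K$, the family is equicontinuous and pointwise relatively compact. Arzel\`a--Ascoli on each compact $[a,a+m]$, followed by a diagonal argument over $m \in \N$, yields a subsequence converging in $\Czeroloc(I,M)$ to some continuous $\gamma : I \to K$.

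\textbf{Limit is a flow line and $\Cinftyloc$ upgrade.} Pass to the limit in the integral equation
\[
\gamma_n(s) = \gamma_n(a) + \int_a^s \nabla f(\gamma_n(\sigma))\, d\sigma,
\]
written in local charts on small subintervals where the limit stays in one chart; uniform convergence and continuity of $\nabla f$ justify this. Thus $\gamma$ is $C^1$ and solves $\gamma' = \nabla f\circ\gamma$, hence is smooth, and $f\circ\gamma \le \zeta$ by pointwise convergence. So $\gamma \in \Gamma(I)$. For the $\Cinftyloc$ statement, bootstrap. Inductively, $\gamma_n^{(k+1)} = \frac{d^k}{ds^k}(\nabla f \circ \gamma_n)$ is a universal polynomial expression in derivatives of $\nabla f$ (bounded on $K$) and in $\gamma_n',\dots,\gamma_n^{(k)}$. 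This gives uniform bounds on all derivatives, and each derivative converges locally uniformly once lower ones do. Alternatively, smooth dependence of the flow on initial data gives it directly: $\gamma_n(s) = \phi^{s-a}(\gamma_n(a))$ and $\gamma_n(a) \to \gamma(a)$.

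\textbf{Main obstacle.} The only genuine point is the compact containment, namely that the curves cannot escape to infinity in a possibly noncompact $M$. This is exactly where properness of $f$, the bound from below, and the built-in constraint $f\circ\gamma\le\zeta$ enter. The rest is standard ODE compactness.
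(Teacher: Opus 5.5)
Your proof is correct and follows essentially the same route as the paper. You confine the curves to the compact sublevel set $\{f \leq \zeta\}$, bound $|\nabla f|$ there, apply Arzel\`a--Ascoli, and bootstrap $\gamma_n' = \nabla f\circ\gamma_n$ in charts to upgrade to $\Cinftyloc$-convergence; the only cosmetic differences are that you verify the limit is a flow line via the integral equation, and that you obtain convergence of higher derivatives directly from the recursive formula instead of through a second application of Arzel\`a--Ascoli.
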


\begin{proof}
The gradient $|\nabla f|$ is bounded on the compact sublevel set $\{f \leq \zeta\} \subseteq M$. Given a sequence $(\gamma_n)_n$ of gradient flow lines on $I$ with image contained in $\{f \leq \zeta\} \,$. Then a subsequence converges to some $\gamma$ in the $\Czeroloc(I,M)$-topology by the Arzel\`a-Ascoli theorem. Deriving the equation $\gamma_n' = \nabla f\circ \gamma_n$ iteratively, we see inductively that the higher order derivatives of the $\gamma_n$ (in finitely many coordinate charts covering $\gamma(I)$) are uniformly bounded. Again by Arzel\`a-Ascoli, we conclude that a subsequence of $(\gamma_n)_n$ converges in $\Cinftyloc(I,M)$ to $\gamma$. It follows that $\gamma$ itself is a gradient flow line with $f \circ \gamma \leq \zeta \,$.
\end{proof}

\noindent To establish \ref{it: ass A - shortening gradient flow lines}, we need the following exponential decay lemma which crucially relies on the assumption that $f$ is Morse-Bott along $Z$. We denote by $d_g : M \times M \rightarrow [0,\infty)$ the Riemannian distance function induced by the metric $g$.

\begin{lemma}[Lemma 3.6 in \cite{Albers_Hein}]
\label{lem: Morse setting - exponential decay}
There exist a tubular neighborhood $U_Z \subseteq M$ of $Z$ and constants $A,B > 0$ with the following significance. For every interval $I = [s_0,\infty)$ and every $\gamma \in \Cinfty(I,U_Z)$ with $\gamma' = \nabla f \circ \gamma$ and existing limit $\gamma(\infty) = \lim_{s \rightarrow \infty} \gamma(s) \in Z$, it holds
\[
d_g(\gamma(s) , \gamma(\infty)) \leq A \, \sqrt{\zeta - f \circ \gamma(s_0)} \,  e^{-B (s-s_0)} \qquad \Forall s \geq s_0 \,\, .
\]
\end{lemma}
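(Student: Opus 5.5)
The plan is the standard Morse--Bott exponential decay argument: a Łojasiewicz-type gradient inequality near $Z$, integrated along the flow, then converted into a distance estimate.

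\emph{Step 1: normal form near $Z$.} Choose a tubular neighborhood $U_Z$ of $Z$, identified with a disk bundle of the normal bundle $N Z$. By the Morse--Bott condition $\ker \mathrm{Hess}_z(f) = T_z Z$, the Hessian is nondegenerate on the normal fibres, uniformly over compact $Z$. After shrinking $U_Z$, Taylor expansion gives constants $c_1, c_2 > 0$ with
\[
c_1\, d_g(x,Z)^2 \le |\zeta - f(x)| \le c_2\, d_g(x,Z)^2, \qquad |\nabla f(x)| \ge c_1\, d_g(x,Z).
\]
Combining these yields the gradient inequality
\[
|\nabla f(x)|^2 \ge c\, |\zeta - f(x)| \qquad \text{for all } x \in U_Z .
\]

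\emph{Step 2: decay of the action.} Let $\gamma$ be a flow line in $U_Z$ with $\gamma(\infty) \in Z$. Since $f \circ \gamma$ increases to $\zeta$, the function $h(s) := \zeta - f \circ \gamma(s) \ge 0$ satisfies
\[
h' = -|\nabla f \circ \gamma|^2 \le -c\, h .
\]
Grönwall then gives $h(s) \le h(s_0)\, e^{-c(s-s_0)}$.

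\emph{Step 3: from action to distance.} Bound the distance by the length of the remaining path and apply Cauchy--Schwarz on dyadic-length windows $[\sigma, \sigma+1]$:
\[
\int_\sigma^{\sigma+1} |\gamma'| \le \Big( \int_\sigma^{\sigma+1} |\nabla f \circ \gamma|^2 \Big)^{1/2} = \big( h(\sigma) - h(\sigma+1) \big)^{1/2} \le h(\sigma)^{1/2}.
\]
Summing over $\sigma = s, s+1, \dots$ gives a geometric series
\[
d_g(\gamma(s), \gamma(\infty)) \le \sum_{k \ge 0} h(s_0)^{1/2}\, e^{-c(s+k-s_0)/2} \le A\, \sqrt{h(s_0)}\, e^{-B(s-s_0)},
\]
with $B = c/2$ and $A = (1 - e^{-c/2})^{-1}$. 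An alternative to the dyadic windows is $|\gamma'| = |\nabla f| \le -h'/\sqrt{c\,h}$, so that $\int_s^\infty |\gamma'| \le \tfrac{2}{\sqrt{c}}\, \sqrt{h(s)}$.

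\emph{Main obstacle.} The only nontrivial point is Step 1: proving the gradient inequality uniformly near $Z$. I would do this in Fermi coordinates $(z, v)$. There $\nabla f(z,v) = \mathrm{Hess}_z(f)\, v + O(|v|^2)$ and $\zeta - f = -\tfrac12 \langle \mathrm{Hess}_z(f)\, v, v \rangle + O(|v|^3)$. On the normal fibre the Hessian has eigenvalues bounded away from $0$ uniformly in $z$, by compactness of $Z$, so both the inequality $|\nabla f|^2 \ge c\,|\zeta - f|$ and the two-sided comparisons with $d_g(x,Z)^2$ follow once $U_Z$ is shrunk enough to absorb the error terms.
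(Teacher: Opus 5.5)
Your argument is correct. The paper gives no proof of this lemma of its own; it is imported from \cite{Albers_Hein}. What you wrote is the standard argument for it, in three steps. First, the Morse--Bott condition gives the \L{}ojasiewicz-type inequality $|\nabla f|^2 \ge c\,|\zeta - f|$ near $Z$. Second, the identity $h' = -|\nabla f\circ\gamma|^2$ for $h = \zeta - f\circ\gamma$ turns this into $h(s) \le h(s_0)\,e^{-c(s-s_0)}$. Third, Cauchy--Schwarz converts decay of the remaining energy into decay of the remaining length, and this is exactly where the prefactor $\sqrt{\zeta - f\circ\gamma(s_0)}$ in the statement comes from.

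One correction in Step 1, repeated in your last paragraph. The lower bound $c_1\, d_g(x,Z)^2 \le |\zeta - f(x)|$ is false unless the Hessian is definite on the normal fibres. For example, take $f(x,y) = x^2 - y^2$ on $\R^2$ with $Z = \{0\}$ and $\zeta = 0$: then $\zeta - f$ vanishes along the diagonals $y = \pm x$, away from $Z$. Since $Z$ in the lemma may have any Morse--Bott index, you should drop this claim. Nothing in your proof depends on it. The gradient inequality uses only two facts, and both hold for indefinite normal Hessians:
\begin{itemize}
\item $|\zeta - f(x)| \le c_2\, d_g(x,Z)^2$, from Taylor expansion at the nearest point of $Z$, where $df$ vanishes;
\item $|\nabla f(x)| \ge c_1\, d_g(x,Z)$, from invertibility of the Hessian on $(T_zZ)^\perp$, with smallest absolute eigenvalue bounded below uniformly by compactness of $Z$.
\end{itemize}
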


\noindent The lemma is formulated slightly differently in \cite{Albers_Hein}, notably for negative gradient flow lines and under the additional assumption that $\zeta=0 \,$.

\begin{remark}
\label{rmk: Morse setting - exponential decay derivative}
Choosing $U_Z$ smaller if necessary, we may assume that it is precompact. Hence $|\nabla f|$ is Lipschitz continuous on $U_Z$ with some Lipschitz constant $C > 0 \,$. Given a gradient flow line $\gamma \in \Cinfty(I,U_Z)$ with limit $\gamma(\infty) \in Z$, the estimate
\[
|\gamma'(s)| = \big| \, |\nabla f \circ \gamma(s)| - |\nabla f \circ \gamma(\infty)| \, \big| \leq C \, d_g(\gamma(s),\gamma(\infty)) 
\]
shows that also $|\gamma'|$ exhibits exponential decay with same exponential coefficient $B \,$.
\end{remark}

\begin{lemma}
\label{lem: Morse setting - ass A shortening gradient flow}
For every $E_0 > 0$ and every interval $I=[a,\infty)$ condition \ref{it: ass A - shortening gradient flow lines} holds.
\end{lemma}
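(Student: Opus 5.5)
We verify the sufficient criterion of Lemma \ref{lem: shortening gradient flow lines criterion}, using the exponential decay estimate of Lemma \ref{lem: Morse setting - exponential decay}. Fix $E_0>0$ and $I=[a,\infty)$, and let $U_Z$ and $A,B>0$ be as in Lemma \ref{lem: Morse setting - exponential decay}. For every $z \in Z$ we take the same neighborhood $U := U_Z$, which is open and contains $z$. As coarse metric we take the restriction $d := d_g|_{U\times U}$. This induces exactly the manifold topology on $U$, so it is in particular a coarse metric in the sense of Definition \ref{def: coarse metric}.

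As candidate function we set
\[
\Xi : U \rightarrow \R \comma \quad \Xi(x) := A\,\sqrt{\max\{\zeta - f(x),0\}} \,\, .
\]
It is continuous because $f$ is smooth and $\sqrt{\max\{\cdot\,,0\}}$ is continuous. It vanishes on $Z\cap U$ because $f\equiv\zeta$ on $Z$.

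Now let $J=[b,\infty)\subseteq I$ and $\gamma \in \moduli_{(E_0,J)}$ with $\gamma(s)\in U$ for all $b\le s\le\infty$. By the definition \eqref{eq: Morse setting - gradient flow line class} of the gradient flow line class, $\gamma$ is smooth, satisfies $\gamma'=\nabla f\circ\gamma$, and has $f\circ\gamma\le\zeta$. Hence $\zeta - f(\gamma(b)) \geq 0$ and the maximum in $\Xi(\gamma(b))$ is harmless. Moreover $\gamma(\infty)\in Z$ exists by the definition of the moduli space, and $\gamma$ takes values in $U_Z$.

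Lemma \ref{lem: Morse setting - exponential decay} therefore applies with $s_0=b$. Evaluating its estimate at $s=b$ gives
\[
d(\gamma(b),\gamma(\infty)) \le A\sqrt{\zeta - f(\gamma(b))} = \Xi(\gamma(b)) \,\, .
\]
This is exactly the hypothesis of Lemma \ref{lem: shortening gradient flow lines criterion}, which then yields condition \ref{it: ass A - shortening gradient flow lines}.

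There is no real obstacle here. The only points needing care are that $\Xi$ is well defined and continuous on all of $U$, which the $\max$ handles, and that $d_g$ is a genuine coarse metric on $U$. The substantive input is entirely contained in the quoted exponential decay Lemma \ref{lem: Morse setting - exponential decay}.
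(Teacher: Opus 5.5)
Your proof is correct and matches the paper's argument: the paper also applies Lemma \ref{lem: shortening gradient flow lines criterion} with $U = U_Z$, the coarse metric $d_g$, and $\Xi(x) = A\sqrt{|\zeta - f(x)|}$, and evaluates the decay estimate of Lemma \ref{lem: Morse setting - exponential decay} at $s = s_0$. Using $\max\{\zeta - f(x),0\}$ instead of the absolute value makes no difference, since $f \circ \gamma \leq \zeta$ holds for all flow lines in this class.
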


\begin{proof}
    We want to invoke Lemma \ref{lem: shortening gradient flow lines criterion}. Choose the tubular neighborhood $U_Z$ and the constants $A,B > 0$ as in Lemma \ref{lem: Morse setting - exponential decay}.
    Consider the coarse metric $d_g$ on $M$ (the topology induced by $d_g$ is not just coarser but in fact equal to the given topology on $M$) which restricts to a coarse metric on $U_Z \,$. As continuous function $\Xi : U_Z \rightarrow \R$ take 
    \[
    \Xi(x) := A \, \sqrt{|\zeta - f(x)|} \geq 0 \,\, .
    \]
    Then $\Xi$ vanishes on $Z = Z \cap U_Z $ and 
    \[
    d_g(\gamma(s_0), \gamma(\infty)) \leq A \, \sqrt{\zeta - f \circ \gamma(s_0)} \, e^{-B (s_0-s_0)} =  \Xi(\gamma(s_0)) 
    \]
    for every gradient flow line $\gamma : [s_0,\infty) \rightarrow M$ with image contained in $U_Z$ and limit in $Z$. Hence the above choices meet the requirements in Lemma \ref{lem: shortening gradient flow lines criterion} for every $z \in Z \,$.
\end{proof}

\noindent Having verified conditions \ref{it: ass A - Czeroloc convergence} and \ref{it: ass A - shortening gradient flow lines}, we can apply Theorem \ref{mthm: Compactness moduli spaces} to obtain the next corollary.

\begin{corollary}
\label{cor: Morse setting - compactness moduli space}
Given $E_0 < \mathfrak{S}_f^+(Z)$ and an interval $I=[a,\infty) \,$.
For every sequence $(\gamma_n)_n \subseteq \Cinfty(I,M)$ of gradient flow lines with energy uniformly bounded by $E_0$ and limits $\gamma_n(\infty) \in Z \,$, there exists a subsequence converging in $\Cinfty(I,M)$ to a gradient flow line $\gamma \in \Cinfty(I,M)$ of energy $E(\gamma) \leq E_0$ and with limit $\gamma(\infty) \in Z $. 
\end{corollary}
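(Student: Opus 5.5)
The plan is to apply Theorem \ref{mthm: Compactness moduli spaces} with the gradient flow line class \eqref{eq: Morse setting - gradient flow line class}, and then upgrade the uniform convergence to convergence in $\Cinfty(I,M)$ using the $\Cinftyloc$ statement of Lemma \ref{lem: Morse setting - ass A Czeroloc convergence} together with exponential decay.

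\textbf{Step 1: the sequence lies in the moduli space.} Each $\gamma_n$ has limit in $Z$ and $f\circ\gamma_n$ is increasing, so $f\circ\gamma_n\le \zeta$. Hence $\gamma_n$ belongs to the class \eqref{eq: Morse setting - gradient flow line class}, and therefore $\gamma_n\in\moduliData$. The case $E_0\le 0$ is trivial: then each $\gamma_n$ is constant, and a subsequence of constants converges in $Z$ by compactness. So we may assume $E_0>0$. Lemmas \ref{lem: Morse setting - ass A Czeroloc convergence} and \ref{lem: Morse setting - ass A shortening gradient flow} give \ref{it: ass A - Czeroloc convergence} and \ref{it: ass A - shortening gradient flow lines}, and $E_0<\mathfrak{S}^+_f(Z)=\specgap$. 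Theorem \ref{mthm: Compactness moduli spaces} then yields a subsequence converging uniformly on $I$ to some $\gamma\in\moduliData$. Passing to a further subsequence, Lemma \ref{lem: Morse setting - ass A Czeroloc convergence} makes the convergence also hold in $\Cinftyloc(I,M)$.

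\textbf{Step 2: uniform control of the tails (the main step).} Choose $U_Z$, $A$, $B$ as in Lemma \ref{lem: Morse setting - exponential decay}, with $U_Z$ precompact as in Remark \ref{rmk: Morse setting - exponential decay derivative}. Lemma \ref{lem: uniform time for nbhd of Z} gives an $s_0$ with $\gamma_n([s_0,\infty))\subseteq U_Z$ for all $n$, and likewise for $\gamma$. The exponential decay lemma then gives
\[
d_g(\gamma_n(s),\gamma_n(\infty))\le A\sqrt{E_0}\,e^{-B(s-s_0)},
\]
uniformly in $n$. By Remark \ref{rmk: Morse setting - exponential decay derivative}, $|\gamma_n'(s)|\le C A\sqrt{E_0}\,e^{-B(s-s_0)}$ as well. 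For the $k$-th derivative, differentiate $\gamma_n'=\nabla f\circ\gamma_n$ repeatedly. In coordinates or via a connection, $\nabla^{k-1}\gamma_n'$ is a polynomial expression in derivatives of $\nabla f$ along $\gamma_n$ and lower derivatives of $\gamma_n$. Each term contains at least one factor $\gamma_n^{(j)}$ with $j\ge1$, and the coefficients are bounded on $\overline{U_Z}$. By induction, every derivative then decays like $e^{-B(s-s_0)}$ uniformly in $n$. The same bounds hold for $\gamma$.

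\textbf{Step 3: conclusion.} Given $\eps>0$ and $k$, choose $T$ so large that the tails beyond $T$ of all $\gamma_n$ and of $\gamma$ are $\eps$-small in $C^k$. This uses the distances to the endpoints for $k=0$, and the fact that $\gamma_n(\infty)\to\gamma(\infty)$, which follows from uniform convergence. On $[a,T]$, use the $\Cinftyloc$-convergence. Together these give $C^k$-convergence on all of $I$ for every $k$, which is convergence in $\Cinfty(I,M)$.

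The obstacle I expect is Step 2. There, the derivative estimates must be made uniform in $n$. This rests on two facts: the single time $s_0$ provided by Lemma \ref{lem: uniform time for nbhd of Z}, and the boundedness of $\nabla f$ and its derivatives on the precompact set $U_Z$.
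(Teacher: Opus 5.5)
Your proof is correct, but it upgrades uniform convergence to $\Cinfty$ convergence by a different route than the paper.

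\textbf{The paper's route.} After Theorem \ref{mthm: Compactness moduli spaces} gives a subsequence converging in $C^0(I,M)$, the paper only says that this convergence plus the gradient flow equation yields $\Cinfty(I,M)$-convergence, ``analogous to the proof of Lemma \ref{lem: Morse setting - ass A Czeroloc convergence}''. The intended argument is a direct bootstrap through the ODE. All $\gamma_n$ and $\gamma$ lie in the compact sublevel set $\{f\le\zeta\}$, where $\nabla f$ and all its derivatives are bounded and uniformly continuous. Hence $\gamma_n'=\nabla f\circ\gamma_n\to\nabla f\circ\gamma=\gamma'$ uniformly on all of $I$. Inductively, each $\gamma_n^{(k)}$ is a polynomial in derivatives of $\nabla f$ at $\gamma_n$ and in $\gamma_n',\dots,\gamma_n^{(k-1)}$, so every derivative converges uniformly on $I$. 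This step needs no exponential decay, no further $\Cinftyloc$ subsequence and no uniform entrance time.

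\textbf{Your route.} You split $I$ into $[a,T]$ and a tail $[T,\infty)$. On $[a,T]$ you use the $\Cinftyloc$ convergence from Lemma \ref{lem: Morse setting - ass A Czeroloc convergence}. On the tail you combine Lemma \ref{lem: uniform time for nbhd of Z}, Lemma \ref{lem: Morse setting - exponential decay} and Remark \ref{rmk: Morse setting - exponential decay derivative} to show that all derivatives of all $\gamma_n$ and of $\gamma$ are uniformly exponentially small. This is valid. The bound $\zeta-f\circ\gamma_n(s_0)\le E_0$ from Lemma \ref{lem: action value estimate} makes the decay uniform in $n$. The fact that each term of $\gamma^{(k)}$ contains a factor $\gamma^{(j)}$ with $j\ge 1$ carries the decay to higher derivatives.

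\textbf{Comparison.} Your argument uses more machinery than plain uniform $C^k$ convergence on $I$ requires, and partly re-derives the $C^0$ convergence Theorem \ref{mthm: Compactness moduli spaces} already gives. In exchange it gives quantitative, $n$-uniform tail control. That is what you would need for notions of convergence with weights at $s=\infty$. Examples are $\Cinfty$ convergence on the compactification $[a,\infty]$ with the smooth structure of $\Rbar$, whose derivatives pick up polynomial factors in $s$, or the weighted spaces in Theorem \ref{thm: modulihat(Z_-,Z_+,zeta_0) compact in Bcal(Z_-,Z_+)}. There the paper itself falls back on exactly such uniform exponential bounds as dominating functions.
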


\begin{proof}
    By Theorem \ref{mthm: Compactness moduli spaces}, a subsequence $(\gamma_{n_k})_k$ converges in $C^0(I,M)$ to a gradient flow line $\gamma \in \Cinfty(I,M)$ of energy bounded by $E_0$ and with existing limit $\gamma(\infty) \in Z \,$. Now, analogous to the proof of Lemma \ref{lem: Morse setting - ass A Czeroloc convergence}, the gradient flow equation $\gamma_{n_k}' = \nabla f \circ \gamma_{n_k}$ and $C^0(I,M)$-convergence implies $\Cinfty(I,M)$-convergence $\gamma_{n_k} \rightarrow \gamma $ as $ k \rightarrow \infty \,$.
\end{proof}

\subsection{Compactness of Moduli Spaces in an Ambient Banach Manifold}
\label{subsec: Morse setting - ambient Banach mfd}

Continue in the setting of the previous Subsection \ref{subsec: Morse-Bott functions (finite dim)}. In the analytic approach to Morse theory, one is often not primarily interested in compactness of the moduli space with respect to uniform convergence but the ambient topology of the Banach manifold of maps containing the moduli space. In this subsection we show by an example (see Theorem \ref{thm: modulihat(Z_-,Z_+,zeta_0) compact in Bcal(Z_-,Z_+)} below) how one can still easily deduce compactness with respect to the latter topology from compactness with respect to the former topology. For the Morse case this is similarly stated in \cite[Lemma 2.39]{M_Schwarz} and the Morse-Bott case is a straightforward adaption.

\subsubsection{Moduli Space of Unparametrized Trajectories}
\label{subsubsec: Morse setting ambient Banach mfd - moduli space unparametrized trajectories}

\noindent Again let $f \in \Cinfty(M)$ be a proper, bounded below function on an $m$-dimensional Riemannian manifold $(M,g)$. Let $Z_- , Z_+ \subseteq \Crit(f)$ be disjoint connected components of $\Crit(f)$ that are closed submanifolds of $M$ and suppose $f$ is Morse-Bott along $Z_-$ and $Z_+ \,$. On $Z_\pm$ the function $f$ is constant, say with value $\zeta_\pm \equiv f|_{Z_\pm} \,$. We suppose moreover that there exists a gradient flow line $\gamma : \R \rightarrow M$ with existing limits $\gamma(\pm \infty) \in Z_\pm \,$. From this it follows that $\zeta_- < \zeta_+ \,$. We assume $\zeta_0 \in (\zeta_-,\zeta_+)$ is a value for which
\begin{align}
\label{eq: Morse setting - zeta_0}
    \zeta_+ - \zeta_0 < \mathfrak{S}_f^+(Z_+) \quad \text{ and } \zeta_0 - \zeta_- < \mathfrak{S}_f^-(Z_-) \,\, .
\end{align}
If $\mathfrak{S}_f^+(Z_+) = \mathfrak{S}_f^-(Z_-) = \zeta_+ - \zeta_-$, which holds for instance if $Z_-$ and $Z_+$ are the only components of $\Crit(f)$, then every value $\zeta_0 \in (\zeta_-,\zeta_+)$ has the above property.
Now consider the \textit{moduli space of unparametrized trajectories from $Z_-$ to $Z_+$}
\begin{align*}
    \modulihat (Z_-,Z_+, \zeta_0) := \Bigset{\gamma \in \Cinfty(\R,M)}{\begin{array}{c}
    \gamma' = \nabla f \circ \gamma \comma \,\, f \circ \gamma(0) = \zeta_0 \, , \\
   \text{limits } \gamma(\pm \infty) \in Z_\pm \text{ exist}
    \end{array}} \,\, .
\end{align*}
Elements in $\modulihat(Z_-,Z_+,\zeta_0)$ have energy $\zeta_+ - \zeta_0 < \mathfrak{S}_f^+(Z_+)$ on $[0,\infty)$ and energy $\zeta_0 - \zeta_- < \mathfrak{S}_f^-(Z_-)$ on $(-\infty,0]$. Therefore, Corollary \ref{cor: Morse setting - compactness moduli space} and an analogous version for the interval $(-\infty,0] $ yield the following result.

\begin{corollary}
\label{cor: Morse setting ambient Banach mfd - C_0 compactness}
Suppose \eqref{eq: Morse setting - zeta_0} holds. Then the moduli space $\modulihat(Z_-,Z_+,\zeta_0)$ is compact with respect to the topology of $\Cinfty(\R,M)$-convergence.
\end{corollary}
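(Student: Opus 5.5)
Take a sequence $(\gamma_n)_n \subseteq \modulihat(Z_-,Z_+,\zeta_0)$ and show that some subsequence converges in $\Cinfty(\R,M)$ to an element of $\modulihat(Z_-,Z_+,\zeta_0)$. The idea is to handle the two half-lines separately and glue the results.

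\emph{Positive half-line.} Restrict to $[0,\infty)$. Since $f\circ\gamma_n(0)=\zeta_0$ and $\gamma_n(\infty)\in Z_+$, Remark \ref{rmk: gradient flow line} gives $E(\gamma_n|_{[0,\infty)}) = \zeta_+-\zeta_0 =: E_0$. By \eqref{eq: Morse setting - zeta_0} we have $E_0<\mathfrak{S}^+_f(Z_+)$. Moreover $f\circ\gamma_n\le\zeta_+$, because $f\circ\gamma_n$ is increasing, so these restrictions lie in the gradient flow line class \eqref{eq: Morse setting - gradient flow line class} with $\zeta=\zeta_+$. Corollary \ref{cor: Morse setting - compactness moduli space} (with $Z=Z_+$, $I=[0,\infty)$) then yields a subsequence converging in $\Cinfty([0,\infty),M)$ to a gradient flow line $\gamma^+$ with $\gamma^+(\infty)\in Z_+$.

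\emph{Negative half-line.} Apply to $(-\infty,0]$ the analogous version of the corollary from Remark \ref{rmk: compactness thm - negative gradient flow lines and negative ends}(i), with $Z=Z_-$ and energy $\zeta_0-\zeta_-<\mathfrak{S}^-_f(Z_-)$. Its hypotheses (A1) and (A2) are checked exactly as in Lemmas \ref{lem: Morse setting - ass A Czeroloc convergence} and \ref{lem: Morse setting - ass A shortening gradient flow}, using $f\circ\gamma\ge\zeta_-$ and the exponential decay of Lemma \ref{lem: Morse setting - exponential decay} for $-f$ near $Z_-$. Passing to a further subsequence, we get convergence in $\Cinfty((-\infty,0],M)$ to $\gamma^-$ with $\gamma^-(-\infty)\in Z_-$.

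\emph{Gluing.} Both $\gamma^\pm$ satisfy $\gamma'=\nabla f\circ\gamma$ and agree at $s=0$, since both equal $\lim_n\gamma_n(0)$. By uniqueness of ODE solutions they therefore combine into a smooth gradient flow line $\gamma:\R\to M$ with $f\circ\gamma(0)=\lim_n f\circ\gamma_n(0)=\zeta_0$ and limits $\gamma(\pm\infty)\in Z_\pm$. Hence $\gamma\in\modulihat(Z_-,Z_+,\zeta_0)$. Uniform $C^k$-convergence on both half-lines gives $\Cinfty(\R,M)$-convergence on $\R$.

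The main obstacle is the negative end. The paper only states the $(-\infty,a]$ version as a remark, and the Morse-Bott inputs must be transferred to it. The natural route is to replace $f$ by $-f$ and reparametrize $s\mapsto -s$. This turns gradient flow lines on $(-\infty,0]$ into negative gradient flow lines on $[0,\infty)$, and Remark \ref{rmk: compactness thm - negative gradient flow lines and negative ends}(ii) together with Lemma 3.6 of \cite{Albers_Hein}, which is stated for negative flow lines, covers that case directly. Everything else is bookkeeping of energies and subsequences.
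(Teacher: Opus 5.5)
Your structure matches the paper's, which proves this corollary in one line: apply Corollary \ref{cor: Morse setting - compactness moduli space} on $[0,\infty)$ and a mirrored version on $(-\infty,0]$. Your energy bookkeeping on the positive end and your gluing at $s=0$ are fine. The flaw is in how you verify the hypotheses on the negative end.

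In Lemma \ref{lem: Morse setting - ass A Czeroloc convergence}, condition (A1) comes from Arzel\`a--Ascoli. It works because the class \eqref{eq: Morse setting - gradient flow line class} confines images to the sublevel set $\{f\le\zeta\}$, which is compact since $f$ is proper and bounded below. The constraint you mirror, $f\circ\gamma\ge\zeta_-$, confines nothing: $\{f\ge\zeta_-\}$ is non-compact whenever $M$ is. Correspondingly, $-f$ does not satisfy the standing hypotheses of Subsection \ref{subsec: Morse-Bott functions (finite dim)}, since its sublevel sets are not compact. So the Morse-setting lemmas cannot be applied to $-f$ ``directly''. In fact (A1) can genuinely fail for the class $\{\gamma'=\nabla f\circ\gamma,\ f\circ\gamma\ge\zeta_-\}$. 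If $\Crit(f)$ is non-compact, which the hypotheses allow, constant curves at critical points escaping to infinity have zero energy, eventually satisfy $f\ge\zeta_-$, and have no convergent subsequence.

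The repair uses information you already have. On $(-\infty,0]$ the restrictions satisfy $\zeta_-\le f\circ\gamma_n\le\zeta_0$, so take as negative-end class $\{\gamma\in\Cinfty(I,M) : \gamma'=\nabla f\circ\gamma,\ f\circ\gamma\le\zeta_0\}$. Then:
\begin{itemize}
\item Its members lie in the compact set $\{f\le\zeta_0\}$, so (A1) follows exactly as in Lemma \ref{lem: Morse setting - ass A Czeroloc convergence}, and the limit again satisfies $f\le\zeta_0$.
\item The spectral gap is still computed over all of $\Crit(f)$, so it equals $\mathfrak{S}^-_f(Z_-)$.
\item (A2) holds as you say, because the decay estimate is local near $Z_-$. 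After $s\mapsto -s$ these curves are negative gradient flow lines of $f$ converging to $Z_-$, which is the Albers--Hein setting.
\end{itemize}
With this change your argument is complete.
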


\noindent The above corollary is a special case of a well-known compactness result for moduli spaces of unparametrized trajectories up to breaking, cf. \cite[Prop.~2.35]{M_Schwarz} for the Morse case. Under \eqref{eq: Morse setting - zeta_0}, breaking cannot happen.

\subsubsection{The Ambient Banach manifold}
\label{subsubsec: Morse setting ambient Banach mfd - def Banach mfd}

Since we are in a Morse-Bott situation, we have to work with weighted Sobolev spaces, the definition of which we recall now.

\begin{definition}[Weighted Sobolev Space]
\label{def: weighted Sobolev space - Morse setting}
Given $\delta > 0 \comma p > 1 \comma k \in \N_{0} $ and an unbounded closed interval $I \subseteq \R \,$. The \textit{$\delta$-weighted Sobolev space} $W^{k,p}_\delta(I,\R^n)$ is defined as
\begin{align*}
    W^{k,p}_\delta(I,\R^m) := &\Bigset{u \in W^{k,p}(I,\R^m)}{ (s \mapsto e^{\delta |s|} \, u(s) ) \in W^{k,p}(I,\R^n)} \\
    = &\Bigset{u \in W^{k,p}(I,\R^m)}{e^{\delta |s|} \,\partial_s^i u \in L^p(I,\R^m) \text{ for all } 0 \leq i \leq k}
\end{align*}
and is a Banach space with the norm $\lVert u \rVert_{W^{k,p}_\delta(I)} := \lVert e^{\delta |s|} \, u \rVert_{W^{k,p}(I)} \,$.\\
Instead of $W^{k,2}_\delta$ we also write $H^k_\delta \,$.
\end{definition}

\noindent A norm equivalent to $\lVert \,\cdot\, \rVert_{W^{k,p}_\delta}$ is given by $\lVert u \rVert := \sum_{i=0}^k \lVert e^{\delta  |s|} \, \partial_s^i u \rVert_{L^p(I)} \,$. We interchange these norms whenever it feels more convenient.
\\

\noindent Given $\gamma \in H^1_{\mathrm{loc}}(\R,M) \subseteq C^0(\R,M)$ with existing limits $\gamma(\pm\infty) = \lim_{s \rightarrow \pm \infty} \gamma(s)$ and coordinate charts $\varphi_\pm : U_\pm \rightarrow \varphi_\pm(U_\pm) \subseteq \R^m$ around $\gamma(\pm \infty)\,$, the condition
\begin{align}
 &\varphi_+ \circ \gamma - \varphi_+(\gamma(\infty)) \in H^1_\delta([s_0,\infty),\R^m) \quad \hspace{10mm} \text{and }\notag\\
  &\varphi_- \circ \gamma - \varphi_-(\gamma(-\infty)) \in H^1_\delta((-\infty,-s_0],\R^m) \quad \text{ for some } s_0 > 0 \text{ sufficiently large.}   \label{eq: Morse setting - def Bcal limit condition}
\end{align}
is independent of the charts $(U_\pm,\varphi_\pm)$ around $\gamma(\pm \infty) \,$. We now define the Banach manifold $\BcalMorse$ by
\begin{align}
\label{eq: Morse setting - def BcalMorse}
    \BcalMorse := \Bigset{\gamma \in H^1_{\mathrm{loc}}(\R,M)}{\begin{array}{c}
       \gamma(\pm \infty) \in Z_\pm \text{ exist and \eqref{eq: Morse setting - def Bcal limit condition} holds for any}  \\
       \text{choice of charts } (U_\pm,\varphi_\pm) \text{ around } \gamma(\pm \infty)   
    \end{array}} \,\, .
\end{align}

\subsubsection{Charts for $\BcalMorse$}
\label{subsubsec: Morse setting ambient Banach mfd - charts}
Next we construct charts for $\BcalMorse \,$. To this end, we first define a distinguished class of elements where our charts will be centered. Following Schwarz \cite{M_Schwarz}, we give $\Rbar := \R \cup \{\pm \infty\}$ the structure of a smooth manifold with boundary by declaring the homeomorphism
\[
\Rbar \rightarrow [-1,1] \comma \quad s \mapsto \frac{s}{\sqrt{1+s^2}}
\]
to be a diffeomorphism. We construct charts around every $\gamma_0 \in \BcalMorse \cap \Cinfty(\Rbar,M)$ which altogether constitute an atlas for $\BcalMorse$.\\ 
Let us temporarily write $d_\pm := \dim(Z_\pm) $ and fix, once and for all, a monotone cut-off function $\beta \in \Cinfty(\R,[0,1])$ with
\begin{align*}
\begin{cases}
    \beta(s) = 0 & \text{, if } s \leq \eps \\
    \beta(s) = 1 & \text{, if } s \geq 1 - \eps \\
\end{cases}
\end{align*}
for some small $\eps > 0 \,$.
For given $\gamma_0 \in \BcalMorse \cap \Cinfty(\Rbar,M)$ fix slice charts $(U_\pm,\varphi_\pm)$ for $Z_\pm$ centered at $z_\pm := \gamma_0(\pm \infty) \in Z$, that is $\varphi_\pm(z_\pm)$ is the origin in $\R^m$ and
\[
V_\pm := \varphi_\pm(U_\pm \cap Z_\pm) = \varphi_\pm (U_\pm) \cap (\R^{d_\pm} \times \{0\}) \subseteq \R^m
\]
is the intersection of the chart image with $\R^{d_\pm} \times \{0\} \subseteq \R^m \,$. We choose $U_-$ and $U_+$ so small that the charts $\varphi_\pm$ can be extended over the disjoint compact closures of $U_\pm \,$. We fix an auxiliary Riemannian metric $\gaux$ on $M$ which, pulled back via the charts $\varphi_\pm^{-1} \,$, is the Euclidean metric on $\varphi_\pm(U_\pm) \subseteq \R^m \,$. Moreover, we fix an open, fiberwise convex neighborhood $\Dcal \subseteq TM$ of the zero section for which
 \[
 \exp^{\gaux} : \Dcal \rightarrow M \times M
 \]
 is a diffeomorphism onto an open neighborhood of the diagonal and a smaller open, fiberwise convex neighborhood $\mathcal{D}' \subseteq TM$ of the zero section with closure contained in $\mathcal{D} \,$.
Since $\varphi_\pm$ is an isometry from $(U_\pm, \gaux)$ to Euclidean space, we have
\begin{align}
\label{eq: Morse setting - exp gaux vs Euclidean exp}
    \varphi_\pm \circ \exp^{\gaux}_x (w) = \varphi_\pm(x) +(d \varphi_\pm)_x \cdot w \qquad \text{ for } w \in \mathcal{D}_x \subseteq T_x M \comma x \in U_\pm \,\, ,
\end{align}
 the term on the right-hand side being the Euclidean exponential map at $\varphi_\pm(x) \,$. We define
 \[
 H_\delta^1(\R,\gamma_0^*\Dcal') := \Bigset{\xi \in \Gamma^0(\R,\gamma_0^*TM)}{\begin{array}{c}
    \xi(\R) \subseteq \Dcal' \text{ and } \Psi \cdot \xi \in H^1_\delta(\R,\R^m) \text{ for any}    \\
    \text{smooth trivialization } \Psi : \gamma_0^*TM \overset{\sim}{\rightarrow} \Rbar \times \R^m
 \end{array}}  \,\, ,
 \]
 the space of continuous sections $\xi : \R \rightarrow \gamma_0^*TM$ of $\gamma_0^*TM$ which have image in $\Dcal'$ and lie in $H^1_\delta$ with respect to a trivialization.
 We note that $\gamma_0^*TM \rightarrow \Rbar$ admits a smooth global trivialization since $\Rbar$ is contractible and that every smooth map $A \in \Cinfty(\Rbar, \R^{m \times m})$ induces a well-defined map $H^1_\delta(\R,\R^m) \rightarrow H^1_\delta(\R,\R^m) \comma \xi \mapsto A \cdot \xi \,$, so that the above condition is independent of the trivialization $\Psi \,$.\\
 
 \noindent We fix $s_0 > 1$ sufficiently large so that both $\gamma_0([s_0,+\infty)) \subseteq U_+$ and $\gamma_0((-\infty,-s_0]) \subseteq U_- \,$. For every $\xi \in H_\delta^1(\R,\gamma_0^*\Dcal')$ and vectors $v_\pm \in V_\pm $ we now define the section $\widetilde{\xi}_{(\xi,v_-,v_+)}$ of $\gamma_0^*TM$ over $\R$ by
 \begin{align*}
     \widetilde{\xi}_{(\xi,v_-,v_+)}(s) = \begin{cases}
        \xi(s) & \text{, if } s \in [-s_0,s_0] \\
        \xi(s) + \beta(s-s_0) \, (d\varphi_+)_{\gamma_0(s)}^{-1} \cdot v_+ & \text{, if } s \geq s_0 \\
        \xi(s) + \beta(-s-s_0) \, (d\varphi_-)_{\gamma_0(s)}^{-1} \cdot v_- & \text{, if } s \leq - s_0 \,\, .
     \end{cases}
 \end{align*}
 We lastly fix sufficiently small relatively open neighborhoods $V_\pm' \subseteq V_\pm$ of the origin (of dimension $d_\pm$) so that $\widetilde{\xi}_{(\xi,v_-,v_+)}$ has image in $\Dcal$ for every $v_\pm \in V_\pm'$ and $\xi \in H^1_\delta(\R,\gamma_0^* \Dcal') \,$.
 \\
 \noindent Using \eqref{eq: Morse setting - exp gaux vs Euclidean exp} and that $\varphi_\pm$ are slice charts for $Z_\pm$ centered at $z_\pm \,$, we immediately see that
 \begin{equation}
 \label{eq: Morse setting - exp xitilde limits}
 \exp^{\gaux}_{\gamma_0(\pm \infty)} ( \widetilde{\xi}_{(\xi,v_-,v_+)}(\pm \infty))  = \varphi_\pm^{-1}(v_\pm) \in Z_\pm 
 \end{equation}
 for $\xi \in H^1_\delta(\R,\gamma_0^*\Dcal')$ and $v_\pm \in V_\pm' \,$.
 Again by \eqref{eq: Morse setting - exp gaux vs Euclidean exp}, the curve 
 \[
 s \mapsto \gamma(s) := \exp^{\gaux}_{\gamma_0(s)}(\widetilde{\xi}_{(\xi,v_-,v_+)} (s))
 \]
 satisfies \eqref{eq: Morse setting - def Bcal limit condition} for the charts $(U_\pm, \varphi_\pm)$ and therefore is an element of $\BcalMorse \,$.
\\

\noindent We are finally ready to define the charts. The map 
 \begin{align*}
    \Phi : \,\, H^1_\delta(\R,\gamma_0^*\Dcal') \times V_-' \times V_+' \rightarrow \BcalMorse \comma \quad (\xi,v_-,v_+) \mapsto \exp^{\gaux}_{\gamma_0}( \widetilde{\xi}_{(\xi, v_-,v_+)} )
 \end{align*}
 has $C^0$-open image in $\BcalMorse \,$. Consider the collection $\{\Phi\}$ of all such charts, indexed over the $\gamma_0$, slice charts $(U_\pm,\varphi_\pm)$ centered at $\gamma_0(\pm\infty)$, admissible auxiliary metrics $\gaux$ on $M$, zero section neighborhoods $\Dcal' \subseteq \Dcal \subseteq TM \,$, admissible $s_0 >1$ and $V_\pm' \subseteq \varphi_\pm(U_\pm \cap Z_\pm)$ as above. We give $\BcalMorse$ the final topology with respect to this collection.
 The collection $\{\Phi^{-1}\}$ then constitutes a smooth atlas for $\BcalMorse \,$. Figure \ref{fig: Morse setting - chart construction} illustrates the asymptotics of a trajectory in the domain of a chart $\Phi^{-1} \,$.
\\
 \begin{figure}
    \centering
     \begin{tikzpicture}[scale=1, x=8cm , y=7cm]

\draw (0,0,0) -- (1.05,0,0) -- (1.15,0,-4) -- (0.1,0,-4) -- cycle ;
\node at (1.25,0.1,0.5) {$Z_+$} ;

\begin{scope}[
plane x={(1,0,0)}, plane y = {(0.1,0,-4)}, canvas is plane
]
\draw[loosely dashed] (0.5,0.5) circle [x radius=0.35, y radius = 0.3] ;

\node (origin) at (0.5,0.5) {};
\node (gamma_infty) at (0.65,0.5) {};
\node[below,yshift=0.05cm, node font=\small] (v_+) at (0.59,0.5) {$v_+$};
\draw[arrows={-{Stealth[round,slant=0.7]}}] (origin) -- (gamma_infty) ;
\end{scope}

\fill (origin) circle (1.3pt) node[left, xshift=0.1ex, yshift=-0.5ex, node font=\small] {$z_+$} ;
\fill (gamma_infty) circle (1.3pt) node[right, xshift=-0.2ex, node font=\small] {$\gamma(\infty)$} ;

\begin{scope}[
plane origin={(0,0.5,0)}, plane x={(1,0.5,0)}, plane y = {(0.1,0.5,-4)}, canvas is plane
]
\draw[loosely dashed] (0.5,0.5) circle [x radius=0.35, y radius = 0.3] ;  
\end{scope}

\begin{scope}[
plane origin={(0,-0.2,0)}, plane x={(1,-0.2,0)}, plane y = {(0.1,-0.2,-4)}, canvas is plane
]
\draw[loosely dashed] (0.5,0.5) circle [x radius=0.35, y radius = 0.3] ; 
\end{scope}

\draw[loosely dashed] (0.24,0.52,-0.9) -- (0.24,0.08,-0.9)  (0.24,-0.05,-0.9) -- (0.24,-0.12,-0.9);
\draw[loosely dashed] (0.96,0.55,-0.9) -- (0.96,0.08,-0.9) (0.96,-0.05,-0.9) -- (0.96,-0.12,-0.9);
\node at (0.19,0.43,-1) {$U_+$} ;

\begin{scope}[decoration={
    markings,
    mark=at position 0.23 with {\arrow{Stealth[]}}}
    ] 
    \draw[postaction={decorate}] plot[smooth, tension=0.8] coordinates {  (0.62,0.6,-2) (0.61,0.5,-2) (0.6,0.4,-1.9) (0.58,0.3,-1.9) (0.6,0.2,-1.8) (origin)};
\end{scope}
\node[node font=\small] (gamma_0) at (0.585,0.52,-2) {$\gamma_0$} ;

\begin{scope}[decoration={
    markings,
    mark=at position 0.22 with {\arrow{Stealth[]}}}
    ]
     \node (xi_start) at (0.74,0.4,-2.1) {};
    \draw[postaction={decorate}] plot[smooth, tension=0.8] coordinates { (0.77,0.6,-2) (0.75,0.5,-2)  (0.74,0.4,-2.1)  (0.73,0.3,-2)  (0.7,0.2,-2) (0.69,0.1,-2) (gamma_infty)} ;
\end{scope}
\node[node font=\small] (gamma) at (0.775,0.52,-2) {$\gamma$} ;

\begin{scope}[decoration={
    markings,
    mark=at position 0.5 with {\arrow{Stealth[]}}}
    ]
    \draw[densely dashed, postaction=decorate] plot[smooth, tension=0.8] coordinates { (0.58,0.3,-2)  (0.55,0.2,-2) (0.54,0.1,-2) (origin)} ;
\end{scope}
\node[node font=\small] (exp xi) at (0.47,0.23,-2) {$\exp^{\Bar{g}}_{\gamma_0}(\xi)$} ;

\draw[densely dashed, rounded corners] (xi_start) -- (0.73,0.37,-2) [rounded corners=4pt] -- (0.7,0.36,-2) .. controls (0.67,0.35,-2) and (0.63,0.34,-2) .. (0.6,0.33,-2) [rounded corners=5pt] -- (0.58,0.3,-2) ;

\end{tikzpicture}
    \caption{Positive asymptotics for a trajectory $\gamma = \exp^{\gaux}_{\gamma_0}(\widetilde{\xi}_{(\xi,v_-,v_+)})$ in the chart domain of $\Phi^{-1} \,$. The auxiliary metric $\gaux$ is Euclidean on $(U_+,\varphi_+) \,$, so $\gamma(s) =  \exp^{\gaux}_{\gamma_0(s)}(\xi(s)) + \beta(s-s_0) \, v_+$ in the chart $\varphi_+ $ due to \eqref{eq: Morse setting - exp gaux vs Euclidean exp}.
    } 
    \label{fig: Morse setting - chart construction}
\end{figure}
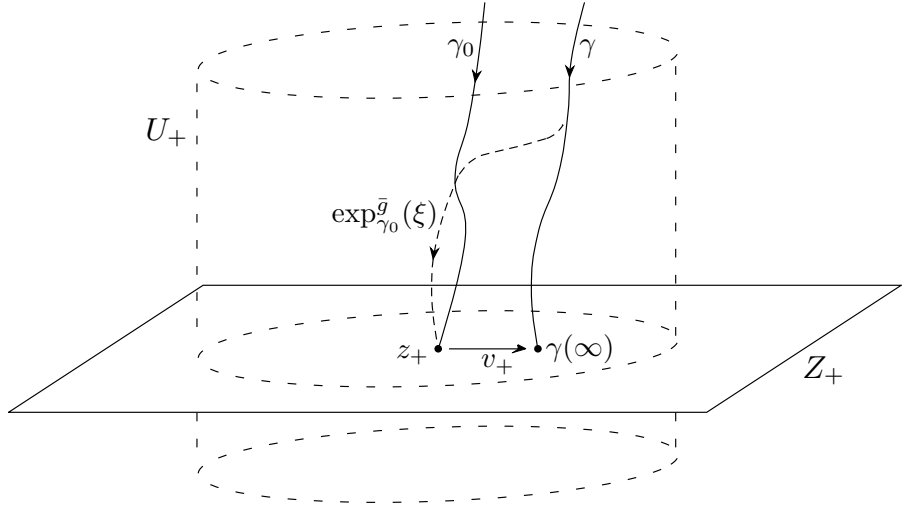
 
 \noindent The above charts $\Phi^{-1}$ are modeled on open subsets of second-countable Banach spaces. It is not difficult to show that the above atlas admits a countable subatlas, so that $\BcalMorse$ is a second-countable Banach manifold. In particular, by a result in \cite{Palais_ParacompactBanachMfds}, $\BcalMorse$ is metrizable.

 \subsubsection{Compactness of $\modulihat(Z_-,Z_+,\zeta_0)$ in $\BcalMorse$}
 \label{subsubsec: Morse setting ambient Banach mfd - compactness in Bcal(Z_-,Z_+)}

 We continue with the setting of Subsection \ref{subsubsec: Morse setting ambient Banach mfd - moduli space unparametrized trajectories}. In particular, we assume $f$ is Morse-Bott along both $Z_-$ and $Z_+$ and have fixed $\zeta_0 \in (\zeta_-,\zeta_+)$ satisfying \eqref{eq: Morse setting - zeta_0}. Let $B > 0$ be a constant as in Lemma \ref{lem: Morse setting - exponential decay}, simultaneously for $Z_-$ and $Z_+ \,$. 

 \begin{theorem}
\label{thm: modulihat(Z_-,Z_+,zeta_0) compact in Bcal(Z_-,Z_+)}
For every $\delta \in (0,B)$, the moduli space $\modulihat(Z_-,Z_+,\zeta_0)$ is a compact subset of $\BcalMorse \,$.
 \end{theorem}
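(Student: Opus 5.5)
Since $\BcalMorse$ is metrizable, compactness of $\modulihat(Z_-,Z_+,\zeta_0)$ amounts to sequential compactness. So the plan is to take an arbitrary sequence $(\gamma_n)_n$ in $\modulihat(Z_-,Z_+,\zeta_0)$ and show that a subsequence converges in $\BcalMorse$. By Corollary \ref{cor: Morse setting ambient Banach mfd - C_0 compactness}, after passing to a subsequence, $\gamma_n \to \gamma$ in $\Cinfty(\R,M)$ for some $\gamma \in \modulihat(Z_-,Z_+,\zeta_0)$. In particular $\gamma_n(\pm\infty) \to \gamma(\pm\infty) =: z_\pm$, and the convergence is uniform on $\R$. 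Since $\gamma$ is a smooth gradient flow line with exponential decay at both ends (Lemma \ref{lem: Morse setting - exponential decay} together with Remark \ref{rmk: Morse setting - exponential decay derivative}), it extends to an element of $\BcalMorse \cap \Cinfty(\Rbar,M)$, possibly after a reparametrization check at $\pm\infty$. Alternatively, we may take as chart center any $\gamma_0 \in \BcalMorse\cap\Cinfty(\Rbar,M)$ that agrees with $\gamma$ on a large compact set and is constant near $\pm\infty$.

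Next I would fix a chart $\Phi$ centered at $\gamma_0 = \gamma$, with slice charts $(U_\pm,\varphi_\pm)$ at $z_\pm$, and show that $\gamma_n = \Phi(\xi_n,v_-^n,v_+^n)$ for $n$ large with $(\xi_n,v_-^n,v_+^n) \to (0,0,0)$. Set $v_\pm^n := \varphi_\pm(\gamma_n(\pm\infty)) \in V_\pm$; these tend to $0$, hence lie in $V'_\pm$ eventually. Define $\xi_n$ by $\exp^{\gaux}_{\gamma(s)}(\widetilde\xi_n(s)) = \gamma_n(s)$, which makes sense pointwise for large $n$ by uniform convergence. On the ends, by \eqref{eq: Morse setting - exp gaux vs Euclidean exp} and for $s \ge s_0+1$, we get in the chart $\varphi_+$
\[
\xi_n(s) \cong \big(\varphi_+\circ\gamma_n(s) - \varphi_+(\gamma_n(\infty))\big) - \big(\varphi_+\circ\gamma(s) - \varphi_+(\gamma(\infty))\big),
\]
and analogously at $-\infty$. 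On the compact middle part $[-s_0-1,s_0+1]$, the difference is controlled in $C^1$ by $\Cinfty$-convergence.

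The main step, and the main obstacle, is showing $\lVert\xi_n\rVert_{H^1_\delta}\to 0$ on the ends. This requires a tail estimate that is \emph{uniform in $n$}, and this is where $\delta<B$ enters. By Lemma \ref{lem: uniform time for pt in Z} (or Lemma \ref{lem: uniform time for nbhd of Z}) applied to $\gamma_n|_{[0,\infty)}$, there is $\bar s$ with $\gamma_n([\bar s,\infty))\subseteq U_Z \cap U_+$ for all large $n$. Then Lemma \ref{lem: Morse setting - exponential decay} and Remark \ref{rmk: Morse setting - exponential decay derivative} give
\[
|\varphi_+\gamma_n(s)-\varphi_+\gamma_n(\infty)| + |(\varphi_+\gamma_n)'(s)| \le C\, e^{-B(s-\bar s)}, \qquad s\ge \bar s,
\]
with $C$ independent of $n$. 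Here we use $\zeta_+-f\circ\gamma_n(\bar s)\le \zeta_+-\zeta_0$ and the comparability of $d_g$ with the Euclidean distance in the chart. The same bound holds for $\gamma$. Given $\eps>0$, I would choose $T\ge\bar s$ with $\int_T^\infty e^{2\delta s}e^{-2B(s-\bar s)}\,ds<\eps$, which is possible since $\delta<B$. On $[T,\infty)$ both $\xi_n$ and $\xi_n'$ then have weighted $L^2$-norm $O(\eps)$ uniformly in $n$. On $[s_0+1,T]$ the norm tends to zero by $\Cinfty$-convergence on compacts, since $v^n_+\to0$ and $\gamma_n\to\gamma$ in $C^1$ there. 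The cutoff region $[s_0,s_0+1]$ is compact and is handled the same way. Treating $-\infty$ symmetrically gives $\xi_n\to0$ in $H^1_\delta$.

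Since $\Phi$ is a chart, this yields $\gamma_n\to\gamma$ in $\BcalMorse$, which proves the theorem. A minor point to verify is that $\modulihat(Z_-,Z_+,\zeta_0)\subseteq\BcalMorse$ at all. This follows from the same exponential decay estimate, since $\delta<B$.
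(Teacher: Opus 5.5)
Your proposal is correct and follows the paper's proof essentially step for step. You use sequential compactness via Corollary~\ref{cor: Morse setting ambient Banach mfd - C_0 compactness}, a chart centered at the limit $\gamma \in \Cinfty(\Rbar,M)$, $v_n^\pm \to 0$ from convergence of the endpoints $\gamma_n(\pm\infty)$, and $H^1_\delta$-convergence on the ends from an exponential tail bound that is uniform in $n$ (integrable against the weight because $\delta < B$) together with local $C^1$-convergence. The only difference is cosmetic: you cut off the tail by hand with an $\eps$--$T$ argument and make the $n$-independence explicit via $\zeta_+ - f\circ\gamma_n(\bar s) \le \zeta_+ - \zeta_0$, whereas the paper invokes dominated convergence with the dominating function $C\,e^{-2(B-\delta)s}$.
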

 
\begin{proof}
   Fix an arbitrary $\delta \in (0,B)$. Having chosen the constant $B$ as in Lemma \ref{lem: Morse setting - exponential decay} and using also Remark \ref{rmk: Morse setting - exponential decay derivative}, we conclude that $\modulihat(Z_-,Z_+,\zeta_0)$ is a subset of $\BcalMorse \,$. Because $\BcalMorse$ is metrizable, to show compactness of $\modulihat(Z_-,Z_+,\zeta_0)$ it suffices to show sequential compactness. Given a sequence $(\gamma_n)_n \subseteq \modulihat(Z_-,Z_+,\zeta_0)$, we know from Corollary \ref{cor: Morse setting ambient Banach mfd - C_0 compactness} that a subsequence (which we suppress in the notation) converges both in $\Cinftyloc(\R,M)$ and in $C^0(\R,M)$ to an element $\gamma \in \modulihat(Z_-,Z_+,\zeta_0) \,$. It remains to show that the convergence $\gamma_n \overset{n}{\rightarrow} \gamma$ also is with respect to the given topology on $\BcalMorse \,$. To this end observe that $\gamma$ is an element of $\Cinfty(\Rbar,M)$ since (in local coordinates) all higher order derivatives of $\gamma$ exhibit exponential decay.\footnote{
   This can be seen as in Remark \ref{rmk: Morse setting - exponential decay derivative}.
   } Hence we can verify convergence of $\gamma_n$ to $\gamma$ in a chart $\Phi^{-1}$ for $\BcalMorse$ centered at $\gamma \,$, as constructed in the previous subsection. Choosing the charts $(U_\pm,\varphi_\pm)$ smaller if necessary, we may assume that they are contained in tubular neighborhoods around $Z_\pm$ as in the exponential decay lemma (Lemma \ref{lem: Morse setting - exponential decay}).
   Let $(\xi_n,v^-_n,v^+_n)$ be defined by
   \[
  \gamma_n = \Phi(\xi_n,v^-_n,v^+_n) = \exp^{\gaux}_{\gamma}( \widetilde{\xi}_{(\xi_n, v^-_n,v^+_n)} ) \,\, .
   \]
   We have to show that 
   \[
   (\xi_n,v^-_n,v^+_n) \overset{n}{\rightarrow} (0,0,0) \text{ in } H^1_\delta(\R,\gamma^*TM) \times (\R^{d_-} \times \{0\}) \times ( \R^{d_+} \times \{0\}) \,\, .
   \]
   From \eqref{eq: Morse setting - exp xitilde limits} and the $C^0(\R,M)$-convergence of $\gamma_n$ to $\gamma$, we immediately conclude that $v_n^-$ and $v_n^+$ tend to the origin respectively. Again by the $C^0(\R,M)$-convergence, we can choose $\Bar{s} > s_0 +1$ so that $\gamma_n([\Bar{s},\infty]) \subseteq U_+$ and $\gamma_n([-\infty,-\Bar{s}] \subseteq U_-$ for every $n$ sufficiently large. The differential $(d\varphi_+)_{\gamma}$ of the chart $\varphi_+$ gives a local trivialization of $\gamma^*TM$ over $[\Bar{s},\infty]$ and we consider the trivialized section $\xi_n^{\varphi_+}  := (d \varphi_+)_{\gamma} \cdot \xi_n \in H^1_\delta([\Bar{s},\infty),\R^m) \,$. The $\Cinftyloc$-convergence $\gamma_n \rightarrow \gamma$ and the already proven convergence of $v_n^+$ to the origin imply that
   \[
   \xi_n^{\varphi_+} = (d \varphi_+)_{\gamma} \cdot \widetilde{\xi}_{(\xi_n,v_n^-,v_n^+)} - v_n^+ \overset{\text{\eqref{eq: Morse setting - exp gaux vs Euclidean exp}}}{=} \varphi_+ \circ \gamma_n - \varphi_+ \circ \gamma - v_n^+
   \]
   converges to the zero function in $\Cinftyloc \,$. We claim that
   \begin{align}
    \label{eq: Morse setting - xi --> 0 goal}
    \begin{cases}
    \int_{\Bar{s}}^\infty e^{2 \delta s} \, | \xi_n^{\varphi_+}(s) |^2 \, ds \, \overset{n}{\rightarrow} 0 & \text{and} \\
    \int_{\Bar{s}}^\infty  e^{2 \delta s} \, | (\xi_n^{\varphi_+})'(s) |^2 \, ds \, \overset{n}{\rightarrow} 0 \,\, .
    \end{cases}
   \end{align}
The integrable function $s \mapsto C \, e^{- 2 (B- \delta) s} \,$, for some suitable constant $C >0$ independent of $n$, is a dominating function for both $e^{2 \delta s} \, | \xi_n^{\varphi_+}(s) |^2$ and $e^{2 \delta s} \, | (\xi_n^{\varphi_+})'(s) |^2$ by exponential decay of gradient flow lines (Lemma \ref{lem: Morse setting - exponential decay} and Remark \ref{rmk: Morse setting - exponential decay derivative}). Hence \eqref{eq: Morse setting - xi --> 0 goal} follows from pointwise convergence of $|\xi_n^{\varphi_+}|$ and $|(\xi_n^{\varphi_+})'|$ to the zero function and the dominated convergence theorem. Combining \eqref{eq: Morse setting - xi --> 0 goal} and an analogous version on the interval $(-\infty, - \Bar{s}]$ with $\Cinftyloc(\R,M)$-convergence of $\gamma_n$ to $\gamma$, we conclude that $\xi_n$ tends to the zero section in $H^1_\delta(\R,\gamma^*TM)$. This finishes the proof of the theorem.
\end{proof}

\subsection{Optimality of Upper Bound for the Energy}
\label{subsec: Optimality of Upper Bound for Energy}

The spectral gap as upper bound for the energy in Theorem \ref{mthm: Compactness moduli spaces} is optimal. We will illustrate this briefly in the setting of the previous Subsection \ref{subsec: Morse setting - ambient Banach mfd}. Recall that $f|_{Z_\pm} = \zeta_\pm$ and that we assumed that there is at least one gradient flow line from $Z_-$ to $Z_+ \,$. Suppose $\Crit(f)$ consists precisely of the components $Z_-$ and $Z_+$, so that
\[
E_0 := \mathfrak{S}_f^-(Z_-) = \mathfrak{S}_f^+(Z_+) = \zeta_+ - \zeta_- > 0 \,\, .
\]
Examples as above exist in abundance. (For instance the sphere with the height function.) Consider the \textit{moduli space of parametrized trajectories from $Z_-$ to $Z_+$}
\begin{align*}
    \moduli (Z_-,Z_+) := \Bigset{\gamma \in \Cinfty(\R,M)}{
    \gamma' = \nabla f \circ \gamma  \text{ and }
    \gamma(\pm \infty) \in Z_\pm \text{ exist} } \,\, .
\end{align*}
Elements in $\moduli(Z_-,Z_+)$ have energy $E_0 = \zeta_+ - \zeta_- \,$, hence they restrict to elements of the moduli space $\moduli_{(E_0, \, [0,\infty))} (\Gamma,Z_+) $ introduced in \eqref{eq: moduli space def}. Here $\Gamma$ is the gradient flow line class introduced in \eqref{eq: Morse setting - gradient flow line class}.
Fix a gradient flow line $\gamma \in \moduli(Z_-,Z_+)$ and a sequence $(s_n)_n$ tending to $+\infty \,$. Then $\gamma(\,\cdot - s_n)$ is a sequence in $\moduli(Z_-,Z_+)$ which converges in $\Cinftyloc(\R,M)$ to the constant flow line sitting at $\gamma(-\infty) \,$. In particular, neither $\moduli(Z_-,Z_+)$ nor $\moduli_{(E_0, \,[0,\infty))}(\Gamma,Z_+)$ are compact for the topology of $\Czeroloc$-convergence. A fortiori, $\moduli_{(E_0, \, [0,\infty))}$ is not compact for the topology of uniform convergence.

\begin{remark}
\label{rmk: non-compactness moduli space energy < specgap - counterexample}
The above example also shows that one cannot expect a compactness statement as in Theorem \ref{mthm: Compactness moduli spaces} for the moduli space of gradient flow lines $\gamma$ converging to points in $Z$ as $s \rightarrow \infty$ and with energy $E(\gamma) < \specgap \,$. That is, for compactness one needs a uniform energy bound strictly smaller than $\specgap$.
\end{remark}

\section{Illustrating the Compactness Result in the Floer Setting}
\label{sec: Floer - Illustration}

\noindent The advantage of the rather general definition of gradient flow lines in Definition \ref{def: class of gradient flow lines} will become apparent in the Floer case below. Usually, even if a proof from Morse theory carries over to Floer theory, one has to formally repeat the analogous arguments in the Floer case. This has various reasons, one prominent being that the $L^2$-gradient of the action functional is not a vector field on the underlying Banach manifold $H^1(\bbS^1,W)$, so that Floer cylinders are not actually gradient flow lines. The advantage of the rather general definition of gradient flow line classes we have given (Definition \ref{def: class of gradient flow lines}) is that they encompass Floer cylinders as trajectories into the loopspace $H^1(\bbS^1,W)$ or even better into the metrizable space of smooth loops $\Cinfty(\bbS^1,W)$. So we can indeed apply our compactness result in the Floer realm as well, once conditions \ref{it: ass A - Czeroloc convergence} and \ref{it: ass A - shortening gradient flow lines} have been verified. Condition \ref{it: ass A - Czeroloc convergence} essentially follows directly from Gromov compactness. Condition \ref{it: ass A - shortening gradient flow lines}, however, is much harder to ensure and requires our second main theorem (Theorem \ref{mthm: Floer - exponential decay}), the proof of which will be deferred to Section \ref{sec: Exponential Decay - proof}.

\subsection{Morse-Bott set up for condition \ref{it: ass A - shortening gradient flow lines}}
\label{subsec: Floer - MB set-up for ass A shortening flow lines}

In this section let $(W,\omega)$ be a symplectic manifold of dimension $2n$ that is symplectically aspherical. The latter means
\[
\int_{\bbS^2} f^* \omega = 0 \qquad \Forall f \in \Cinfty(\bbS^2,W) \,\, .
\]
An important special case are exact symplectic manifolds $(W,\omega = d \lambda) \,$. Let $H : W \rightarrow \R $ be an autonomous Hamiltonian. Assuming symplectic asphericity, we can define its Hamiltonian action functional $\actionH : H^1_{\mathrm{contr}}(\bbS^1,W) \rightarrow \R$ on the space of contractible loops in the Sobolev space via
\begin{align*}
\actionH(x) := \int_{\mathbbm{D}^2} \Bar{x}^*\omega - \int_0^1 H(x(t)) \, dt \,\, , 
\end{align*}
where $\Bar{x} \in \Cinfty(\mathbbm{D}^2,W)$ is any choice of filling disk for $x$, that is $\Bar{x}|_{\bbS^1} = x \,$. In the case of an exact symplectic manifold with primitive $\lambda$, we may extend the domain of $\actionH$ to $H^1(\bbS^1,W)$ by setting 
\begin{equation}
\label{eq: action functional exact case}
\actionH(x) := \int_{\bbS^1} x^*\lambda - \int_0^1 H (x) \, dt  \qquad \Forall x \in H^1(\bbS^1,W ) \,\, .
\end{equation}
Let $J = (J_t)_{t \in \bbS^1}$ be a smooth $1$-periodic family of $\omega$-compatible almost complex structures on $W$. This gives rise to a smooth family $g_t := \omega(\,\cdot\, , J_t \,\cdot\,)$ of Riemannian metrics on $W$ and to an induced $L^2$-metric, which at $x \in H^1(\bbS^1,W)$ is defined by
\begin{align}
\label{eq: g_J metric loopspace}
    (g_J)_x(X_1,X_2) := \int_0^1 (g_{t})_{x}(X_1, X_2) \, dt \qquad \Forall X_1,X_2 \in L^2(\bbS^1,x^*TW) \,\, .
\end{align}
We emphasize that the $L^2$-metric gives a bundle metric, not a Riemannian metric. The gradient of $\actionH$ with respect to this metric at the loop $x$ is the $L^2$-vector field along $x$ given by
\[
\nabla^{g_J} \actionH(x) = - J_t (x) \, (\dot x - X_{H}(x)) \,\, ,
\]
where the Hamiltonian vector field $X_H$ is defined by $dH = \omega(\,\cdot\, , X_H) \,$. 
It is important to realize that, while $\actionH$ might only be defined on the component $H^1_{\mathrm{contr}}(\bbS^1,V)$ of contractible loops, the definition of the gradient makes sense on the entire Sobolev space $H^1(\bbS^1,W) \,$. For this reason, we can speak of the gradient $\nabla^{g_J} \actionH (x)$ at an arbitrary loop $x$, not just at a contractible one. The zeroes of the gradient are the one-periodic solutions of $\dot x = X_H(x)$ and are in one-to-one correspondence with fixed points of the time-$1$ flow $\phi_H^1$ of the Hamiltonian vector field. If $x$ is additionally contractible or we are working with an exact symplectic manifold, then $x$ is also a critical point of $\actionH$.
\\
In the following, let us abridge 
\[
\loopspace := \Cinfty(\bbS^1,W) \,\, .
\]
The $\actionH$-gradient flow lines on an interval $I \subseteq \R$ are maps $u : I \rightarrow \loopspace$ that, considered as cylinders $u : I \times \bbS^1 \rightarrow W$, are smooth solutions of the Floer equation
\begin{equation}
\label{eq: Floer eq - Floer illustration}
\partial_s u + J_t(u)\, (\partial_t u - X_H(u)) = 0 \,\, .
\end{equation}
(We do not require $u$ to take on values in the space of contractible loops.) 
To distinguish between $u$ as being defined on an interval versus on a cylinder, we also write
\[
u_s := u(s,\,\cdot\,) \in \loopspace \,\, .
\]
We now make the following Morse-Bott type assumption.

{
\let\realItem\item 
\makeatletter
\NewDocumentCommand\myItem{ o }{%
   \IfNoValueTF{#1}%
      {\realItem}
      {\realItem[#1]\def\@currentlabel{#1}}
}
\makeatother

\setlist[enumerate]{
    before=\let\item\myItem,       
    label=\textnormal{(\arabic*)}, 
    widest=(2')                    
}

\begin{enumerate}
    \item[(MB$_{(H,N)}$)]\label{it: MB} Let $N \subseteq W$ be a closed, connected submanifold so that $N$ is fibered by periodic Hamiltonian flow lines of $H$ of minimal period $1$. Suppose moreover
    \[
    T_p N = \ker (d_p \phi_H^1 - \id) \quad \Forall p \in N \,\, .
    \]
\end{enumerate}
}

\begin{remark}
\label{rmk: MB minimal period}
The minimal period assumption can likely be dropped by lifting the charts in Lemma \ref{lem: tubular nbhd around limit loop} to a covering.
\end{remark}

\noindent One can naturally identify $N$ with the set of Hamiltonian flow lines starting at points of $N$. Define
\begin{align}
\label{eq: Floer - Nloopspace}
    \Nloopspace := \set{x \in \loopspace}{x(t) = \phi_H^t(x(0)) \comma x(0) \in N} \,\, ,
\end{align}
so that $\Nloopspace$ is contained in the zero set of $\nabla^{g_J} \actionH \,$. The natural continuous injective map $N \rightarrow \loopspace \comma \,\, p \mapsto (\phi_H^t(p))_t \,$, is a topological embedding onto its image $\mathcal{N}$. (Here and always, $\loopspace$ carries the $\Cinfty$-topology.) Thus $\Nloopspace$ is a compact and connected topological manifold. If $\Nloopspace$ consists of contractible loops or we are in the exact case, then $\Nloopspace \subseteq \Crit(\actionH)$ and $\actionH$ is constant on its connected critical set $\Nloopspace \,$.
\\

\noindent We can now state our second main result.

\begin{MainThm}
\label{mthm: Floer - exponential decay}
Suppose \ref{it: MB} holds. Then for every $x_0 \in \Nloopspace$ there exists a $\Cinfty$-open neighborhood $\Ucal \subseteq \loopspace$ of $x_0 \,$, a $\Cinfty$-continuous function $\Xi : \Ucal \rightarrow [0,\infty) $ vanishing on $\Nloopspace \cap \Ucal$ and a constant $B >0$ with the following significance.\\
For every $s_0 \in \R$ and every $\actionH$-gradient flow line $u : [s_0,\infty) \rightarrow \loopspace$ with image contained in $\Ucal$ and existing limit (in the $\Cinfty$-topology) $u_\infty = \lim_{s \rightarrow \infty} u_s \in \Nloopspace \,$, it holds
\begin{align*}
     |\partial_s u(s,t)| \leq \Xi(u_{s_0}) \, e^{-B(s-s_0)}  \qquad \Forall s \geq s_0 \comma t \in \bbS^1 \,\, .
\end{align*}
\end{MainThm}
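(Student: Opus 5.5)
We follow the classical Morse–Bott exponential decay scheme for Floer cylinders (Robbin–Salamon, Bourgeois), keeping every constant under control so that the prefactor depends continuously on the initial loop $u_{s_0}$. The plan has three steps.

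\emph{Step 1 (local coordinates).} Near $x_0 \in \Nloopspace$ we choose coordinates adapted to $N$, via a lemma of the type of Lemma \ref{lem: tubular nbhd around limit loop}. Concretely, we pick a tubular neighbourhood of $N$ and a time-dependent frame along the orbits $t \mapsto \phi_H^t(p)$; the minimal period $1$ assumption makes this frame well defined. In these coordinates a loop $x$ near $\Nloopspace$ becomes $x = \exp_{y}(\xi)$ with $y \in \Nloopspace$ and $\xi$ a vector field along $y$. The Floer equation then takes the form
\[
\partial_s \xi + A_{y}\xi = Q(\xi,\partial_t\xi).
\]
Here $A_y = -J_t(\nabla_t - \nabla X_H)$ is the Hessian at $y$ and $Q$ is quadratically small. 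By \ref{it: MB}, $A_y$ is self-adjoint on $L^2(\bbS^1)$ with kernel exactly $T_y\Nloopspace$, which is the tangent space of $N$ transported by the flow. Its spectral gap $2B_0>0$ is locally uniform in $y$.

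\emph{Step 2 ($L^2$ decay of $\partial_s u$).} We set $v := \partial_s u$. Differentiating the Floer equation in $s$ gives
\[
\partial_s v + A(s) v = 0,
\]
where $A(s)$ is the linearized operator at $u_s$. As long as $u_s$ stays $C^1$-close to $\Nloopspace$, the operator $A(s)$ is a small perturbation of some $A_y$.

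The key point is that $v$ is $L^2$-orthogonal to the kernel only up to a small error. We handle this by projecting the equation onto the kernel and onto its complement. The Floer energy identity says that $v = \nabla^{g_J}\actionH(u_s)$ is the gradient of $\actionH$. Near $\Nloopspace$, the action functional satisfies a Morse–Bott {\L}ojasiewicz-type inequality
\[
\zeta-\actionH(x)\le C|\nabla\actionH(x)|_{L^2}^2.
\]
Combined with $\tfrac{d}{ds}\actionH(u_s)=|v|^2_{L^2}$, this yields
\[
\zeta-\actionH(u_s)\le (\zeta-\actionH(u_{s_0}))e^{-2B(s-s_0)}.
\]
Here the convergence $u_\infty\in\Nloopspace$ forces the limit of the action to equal $\zeta$. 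Integrating this energy decay over $[s,s+1]$ gives
\[
\int_s^{s+1}\|v\|_{L^2}^2 \le C(\zeta-\actionH(u_{s_0}))e^{-2B(s-s_0)}.
\]

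\emph{Step 3 (pointwise bounds and the choice of $\Xi$).} We upgrade the $L^2$ estimate on $[s,s+1]\times\bbS^1$ to a pointwise bound on $|\partial_s u|$. The tool is elliptic bootstrapping for the linear Cauchy–Riemann type equation satisfied by $v$, which is legitimate because all derivatives of $u$ are uniformly bounded while $u$ stays in a small $\Cinfty$-neighbourhood $\Ucal$. This handles all $s\ge s_0+1$.

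The initial strip $[s_0,s_0+1]$ needs separate treatment, and it is here that continuity of $\Xi$ must be secured. Since $\Ucal$ is defined by $C^k$-smallness, we bound $|\partial_s u|$ on the initial strip directly by $C\|\nabla^{g_J}\actionH(u_{s_0})\|_{C^0}$ via interior estimates. This is a continuous function of $u_{s_0}$ in the $\Cinfty$-topology.

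We then take
\[
\Xi(x):=C\big(\sqrt{|\zeta-\actionH(x)|}+\|J_t(\dot x-X_H(x))\|_{C^1}\big),
\]
which is $\Cinfty$-continuous and vanishes on $\Nloopspace$.

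\emph{Main obstacle.} The hardest part is the initial strip. Interior elliptic estimates do not reach the boundary $s=s_0$, so controlling $\partial_s u$ near $s_0$ by data of $u_{s_0}$ alone is delicate. I would first try to control $v$ near $s_0$ through its equation $\partial_s v=-A(s)v$, treated as an evolution in $s$ with initial value $v(s_0)=-J_t(u_{s_0})(\dot u_{s_0}-X_H)$. This ill-posed problem becomes tractable once the $C^k$-bounds in $\Ucal$ are available, via energy estimates on $\|v\|_{H^k(\bbS^1)}$ in the spirit of the diagonalization argument credited to Siefring.
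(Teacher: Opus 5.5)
Your treatment of the tail $s \ge s_0+1$ is a legitimate alternative route: energy decay from a Morse--Bott gradient inequality, then interior elliptic bootstrapping for $v=\partial_s u$. There is, however, a genuine gap on the initial strip $[s_0,s_0+1]\times\bbS^1$, and that is exactly where the content of the theorem lies. The estimate must hold from $s=s_0$ on, with a prefactor that is a continuous function of the single loop $u_{s_0}$ and vanishes on $\Nloopspace$.

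In Step 3 you bound $|\partial_s u|$ on this strip by $C\,\|\nabla^{g_J}\actionH(u_{s_0})\|_{C^0}$ ``via interior estimates''. As you concede, interior estimates give nothing up to $s=s_0$. Your fallback is to treat $\partial_s v=-A(s)v$ as an evolution with initial datum $v(s_0)$ and run energy estimates; this fails as stated. $A(s)$ is a perturbation of a self-adjoint operator whose spectrum is unbounded in both directions, so the initial value problem is ill-posed. No bound on $v(s)$, $s>s_0$, in terms of $v(s_0)$ alone can hold; the condition at $s=\infty$ has to enter. Moreover, even with $L^2$ control of $v$, naive energy estimates for $\|v\|_{H^k(\bbS^1)}$ do not close, because the asymptotic operator is $L^2$- but not $H^1$-self-adjoint (Remark \ref{rmk: mistake by Bourgeois-Oancea}).

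The paper never uses interior estimates. Writing the shifted Floer equation as $\partial_s Z=\Asf_Z Z$, it proves $f''\ge 4B^2 f$ for $f=\tfrac12\|\Qsf Z\|_{L^2}^2$. It then concludes by the maximum principle (Lemma \ref{lem: maximum principle}), using only $f(s_0)$ and $f(s)\to 0$. This convexity argument is how the asymptotic condition tames the ill-posedness, and the bound starts exactly at $s_0$.

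To go beyond $L^2$, the same argument is run for $\Zhat=(Z,\partial_s Z,\partial_s^2 Z)$ with the triangular operator $\Asfhat_x$. Its entries are rewritten, by repeatedly using the Floer equation, as functions of the loop $x$ alone. Hence $\Zhat(s_0)=\mathsf{E}(Z(s_0))$, and the prefactor $\|\Qsfhat\,\mathsf{E}(Z(s_0))\|_{\Lhat^2}$ is automatically continuous in $u_{s_0}$. The $H^2$ bound on $\Qsf Z(s)$ then follows from the Floer equation, and $\|\partial_s u\|_{C^0}\lesssim\|\Qsf Z\|_{H^2}$ by Sobolev embedding.

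To keep your route, you would need a genuine boundary estimate on the initial strip, with constants uniform over $\Ucal$. One option is an elliptic estimate for the equation of $v$ with a spectral (APS-type) boundary condition at $s_0$; you have not supplied anything of this kind. Two smaller points:
\begin{itemize}
\item The Morse--Bott {\L}ojasiewicz inequality you quote is plausible but needs proof in this infinite-dimensional setting.
\item The theorem assumes neither exactness nor contractibility of $x_0$, so $\actionH$ is in general only available as a locally defined action near $x_0$.
\end{itemize}
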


\noindent The conclusion of the theorem is independent of the Riemannian metric used on $W$ to measure the norm of $\partial_s u (s,t) \,$, which is why we did not specify it. The proof of Theorem \ref{mthm: Floer - exponential decay} will be given in Section \ref{sec: Exponential Decay - proof}. As an immediate consequence, we see that condition \ref{it: ass A - shortening gradient flow lines} is satisfied. Let us state this as a separate result.

\begin{corollary}
\label{cor: Floer - ass A shortening gradient flow lines}
Suppose \ref{it: MB} holds.
\begin{enumerate}[label=(\alph*)]
    \item\label{it: Floer - ass A shortening general case} In the general case, assume additionally that the periodic Hamiltonian flow lines in $N$ are contractible loops. Endow $M := \Cinfty_{\mathrm{contr}}(\bbS^1,W)$ with the $\Cinfty$-topology. Set $Z:= \Nloopspace$ and $f := \actionH : M \rightarrow \R$ and define $G : M \rightarrow [0,\infty)$ by
     $G(x) := |\nabla^{g_J} \actionH (x )|_{g_J}^2 \,$. Fix the gradient flow line class, which assigns to a closed interval $I \subseteq \R$ the set
    \begin{align*}
       \Bigset{u : I \rightarrow M }{u : I \times \bbS^1 \rightarrow W \textit{ satisfies \eqref{eq: Floer eq - Floer illustration}}} \,\, .
    \end{align*} 
    \item\label{it: Floer - ass A shortening exact case} In the exact case $\omega = d \lambda \,$, we can even take $M := \loopspace = \Cinfty(\bbS^1,W)$. Again, let $Z := \Nloopspace$ and let $f = \actionH : M \rightarrow \R$ be defined by \eqref{eq: action functional exact case}. Define $G : M \rightarrow [0,\infty)$ and the gradient flow line class by the same formulae as above.
\end{enumerate}
Then condition \ref{it: ass A - shortening gradient flow lines} holds for every $E_0 \geq 0$ and every interval $I = [a,\infty) \,$. 
\end{corollary}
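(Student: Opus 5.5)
We will verify the hypotheses of Lemma \ref{lem: shortening gradient flow lines criterion} for each $z = x_0 \in Z = \Nloopspace$. Theorem \ref{mthm: Floer - exponential decay} supplies a $\Cinfty$-open neighborhood $\Ucal$ of $x_0$, a $\Cinfty$-continuous function $\Xi_0 : \Ucal \rightarrow [0,\infty)$ vanishing on $\Nloopspace \cap \Ucal$, and a constant $B > 0$. In case \ref{it: Floer - ass A shortening general case} we intersect $\Ucal$ with the open subset $M = \Cinfty_{\mathrm{contr}}(\bbS^1,W)$ of $\loopspace$; contractible loops form a union of path components and are therefore $\Cinfty$-open. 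We take $U := \Ucal \cap M$.

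As coarse metric on $U$ we take the $C^0$-distance
\[
d(x,y) := \sup_{t \in \bbS^1} d_{g}(x(t),y(t)) \,\, ,
\]
where $d_g$ is the distance of a fixed complete (or just any) Riemannian metric $g$ on $W$, for instance $g_0 = \omega(\cdot, J_0 \cdot)$. This is a genuine metric on loops. Its topology is the $C^0$-topology, which is coarser than the $\Cinfty$-topology, so $d$ is a coarse metric in the sense of Definition \ref{def: coarse metric}, and its restriction to $U$ is again coarse by Remark \ref{rmk: coarse metric}. We set $\Xi := \Xi_0 / B$ on $U$, measuring $|\partial_s u|$ with respect to $g$. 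This is allowed since the conclusion of Theorem \ref{mthm: Floer - exponential decay} does not depend on the choice of metric.

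Now let $J = [b,\infty) \subseteq I$ and let $\gamma = u \in \moduli_{(E_0,J)}$ have image in $U$ for $b \leq s \leq \infty$. By definition of the moduli space, the limit $u_\infty \in \Nloopspace$ exists in the $\Cinfty$-topology. Theorem \ref{mthm: Floer - exponential decay} then gives $|\partial_s u(s,t)| \leq \Xi_0(u_b) e^{-B(s-b)}$ for all $s \geq b$. For fixed $t$ and $S > b$, integrating along the path $s \mapsto u(s,t)$ yields
\[
d_g(u(b,t),u(S,t)) \leq \int_b^S |\partial_s u(s,t)| \, ds \leq \frac{\Xi_0(u_b)}{B} \,\, .
\]
We then let $S \rightarrow \infty$, using the pointwise convergence $u(S,t) \rightarrow u_\infty(t)$, and take the supremum over $t$. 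This gives $d(u_b, u_\infty) \leq \Xi(u_b)$.

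Finally, $\Xi$ is continuous and vanishes on $Z \cap U$, so Lemma \ref{lem: shortening gradient flow lines criterion} yields condition \ref{it: ass A - shortening gradient flow lines} for every $E_0$ and every $I$. The only step that needs any care is checking that $d$ really is a coarse metric, and that case \ref{it: Floer - ass A shortening general case} works in the restricted space $M$ with $\Ucal$ shrunk. Beyond that the corollary is a direct consequence of Theorem \ref{mthm: Floer - exponential decay}, and the genuine difficulty lies in that theorem itself.
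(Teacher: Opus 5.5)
Your proof is correct and matches the paper's argument: you verify the hypotheses of Lemma \ref{lem: shortening gradient flow lines criterion} with the sup-metric $d_{C^0}$ as coarse metric and $\Xi/B$ as control function, by integrating the decay estimate of Theorem \ref{mthm: Floer - exponential decay} in $s$. Intersecting $\Ucal$ with $M$ in case \ref{it: Floer - ass A shortening general case} and letting $S \to \infty$ pointwise in $t$ only make explicit what the paper leaves implicit.
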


\begin{proof}
    It suffices to verify the assumptions of Lemma \ref{lem: shortening gradient flow lines criterion}. Fix a Riemannian metric on $W$ with induced distance function $d_W : W \times W \rightarrow [0,\infty) \,$.
    Given $x_0 \in \Nloopspace \,$, choose an open neighborhood $\Ucal \subseteq M $ of $x_0 \,$, a continuous function $\Xi : \Ucal \rightarrow [0,\infty)$ vanishing on $\Nloopspace \cap \Ucal$ and a constant $B > 0$ with the property described in Theorem \ref{mthm: Floer - exponential decay}. As coarse metric take the sup-metric $d_{C^0}$ on $M$, defined by
    \[
    d_{C^0}(x,y) := \max_{t \in \bbS^1} \, d_W(x(t), y(t)) \quad \text{ for } x,y \in M \,\, .
    \]
    Clearly $d_{C^0}$ induces the $C^0$-topology on $M$, which is coarser than the given $\Cinfty$-topology. Now let $u : [s_0,\infty) \rightarrow M$ be any gradient flow line with image contained in $\Ucal$ and existing limit $u_{\infty} \in \Nloopspace \cap \Ucal $ (in the $\Cinfty$-topology). We can now estimate
    \begin{align*}
        d_{C^0}(u_{s_0}, u_{\infty}) \leq \sup_{t \in \bbS^1} \int_{s_0}^{\infty} |\partial_s u(s,t) | \, ds \leq \Xi(u_{s_0}) \int_{s_0}^{\infty} e^{-B(s-s_0)} \, ds = \frac{\Xi(u_{s_0})}{B}\,\, .
    \end{align*}
    Thus the function $\frac{1}{B} \, \Xi : \Ucal \rightarrow [0,\infty) $ has the required property stated in Lemma \ref{lem: shortening gradient flow lines criterion}.
\end{proof}

\noindent The exponential coefficient $B > 0$ in Theorem \ref{mthm: Floer - exponential decay} can be chosen uniformly for every loop $x_0 \in \Nloopspace \,$, which is relevant if one wants to embed moduli spaces of Floer cylinders into a Banach manifold of Sobolev maps of weighted exponential decay (choosing the weight smaller than a uniform exponential decay coefficient $B$), similarly to Theorem \ref{thm: modulihat(Z_-,Z_+,zeta_0) compact in Bcal(Z_-,Z_+)}. This can be deduced from abstract nonsense only. To this end, we first state a version of the well-known tube lemma from topology.

\begin{lemma}
\label{lem: tube lemma}
Let $X$ be a compact topological space. Let $\mathbf{P}$ be a predicate taking arguments in $(0,\infty) \times X$ and satisfying

{
\let\realItem\item 
\makeatletter
\NewDocumentCommand\myItem{ o }{%
   \IfNoValueTF{#1}%
      {\realItem}
      {\realItem[#1]\def\@currentlabel{#1}}
}
\makeatother

\setlist[enumerate]{
    before=\let\item\myItem,       
    label=\textnormal{(\arabic*)}, 
    widest=(2')                    
}

\begin{enumerate}
    \item[($\mathbf{P}_1$)]\label{it: P1 - tube lemma} For all $0 < r < R$ and $x \in X$: \quad $\mathbf{P}(R,x) \,\, \Longrightarrow \,\, \mathbf{P}(r,x) \,\, .$
    \item[($\mathbf{P}_2$)]\label{it: P2 - tube lemma} For every $x \in X$ there exists a neighborhood $U \subseteq X$ of $x$ and $r > 0$ so that $\mathbf{P}(r,y) $ holds for every $y \in U \,$.
\end{enumerate}
}
Then there exists $r_0 >0$ so that $\mathbf{P}(r,x)$ holds for all $0 < r \leq r_0$ and $x \in X \,$.
\end{lemma}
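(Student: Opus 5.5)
The plan is a standard compactness argument: cover $X$ by the neighborhoods from ($\mathbf{P}_2$), pass to a finite subcover, and take the minimum of the finitely many radii. Monotonicity ($\mathbf{P}_1$) then propagates the property to all smaller radii.

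First, for each $x \in X$, use ($\mathbf{P}_2$) to choose a neighborhood $U_x$ of $x$ and a radius $r_x > 0$ such that $\mathbf{P}(r_x,y)$ holds for every $y \in U_x$. Since a neighborhood contains an open set around its point, we may shrink each $U_x$ to its interior. This gives an open cover $\{\mathrm{int}(U_x)\}_{x \in X}$ of $X$. By compactness of $X$, there are finitely many points $x_1,\dots,x_k$ with $X = \bigcup_{i=1}^k \mathrm{int}(U_{x_i})$. (If $X$ is empty the statement is vacuous and any $r_0$ works.)

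Next, set $r_0 := \min\{r_{x_1},\dots,r_{x_k}\} > 0$; this is positive because it is a minimum of finitely many positive numbers. Given $0 < r \leq r_0$ and $x \in X$, pick $i$ with $x \in U_{x_i}$. Then $\mathbf{P}(r_{x_i},x)$ holds and $r \leq r_0 \leq r_{x_i}$. If $r = r_{x_i}$ we are done. Otherwise $r < r_{x_i}$, and ($\mathbf{P}_1$) gives $\mathbf{P}(r,x)$.

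There is no real obstacle here. The only points of care are passing from neighborhoods to open sets so that compactness applies, and handling the boundary case $r = r_{x_i}$, since ($\mathbf{P}_1$) is stated only for strict inequality.
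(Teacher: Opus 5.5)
Your proposal is correct and follows the paper's argument: choose neighborhoods and radii via ($\mathbf{P}_2$), extract a finite subcover by compactness, take the minimum radius, and conclude with ($\mathbf{P}_1$). Your handling of interiors, the empty space, and the boundary case $r = r_{x_i}$ just spells out details the paper leaves implicit.
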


\begin{proof}
    For each $x \in X$ choose a neighborhood $U(x) \subseteq X$ of $x$ and $r(x) > 0$ as in \ref{it: P2 - tube lemma}. By compactness, finitely many $U(x_1),\ldots , U(x_n)$ cover $X$. Then $r_0 := \min_{1 \leq i \leq n} r(x_i) > 0 $ has the desired property by \ref{it: P1 - tube lemma}.
\end{proof}

\begin{corollary}
\label{cor: Floer - exponential decay uniform constant}
The exponential decay constant $B > 0$ in Theorem \ref{mthm: Floer - exponential decay} can be chosen uniformly for every loop $x_0 \in \Nloopspace \,$.
\end{corollary}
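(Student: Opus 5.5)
We apply Lemma \ref{lem: tube lemma} with $X := \Nloopspace$, which is compact (it is homeomorphic to the closed manifold $N$). For $B > 0$ and $x_0 \in \Nloopspace$, let $\mathbf{P}(B,x_0)$ be the statement that there exist a $\Cinfty$-open neighborhood $\Ucal \subseteq \loopspace$ of $x_0$ and a $\Cinfty$-continuous function $\Xi : \Ucal \rightarrow [0,\infty)$ vanishing on $\Nloopspace \cap \Ucal$ such that the estimate of Theorem \ref{mthm: Floer - exponential decay} holds with exponent $B$. That is, for every $s_0 \in \R$ and every gradient flow line $u : [s_0,\infty) \rightarrow \loopspace$ with image in $\Ucal$ and limit $u_\infty \in \Nloopspace$, we require $|\partial_s u(s,t)| \leq \Xi(u_{s_0})\, e^{-B(s-s_0)}$ for all $s \geq s_0$ and $t \in \bbS^1$.

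First we check \ref{it: P1 - tube lemma}. If $r < R$, then $e^{-R(s-s_0)} \leq e^{-r(s-s_0)}$ for $s \geq s_0$, so the same pair $(\Ucal,\Xi)$ witnessing $\mathbf{P}(R,x_0)$ also witnesses $\mathbf{P}(r,x_0)$.

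Next we check \ref{it: P2 - tube lemma}. Fix $x_0 \in \Nloopspace$ and take $\Ucal$, $\Xi$ and $B$ from Theorem \ref{mthm: Floer - exponential decay}. The set $U := \Ucal \cap \Nloopspace$ is a neighborhood of $x_0$ in $X$, with the subspace topology. For every $y \in U$, the set $\Ucal$ is itself an open neighborhood of $y$, and $\Xi$ is defined on it, continuous and vanishing on $\Nloopspace \cap \Ucal$. The decay estimate is a statement about flow lines in $\Ucal$ and does not refer to the base point at all. Hence $\mathbf{P}(B,y)$ holds for all $y \in U$.

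Lemma \ref{lem: tube lemma} now yields $B_0 > 0$ such that $\mathbf{P}(B_0,x_0)$ holds for every $x_0 \in \Nloopspace$. This is precisely the claim that a single constant $B_0$ works in Theorem \ref{mthm: Floer - exponential decay} for all loops $x_0$. There is no real obstacle. The only point needing care is to formulate $\mathbf{P}$ so that the neighborhood and the function $\Xi$ are existentially quantified for each base point separately, with only $B$ shared. This is what makes \ref{it: P2 - tube lemma} immediate.
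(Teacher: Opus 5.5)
Your proposal is correct and follows the paper's proof. Both apply the tube lemma to the compact set $\Nloopspace$, using the predicate in which $\Ucal$ and $\Xi$ are chosen separately for each base point and only $B$ is shared. Both verify ($\mathbf{P}_2$) by noting that the triple $(\Ucal,\Xi,B)$ chosen for $x_0$ also works for every $x_1 \in \Nloopspace \cap \Ucal$; your explicit check of ($\mathbf{P}_1$) is a detail the paper leaves implicit.
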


\begin{proof}
Apply Lemma \ref{lem: tube lemma} to the compactum $\Nloopspace$ and the obvious choice of predicate. To verify \ref{it: P2 - tube lemma}, observe that the choices $(\Ucal,\Xi, B) \,$, working for a particular loop $x_0 \in \Nloopspace$ in Theorem \ref{mthm: Floer - exponential decay}, also work for every loop $x_1 \in \Nloopspace \cap \Ucal \,$.
\end{proof}

\subsection{Theorem \ref{mthm: Compactness moduli spaces} in the context of Symplectic Homology}
\label{subsec: Floer - symplectic homology setting}

We will give an application of Theorem \ref{mthm: Compactness moduli spaces} to Floer cylinders for Hamiltonians appearing naturally in the setting of symplectic homology. We refer to Seidel \cite{Seidel_sympletic_cohomology} for an excellent survey on symplectic homology and Bourgeois-Oancea \cite{Oancea_Bourgeois_MorseBott} for the analytical details.\\

\noindent Let $(W,\lambda)$ be a compact Liouville domain. This means that $\lambda \in \Omega^1(W)$ is the primitive of a symplectic form $\omega := d \lambda$ on $W$ and that the global Liouville vector field $X$ on $W$, defined by $\iota_X \omega = \lambda \,$, is outward pointing on the boundary $\Sigma := \partial W \,$.
The restriction $\alpha := \lambda|_\Sigma$ gives a contact form on $\Sigma$ whose Reeb vector field we denote by $R_\alpha \,$. We endow the completion $\widehat{W} := W \, \cup_\Sigma \,  [1,\infty) \times \Sigma$ with the one-form
\begin{align*}
    \widehat{\lambda} := \begin{cases}
        \lambda & \text{ on } W \\
       r \alpha &  \text{ on } [1,\infty) \times \Sigma \,\, ,
    \end{cases}
\end{align*}
where $r$ designates the coordinate on $[1,\infty) \,$, and the symplectic form $\widehat{\omega} := d \widehat{\lambda} \,$.

\begin{example}
\label{ex: cotangent bundle}
An important example of a compact Liouville domain is the unit disk cotangent bundle over a closed Riemannian manifold, equipped with the canonical one-form. Its completion is the whole cotangent bundle. 
\end{example}

\noindent For every $T \in \R$ let $N^T \subseteq \Sigma$ denote the fixed point set of the time-$T$ Reeb flow $\phi_{R_\alpha}^T \,$. For $T >0$ consider the following Morse-Bott type assumption.

{
\let\realItem\item 
\makeatletter
\NewDocumentCommand\myItem{ o }{%
   \IfNoValueTF{#1}%
      {\realItem}
      {\realItem[#1]\def\@currentlabel{#1}}
}
\makeatother

\setlist[enumerate]{
    before=\let\item\myItem,       
    label=\textnormal{(\arabic*)}, 
    widest=(2')                    
}

\begin{enumerate}
    \item[(ReebMB$_{T}$)]\label{it: Reeb MB} Suppose $N^{T} \subseteq \Sigma$ is a closed submanifold, that every point in $N^{T}$ has minimal Reeb period $T$ and that
    \[
    T_p N^{T} = \ker (d_p \phi_{R_\alpha}^{T} - \id_{T_p \Sigma}) \qquad \Forall p \in N^{T} \,\, .
    \]
\end{enumerate}
}

\noindent To understand the Reeb orbits on $\Sigma$ in a certain spectrum of periods, one now studies Hamiltonians $H : \widehat{W} \rightarrow \R$ with
\begin{enumerate}[label=(\arabic*)]
    \item On $[1,\infty) \times \Sigma$, the Hamiltonian $H$ is of the form $H(r,p) = h(r)$ for some smooth $h : [1,\infty) \rightarrow \R$ with $h' > 0 \comma h'' \geq 0$ and $h'' > 0$ on $[1,\Bar{r})$ for some $\Bar{r} > 1 \,$.
    \item $H|_W$ is a $C^2$-small Morse function with $h(1)- h'(1) \leq H \leq 0$ on $W$.
\end{enumerate}
This is the usual set-up described in \cite[\S3]{Oancea_Bourgeois_MorseBott} and \cite[\S3(c)]{Seidel_sympletic_cohomology}, except the extra condition $h(1) -h'(1) \leq \min_W H \,$, which can be easily achieved. Since $H|_W$ is $C^2$-small, the only $1$-periodic Hamiltonian orbits in $W$ are the critical points of $H$. On the cylindrical end, the Hamiltonian vector field is given by $X_H(r,p) = h'(r) \, R_\alpha(p) \,$, so that the Hamiltonian flow is
\begin{align*}
\phi_H^t(r,p) = (r, \phi_{R_\alpha}^{h'(r)t}(p)) \,\, .
\end{align*}
For $r \geq 1$ let $T:= h'(r)\,$. Then we have a bijection between $T$-periodic Reeb orbits $y$ and $1$-periodic Hamiltonian orbits $(r,y(T\,\cdot\,)) \,$. The action value of a critical point $(r,y(T\,\cdot\,))$ of the action functional $\actionH$, defined by \eqref{eq: action functional exact case}, is
\[
\actionH(r,y(T\,\cdot\,)) = r h'(r) - h(r) \,\, .
\]
The action value therefore only depends on $r$ and moreover is increasing in $r$ because
\begin{align*}
\frac{d}{dr} \big( r h'(r) - h(r) \big)= r h''(r) 
\end{align*}
is nonnegative and even strictly positive for $r < \Bar{r} \,$.
Since $h'' > 0$ on $[1,\Bar{r}) \,$, for every $T \in [h'(1) , h'(\Bar{r}) )$ there exists a uniquely determined $r \geq 1$ with $h'(r) = T \,$.

\begin{lemma}
\label{lem: Reeb MB --> MB}
For $r \in [1, \Bar{r})$ set $T:= h'(r) \,$. If \ref{it: Reeb MB} holds, then every connected component $N$ of $\{r\} \times N^{T}$ satisfies condition \ref{it: MB}.
\end{lemma}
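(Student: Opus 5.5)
Fix $r\in[1,\Bar r)$, $T=h'(r)$, and a connected component $N$ of $\{r\}\times N^T$. Three things have to be checked: $N$ is a closed connected submanifold of $\widehat W$; $N$ is fibered by $1$-periodic $H$-orbits of minimal period $1$; and the linearized condition $T_pN=\ker(d_p\phi_H^1-\id)$ holds. Closedness and connectedness are immediate, since $N^T\subseteq\Sigma$ is a closed submanifold by \ref{it: Reeb MB} and $N$ is a component of the closed submanifold $\{r\}\times N^T$ of $\widehat W$. For $p\in N^T$ the flow formula $\phi_H^t(r,p)=(r,\phi_{R_\alpha}^{Tt}(p))$ shows that $t\mapsto\phi_H^t(r,p)$ is $1$-periodic. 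It stays in $\{r\}\times N^T$, because $N^T$ is invariant under the Reeb flow (the flow commutes with $\phi^T_{R_\alpha}$), and by continuity it stays in the component $N$. Its minimal period is $1$, because the minimal Reeb period of $p$ is $T$.

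The main step is the kernel computation. I will split $T_{(r,p)}\widehat W=\R\,\partial_r\oplus T_p\Sigma$ and differentiate $\phi_H^1(r',p')=(r',\phi_{R_\alpha}^{h'(r')}(p'))$ at $(r,p)$. For $(a,v)\in\R\oplus T_p\Sigma$ this gives
\[
d\phi_H^1(a,v)=\big(a,\; d_p\phi_{R_\alpha}^{T}v + a\,h''(r)\,R_\alpha(p)\big),
\]
where I used $\phi^T_{R_\alpha}(p)=p$, so that $\frac{d}{dT'}\phi^{T'}_{R_\alpha}(p)|_{T'=T}=R_\alpha(p)$. Hence $(a,v)$ lies in the kernel of $d\phi_H^1-\id$ exactly when
\[
(d_p\phi^T_{R_\alpha}-\id)v=-a\,h''(r)\,R_\alpha(p).
\]

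The obstacle is to show that this forces $a=0$. Here I use that $d\phi^T_{R_\alpha}$ preserves $\alpha$ and fixes $R_\alpha(p)$. Applying $\alpha_p$ to both sides, the left side gives $\alpha_p(d\phi^T v)-\alpha_p(v)=0$, since $(\phi^T)^*\alpha=\alpha$ and $\phi^T(p)=p$. The right side gives $-a\,h''(r)$. Since $r<\Bar r$ we have $h''(r)>0$, so $a=0$. With $a=0$ the condition reads $v\in\ker(d_p\phi^T_{R_\alpha}-\id)=T_pN^T$ by \ref{it: Reeb MB}. Thus the kernel equals $\{0\}\oplus T_pN^T=T_{(r,p)}N$, which is the Morse--Bott condition in \ref{it: MB}.
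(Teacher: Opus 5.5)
Your proof is correct and follows essentially the same route as the paper: both compute $d_{(r,p)}\phi_H^1$ in the splitting $\R\,\partial_r\oplus T_p\Sigma$, use $h''(r)>0$ to force the $\partial_r$-component of a fixed vector to vanish, and then invoke (ReebMB$_{T}$). The paper argues via the $d\phi_H^1$-invariant splitting $\mathrm{span}\{\partial_r,R_\alpha(p)\}\oplus\xi_p$, whereas you apply $\alpha_p$ and use $(\phi^T_{R_\alpha})^*\alpha=\alpha$; this is the same fact made explicit, and you additionally check the closedness and minimal-period parts of (MB), which the paper leaves implicit.
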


\begin{proof}
    The argument is analogous to \cite[Lemma 3.3]{Oancea_Bourgeois_MorseBott}. The differential of $\phi_H^1$ at $(r,p) \in \{r\} \times N^{T} $ is
\begin{align*}
d_{(r,p)} \phi_H^1 = \left(\begin{matrix}
    \mathrm{id}_{\R} & 0  \\
    h''(r) \, R_\alpha ( \phi_{R_\alpha}^{h'(r)}(p)) & d_p \phi_{R_\alpha}^{h'(r)}
\end{matrix} \right) =  \left(\begin{matrix}
    \mathrm{id}_{\R} & 0  \\
    h''(r) \, R_\alpha (p) & d_p \phi_{R_\alpha}^{T}
\end{matrix} \right) \,\, .
\end{align*}
Since $h''(r) > 0 \,$, the vector $\partial_r$ is not an eigenvector for eigenvalue $1$. Keeping in mind that $R_\alpha(p)$ however is and that $\xi_p= \ker(\alpha_p)$ and $\mathrm{span}\{\partial_r, \, R_\alpha(p)\}$ are invariant under $d_{(r,p)}\phi_H^1 \,$, we conclude that the $1$-eigenspace of $d_{(r,p)} \phi_H^1$ is contained in $T_p \Sigma \,$. By assumption \ref{it: Reeb MB}, it thus equals $ T_p N^{T} = T_{(r,p)} N \,$.
\end{proof}

\begin{lemma}
\label{lem: Reeb MB --> specgap positive}
For $T > 0$ suppose \ref{it: Reeb MB} holds. Then $T$ is isolated in the spectrum
\begin{align*}
    \mathrm{Spec}(\Sigma,\alpha) := \set{\tau > 0}{\text{there exists a } \tau\text{-periodic Reeb orbit}} \,\, .
\end{align*}
\end{lemma}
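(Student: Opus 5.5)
The plan is to argue by contradiction, combining compactness of $\Sigma$ with a Poincar\'e return map argument near a point of $N^{T}$. Suppose there are $T_n \in \mathrm{Spec}(\Sigma,\alpha)$ with $T_n \neq T$ and $T_n \to T$. Let $y_n$ be $T_n$-periodic Reeb orbits and set $p_n := y_n(0)$. Since $\Sigma$ is compact, after passing to a subsequence $p_n \to p$. By continuity of the flow, $\phi_{R_\alpha}^{T}(p) = \lim_n \phi_{R_\alpha}^{T_n}(p_n) = p$, so $p \in N^{T}$. By \ref{it: Reeb MB}, $p$ has minimal period $T$.

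Next I set up the return map. Choose a small hypersurface $D \subseteq \Sigma$ through $p$ transverse to $R_\alpha$, for instance the image under a chart of a small disk tangent to $\xi_p$. The first return map $P : D' \to D$ with return time $\tau : D' \to \R$ is then smooth on a neighborhood $D' \subseteq D$ of $p$, and $\tau(p) = T$ because the minimal period of $p$ is $T$. Flowing $p_n$ for a short time (tending to $0$) gives points $q_n \in D$ with $q_n \to p$ on the same orbits $y_n$.

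I then rule out multiple covers. The first return time $\tau(q_n)$ tends to $T$, and $T_n$ is a multiple $k_n\,\tau(q_n)$ with $k_n \ge 1$; since $T_n \to T$, we get $k_n = 1$ for large $n$. Hence $q_n$ is a fixed point of $P$ with $\tau(q_n) = T_n$. Equivalently, if $y_n$ had minimal period close to $T/k$ with $k \geq 2$, the limit would give $\phi^{T/k}_{R_\alpha}(p) = p$, contradicting minimality.

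The Morse--Bott condition then transfers to $P$. Because $N^T$ is invariant under the flow, $S := N^{T} \cap D$ is a submanifold of $D$ near $p$, consisting of fixed points of $P$, with $T_pN^{T} = T_pS \oplus \R R_\alpha(p)$. The splitting $T_p\Sigma = \R R_\alpha(p) \oplus T_pD$ and the formula $d_pP = \mathrm{pr}_{T_pD} \circ d_p\phi^{T}_{R_\alpha}|_{T_pD}$ give $\ker(d_pP - \id) = T_pS$. This uses $d\phi^T R_\alpha = R_\alpha$ together with \ref{it: Reeb MB}.

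Cleanness then forces $\mathrm{Fix}(P) = S$ near $p$. Choose coordinates $(x,y)$ on $D$ centered at $p$ with $S = \{y = 0\}$, and let $F(x,y) := P(x,y) - (x,y)$ in this chart. Then $F(x,0) = 0$, and $\partial_y F(0,0)$ is injective because $\ker dF_{(0,0)} = T_pS$. By continuity, $\partial_yF(x,\sigma y)$ stays injective with a uniform lower bound for $(x,y)$ small and $\sigma \in [0,1]$. Therefore $F(x,y) = \int_0^1 \partial_yF(x,\sigma y)\,y\,d\sigma \neq 0$ whenever $y \neq 0$ is small. Consequently $q_n \in S \subseteq N^{T}$ for large $n$. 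Points of $N^T$ have minimal period $T$, so $T_n = \tau(q_n) = T$, which contradicts $T_n \neq T$.

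The main obstacle is the linear-algebra step identifying $\ker(d_pP - \id)$ with $T_pS$, together with the bookkeeping that excludes multiply covered orbits. The remaining steps are routine compactness arguments.
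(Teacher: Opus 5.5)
The architecture of your argument is sound: a limit point $p\in N^T$, a Poincar\'e return map on a transversal $D$, transfer of the Morse--Bott condition to $P$, and the clean-fixed-point argument giving $\mathrm{Fix}(P)=S$ near $p$. However, the step you single out as the crux is not justified by the ingredients you cite.

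From $v\in T_pD$ with $d_pP\,v=v$ you only obtain $d_p\phi^T_{R_\alpha}v=v+c\,R_\alpha(p)$ for some $c\in\R$. Neither $d\phi^T_{R_\alpha}R_\alpha=R_\alpha$ nor $\ker(d_p\phi^T_{R_\alpha}-\id)=T_pN^T$ forces $c=0$. A Jordan block $AR=R$, $Av=v+R$ satisfies both, yet $d_pP=\id$ on $T_pD=\R v$ while $T_pS=0$.

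This is not a technicality. Take the flow of $f(r)\,\partial_\theta$ on $(\R/\mathbb{Z})^2$ with $f(r)=2+a\sin(2\pi r)$ and $0<a<1$. The set $N^{1/2}=\{r=0\}\cup\{r=1/2\}$ is a closed submanifold of points of minimal period $1/2$, and $\ker(d\phi^{1/2}-\id)=\R\,\partial_\theta=TN^{1/2}$. Yet every orbit is closed with period $1/f(r)$, so $1/2$ is not isolated among the periods, and the return map to $\{\theta=0\}$ is the identity. No other step of your proof uses more than ``nonvanishing flow on a compact manifold''. So the contact structure must enter exactly here, and as written it never does.

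The missing ingredient is that the Reeb flow preserves $\alpha$. Since $\phi^T_{R_\alpha}(p)=p$, this gives $\alpha_p(d_p\phi^T_{R_\alpha}w)=\alpha_p(w)$ for all $w$, so $\mathrm{im}(d_p\phi^T_{R_\alpha}-\id)\subseteq\xi_p$. Applying $\alpha_p$ to $d_p\phi^T_{R_\alpha}v-v=c\,R_\alpha(p)$ gives $c=0$. Hence $v\in\ker(d_p\phi^T_{R_\alpha}-\id)\cap T_pD=T_pN^T\cap T_pD=T_pS$. Equivalently, take $T_pD=\xi_p$, as you suggested in passing, and use $d\phi^T_{R_\alpha}(\xi)=\xi$, so that no projection is needed. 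With this inserted, your proof becomes a valid self-contained argument; the paper itself gives none and only refers to Fauck's thesis.

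Two smaller repairs are also needed.
\begin{itemize}
\item $T_n$ is not in general $k_n\tau(q_n)$, since the $k$-th return time is $\sum_{j<k}\tau(P^jq_n)$. What you actually need follows from a flow box plus minimality of the period of $p$: for $q\in D$ near $p$, the orbit meets $D$ exactly once in the time window $(0,T+\eps)$. This forces $T_n=\tau(q_n)$ and $P(q_n)=q_n$. Your alternative via $\phi^{T/k}(p)=p$ would additionally need a uniform positive lower bound on periods to bound $k_n$.
\item In the final step, injectivity of each $\partial_yF(x,\sigma y)$ does not by itself make their average injective. Use instead that these operators are uniformly close to the injective operator $\partial_yF(0,0)$.
\end{itemize}
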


\begin{proof}
    This is well-known. See \cite[Thm.~23]{FauckThesis} for a proof.
\end{proof}

\noindent Next we relate the above setting to the setting of our compactness theorem. We endow $M :=  \Cinfty(\bbS^1,\widehat{W}) $ with the $\Cinfty$-topology. Consider a fixed $T_0 \in (h'(1),h'(\Bar{r}))$ for which (ReebMB$_{T_0}$) holds. Let $r_0 \in (1,\Bar{r})$ be determined by $h'(r_0) = T_0 \,$. Analogous to \eqref{eq: Floer - Nloopspace}, we now define
\begin{align*}
    Z := &\Bigset{x \in \Cinfty(\bbS^1,\widehat{W})}{x(t) = \phi_H^t(x(0)) \comma x(0) \in \{r_0\} \times N^{T_0}} \\
    = &\Bigset{(r_0,y(T_0 \,\cdot\,))}{y \text{ is a } T_0 \text{-periodic Reeb orbit}} \,\, .
\end{align*}
Then $Z$ is homeomorphic to $N^{T_0}$ and in particular compact.
Let $f := \actionH : M \rightarrow \R$ be the action functional, defined in \eqref{eq: action functional exact case}, for primitive $\widehat{\lambda} \,$.
We have seen above that $f= \actionH$ has constant action value $\zeta := r_0 h'(r_0) - h(r_0)$ on $Z \,$. 
Fix a one-periodic family $J = (J_t)_{t \in \bbS^1}$ of $\widehat{\omega}$-compatible almost complex structures on $\widehat{W} \,$, which is of SFT-type\footnote{
``SFT'' stands for ``Symplectic Field Theory''
} 
on the cylindrical end $[\Bar{r},\infty) \times \Sigma \,$, that is
\begin{enumerate}[label=(\arabic*)]
    \item $J|_{[\Bar{r},\infty) \times \Sigma}$ is $t$-independent and invariant under the map $(r,p) \mapsto (c \,r,p)$ for every $c \geq 1 \,$,
    \item $J|_{[\Bar{r},\infty) \times \Sigma}$ leaves $\xi = \ker(\alpha)$ invariant and $J|_\xi $ is $d \alpha|_\xi$-compatible,
    \item $J (r \partial_r) = R_\alpha \,$.
\end{enumerate}
As before, let $G : M \rightarrow [0,\infty)$ be defined by $G := |\nabla^{g_J} \actionH|_{g_J}^2 \,$. Consider the class $\Gamma$ of $(f,G)$-gradient flow lines, given by
\begin{align*}
   \R \supseteq I \longmapsto \Gamma(I) := \Bigset{u : I \rightarrow M}{\begin{array}{c}
         u : I \times \bbS^1 \rightarrow \widehat{W} \text{ solves \eqref{eq: Floer eq - Floer illustration} and} \\
         u(I \times \bbS^1) \subseteq \widehat{W}({\Bar{r}}) := W \, \cup_\Sigma \, [1,\Bar{r}] \times \Sigma
    \end{array}} \,\, .
\end{align*}

\begin{lemma}
\label{lem: Floer - ass A verification}
Under assumption (ReebMB$_{T_0}$) and with the above choices it holds:
\begin{enumerate}[label=(\alph*)]
    \item\label{it: Floer - specgap positive} The spectral gap $\mathfrak{S}^+_{f,G}(Z)$ is strictly positive. In fact even
    \begin{align*}
       0 < \mathfrak{S}_{f,G}(Z) := \inf\Bigset{|\zeta - f(x)| }{\begin{array}{c} x \in G^{-1}(0) \backslash Z
    \end{array}} \leq \mathfrak{S}^+_{f,G}(Z) \,\, .
    \end{align*}
\end{enumerate}
Let $E_0 \geq 0$ and the interval $I = [s_0,\infty)$ be arbitrary. 
\begin{enumerate}[label=(\alph*),resume]
    \item\label{it: Floer - ass A Czeroloc} Condition \ref{it: ass A variant - Czeroloc convergence} holds for every interval $I_1=[s_1,\infty)$ with $s_0 < s_1 \,$.
    \item\label{it: Floer - ass A shortening gradient flow lines} Condition \ref{it: ass A - shortening gradient flow lines} holds.
\end{enumerate}
\end{lemma}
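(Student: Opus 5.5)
We treat the three parts separately, starting with (a), which reduces to a statement about the critical set. The zeros of $G$ in $M$ are the one-periodic Hamiltonian orbits, and since no critical value restriction is imposed we must list all of them. Inside $W$ they are the critical points of the $C^2$-small Morse function $H|_W$. Their action is $-H(x)\in[0,h'(1)-h(1)]$, by the normalization $h(1)-h'(1)\le H\le 0$.

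On the cylindrical end they are the loops $(r,y(h'(r)\,\cdot\,))$ with $y$ a $h'(r)$-periodic Reeb orbit, and their action is $a(r):=rh'(r)-h(r)$. This function is nondecreasing, strictly increasing on $[1,\bar r)$, and satisfies $a(1)=h'(1)-h(1)$.

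\begin{itemize}
\item Critical points in $W$ have action at most $a(1)<a(r_0)=\zeta$ (strict since $r_0>1$), so they are bounded away from $\zeta$.
\item Orbits at level $r$ with $r$ far from $r_0$ have action bounded away from $\zeta$ by strict monotonicity of $a$ near $r_0$.
\item Orbits at level $r$ near $r_0$ have period $T=h'(r)$ near $T_0$. By Lemma \ref{lem: Reeb MB --> specgap positive}, $T_0$ is isolated in $\mathrm{Spec}(\Sigma,\alpha)$, so for $r$ close to but distinct from $r_0$ there are no orbits. At $r=r_0$ every orbit lies in $Z$.
\end{itemize}

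Hence $\inf\{|\zeta-f(x)|\,:\,x\in G^{-1}(0)\setminus Z\}>0$. The inequality $\mathfrak S_{f,G}\le\mathfrak S^+_{f,G}$ is immediate from the definitions. We must also exclude orbits with $r>\bar r$, where $a$ may only be nondecreasing. Since $a$ is nondecreasing on $[\bar r,\infty)$, their actions are at least $a(\bar r)>\zeta$, which keeps them away from $\zeta$. Alternatively, $\Gamma$ only sees loops in $\widehat W(\bar r)$.

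For (b) we take a sequence $u_n\in\Gamma(I)$ with $E(u_n)\le E_0$. Their images lie in the compact set $\widehat W(\bar r)$, so we need no maximum principle. Exactness of $\widehat\omega$ rules out bubbling. The standard rescaling argument then gives uniform gradient bounds $\sup|\partial_s u_n|$ on $[s_1-\epsilon,\infty)\times\bbS^1$ for $\epsilon<s_1-s_0$. If the gradient blew up at points $(s_n,t_n)$, Hofer's lemma would produce a nonconstant finite-energy $J$-holomorphic plane or sphere. It cannot be a sphere, since $\widehat\omega$ is exact. For a plane, removing its puncture gives a sphere, again contradicting exactness. Elliptic bootstrapping then yields uniform $C^k$ bounds on compact subsets of $[s_1,\infty)\times\bbS^1$. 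Arzel\`a--Ascoli together with a diagonal argument gives a subsequence converging in $\Cinftyloc$. This is exactly $\Czeroloc(I_1,M)$-convergence for the $\Cinfty$-topology on loops. The limit solves the Floer equation and stays in the closed set $\widehat W(\bar r)$, so it lies in $\Gamma(I_1)$. The margin $s_1>s_0$ is needed precisely for the interior elliptic estimates, and this bootstrapping is the routine but heaviest step.

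For (c) we apply Lemma \ref{lem: Reeb MB --> MB} to each component $N$ of $\{r_0\}\times N^{T_0}$, which shows that condition (MB$_{(H,N)}$) holds. Since $Z$ is the disjoint union of the corresponding compact sets $\Nloopspace$, each $z\in Z$ lies in exactly one of them. We then invoke Corollary \ref{cor: Floer - ass A shortening gradient flow lines}(b) for the exact case. More precisely, we take the neighbourhood $\Ucal$ and the function $\Xi$ from Theorem \ref{mthm: Floer - exponential decay} and shrink $\Ucal$ so that it meets no other component of $Z$. The restriction of the gradient flow line class to $\widehat W(\bar r)$ only removes flow lines, as in Example \ref{ex: gradient flow line}(ii), so the hypotheses of Lemma \ref{lem: shortening gradient flow lines criterion} still hold with the sup-metric $d_{C^0}$ and the function $\Xi/B$. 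The main obstacle here is Theorem \ref{mthm: Floer - exponential decay} itself, which we assume.
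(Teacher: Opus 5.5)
Your proposal is correct and follows the paper's proof. Part (a) uses the monotonicity of the action $rh'(r)-h(r)$ together with the isolation of $T_0$ in $\mathrm{Spec}(\Sigma,\alpha)$; part (b) uses the standard bubbling-free Gromov compactness and elliptic regularity argument; part (c) combines Lemma \ref{lem: Reeb MB --> MB} with Corollary \ref{cor: Floer - ass A shortening gradient flow lines}\ref{it: Floer - ass A shortening exact case}. Your additional points --- the strict inequality $a(1)<a(r_0)$, shrinking $\Ucal$ so it misses the other components of $Z$, and noting that restricting the class to $\widehat{W}(\Bar{r})$ only shrinks the moduli space --- are details the paper leaves implicit; the one slip is ``removing its puncture gives a sphere'', which should instead invoke removal of singularities to extend the finite-energy plane (image in the compact $\widehat{W}(\Bar{r})$) over the puncture.
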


\begin{proof}
    We prove \ref{it: Floer - specgap positive}. For $p \in \Crit(H|_W) \,$, the action value of the loop sitting constantly at $p$ is
    \[
    -H(p)  \leq  h'(1) - h(1) \leq r h'(r) - h(r) \qquad \Forall r \geq 1 \,\, .
    \]
    As mentioned above, the action value at a $1$-periodic Hamiltonian orbit in $\{r\} \times \Sigma$ is $r h'(r) - h(r) \,$, which is increasing in $r$ and even strictly for $1 \leq r < \Bar{r} \,$. 
    Now \ref{it: Floer - specgap positive} follows since $T_0$ is isolated in the Reeb spectrum (Lemma \ref{lem: Reeb MB --> specgap positive}), so that $r_0$ is isolated in the set of $r \geq 1$ for which there exists a $1$-periodic Hamiltonian orbit in $\{r\} \times \Sigma \,$.
    \\
    Part \ref{it: Floer - ass A Czeroloc} follows from a standard Gromov compactness argument. In more detail, given a sequence of Floer cylinders $u_n : [s_0,\infty) \times \bbS^1 \rightarrow \widehat{W}$ with energy bounded uniformly by $E_0$ and image contained in the compactum $ \widehat{W}(\Bar{r}) \,$. Then, for arbitrary $\eps > 0 \,$, the derivatives of the $u_n$ are uniformly bounded on $[s_0+\eps,\infty) \times \bbS^1 \,$. (For otherwise one could construct a nonconstant $J$-holomorphic sphere bubbling off, contradicting the fact that $\widehat{\omega} = d \widehat{\lambda}$ is exact.) By elliptic regularity, a subsequence of $(u_n)_n$ converges in $\Cinftyloc([s_1,\infty) \times \bbS^1, \widehat{W})$ to a Floer cylinder, whose image must again be contained in $\widehat{W}(\Bar{r})  \,$. This shows \ref{it: Floer - ass A Czeroloc}.
    \\
    Lastly, part \ref{it: Floer - ass A shortening gradient flow lines} follows from Corollary \ref{cor: Floer - ass A shortening gradient flow lines} \ref{it: Floer - ass A shortening exact case}, using that \ref{it: MB} holds for every connected component $N$ of $\{r_0\} \times N^{T_0}$ due to Lemma \ref{lem: Reeb MB --> MB}.
\end{proof}

\begin{theorem}
\label{thm: Floer - compactness result symplectic homology}
For $T_0 \in (h'(1),h'(\Bar{r}))$ assume (ReebMB$_{T_0}$) and let $r_0 \in (1,\Bar{r})$ be determined by $h'(r_0) = T_0 \,$.
Given $0 \leq E_0 < \mathfrak{S}^+_{f,G}(Z)$ and $s_0 < s_1$ as well as a sequence $u_n : [s_0,\infty) \times \bbS^1 \rightarrow \widehat{W}$ of Floer cylinders with
\begin{enumerate}[label=(\arabic*)]
    \item\label{it: Floer compactness - energy bound} energy $E(u_n) = \int_{s_0}^\infty \int_0^1 |\partial_s u_n|_{J_t}^2 \, dt \, ds$ uniformly bounded by $E_0 \,$,
    \item\label{it: Floer compactness - initial loop} $u_n(s_0 , \,\cdot\,) \subseteq \widehat{W}(\Bar{r}) = W \, \cup_{\Sigma} \, [1,\Bar{r}] \times \Sigma$ and
    \item\label{it: Floer compactness - C^0-convergence} $(u_n)(s,\,\cdot\,) \rightarrow (r_0,y_n(T_0\,\cdot\,))$ uniformly as $s \rightarrow \infty \,$, where $y_n$ is a $T_0$-periodic Reeb orbit.
\end{enumerate}
Then a subsequence of $(u_n)_n$ converges in $\Cinfty([s_1,\infty) \times \bbS^1,\widehat{W})$ to a Floer cylinder $u$ with image contained in $\widehat{W}(\Bar{r})$ and for which $u(s,\,\cdot\,)$ converges to some $(r_0,y(T_0\,\cdot\,))$ as $s \rightarrow \infty$ uniformly with all derivatives, where $y$ is a $T_0$-periodic Reeb orbit.
\end{theorem}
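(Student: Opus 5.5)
The plan is to reduce Theorem \ref{thm: Floer - compactness result symplectic homology} to the variant compactness theorem, Theorem \ref{thm: Compactness moduli spaces - variant}. I will apply it with $M = \Cinfty(\bbS^1,\widehat{W})$, $f = \actionH$, $G = |\nabla^{g_J}\actionH|^2_{g_J}$, $Z$ and the class $\Gamma$ as fixed above, and $I = [s_0,\infty)$, $I_1 = [s_1,\infty)$. By Lemma \ref{lem: Floer - ass A verification}, the spectral gap is positive, condition \ref{it: ass A variant - Czeroloc convergence} holds for $I_1$, and condition \ref{it: ass A - shortening gradient flow lines} holds. The hypothesis $E_0 < \mathfrak{S}^+_{f,G}(Z)$ is assumed. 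So the work is (i) to show that the given $u_n$ actually lie in $\moduliData$, and (ii) to translate the conclusion, convergence in $C^0(I_1,M)$ to an element of $\moduli_{(E_0,I_1)}$, into the $\Cinfty$ statement claimed.

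For (i) there are two things to check.

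\textbf{The image lies in $\widehat{W}(\Bar{r})$.} I would use the standard maximum principle on the cylindrical end $[\Bar{r},\infty)\times\Sigma$. There $J$ is of SFT-type and $H = h(r)$ with $h''\geq 0$, so for a Floer cylinder the function $\rho := r\circ u$ satisfies $\Delta\rho \geq 0$ wherever $\rho \geq \Bar{r}$. This is the usual computation, cf.\ \cite{Seidel_sympletic_cohomology}, \cite{Oancea_Bourgeois_MorseBott}.

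Suppose $u_n$ left $\widehat{W}(\Bar{r})$. By hypothesis \ref{it: Floer compactness - initial loop}, $\rho \leq \Bar{r}$ on $\{s_0\}\times\bbS^1$. By hypothesis \ref{it: Floer compactness - C^0-convergence}, $\rho \to r_0 < \Bar{r}$ uniformly as $s\to\infty$. Hence $\rho$ would attain an interior maximum $>\Bar{r}$ on the open set $\{\rho>\Bar{r}\}$, whose boundary values are all $\leq \Bar{r}$. This contradicts the strong maximum principle.

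\textbf{The limit exists in the $\Cinfty$-topology.} Hypothesis \ref{it: Floer compactness - C^0-convergence} only gives $C^0$ convergence of $u_n(s,\cdot)$. Since the image lies in the compact set $\widehat{W}(\Bar{r})$ and $\widehat{\omega}$ is exact, there is no bubbling. Hence $|du_n|$ is bounded on $[s_0+1,\infty)\times\bbS^1$, and elliptic bootstrapping on the cylinders $[s-1,s+2]\times\bbS^1$ bounds all derivatives uniformly in $s$. Arzel\`a--Ascoli together with the $C^0$ limit identifies every $\Cinfty$ subsequential limit of $u_n(s,\cdot)$ as $(r_0,y_n(T_0\,\cdot\,))$, so the full limit exists in $\Cinfty$ and lies in $Z$.

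Restricting to $[s_0+1,\infty)$ if necessary, which is harmless since $s_1 > s_0$ and we may shrink $I$ while keeping $I_1 \subseteq I$, we get $u_n \in \moduliData$ with energy at most $E_0$.

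For (ii), the $C^0(I_1,M)$ convergence means $\sup_{s\geq s_1} d_M(u_n(s,\cdot),u(s,\cdot)) \to 0$ for a metric $d_M$ inducing the $\Cinfty$-topology on loops. That is, all $t$-derivatives converge uniformly on $[s_1,\infty)\times\bbS^1$. Mixed and $s$-derivatives are then obtained recursively from the Floer equation, since $\partial_s u = -J_t(u)(\partial_t u - X_H(u))$ expresses them through $t$-derivatives and smooth functions of $u$. This yields convergence in $\Cinfty([s_1,\infty)\times\bbS^1,\widehat{W})$. Membership $u\in\moduli_{(E_0,I_1)}$ gives the image in $\widehat{W}(\Bar{r})$ and $u_s \to (r_0,y(T_0\,\cdot\,))$ in $\Cinfty$, i.e.\ uniformly with all derivatives.

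I expect the main obstacle to be step (i), specifically the maximum principle, since $J$ is of SFT-type only beyond $\Bar{r}$, and the upgrade of the asymptotic $C^0$ convergence to $\Cinfty$ convergence; the rest is bookkeeping around Theorem \ref{thm: Compactness moduli spaces - variant}.
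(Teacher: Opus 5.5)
Your proposal is correct and follows the paper's proof essentially step by step. You reduce to Theorem~\ref{thm: Compactness moduli spaces - variant} via Lemma~\ref{lem: Floer - ass A verification}, show $u_n \in \moduli_{(E_0,[s_0,\infty))}(\Gamma,Z)$ by the maximum principle for $\rho = r\circ u_n$ plus a no-bubbling/Gromov-compactness upgrade of the asymptotic convergence to the $\Cinfty$-topology, and recover $\Cinfty$-convergence on $[s_1,\infty)\times\bbS^1$ from $C^0(I_1,M)$-convergence and the Floer equation.

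There are two harmless slips. First, on $\{\rho>\Bar{r}\}$ you only have $\Delta\rho + \rho\, h''(\rho)\,\partial_s\rho = |\partial_s u_n|^2_J \geq 0$, not $\Delta\rho\geq 0$, since $h''$ may be positive there; the strong maximum principle still applies. Second, the restriction to $[s_0+1,\infty)$ is unnecessary and would break $I_1\subseteq I$ when $s_1<s_0+1$, because existence of the $\Cinfty$-limit as $s\to\infty$ does not depend on where the interval starts.
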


\begin{proof}
    Lemma \ref{lem: Floer - ass A verification} shows that assumptions \ref{it: ass A variant - Czeroloc convergence} and \ref{it: ass A - shortening gradient flow lines} in Theorem \ref{thm: Compactness moduli spaces - variant} (the generalization of Theorem \ref{mthm: Compactness moduli spaces}) are satisfied with $I:=[s_0,\infty)$ and $I_1:=[s_1,\infty)\,$.
    \begin{claim}
    The $u_n$ are elements of the moduli space $\moduli_{(E_0,\,I)}(\Gamma,Z) \,$, defined in \eqref{eq: moduli space def}.
    \end{claim}
    \begin{proofClaim}
    Because $J$ is of SFT-type on $[\Bar{r},\infty) \times \Sigma $, for any Floer cylinder $u$ the function $\rho := r \circ u$ satisfies
    \[
    \Delta \rho + \rho \, h''(\rho) \, \partial_s \rho = |\partial_s u|^2_J \geq 0  \quad \text{ on } u^{-1}( (\Bar{r},\infty) \times \Sigma ) \,\, ,
    \]
    see the computation in \cite[\S(3c)]{Seidel_sympletic_cohomology}. Thus $\rho$ obeys a maximum principle and attains no interior local maximum on each nonconstant component of $u^{-1}( (\Bar{r},\infty) \times \Sigma )$. 
    Using this and that the $u_n$ have asymptotics in $\widehat{W}(\Bar{r})$ by \ref{it: Floer compactness - initial loop} and \ref{it: Floer compactness - C^0-convergence}, we conclude that the $u_n$ are confined to $\widehat{W}(\Bar{r}) $ and hence $u_n \in \Gamma(I) \,$. Moreover, the convergence $u_n(s,\,\cdot\,) \rightarrow (r_0, \, y_n(T_0 \,\cdot\,)) \comma s \rightarrow \infty \,$, in \ref{it: Floer compactness - C^0-convergence} even holds in the $\Cinfty(\bbS^1,\widehat{W})$-topology. Indeed, for every sequence $(\Bar{s}_k)_k$ tending to infinity, a subsequence of $(u_n(\,\cdot\,+\Bar{s}_k, \,\cdot\,))_k $ converges in $\Cinftyloc([s_0,\infty) \times \bbS^1,\widehat{W})$ by Gromov compactness and its limit must be $(s,t) \mapsto (r_0, \, y_n(T_0 t))$ by \ref{it: Floer compactness - C^0-convergence}.
    This shows that the $u_n$ are elements of $\moduli_{(E_0, \, I)}(\Gamma,Z) \,$.
    \end{proofClaim}
   \noindent We can therefore apply Theorem \ref{thm: Compactness moduli spaces - variant} and extract a subsequence (suppressed in the notation) of $(u_n)_n$ converging in $C^0(I_1,M)$ to some $u \in \moduli_{(E_0, \,I_1 )}(\Gamma,Z)\,$. The convergence in 
    \[
    C^0(I_1,M) = C^0([s_1,\infty), \, \Cinfty(\bbS^1,\widehat{W}))
    \]
    and the Floer equation \eqref{eq: Floer eq - Floer illustration} then imply that the $u_n$ converge to $u$ in $\Cinfty([s_1,\infty) \times \bbS^1,\widehat{W}) \,$.
\end{proof}

\section{Proof of Theorem \ref{mthm: Floer - exponential decay}}
\label{sec: Exponential Decay - proof}

\noindent The proof of Theorem \ref{mthm: Floer - exponential decay} below is a parametrized version of a classic exponential decay estimate in the Morse-Bott setting. We follow Bourgeois-Oancea \cite[App.~A]{Oancea_Bourgeois_MorseBott} regarding the classic decay estimate. In addition, we adopt some arguments presented in Fauck's thesis \cite[Ch.~2.2]{FauckThesis}, although it is concerned with the Rabinowitz action functional instead of the classical action functional.

\subsection{Preliminaries}
\label{subsec: Exponential Decay - Preliminaries}

In the following, we always work in the setting of Subsection \ref{subsec: Floer - MB set-up for ass A shortening flow lines}. For a given closed, connected submanifold $N \subseteq W$ we assume \ref{it: MB} and define $\Nloopspace \subseteq \loopspace = \Cinfty(\bbS^1,W)$ by \eqref{eq: Floer - Nloopspace}. We abbreviate
\begin{align*}
    2n = \dim(W) \quad \text{and} \quad d := \dim(N) \,\, .
\end{align*}
We fix, once and for all, a loop $x_0 \in \Nloopspace$ for which we need to find a $\Cinfty$-open neighborhood $\Ucal = \Ucal(x_0) \subseteq \loopspace$, a $\Cinfty$-continuous function $\Xi : \Ucal \rightarrow [0,\infty)$ vanishing on $\Nloopspace \cap \Ucal$ and a constant $B > 0$ with the significance described in Theorem \ref{mthm: Floer - exponential decay}. 

\begin{lemma}
\label{lem: tubular nbhd around limit loop}
There exists a tubular neighborhood $U \subseteq W$ of $x_0$ and coordinates
\[
(\vartheta, z) = (\vartheta, z', z'') : \,U \rightarrow \bbS^1 \times \R^{d-1} \times \R^{2n-d}
\]
so that $U \cap N = \{z''=0\}$ and $\partial_{\vartheta} = X_H$ on $U \cap N$ as well as $(\vartheta , z) \circ x_0 (t) = (t,0) \,$.
\end{lemma}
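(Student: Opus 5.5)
The plan is to build the chart in two stages: first coordinates along the orbit inside $N$, then normal coordinates off $N$. Write $p_0 := x_0(0) \in N$. By \ref{it: MB}, the curve $x_0(t) = \phi_H^t(p_0)$ is an embedded circle in $N$ of minimal period $1$. The Hamiltonian vector field $X_H$ is tangent to $N$ and nonvanishing along $x_0$, because $N$ is invariant under the flow (it is fibered by periodic orbits) and the orbit has minimal period $1$.

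Inside $N$ we take a small $(d-1)$-dimensional hypersurface $S \subseteq N$ through $p_0$, transverse to $X_H(p_0)$, with coordinates $z' : S \to \R^{d-1}$ centered at $p_0$. We then define $\Psi_N(\vartheta, z') := \phi_H^{\vartheta}(z'^{-1})$ on $\bbS^1 \times (\text{small ball})$. This is well defined on $\bbS^1$ because every point of $N$ is $1$-periodic. It is a local diffeomorphism near $\bbS^1 \times \{0\}$ by transversality, since its differential is the isomorphism $\R X_H \oplus T S \cong T N$ transported by the flow. It is injective on $\bbS^1 \times \{0\}$ because the period of $p_0$ is minimal, so the standard tubular-neighborhood/compactness argument makes it an embedding after shrinking the ball. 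By construction $\partial_\vartheta = X_H$ on its image, and $(\vartheta, z') \circ x_0(t) = (t, 0)$.

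To extend off $N$, we choose a smooth rank-$(2n-d)$ subbundle $\nu \subseteq TW|_N$ complementary to $TN$, say the $g_0$-orthogonal complement for an auxiliary metric. Over the image of $\Psi_N$, which is diffeomorphic to $\bbS^1 \times$ ball, the bundle $\nu$ pulled back is trivial if it is orientable along the loop; in the non-orientable case we replace $\nu$ by a different complement, or twist one of its fibre directions by adding a multiple of a $TN$-direction. A trivialization $e_1, \dots, e_{2n-d}$ then gives the map
\[
(\vartheta, z', z'') \mapsto \exp_{\Psi_N(\vartheta, z')}\Big(\sum_i z''_i e_i\Big).
\]
This map has bijective differential along $z'' = 0$ and is injective on the compact core, so after shrinking it is a diffeomorphism onto a tubular neighborhood $U$ of $x_0$. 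By construction $U \cap N \supseteq \{z'' = 0\}$, and both properties on $N$ are inherited from $\Psi_N$. Equality $U \cap N = \{z'' = 0\}$ holds after shrinking $U$, because $N$ is an embedded closed submanifold and the $\vartheta$-orbit sheet through the ball is a neighborhood of $x_0$ in $N$.

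The main obstacle is this orientability issue. Since $TW|_{x_0}$ is trivial (it is symplectic over a circle, hence orientable) and $TN|_{x_0}$ splits as $\R X_H \oplus$(something), the normal bundle restricted to the loop is orientable exactly when $TN|_{x_0}$ is. If $TN$ is non-orientable along $x_0$, a global product chart with $\R^{d-1}$ in the $N$-directions fails, so the argument would need a double cover, in the spirit of Remark \ref{rmk: MB minimal period}. I would first check whether the flow-box construction forces orientability. It does: $\Psi_N$ already trivializes $TN$ along $x_0$, because the flow $\phi_H^\vartheta$ returns $S$ to itself at time $1$ via the identity. Hence $TN|_{x_0}$ is trivial, so $\nu|_{x_0}$ is orientable and therefore trivial, and the construction goes through.
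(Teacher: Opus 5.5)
Your proof is correct and follows the paper's route. First you build $\bbS^1$-equivariant coordinates $(\vartheta,z')$ on a neighborhood $U_N$ of the orbit in $N$. You construct them by hand, flowing a transversal slice around by $\phi_H^\vartheta$ and using $\phi_H^1|_N=\id$ and minimal period $1$, where the paper simply invokes the slice theorem for the free $\bbS^1$-action. Then you get normal coordinates $z''$ from a trivialization of the normal bundle over $U_N$, which is orientable because $TW$ and $TU_N\cong T(\bbS^1\times\R^{d-1})$ are, hence trivial over a space homotopy equivalent to a circle.

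The one thing to drop is your earlier hedge about replacing $\nu$ by a different complement in the non-orientable case. Every complement is isomorphic to $TW|_N/TN$, so that could not help, but your last paragraph shows this case never arises.
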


\noindent 
Observe that, for a tubular neighborhood $U$ as above, the set $U \cap N$ is invariant under the Hamiltonian flow $\phi_H^t \,$. This lemma is the only time we use that the Hamiltonian orbits on $N$ have \textit{minimal} period $1$.

\begin{proof}
We follow the argument in \cite[Lemma 29]{FauckThesis}.
The $\bbS^1$-action of the Hamiltonian flow on $N$ is free by assumption \ref{it: MB}, so the slice theorem (see \cite[Thm.~23.5]{Ana_Cannas_Silva}) gives $\bbS^1$-equivariant coordinates $(\vartheta,z')$ on a neighborhood $U_N \subseteq N$ of $x_0 \,$. Now the normal bundle of $N$ in $W$ is trivial over $U_N \,$,\footnote{
$U_N \cong \bbS^1 \times \R^{d-1}$ is homotopy equivalent to a circle and the normal bundle $TW|_{U_N} / TU_N$ is orientable since $TW|_{U_N} \cong T U_N \oplus (TW|_{U_N} / TU_N)$ and $TU_N$ are respectively. 
} 
so a tubular neighborhood of $U_N$ in $W$ yields globally defined coordinates $z'' \,$.
\end{proof}

\noindent We now fix a tubular neighborhood $U$ and coordinates $(\vartheta,z) =(\vartheta,z',z'')$ as in the above lemma. In these coordinates
\begin{equation*}
\Bigset{x \in \Nloopspace}{x(0) \in U}= \Bigset{x \in \Cinfty(\bbS^1,U)}{\begin{array}{c}
    x(t) = (\widetilde{\vartheta} + t, \widetilde{z} \,'\,, 0) \text{ for some}   \\
      (\widetilde{\vartheta} \,, \widetilde{z} \,'\,,0) \in \bbS^1 \times \R^{d-1} \times \R^{2n-d} 
\end{array}}
 \,\, .
\end{equation*}
We now define the \textit{shift operators}
\begin{align*}
   &\shift{\pm} : \, C^0 (\bbS^1,\bbS^1\times \R^{2n-1}) \rightarrow  C^0 (\bbS^1,\bbS^1\times \R^{2n-1}) \\
   &(\shift{\pm}(x))(t) := x(t) \pm (t,0) \quad \text{ for } t \in \bbS^1 \,\, .
\end{align*}
Clearly $\shift{-}$ is the inverse to $\shift{+} $ and $\Bar{x}_0 := \shift{-}(x_0)$ is the constant loop sitting at $(0,0) \in \bbS^1 \times \R^{2n-1} \,$. In view of the above, moreover
\begin{align}
\label{eq: shift^- (Nloopspace)}
\shift{-}\Big( \Bigset{x \in \Nloopspace}{x(0)\in U} \Big) = \set{x\in \Cinfty(\bbS^1,U)}{x \text{ constant and } z''(x) \equiv 0} \,\, .
\end{align}

\noindent Most statements below only make sense for loops in a sufficiently small neighborhood $\Ucal(x_0) \subseteq \Cinfty(\bbS^1,W) $ of $x_0$. When making such a statement, we tacitly assume our loops to be sufficiently close to $x_0 \,$. To begin with, we choose $\Ucal(x_0)$ small enough so that every $x \in \Ucal(x_0)$ has image $x(\bbS^1)$ contained in the tubular neighborhood $U \subseteq W$ of $x_0 \,$. In the following we will choose successively smaller neighborhoods of $x_0$ but, to not clutter notation, we will denote them all by $\Ucal(x_0)$. The final and smallest choice of $\Ucal(x_0)$ will then have the desired property stated in Theorem \ref{mthm: Floer - exponential decay}.

\begin{remark}
\label{rmk: C^0 close to constant loop in S^1 x R^(2n-d)}
 Every loop $x \in C^0(\bbS^1,\bbS^1\times \R^{2n-1})$ sufficiently $C^0$-close to $\xzerobar = \shift{-}(x_0) \equiv (0,0)$ can be lifted to a loop $C^0(\bbS^1,\R^{2n})$ close to the constant loop at the origin in $\R^{2n}$. We choose $\Ucal(x_0)$ small enough so that, for every $x \in \Ucal(x_0)$, the shift $\shift{-}(x)$ can be lifted to a loop in $C^0(\bbS^1,\R^{2n})$ close to the origin. Abusing notation, we will denote a loop in $C^0(\bbS^1,\bbS^1 \times \R^{2n-1})$ and its lift by the same symbol.
\end{remark}

\noindent For every cylinder $u : [s_0,\infty) \rightarrow \loopspace$ with image in $\Ucal(x_0)$, we define its shift $\Bar{u} : [s_0,\infty) \rightarrow \loopspace $ by $\Bar{u}_s := \shift{-}(u_s) \comma s \geq s_0 \,$. Observe that $z(\Bar{u}) = z(u)$ and moreover $\partial_s \Bar{u} = \partial_s u$ and $\partial_t \Bar{u} = \partial_t u - \partial_{\vartheta} \,$. Since $\partial_\vartheta - X_H = 0$ on $\{z''=0\}\,$, by Hadamard's lemma (Lemma \ref{app lem: Hadamard's lemma}) we can write
\[
\partial_{\vartheta} - X_H(\vartheta,z) = S(\vartheta,z) \cdot z'' \qquad \Forall (\vartheta,z) = (\vartheta,z',z'') \in \bbS^1 \times \R^{d-1} \times \R^{2n-d}
\]
with a smooth matrix-valued function $S : \bbS^1 \times \R^{2n-1} \rightarrow \R^{2n \times (2n-d)} \,$. Then $u$ is a solution of the Floer equation \eqref{eq: Floer eq - Floer illustration} iff 
\begin{equation}
\label{eq: Floer eq - shift(u) hybrid version}
\partial_s \Bar{u} + J_t(u) \, \big( \partial_t \Bar{u} + S(u) \cdot z''(\Bar{u}) \big) = 0\,\, .
\end{equation}
We write
\[
H^k := H^k(\bbS^1,\R^{2n}) \comma k \geq 0  \quad \text{and} \quad L^2 := L^2(\bbS^1,\R^{2n}) = H^0
\]
for the Sobolev spaces. Abbreviating $J_0(t) := J_t (x_0(t))$ and $g_t := \omega_{x_0(t)}(\,\cdot\,,J_0(t) \,\cdot\,) \,$, we endow $H^k$ with the inner product induced by $(g_t)_t \,$, that is
\begin{align}
\label{eq: L^2 and H^k inner product}
    \langle X_1, X_2 \rangle_{L^2} := \int_0^1 g_t (X_1(t), X_2(t)) \, dt  \quad \text{and} \quad 
    \langle X_1, X_2 \rangle_{H^k} := \sum_{j=0}^{k} \langle \partial_t^{j} X_1 , \, \partial_t^{j} X_2 \rangle_{L^2} \,\, .
\end{align}
These inner products induce norms that are equivalent to the standard norms on $H^k \,$, induced by the Euclidean inner product on $\R^{2n}$ instead of $(g_t)_t \,$. Notice that $\langle \,\cdot\comma \cdot\,\rangle_{L^2} = (g_J)_{x_0} \,$, the metric $g_J$ being introduced in \eqref{eq: g_J metric loopspace}.
\\

\noindent For every loop $x \in H^k(\bbS^1,\bbS^1 \times \R^{2n-1})$ we define the bounded linear operator
\begin{align}
    \Asf_x := \Asf_x^{(k)} : H^k \rightarrow H^{k-1} \comma \quad \Asf_x \, X := - J_t(\shift{+}(x)) \, \big( \partial_t X + S(\shift{+}(x)) \cdot z''(X) \big) \,\, ,
    \label{eq: operator A_x}
\end{align}
where $z'' : \R^{2n} \rightarrow \R^{2n-d}$ denotes the projection to the last coordinates. Clearly $\Asf_x^{(\ell)} = \Asf_x^{(k)}|_{H^\ell}$ for $k \leq \ell$ and $x \in H^\ell(\bbS^1,\bbS^1 \times \R^{2n-1}) \,$, which is why we drop the superscript $k$ whenever the precise domain of $\Asf_x$ is irrelevant or clear from the context. Consistent with the previous notation, we also abbreviate $S_0(t) := S(x_0(t)) \,$, so that the operator $\Asfxzerobar$ at $\Bar{x}_0 \equiv (0,0)$ can be written as
\begin{align}
\label{eq: Asf_xzerobar}
   ( \Asfxzerobar \, X)(t) = - J_0(t) \, \Big( \partial_t X(t) + S_0(t) \cdot z'' ( X(t)) \Big) \,\, .
\end{align}

\begin{lemma}
\label{lem: x mapsto A_x smooth}
Denote by $\mathcal{L}(H^k,H^{k-1})$ the space of bounded linear operators.
For every $1 \leq k \leq \ell$ the map
\begin{align*}
    \Asf : H^\ell(\bbS^1,\bbS^1\times\R^{2n-1}) \rightarrow \mathcal{L}(H^k , H^{k-1}) \comma \quad \Asf(x) := \Asf_x
\end{align*}
is smooth.
\end{lemma}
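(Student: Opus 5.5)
We have to show that $x \mapsto \Asf_x$ is smooth as a map $H^\ell(\bbS^1,\bbS^1\times\R^{2n-1}) \to \mathcal{L}(H^k,H^{k-1})$ for $1 \leq k \leq \ell$. The plan is to decompose $\Asf_x$ into compositions of smooth maps.

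First we write
\[
\Asf_x X = \mathsf{M}_{a(x)}\,\partial_t X + \mathsf{M}_{b(x)}\, z''(X),
\]
with the matrix-valued functions
\[
a(x)(t) := -J_t(\shift{+}(x)(t)), \qquad b(x)(t) := -J_t(\shift{+}(x)(t))\, S(\shift{+}(x)(t)),
\]
where $\mathsf{M}_c$ denotes pointwise multiplication by $c$.

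The linear maps $X \mapsto \partial_t X$ and $X \mapsto z''(X)$ are fixed bounded operators, from $H^k$ to $H^{k-1}$ and from $H^k$ to $H^k \hookrightarrow H^{k-1}$ respectively. Hence it suffices to show two things.

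\textbf{Step 1.} The maps $x \mapsto a(x)$ and $x \mapsto b(x)$ are smooth from $H^\ell(\bbS^1,\bbS^1\times\R^{2n-1})$ into $H^{\ell}(\bbS^1,\R^{2n\times 2n})$ (respectively $\R^{2n\times(2n-d)}$). Locally, via the lift in Remark \ref{rmk: C^0 close to constant loop in S^1 x R^(2n-d)}, $\shift{+}$ is an affine map: it adds the smooth loop $(t,0)$. The maps $a$ and $b$ are then Nemytskii (superposition) operators $x \mapsto \big(t \mapsto F(t,x(t))\big)$ with $F$ smooth on $\bbS^1 \times \R^{2n}$, built from $J_t$ and $S$ in the coordinates of Lemma \ref{lem: tubular nbhd around limit loop}. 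Since $\ell \geq 1$ and the domain $\bbS^1$ is one-dimensional, $H^\ell$ is a Banach algebra embedded in $C^0$. The standard $\Omega$-lemma for Sobolev spaces above the embedding threshold then gives smoothness of $x \mapsto F(\cdot,x(\cdot))$ in $H^\ell$. Its derivatives are $\xi \mapsto D^j_2F(\cdot,x)[\xi,\dots,\xi]$, and one verifies continuity of these via the Banach algebra property and uniform continuity of $D^jF$ on compacts. This step is the main technical point. I would cite the $\Omega$-lemma (e.g.\ Palais, or Eliasson) rather than reprove it.

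\textbf{Step 2.} The multiplication map
\[
H^{\ell} \to \mathcal{L}(H^{k-1},H^{k-1}), \qquad c \mapsto \mathsf{M}_c,
\]
is bounded linear for $k-1 \leq \ell$, $\ell \geq 1$. This follows from the multiplication estimate
\[
\lVert cY\rVert_{H^{k-1}} \leq C\,\lVert c\rVert_{H^\ell}\,\lVert Y\rVert_{H^{k-1}}.
\]
For $k-1 = 0$ it is just $\lVert c\rVert_{C^0} \lesssim \lVert c\rVert_{H^1}$. For $k-1 \geq 1$ it follows from the Leibniz rule: each term $\partial^i c\,\partial^j Y$ with $i+j \leq k-1$ has one factor in $H^1 \subseteq C^0$ (use $\ell \geq 1$ when $i < \ell$, or $j \leq k-2$ so that $Y$'s derivative is in $H^1$), and the other factor in $L^2$. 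A bounded linear map is smooth.

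Composing the smooth map of Step 1, the bounded linear map of Step 2, and composition with the fixed bounded operators $\partial_t$ and $z''$, we obtain that $\Asf$ is a sum of smooth maps, hence smooth. The only care needed is that the domain is an open subset of a Banach manifold, handled by the lift to $\R^{2n}$ near $\xzerobar$. The main obstacle is Step 1, specifically the uniform control of the derivatives of the superposition operator in $H^\ell$ norms.
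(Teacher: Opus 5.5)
Your proposal is correct and follows essentially the paper's route. The paper also writes $\Asf_x = \beta(\Theta\circ\shift{+}(x),\,\cdot\,)$, where $\Theta$ is the smooth coefficient map $x\mapsto(-J_t(x),-J_t(x)S(x))$ into $H^k$ and $\beta((R_1,R_2),X)=R_1\,\partial_t X+R_2\, z''(X)$ is continuous bilinear; this is exactly your Nemytskii-map-plus-multiplication decomposition. The differences are cosmetic: the paper first reduces to $k=\ell$ via the smooth inclusion $H^\ell\hookrightarrow H^k$, whereas you prove the multiplication estimate $H^\ell\cdot H^{k-1}\subseteq H^{k-1}$ directly, and you cite the $\Omega$-lemma where the paper simply asserts that smoothness of $\Theta$ and $\shift{+}$ is readily checked.
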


\begin{proof}
    The inclusion $H^\ell(\bbS^1,\bbS^1 \times \R^{2n-1}) \hookrightarrow H^k(\bbS^1,\bbS^1\times \R^{2n-1}) $ is smooth, so it suffices to prove the case $k= \ell \,$. The map
    \begin{align*}
   \beta &: \, \Big(H^k(\bbS^1,\R^{2n \times 2n}) \oplus H^k(\bbS^1,\R^{2n \times (2n-d)}) \Big) \times H^k \rightarrow H^{k-1} \comma \\
    &\beta \left( (R_1,R_2) \comma X \right) := R_1 \cdot \partial_t X + R_2 \cdot z''(X)
    \end{align*}
    is a continuous bilinear map. Moreover, one readily checks that
    \begin{align*}
    &\Theta : H^k(\bbS^1,\bbS^1 \times \R^{2n-1}) \rightarrow H^k(\bbS^1,\R^{2n \times 2n}) \times H^k(\bbS^1,\R^{2n \times (2n-d)}) \comma \\
    &\Theta(x) := (-J_t(x) \comma - J_t(x) \cdot S(x) )
    \end{align*}
    and 
    \[
    \shift{+} : H^k(\bbS^1,\bbS^1\times \R^{2n-1}) \rightarrow H^k(\bbS^1,\bbS^1 \times \R^{2n-1})
    \]
    are smooth. The lemma follows since $\Asf_x = \beta( \Theta \circ \shift{+}(x)\comma \cdot\,) \,$.
\end{proof}

\noindent The operators $\Asf_x$ allow us to write the Floer equation very succinctly. Given a cylinder $u : [s_0,\infty) \rightarrow \loopspace$ with image in $\Ucal(x_0)$ and shift $\Bar{u} = \shift{-}(u) \,$, the cylinder $u$ is a solution to the Floer equation \eqref{eq: Floer eq - Floer illustration} iff \eqref{eq: Floer eq - shift(u) hybrid version} holds iff $\partial_s \Bar{u}(s,\,\cdot\,) = \Asf_{\Bar{u}_s} \, \Bar{u}_s$ for all $ s \geq s_0$ iff
\begin{equation}
\label{eq: Floer eq - operator A}
 \frac{d \Bar{u}_s}{ds} \Big|_s = \Asf_{\Bar{u}_s} \, \Bar{u}_s \qquad \Forall s \geq s_0 \,\, ,
\end{equation}
where on the left-hand side the derivative of $s \mapsto u_s$ in $H^k \,$, for any choice of $k$, is taken. Notice that, although $\Bar{u}_s$ is an element of $H^k(\bbS^1,\bbS^1 \times \R^{2n-1}) \,$, we can still plug it (or rather its lift) into $\Asf_{\Bar{u}_s}$ (see Remark \ref{rmk: C^0 close to constant loop in S^1 x R^(2n-d)}). We now collect some decisive facts about the operators $\Asf_x \,$.

{
\let\realItem\item 
\makeatletter
\NewDocumentCommand\myItem{ o }{%
   \IfNoValueTF{#1}%
      {\realItem}
      {\realItem[#1]\def\@currentlabel{#1}}
}
\makeatother

\setlist[enumerate]{
    before=\let\item\myItem,       
    label=\textnormal{(\arabic*)}, 
    widest=(2')                    
}

\begin{enumerate}
    \item[(Fact 1)]\label{it: operators A_x - Asfxzerobar = Hessian} In the fixed coordinates $(\vartheta,z)$, the operator $\Asfxzerobar = \Asfxzerobar^{(1)}$ equals the covariant Hessian $\nabla^2 \actionH (x_0)$ at $x_0 \,$.\footnote{
    \ie the linearization of $\nabla^{g_J} \actionH$ at its zero $x_0$
    } Therefore $\Asfxzerobar$ is an unbounded selfadjoint operator on $L^2$ for the inner product $(g_J)_{x_0}=\langle \,\cdot\comma \cdot\,\rangle_{L^2} \,$.
    \item[(Fact 2)]\label{it: operators A_x - kernel} Inspecting \eqref{eq: operator A_x}, for every loop $x$ the kernel of $\Asf_x$ contains all constant loops $X$ with $z'' \circ X \equiv 0 \,$. Moreover, by the Morse-Bott assumption \ref{it: MB}, these are all of the elements in the kernel of $\Asfxzerobar \,$, that is
    \begin{align*}
        \ker \Asfxzerobar = \set{X \in \Cinfty(\bbS^1,\R^{2n})}{\partial_t X \equiv 0 \text{ and } z''(X) \equiv 0 } \,\, .
    \end{align*}
    In particular the kernel of $\Asfxzerobar = \Asfxzerobar^{(k)}$ is $d$-dimensional and independent of $k$.
    \item[(Fact 3)]\label{it: operators A_x - decomposition} $\Asfxzerobar$ is an elliptic linear partial differential operator of order $1$ that is moreover selfadjoint by \ref{it: operators A_x - Asfxzerobar = Hessian}. Hence, for every $k \geq 0 \,$, we have an $L^2$-orthogonal splitting of $H^k$-closed subspaces\footnote{
    This is the Hodge decomposition of $\Asfxzerobar \,$, see \cite[Cor.~10.4.10]{Nicolaescu_Geometry_of_Manifolds}.
    }
    \begin{equation}
    \label{eq: Hodge decomposition}
    H^k = \ker(\Asfxzerobar) \oplus \Asfxzerobar(H^{k+1}) \,\, .
    \end{equation}
    Since $\ker(\Asfxzerobar)$ consists of constant loops only, the above splitting also is $H^k$-orthogonal.
\end{enumerate}
Although standard, as service to the reader we prove \ref{it: operators A_x - Asfxzerobar = Hessian} as Lemma \ref{app lem: Asf_xzerobar = covariant Hessian} and the assertion about $\ker(\Asfxzerobar)$ in \ref{it: operators A_x - kernel} as Lemma \ref{app lem: kernel Asf_xzerobar} in the appendix.\\
Denote by $\Psf = \Psf^{(k)} : H^k \rightarrow \ker(\Asfxzerobar)$ and $\Qsf = \Qsf^{(k)} : H^k\rightarrow \Asfxzerobar(H^{k+1})$ the projections with respect to the splitting \eqref{eq: Hodge decomposition}. They are orthogonal projections, so
    \begin{equation}
    \label{eq: Qsf orthogononal projection}
       \lVert X \rVert_{H^k}^2 = \lVert \Psf X \rVert_{H^k}^2 + \lVert \Qsf X \rVert_{H^k}^2 \geq \lVert \Qsf X \rVert_{H^k}^2 \qquad \Forall X \in H^k \,\, .
    \end{equation}
    It holds $\Qsf^{(\ell)} = \Qsf^{(k)}|_{H^\ell}$ and $\Psf^{(\ell)} = \Psf^{(k)}|_{H^\ell}$ for $k \leq \ell $ because $\ker(\Asf_{\xzerobar}^{(k)}) = \ker(\Asf_{\xzerobar}^{(\ell)})$ and $\Asfxzerobar(H^{\ell+1})\subseteq \Asfxzerobar(H^{k+1}) \,$.
    For this reason we often omit the superscript.
The following algebraic relations are direct consequences from the definitions and above facts.
\begin{enumerate}
    \item[(Fact 4)]\label{it: operators A_x - Q Asfxzerobar = Asfxzerobar} $\Qsf \, \Asfxzerobar = \Asfxzerobar$ or more precisely $\Qsf^{(k)} \, \Asfxzerobar^{(k+1)} = \Asfxzerobar^{(k+1)} \,$.
    \item[(Fact 5)]\label{it: operators A_x - A Q = A} $\Asf_x \, \Qsf = \Asf_x$ for every loop $x$. Indeed, $\Psf X \in \ker(\Asfxzerobar) \subseteq \ker(\Asf_x)$ by \ref{it: operators A_x - kernel}.
    \item[(Fact 6)]\label{it: operators A_x - partial_t Q = partial_t} $\partial_t \, \Qsf = \partial_t$ since $\ker(\Asfxzerobar)$ consists of constant loops only.
    \item[(Fact 7)]\label{it: operators A_x - Q Asfxzerobar invertible} The bounded operator 
    \[
    \Qsf \, \Asfxzerobar = \Asfxzerobar : \, (\Asfxzerobar(H^{k+2}) , \lVert \,\cdot\, \rVert_{H^{k+1}}) \rightarrow (\Asfxzerobar(H^{k+1}) , \lVert \,\cdot\, \rVert_{H^{k}})
    \]
    is bijective and hence invertible. Consequently, defining the constant $c(k) := \lVert (\Qsf \, \Asfxzerobar)^{-1} \rVert^{-2} \,$, we have
    \begin{align*}
        \lVert \Asfxzerobar \, \Qsf \, X \rVert_{H^k}^2 = \lVert \Qsf \, \Asfxzerobar \, \Qsf \, X \rVert_{H^k}^2 \geq c(k) \, \lVert \Qsf \, X \rVert_{H^{k+1}}^2 \qquad \Forall X \in H^{k+1} \,\, .
    \end{align*}
\end{enumerate}
}

\begin{definition}
\label{def: smooth cylinder}
A map $Z : [s_0,\infty) \rightarrow \Cinfty(\bbS^1,\R^{2n})$ is called \textit{smooth cylinder} if the induced map $Z : [s_0,\infty) \times \bbS^1 \rightarrow \R^{2n}$ is smooth.
\end{definition}

\noindent The following lemma gives an alternative characterization of smooth cylinders. Its proof is straightforward and left to the reader.

\begin{lemma}
\label{lem: smooth cylinder}
Let $Z : [s_0,\infty) \rightarrow \Cinfty(\bbS^1,\R^{2n})$ be a map.
\begin{enumerate}[label=(\alph*)]
    \item\label{it: smooth cylinder - characterization}
   The following are equivalent.
    \begin{enumerate}[label=(\arabic*)]
        \item $Z$ is a smooth cylinder (in the sense of Definition \ref{def: smooth cylinder}).
        \item For every $k \geq 0$ the induced map $Z : [s_0,\infty) \rightarrow H^k$ is smooth (as map into a Banach space).
    \end{enumerate}
    \item\label{it: smooth cylinder - derivative}
    Suppose $Z$ is a smooth cylinder. For every $k \geq 0$ the derivative of $Z : [s_0,\infty) \rightarrow H^k$ at $s \geq s_0$ is $Z'(s) = \partial_s Z(s,\,\cdot\,) \in H^k \,$, where on the right-hand side we mean the partial derivative of $Z : [s_0,\infty) \times \bbS^1 \rightarrow \R^{2n} \,$.
\end{enumerate}
\end{lemma}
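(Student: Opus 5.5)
The plan is to route everything through the spaces $C^k(\bbS^1,\R^{2n})$ with the $C^k$-norm. On the circle we have continuous inclusions $C^k \hookrightarrow H^k$ and, by Sobolev embedding, $H^{k+1} \hookrightarrow C^k$. So for a map $Z : [s_0,\infty) \rightarrow \Cinfty(\bbS^1,\R^{2n})$, statement (2) is equivalent to
\begin{itemize}
\item[(2')] for every $k \geq 0$ the map $Z : [s_0,\infty) \rightarrow C^k(\bbS^1,\R^{2n})$ is smooth.
\end{itemize}
Indeed, composing a smooth map with a bounded linear map is smooth, and the derivatives correspond under the inclusions. I will therefore prove (1)$\Leftrightarrow$(2') and identify the derivatives. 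At $s_0$ all derivatives are understood one-sided; this causes no extra difficulty.

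\textbf{(1)$\Rightarrow$(2') and part (b).} Suppose $Z$ is smooth on $[s_0,\infty)\times\bbS^1$ and fix $s \geq s_0$. For $0 \leq i \leq k$, Taylor's formula with integral remainder in the $s$-variable gives
\[
\partial_t^i Z(s+h,t) - \partial_t^i Z(s,t) - h\,\partial_t^i\partial_s Z(s,t) = h^2 \int_0^1 (1-\tau)\,\partial_s^2\partial_t^i Z(s+\tau h,t)\, d\tau .
\]
The integrand is bounded uniformly on the compact set $[s,s+1]\times\bbS^1$. Hence the difference quotient $(Z(s+h)-Z(s))/h$ converges to $\partial_s Z(s,\cdot)$ in $C^k$, so $Z$ is differentiable into $C^k$ with derivative the partial derivative $\partial_s Z$.

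Continuity of $s \mapsto Z(s) \in C^k$ follows from uniform continuity of $\partial_t^i Z$ on compact sets $[s_0,S]\times\bbS^1$. Since $\partial_s Z$ is again a smooth cylinder, induction shows that $Z$ is $C^\infty$ into $C^k$, with $j$-th derivative $\partial_s^j Z(s,\cdot)$. Passing to $H^k$ through the inclusion $C^k \hookrightarrow H^k$ gives (2) and also part (b).

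\textbf{(2')$\Rightarrow$(1).} Denote by $Z^{(j)}(s)$ the $j$-th derivative of $Z$ as a map into $C^k$. By uniqueness of derivatives and compatibility under the inclusions $C^{k+1} \hookrightarrow C^k$, this element does not depend on $k$, so $Z^{(j)}(s) \in \Cinfty(\bbS^1,\R^{2n})$.

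For fixed $t$ and $i \leq k$, the functional $\mathrm{ev}_t\circ\partial_t^i : C^k \rightarrow \R^{2n}$ is continuous and linear, so it commutes with $s$-differentiation. Hence the classical partial derivatives exist and satisfy
\[
\partial_s^j\partial_t^i Z(s,t) = \partial_t^i \big(Z^{(j)}(s)\big)(t).
\]
The right-hand side is jointly continuous in $(s,t)$, because $s \mapsto Z^{(j)}(s)$ is continuous into $C^k$ and the evaluation map $C^0 \times \bbS^1 \rightarrow \R^{2n}$ is continuous.

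It remains to handle the other orders of differentiation. Apply the continuous linear map $\partial_t^i : C^{k} \rightarrow C^{k-i}$ to the $C^{k}$-valued smooth curve $Z$. This shows that $\partial_t^i Z$ is again a $C^{k-i}$-valued smooth curve, so the partials may be taken in any order. By Schwarz's theorem they coincide, and all of them are continuous. Therefore $Z$ is $C^\infty$ on $[s_0,\infty)\times\bbS^1$, which is (1).

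\textbf{Main obstacle.} The only point needing care is this last bookkeeping step: ensuring that mixed partial derivatives in arbitrary order exist and are jointly continuous, rather than only the $\partial_s^j\partial_t^i$ ordering. The argument above handles it by treating $\partial_t^i$ as a bounded operator between $C^k$-spaces and then invoking Schwarz's theorem.
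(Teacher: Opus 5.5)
Your proof is correct. The paper gives no argument for this lemma; it calls it straightforward and leaves it to the reader. So there is no competing route to compare against, and your reduction to the $C^k$-norms via the bounded inclusions $C^k \hookrightarrow H^k$ and $H^{k+1} \hookrightarrow C^k$ is a natural way to supply the missing details.

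Two minor remarks.

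First, the final appeal to Schwarz's theorem can be replaced by a direct induction. Take any word in $\partial_s,\partial_t$ containing $j$ derivatives in $s$ and $i$ in $t$. Applied to $Z$, in any order, it yields $\partial_t^i\bigl(Z^{(j)}(s)\bigr)(t)$. Differentiating in $t$ is legitimate because $Z^{(j)}(s) \in C^\infty(\mathbb{S}^1,\mathbb{R}^{2n})$. Differentiating in $s$ is legitimate because $Z^{(j)}$ is itself a smooth curve into every $C^k$, with derivative $Z^{(j+1)}$. This gives existence and joint continuity of all mixed partials at once.

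Second, suppose smoothness on the manifold with boundary $[s_0,\infty)\times\mathbb{S}^1$ is defined through local smooth extensions rather than through continuous one-sided partial derivatives. Then the last step to (1) tacitly uses Seeley's extension theorem. The paper does not fix this convention either, so this is not a gap.
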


\noindent Motivated by part \ref{it: smooth cylinder - derivative} of the previous lemma, for a smooth cylinder $Z$ we will write $\partial_s Z (s)  \in \Cinfty(\bbS^1,\R^{2n})$ for the derivative of $Z : [s_0,\infty) \rightarrow H^k$ at $s$, for any choice of $k \geq 0 \,$. (This is independent of $k$.) Notice that $\partial_s Z$ is again a smooth cylinder.
\\

\noindent The next proposition is essentially a shifted version of Theorem \ref{mthm: Floer - exponential decay}. Justified by Remark \ref{rmk: C^0 close to constant loop in S^1 x R^(2n-d)}, we consider $\xzerobar$ as constant loop at the origin in $\R^{2n} \,$.

\begin{proposition}
\label{prop: exponential decay - localized version}
There exist a constant $B >0 \,$, a $\Cinfty$-open neighborhood $\Ucal(\Bar{x}_0) \subseteq \Cinfty(\bbS^1,\R^{2n})$ of the constant loop $\Bar{x}_0 = \shift{-}(x_0) \equiv 0$ at the origin and a $\Cinfty$-continuous map $\Xi : \Ucal(\Bar{x}_0) \rightarrow [0,\infty)$ with
\[
\Xi(x) = 0 \quad \text{for every constant loop } x \text { with } z''(x) \equiv 0
\]
having the following significance:\\
For every $s_0 \in \R$ and every smooth cylinder $Z : [s_0,\infty) \rightarrow \Cinfty(\bbS^1,\R^{2n})$ with 
\begin{enumerate}[label=(\roman*)]
    \item\label{it: exponential decay - properties Z image in nbhd} image contained in $\Ucal(\Bar{x}_0) \,$,
    \item\label{it: exponential decay - properties Z abstract Floer eq} satisfying the abstract Floer equation
    \begin{equation}
    \label{eq: abstract Floer eq}
    \partial_s Z (s) = \Asf_{Z(s)} \, Z(s) \qquad \Forall s \geq s_0 \,\, ,
    \end{equation}
    \item\label{it: exponential decay - properties Z limit} $ \lim_{s \rightarrow \infty} Z(s) =x$ in the $\Cinfty(\bbS^1,\R^{2n})$-topology, where $x \in \Ucal(\Bar{x}_0)$ is some constant loop with $z''(x) \equiv 0 \,$,
\end{enumerate}
it holds
\[
\lVert \Qsf \, Z(s) \rVert_{H^2} \leq \Xi(Z(s_0)) \, e^{- B(s-s_0)} \qquad \Forall s \geq s_0 \,\, .
\]
\end{proposition}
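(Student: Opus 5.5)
We derive a differential inequality for $\phi(s) := \tfrac12 \lVert \Qsf Z(s)\rVert_{H^2}^2$ of the form $\phi'' \geq B^2 \phi$ (or at least $\phi'' \geq c\,\phi$ with uniform $c>0$) on $[s_0,\infty)$. We then combine it with $\phi(s)\to 0$, which follows from hypothesis \ref{it: exponential decay - properties Z limit} since $\Qsf x = 0$ for the constant limit loop $x \in \ker\Asfxzerobar$. The maximum principle then gives $\phi(s)\leq \phi(s_0)e^{-2B(s-s_0)}$. We set $\Xi(y) := \lVert \Qsf\, y\rVert_{H^2}$, possibly multiplied by a constant. This is $\Cinfty$-continuous and vanishes on constant loops with $z''\equiv 0$, because these lie in $\ker\Asfxzerobar$ by \ref{it: operators A_x - kernel}. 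This is the classical Bourgeois--Oancea convexity argument, carried out in the splitting \eqref{eq: Hodge decomposition}.

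Set $\zeta := \Qsf Z$, which is again a smooth cylinder. By \ref{it: operators A_x - A Q = A}, $\partial_s Z = \Asf_{Z}\zeta$. Applying $\Qsf$ gives $\partial_s\zeta = \Qsf\Asf_Z\zeta = \Asfxzerobar\zeta + \Qsf(\Asf_Z-\Asfxzerobar)\zeta$, using \ref{it: operators A_x - Q Asfxzerobar = Asfxzerobar}. Write $R(s) := \Qsf(\Asf_{Z(s)}-\Asfxzerobar)$. By Lemma \ref{lem: x mapsto A_x smooth}, with $\ell$ large (say $\ell=4$), $\lVert \Asf_{Z(s)}-\Asfxzerobar\rVert_{\mathcal L(H^{k},H^{k-1})}\leq C\lVert Z(s)\rVert_{H^\ell}$ for $k\le 3$, and this is small on a small $\Cinfty$-neighbourhood $\Ucal(\xzerobar)$. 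Differentiating once more gives $\partial_s^2\zeta = \Asfxzerobar\partial_s\zeta + R\,\partial_s\zeta + R'\zeta$. Here $R' = \Qsf\, d\Asf_{Z}[\partial_s Z]$, and $\partial_s Z = \Asf_Z\zeta$ is controlled by $\lVert\zeta\rVert_{H^{k+1}}$, so $\lVert R'\zeta\rVert_{H^k}\le C\lVert\zeta\rVert_{H^{k+1}}\lVert \zeta\rVert_{H^{k+1}}$. The factor $\lVert\zeta\rVert_{H^{k+1}}$ is small in $\Ucal(\xzerobar)$.

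Then compute $\phi'' = \lVert\partial_s\zeta\rVert^2_{H^2} + \langle \partial_s^2\zeta,\zeta\rangle_{H^2}$. The key identity uses the selfadjointness of $\Asfxzerobar$ with respect to $\langle\cdot,\cdot\rangle_{L^2}$ (\ref{it: operators A_x - Asfxzerobar = Hessian}). Writing $\partial_s\zeta = \Asfxzerobar\zeta + R\zeta$ yields $\langle\Asfxzerobar\partial_s\zeta,\zeta\rangle = \langle\partial_s\zeta,\Asfxzerobar\zeta\rangle$, so the main term is $\phi''\gtrsim 2\lVert\Asfxzerobar\zeta\rVert^2 - (\text{error})$. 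By \ref{it: operators A_x - Q Asfxzerobar invertible}, $\lVert\Asfxzerobar\zeta\rVert^2_{H^k}\ge c(k)\lVert\zeta\rVert^2_{H^{k+1}}$. The errors are bounded by $\eps\lVert\zeta\rVert^2_{H^{k+1}}$ with $\eps$ small once $\Ucal(\xzerobar)$ is small, so $\phi''\ge c\,\phi$.

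\textbf{Main obstacle.} The $H^2$ inner product is \emph{not} the one for which $\Asfxzerobar$ is selfadjoint. I would first try to commute $\partial_t$ past $\Asfxzerobar$: the commutator $[\partial_t,\Asfxzerobar]$ involves $\partial_t J_0$ and $\partial_t S_0$ and is an operator of order $1$. This order-$1$ term is not small, so it cannot simply be absorbed. The fix I would attempt is to run the convexity argument with the $L^2$ norm first, obtaining $\lVert\zeta(s)\rVert_{L^2}\le C\lVert\zeta(s_0)\rVert_{L^2}e^{-B(s-s_0)}$. I would then upgrade to $H^2$ by interior elliptic estimates for the Floer equation on $[s-1,s+1]\times\bbS^1$ (valid for $s\ge s_0+1$), together with local continuity near $s_0$, controlled by $\lVert Z(s_0)\rVert$ in a stronger norm. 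Alternatively, one can apply the $L^2$ argument to $\partial_t^j\zeta$, using Fact \ref{it: operators A_x - partial_t Q = partial_t}. If the elliptic route is needed, $\Xi$ must involve a higher Sobolev norm of $\Qsf Z(s_0)$. This is still $\Cinfty$-continuous and still vanishes on $\Nloopspace$, so it remains acceptable.
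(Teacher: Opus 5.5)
\textbf{Genuine gap: the initial strip $[s_0,s_0+1]$.} Your diagnosis of the obstacle is right: it is exactly the point the paper raises against Bourgeois--Oancea. Your $L^2$ convexity step is also fine; it is the paper's $L^2$ proposition. The gap is in the upgrade to $H^2$.

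\emph{Why ``local continuity'' is not available.} Interior elliptic estimates on $[s-1,s+1]\times\bbS^1$ can only give the bound for $s\geq s_0+1$. On $[s_0,s_0+1]$ you appeal to ``local continuity near $s_0$'', but no such continuity exists. The equation $\partial_s Z=\Asf_Z Z$ is not a well-posed evolution equation, because $\Asf_x$ is a first-order operator whose spectrum is unbounded in both directions. So no ODE- or semigroup-type argument controls $Z|_{[s_0,s_0+1]}$ by the single loop $Z(s_0)$. Yet bounding $\sup_{s\in[s_0,s_0+1]}\lVert\Qsf Z(s)\rVert_{H^2}$ by a continuous function of $Z(s_0)$ that vanishes on $\ker\Asfxzerobar$, uniformly over all cylinders, is precisely the content of the proposition. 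It cannot be skipped: the shortening criterion integrates $\lVert\partial_s u\rVert_{C^0}$ over all of $[s_0,\infty)$, this strip included.

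\emph{What your routes would need.} Along your elliptic route you need a genuinely additional ingredient. One option is elliptic estimates up to the boundary $\{s_0\}\times\bbS^1$ for the linear Cauchy--Riemann type equation satisfied by $\Qsf Z$ (or by $\partial_s u$), treating the loop at $s_0$ as overdetermined Dirichlet data, with constants uniform over $\Ucalxzerobar$. This is nowhere in your proposal.

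Your other alternative, running the $L^2$ argument on $\partial_t^j\zeta$, fails for the reason you found yourself. Differentiating the equation in $t$ gives $\partial_s(\partial_t\zeta)=\Asf_Z\,\partial_t\zeta-(\partial_t J_t(u))\,\partial_t\zeta+(\text{terms in }\zeta)$. The coefficient $\partial_t J_t(u)\approx\partial_t J_0(t)$ is not small, so the analogue of the smallness of $\lVert\Asf_x\rVert\,\lVert\Asf_x^*-\Asf_x\rVert$ is lost.

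\textbf{The missing idea: differentiate in $s$, not $t$.} The commutator is $\partial_s\Asf_Z=D_Z\Asf\,[\Asf_Z Z]$, which vanishes at $\xzerobar$. Hence $\Zhat=(Z,\partial_s Z,\partial_s^2 Z)$ solves $\partial_s\Zhat=\Asfhat_Z\Zhat$, where $\Asfhat_{\xzerobar}=\mathrm{diag}(\Asfxzerobar,\Asfxzerobar,\Asfxzerobar)$ is $\Lhat^2$-selfadjoint and the lower-triangular entries are small near $\xzerobar$. The $L^2$ convexity argument then runs verbatim for $\Qsfhat\Zhat$ starting at $s_0$.

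Because the Floer equation expresses $\Zhat(s_0)=\mathsf{E}(Z(s_0))$ as a continuous function of the loop alone, this yields $\lVert\Qsfhat\Zhat(s)\rVert_{\Lhat^2}\leq\lVert\Qsfhat\,\mathsf{E}(Z(s_0))\rVert_{\Lhat^2}\,e^{-B(s-s_0)}$ on all of $[s_0,\infty)$. There is no initial strip to handle. The $t$-derivatives $\partial_t Z$ and $\partial_t^2 Z$ are then recovered algebraically from the Floer equation. The components $\partial_s\Psf Z$ and $\partial_s^2\Psf Z$ are controlled through the invertible operator $\mathsf{D}_x$.
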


\noindent Before proving this proposition, which will occupy the rest of the section, we deduce Theorem \ref{mthm: Floer - exponential decay} from it.

\begin{proof}[Proof of Theorem \ref{mthm: Floer - exponential decay}]
    Choose the constant $B > 0$, the $\Cinfty$-open neighborhood $\Ucal(\Bar{x}_0)$ of $\Bar{x}_0 = \shift{-}(x_0)$ and the continuous map $\Xi : \Ucal(\Bar{x}_0) \rightarrow [0,\infty)$ as in Proposition \ref{prop: exponential decay - localized version}. Since $\Asf : \Ucalxzerobar \rightarrow \mathcal{L}(H^2,H^1)$ is continuous (Lemma \ref{lem: x mapsto A_x smooth}), making $\Ucal(\Bar{x}_0)$ smaller if necessary, we can assume that the operator norm of $\Asf_x$ is bounded uniformly for all $x \in \Ucal(\Bar{x}_0) \,$. Using also the continuity of the Sobolev embedding $H^1 \hookrightarrow C^0(\bbS^1,\R^{2n}) \,$, for a smooth cylinder $Z$, satisfying \ref{it: exponential decay - properties Z image in nbhd}\textbf{--}\ref{it: exponential decay - properties Z limit} in Proposition \ref{prop: exponential decay - localized version}, we estimate\footnote{
    This estimate can similarly be found in the proof of \cite[Prop.~32]{FauckThesis}.
    }
    \begin{align*}
        \lVert \partial_s Z (s) \rVert_{C^0} &\leq c_1 \, \lVert \partial_s Z (s) \rVert_{H^1} = c_1 \, \lVert \Asf_{Z(s)} \, Z(s) \rVert_{H^1} \overset{\text{\ref{it: operators A_x - A Q = A}}}{=} c_1 \, \lVert \Asf_{Z(s)} \Qsf \, Z(s) \rVert_{H^1} \\
        &\leq c_1 \, \lVert \Asf_{Z(s)} \rVert_{\mathrm{op}} \, \lVert \Qsf \, Z(s)  \rVert_{H^2} \\
        &\leq c_2 \, \Xi(Z(s_0)) \, e^{-B(s-s_0)} \,\, ,
    \end{align*}
    with constants $c_1,c_2 >0$ independent of $Z$. Now shift $\Ucal(\Bar{x}_0)$ to the neighborhood $\Ucal(x_0) := \shift{+}(\Ucal(\Bar{x}_0))$ of $x_0$ and define the continuous function $\widehat{\Xi} := (c_2 \, \Xi) \circ \shift{-}$ on $\Ucal(x_0) \,$, which vanishes on $\Ucal(x_0) \cap \Nloopspace \,$. Given a Floer cylinder $u : [s_0,\infty) \rightarrow \Ucal(x_0)$ as in Theorem \ref{mthm: Floer - exponential decay}, its shift $Z := \Bar{u} = \shift{-}(u)$ satisfies the abstract Floer equation \eqref{eq: abstract Floer eq} due to \eqref{eq: Floer eq - operator A} and moreover converges asymptotically to a constant loop $Z(\infty)$ with $z''(Z(\infty)) = 0 $ due to \eqref{eq: shift^- (Nloopspace)}. By the above considerations 
    \[
    \lVert \partial_s u(s,\,\cdot\,) \rVert_{C^0} = \lVert \partial_s Z(s) \rVert_{C^0} \leq c_2 \, \Xi(Z(s_0)) \, e^{-B(s-s_0)} = \widehat{\Xi}(u_{s_0}) \, e^{-B(s-s_0)}
    \]
    and this finishes the proof.
\end{proof}

\noindent It remains to prove Proposition \ref{prop: exponential decay - localized version} and we will accomplish this in the next two subsections in a very technical manner.

\subsection{Exponential Decay Estimate for the $L^2$-Norm}
\label{subsec: Exponential Decay - L^2-norm}

\noindent In this section we prove an exponential decay estimate of the form described in Proposition \ref{prop: exponential decay - localized version} but for the $L^2$-norm instead of the $H^2$-norm. This is not quite strong enough but a solid first step. In the subsequent section we will then copy these arguments for higher-order vectors. The proof of the $L^2$-norm estimate below is an adaption of \cite[Prop.~A.1]{Oancea_Bourgeois_MorseBott}. We first recapitulate a particularly easy case of a maximum principle.

\begin{lemma}[Maximum Principle]
\label{lem: maximum principle}
Given $f  \in C^2([s_0,\infty), \,\R)$ and a constant $\delta >0 $ with $f''(s) \geq \delta^2 \, f(s)$ for all $s \geq s_0 \,$. Suppose there exists a sequence $\Bar{s}_n \geq s_0$ tending to infinity with $f(\Bar{s}_n) \overset{n}{\rightarrow} 0 \,$. Then 
\[
f(s) \leq f(s_0) \, e^{-\delta (s-s_0)} \quad \Forall s \geq s_0 \,\, .
\]
\end{lemma}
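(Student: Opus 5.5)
We compare $f$ with the explicit supersolution $g(s) := f(s_0)\, e^{-\delta(s-s_0)}$, which satisfies $g'' = \delta^2 g$ and $g(s_0)=f(s_0)$. The difference $h := f - g$ then obeys $h'' \geq \delta^2 h$ on $[s_0,\infty)$, $h(s_0)=0$, and $\liminf_{s\to\infty} h(s) \leq \lim_n \big(f(\Bar{s}_n) - g(\Bar{s}_n)\big) = 0$, because $g(\Bar{s}_n)\to 0$ whatever the sign of $f(s_0)$. The goal is to show $h \leq 0$ everywhere, which is exactly the claimed bound.

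Suppose by contradiction that $h(s_1) > 0$ for some $s_1 > s_0$. The key observation is that $h$ cannot have an interior local maximum with positive value: at such a point $h'' \leq 0$ but $h'' \geq \delta^2 h > 0$. Hence on the set where $h>0$, the function $h$ is strictly convex. Set $t_0 := \sup\{ s \in [s_0,s_1] \mid h(s) \leq 0\}$. Then $h(t_0)=0$ by continuity and $h>0$ on $(t_0,s_1]$. By the mean value theorem there is a point in $(t_0,s_1)$ with $h'>0$. On any interval where $h>0$ we have $h''>0$, so $h'$ is increasing there, and $h$ stays increasing and positive. This persists for all $s \geq s_1$: $h$ can never return to zero, since it is increasing as long as it is positive. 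Consequently $h(s) \geq h(s_1) > 0$ for all $s \geq s_1$, contradicting $\liminf h \leq 0$ along $\Bar{s}_n \to \infty$.

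There is no serious obstacle; the one point requiring care is making the ``stays increasing'' step rigorous. Let $t_1 := \sup\{s \geq s_1 \mid h > 0 \text{ on } [s_1,s]\}$. On $[s_1,t_1)$ we have $h''>0$ and $h'(s_1)>0$, where the latter holds because $h'$ is increasing on $(t_0,s_1]$ and positive somewhere there. Hence $h' > 0$ on $[s_1,t_1)$, so $h \geq h(s_1) > 0$ on $[s_1,t_1)$, and therefore $t_1=\infty$. This is also why the conclusion is a one-sided bound on $f$ with no sign assumption on $f$ required.
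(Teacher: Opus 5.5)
Your proof is correct and follows the paper's argument: you use the same comparison function $g(s)=f(s_0)e^{-\delta(s-s_0)}$ and the same key observation that $h=f-g$ satisfies $h''\geq\delta^2 h$ and so cannot have a strictly positive interior local maximum. The paper reaches the contradiction more briefly: if $h(s_1)>0$, pick $\bar s_n>s_1$ with $h(\bar s_n)<h(s_1)$, and then the maximum of $h$ on $[s_0,\bar s_n]$ is a positive interior local maximum. Your route instead propagates monotonicity via convexity, which is equally valid.
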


\begin{proof}
    Define $g(s) := f(s_0) \, e^{-\delta (s-s_0)} \,$. Then $(f-g)'' \geq \delta^2 (f-g) \,$, which shows that $f-g$ does not have a strictly positive local maximum on $(s_0,\infty) \,$. Using moreover $(f-g)(s_0) =0$ and $(f-g)(\Bar{s}_n) \overset{n}{\rightarrow} 0 \,$, we conclude that $f - g \leq 0 \,$.
\end{proof}

\noindent Recall that $\Bar{x}_0$ is the constant loop at the origin in $\R^{2n}$. In the proposition below, let $c(0) > 0$ be the constant from \ref{it: operators A_x - Q Asfxzerobar invertible}.

\begin{proposition}
\label{prop: exponential decay - L^2 estimate}
Let $0 < B < \sqrt{ c(0)}$ be arbitrary. There exists a $\Cinfty$-open neighborhood $\Ucal(\Bar{x}_0) \subseteq \Cinfty(\bbS^1,\R^{2n})$ of $\Bar{x}_0 $ so that for every $s_0 \in \R$ and every smooth cylinder $Z : [s_0,\infty) \rightarrow \Cinfty(\bbS^1,\R^{2n}) \,$, satisfying \ref{it: exponential decay - properties Z image in nbhd}\textbf{--}\ref{it: exponential decay - properties Z limit} in Proposition \ref{prop: exponential decay - localized version}, it holds
\[
\lVert \Qsf \, Z(s) \rVert_{L^2} \leq  \lVert \Qsf \, Z(s_0) \rVert_{L^2} \, e^{-B(s-s_0)} \qquad \Forall s \geq s_0 \,\, .
\]
\end{proposition}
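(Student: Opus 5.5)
We apply the maximum principle (Lemma~\ref{lem: maximum principle}) to
\[
f(s) := \tfrac12 \lVert \Qsf \, Z(s) \rVert_{L^2}^2
\]
with $\delta := 2B$. That lemma then gives $f(s) \leq f(s_0)\, e^{-2B(s-s_0)}$, and taking square roots yields the claim. The decay hypothesis of the lemma is immediate: by \ref{it: exponential decay - properties Z limit}, $Z(s)$ converges in $\Cinfty$ to a constant loop $x$ with $z''(x) \equiv 0$. Such an $x$ lies in $\ker \Asfxzerobar$, so $\Qsf x = 0$, and hence $f(s) \to 0$ as $s \to \infty$. It therefore remains to show $f'' \geq 4B^2 f$ once $\Ucal(\Bar{x}_0)$ is small enough.

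Write $\xi(s) := \Qsf Z(s)$. Since $\Qsf$ is a bounded projection commuting with $\partial_s$, the abstract Floer equation together with \ref{it: operators A_x - A Q = A} gives
\[
\partial_s \xi = \Qsf \Asf_{Z} \xi = \Asfxzerobar \xi + \Qsf(\Asf_Z - \Asfxzerobar)\xi ,
\]
where we used \ref{it: operators A_x - Q Asfxzerobar = Asfxzerobar} on the first term. Differentiating $f$ twice gives
\[
f' = \langle \xi, \partial_s \xi\rangle_{L^2}, \qquad f'' = \lVert \partial_s\xi\rVert_{L^2}^2 + \langle \xi, \partial_s^2 \xi\rangle_{L^2}.
\]
For the second derivative we differentiate the equation once more:
\[
\partial_s^2\xi = \Qsf\Asf_Z\partial_s\xi + \Qsf (d\Asf_Z[\partial_s Z])\xi ,
\]
using the smoothness of $x \mapsto \Asf_x$ from Lemma~\ref{lem: x mapsto A_x smooth}. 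By selfadjointness of $\Asfxzerobar$ (\ref{it: operators A_x - Asfxzerobar = Hessian}) and $\Qsf\xi = \xi$, the leading term becomes
\[
\langle \xi, \Asfxzerobar \partial_s \xi\rangle = \langle \Asfxzerobar\xi, \partial_s\xi\rangle .
\]
Writing $\partial_s \xi = \Asfxzerobar\xi + R$, we get
\[
f'' \approx 2\lVert \Asfxzerobar\xi\rVert_{L^2}^2 + (\text{error terms}) .
\]
Then \ref{it: operators A_x - Q Asfxzerobar invertible} with $k=0$ gives $\lVert \Asfxzerobar\xi\rVert_{L^2}^2 \geq c(0)\lVert\xi\rVert_{H^1}^2$. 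So the main term dominates $2c(0)\lVert \xi\rVert_{L^2}^2 = 4c(0) f$, and we have room $4c(0) - 4B^2 > 0$ to absorb the errors.

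The main obstacle is controlling the error terms. They are of the following types:
\[
\langle \xi,\, \Qsf(\Asf_Z - \Asfxzerobar)\partial_s\xi\rangle, \qquad \lVert R\rVert^2, \qquad \langle\Asfxzerobar\xi, R\rangle, \qquad \langle \xi,\, (d\Asf_Z[\partial_s Z])\xi\rangle .
\]
Each must be bounded by $\eps \lVert \xi\rVert_{H^1}^2$ with $\eps$ small. The factor $\lVert \Asf_Z - \Asfxzerobar\rVert_{\mathcal L(H^1,L^2)}$ is small for $Z$ near $\Bar{x}_0$ by Lemma~\ref{lem: x mapsto A_x smooth}. The difficulty is the term containing $\partial_s\xi$ paired against an operator of order one, which would naively need $H^2$ control of $\xi$. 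I would handle it in three steps. First, use $\Asf_Z - \Asfxzerobar = -(J_t(\shift{+}Z) - J_0)(\partial_t + S\,z'') + \text{(order zero)}$. Second, transpose the order-one part onto $\xi$ using the $L^2$-adjoint, which costs only $C^1$-smallness of $Z$. Third, substitute $\partial_s\xi = \Asf_Z\xi$, so that everything reduces to $H^1$-norms of $\xi$. The last term uses that $\partial_s Z = \Asf_Z\Qsf Z$ is $C^0$-small for $Z$ in a small $\Cinfty$-neighborhood, which is exactly where the $\Cinfty$-smallness of $\Ucal(\Bar{x}_0)$ enters.

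With all errors bounded by $C\eps\lVert\xi\rVert_{H^1}^2$, we obtain
\[
f'' \geq (2c(0) - C\eps)\lVert\xi\rVert_{H^1}^2 \geq 4B^2 f
\]
for $\eps$ small. Lemma~\ref{lem: maximum principle} then finishes the proof.
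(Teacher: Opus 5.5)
Your proposal is correct and is essentially the paper's proof. It uses the same function $f=\frac12\lVert\Qsf Z\rVert_{L^2}^2$, the same maximum principle, and the same three inputs: coercivity from (Fact 7); moving the extra derivative across an $L^2$-adjoint whose non-selfadjoint part is small (Proposition \ref{app prop: adjoint continuous}); and smallness of $\partial_s\Asf_Z = D_Z\Asf\,[\Asf_Z Z]$, obtained by eliminating the $s$-derivative via the Floer equation.

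The only difference is bookkeeping. You expand around the selfadjoint $\Asfxzerobar$, which keeps the exact constant $c(0)$ at the price of error terms in $\Asf_Z-\Asfxzerobar$ and its adjoint. The paper instead keeps $\Asf_Z$ throughout and uses the perturbed bound $\lVert\Qsf\Asf_x\Qsf X\rVert_{L^2}^2\geq (c(0)-\eps)\lVert\Qsf X\rVert_{H^1}^2$ together with smallness of $\lVert\Asf_x\rVert\,\lVert\Asf_x^*-\Asf_x\rVert$.
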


\noindent The function $\Xi : \Ucal(\Bar{x}_0) \rightarrow [0,\infty)$ defined by $\Xi(x) := \lVert \Qsf \, x \rVert_{L^2}$ is $L^2$-continuous, so a fortiori $\Cinfty$-continuous, and moreover vanishes on $\ker \Asfxzerobar \,$. Since $\ker \Asfxzerobar$ consists of the constant loops $x$ with $z''(x) \equiv 0 \,$, the above inequality is nearly as desired in Proposition \ref{prop: exponential decay - localized version}, but unfortunately for the $L^2$-norm instead of the $H^2$-norm.

\begin{proof}[Proof of Proposition \ref{prop: exponential decay - L^2 estimate}]
    We define $\eps := \frac{1}{2} (c(0) - B^2) > 0$ and start by selecting the neighborhood $\Ucal(\Bar{x}_0) \,$. We will successively choose $\Ucal(\Bar{x}_0)$ smaller below but keep writing the same symbol.
    \begin{enumerate}[label=(\arabic*)]
        \item\label{it: exponential decay - L^2 estimate invertibility}  Recall from \ref{it: operators A_x - Q Asfxzerobar invertible} that $\Qsf \,\Asfxzerobar|_{\Asfxzerobar(H^2)}^{\Asfxzerobar(H^1)}$ is invertible. Now invertibility is an open condition and taking the inverse of an operator a continuous assignment. Because $\Asf$ is a continuous map from $\Cinfty(\bbS^1,\bbS^1 \times \R^{2n-1})$ into the space of bounded linear operators (Lemma \ref{lem: x mapsto A_x smooth}), we can choose a $\Cinfty$-open neighborhood $\Ucal(\Bar{x}_0)$ of $\Bar{x}_0$ so that for every $x \in \Ucal(\Bar{x}_0)$ the operator $\Qsf \, \Asf_x |_{\Asfxzerobar(H^2)}^{\Asfxzerobar(H^1)}$ is invertible with
        \begin{align*}
            0 < c(0) - \eps = \left\lVert \Big( \Qsf \, \Asfxzerobar \big|_{\Asfxzerobar(H^2)}^{\Asfxzerobar(H^1)} \Big)^{-1} \right\rVert^{-2} - \eps \leq  \left\lVert \Big( \Qsf \, \Asf_x \big|_{\Asfxzerobar(H^2)}^{\Asfxzerobar(H^1)} \Big)^{-1} \right\rVert^{-2} \,\, .
        \end{align*}
        By this choice of $\Ucalxzerobar$ and \eqref{eq: Qsf orthogononal projection}, for every $x \in \Ucalxzerobar$ it holds
        \begin{equation}
        \label{eq: exponential decay - L^2 estimate invertibility}
        \lVert \Asf_x \, \Qsf \, X \rVert_{L^2}^2 \geq \lVert \Qsf \,\Asf_x \, \Qsf \, X \rVert_{L^2}^2 \geq (c(0) - \eps ) \, \lVert \Qsf \, X \rVert_{H^1}^2 \qquad \Forall X \in H^1  \,\, .
        \end{equation}
        \item\label{it: exponential decay - L^2 estimate adjoint} Denote by $\Asf_x^* : H^1 \rightarrow L^2$ the adjoint of $\Asf_x$ with respect to the inner product $(g_J)_{x_0} = \langle \,\cdot\comma \cdot\,\rangle_{L^2} \,$.\footnote{
We emphasize that we define the domain of the adjoint to be $H^1$, see Definition \ref{app def: adjoint} in the appendix. Being a partial differential operator of order $1$, the operator $\Asf_x$ has an adjoint which lies in $\mathcal{L}(H^1,L^2) \,$, see Appendix \ref{app subsec: adjoints continuous}.
}
The map $\Ucal(\Bar{x}_0) \rightarrow \mathcal{L}(H^1,L^2) \comma x \mapsto \Asf_x^* \,$, is continuous (by Proposition \ref{app prop: adjoint continuous} in the appendix) and hence so is
        \begin{align*}
          \Ucal(\Bar{x}_0) \rightarrow [0,\infty) \comma \quad  x \mapsto \lVert \Asf_x \rVert_{\mathcal{L}(H^1,L^2)} \, \lVert \Asf^*_x - \Asf_x \rVert_{\mathcal{L}(H^1,L^2)} \,\, .
        \end{align*}
    Moreover, this map vanishes at $\Bar{x}_0$ because $\Asfxzerobar$ is selfadjoint, see \ref{it: operators A_x - Asfxzerobar = Hessian}. Thus we can choose $\Ucalxzerobar$ smaller so that
    \begin{equation}
    \label{eq: exponential decay - L^2-estimate adjoint}
    \lVert \Asf_x \rVert_{\mathcal{L}(H^1,L^2)} \, \lVert \Asf^*_x - \Asf_x \rVert_{\mathcal{L}(H^1,L^2)} \leq \eps  \qquad \Forall x \in \Ucalxzerobar \,\, .
    \end{equation}
    \item\label{it: exponential decay - L^2 estimate derivative A_Z Z} We choose $\Ucalxzerobar$ smaller, so that every smooth cylinder $Z : [s_0,\infty) \rightarrow \Cinfty(\bbS^1,\R^{2n})$ with image contained in $\Ucalxzerobar$ satisfies
    \begin{equation}
    \label{eq: exponential decay - L^2-estimate derivative A_Z Z}
        \partial_s Z = \Asf_Z \, Z \quad \Longrightarrow \quad \big\lVert (\partial_s \Asf_{Z(s)}) (s_1) \big\rVert_{\mathcal{L}(H^1,L^2)} < \eps \quad \Forall s_1 \geq s_0 \,\, .
    \end{equation}
    This is possible, as we now explain. Recall from Lemma \ref{lem: x mapsto A_x smooth} that the map $\Asf : H^1(\bbS^1,\bbS^1 \times \R^{2n-1}) \rightarrow \mathcal{L}(H^1,L^2)$ is smooth. We denote by 
    \[
    D_\bullet \Asf : H^1(\bbS^1,\bbS^1 \times \R^{2n-1}) \rightarrow \mathcal{L}(H^1,\mathcal{L}(H^1,L^2)) \comma \quad x \mapsto D_x \Asf \, ,
    \]
    its differential. Then the map
    \begin{align*}
        &\Theta : \, \Ucalxzerobar \rightarrow [0,\infty) \comma \quad \Theta(x) := \left\lVert D_x \Asf \, [\Asf_x \, x] \right\rVert = \Big\lVert D_x \Asf^{(1)} \, \underbrace{[\Asf_x^{(2)} \, x]}_{\in H^1} \Big\rVert_{\mathcal{L}(H^1,L^2)} 
    \end{align*}
    is continuous, where as usual $\Ucalxzerobar$ carries the $\Cinfty$-topology. Moreover $\Theta(\xzerobar) = 0$ since $0 \equiv \xzerobar \in \ker(\Asfxzerobar) \,$. Thus we can choose $\Ucalxzerobar$ smaller so that $0 \leq \Theta(x) < \eps$ for every $x \in \Ucalxzerobar \,$. This choice of $\Ucalxzerobar$ now satisfies \eqref{eq: exponential decay - L^2-estimate derivative A_Z Z}: Given a smooth cylinder $Z$ with image in $\Ucalxzerobar$ and $\partial_s Z = \Asf_Z \, Z \,$, we compute
    \[
    (\partial_s \Asf_Z) (s) = D_{Z(s)} \Asf \, [\partial_s Z(s)] = D_{Z(s)} \Asf \, [\Asf_{Z(s)} \, Z(s)]
    \]
    and hence $\lVert (\partial_s \Asf_Z)(s) \rVert = \Theta(Z(s)) < \eps \,$.
    \end{enumerate}
\noindent Having chosen $\Ucalxzerobar \,$, we now verify that it has the desired property. Let $Z : [s_0,\infty) \rightarrow \Cinfty(\bbS^1,\R^{2n})$ be an arbitrary smooth cylinder satisfying \ref{it: exponential decay - properties Z image in nbhd}\textbf{--}\ref{it: exponential decay - properties Z limit} in Proposition \ref{prop: exponential decay - localized version}. We define the smooth function
\[
f : [s_0,\infty) \rightarrow [0,\infty) \comma \quad f(s) := \frac{1}{2} \, \big\lVert \Qsf \, Z(s) \big\rVert_{L^2}^2 \,\, .
\]
Notice that $\lim_{s \rightarrow \infty} f(s) = 0$ since $\lim_{s \rightarrow \infty} Z(s) \in  \ker \Asfxzerobar = \ker \Qsf $ by \ref{it: exponential decay - properties Z limit} and \ref{it: operators A_x - kernel}. We aim to show
\begin{equation}
\label{eq: exponential decay - L^2 estimate maximum principle}
f''(s) \geq 4 B^2 \, f(s) \qquad \Forall s \geq s_0 \,\, .
\end{equation}
If this has been shown, the maximum principle (Lemma \ref{lem: maximum principle}) yields
\[
\lVert \Qsf \, Z(s) \rVert_{L^2}^2 \leq \lVert \Qsf \, Z(s_0) \rVert_{L^2}^2 \, e^{- 2B (s-s_0)} \qquad \Forall s \geq s_0 \,\, ,
\]
which is our ultimate goal. We now show \eqref{eq: exponential decay - L^2 estimate maximum principle}. To this end, we use that $\Qsf \, \partial_s = \partial_s \, \Qsf$ (because $\Qsf$ is a bounded linear operator), that $\Asf_Z \, \Qsf = \Asf_Z$ by \ref{it: operators A_x - A Q = A}, that $\langle \Qsf X_1, \, \Qsf \, X_2 \rangle_{L^2} = \langle \Qsf X_1, \,  X_2 \rangle_{L^2}$ for all $X_1,X_2$ (because $\Qsf$ is an orthogonal projection) and the abstract Floer equation
\[
\partial_s Z = \Asf_Z \, Z = \Asf_Z \, \Qsf \, Z  \,\, ,
\]
which holds due to assumption \ref{it: exponential decay - properties Z abstract Floer eq}. Let $s \geq s_0$ be arbitrary but fixed (below we drop $s$ in the notation). We compute
\begin{align}
    f''(s) &= \lVert \partial_s ( \Qsf \, Z ) (s) \rVert^2_{L^2} + \big\langle \Qsf \, Z (s) , \, \partial_s^2 (\Qsf \, Z ) (s)\big\rangle_{L^2} \notag\\
    &= \lVert  \Qsf \, \Asf_Z \, \Qsf \, Z \rVert^2_{L^2} + \big\langle \Qsf \, Z , \, \Qsf \, \partial_s (\Asf_Z \, \Qsf \, Z) \big\rangle_{L^2} \notag\\
    &= \lVert  \Qsf \, \Asf_Z \, \Qsf \, Z \rVert^2_{L^2} + \big\langle \Qsf \, Z , \, (\partial_s \Asf_Z) \, \Qsf \, Z + \Asf_Z \, \Qsf \, \partial_s Z) \big\rangle_{L^2} \notag\\
    &= \lVert  \Qsf \, \Asf_Z \, \Qsf \, Z \rVert^2_{L^2} + \big\langle \Qsf \, Z , \, (\partial_s \Asf_Z) \, \Qsf \, Z \big\rangle_{L^2} + \big\langle \Qsf \, Z , \, \Asf_Z^2 \, \Qsf \, Z) \big\rangle_{L^2} \notag\\
    &= \lVert  \Qsf \, \Asf_Z \, \Qsf \, Z \rVert^2_{L^2} + \big\langle \Qsf \, Z , \, (\partial_s \Asf_Z) \, \Qsf \, Z \big\rangle_{L^2} \notag\\
    &\hspace*{3mm}+ \big\langle (\Asf_Z^* - \Asf_Z) \, \Qsf \, Z , \, \Asf_Z \, \Qsf \, Z \big\rangle_{L^2} + \lVert \Asf_Z \, \Qsf \, Z \rVert_{L^2}^2 \,\, . \label{eq: exponential decay - L^2-estimate f''}
\end{align}
Since $Z$ has image contained in $\Ucalxzerobar$, the first and the fourth term on the right can be bounded below by \eqref{eq: exponential decay - L^2 estimate invertibility}, so that
\[
\lVert \Qsf \, \Asf_Z \, \Qsf \, Z \rVert^2_{L^2} +  \lVert \Asf_Z \, \Qsf \, Z \rVert_{L^2}^2 \geq 2 (c(0) - \eps) \, \lVert \Qsf \, Z \rVert_{H^1}^2 \,\, .
\]
On the other hand, using \eqref{eq: exponential decay - L^2-estimate adjoint} and \eqref{eq: exponential decay - L^2-estimate derivative A_Z Z}, the second and third term can be bounded above by
\begin{align*}
   \big|\big\langle \Qsf \, Z , \, (\partial_s \Asf_Z) \, \Qsf \, Z \big\rangle_{L^2} \big|  + \big|\big\langle (\Asf_Z^* - \Asf_Z) \, \Qsf \, Z , \, \Asf_Z \, \Qsf \, Z \big\rangle_{L^2} \big| \leq 2 \, \eps \, \lVert \Qsf \, Z \rVert^2_{H^1} \,\, .
\end{align*}
Thus 
\begin{align*}
   f''(s) \geq  (2 c(0) -  4 \eps) \, \lVert \Qsf \, Z \rVert_{H^1}^2 \geq  (2c(0) - 4 \eps) \, \lVert \Qsf \, Z \rVert_{L^2}^2 = 4 B^2 \, f(s) \,\, ,
\end{align*}
showing \eqref{eq: exponential decay - L^2 estimate maximum principle}.
\end{proof}

\begin{remark}
\label{rmk: mistake by Bourgeois-Oancea}
In the proof of \cite[Prop.~A.1]{Oancea_Bourgeois_MorseBott} it is claimed that the above estimates not only hold for the $L^2$-norm but for every $H^k$-norm. If this was true, replacing the $L^2$-norm with the $H^2$-norm in the above proof would directly prove Proposition \ref{prop: exponential decay - localized version}. However, we believe this to be a mistake. Even if working with the Euclidean $L^2$-inner product, the standard almost complex structure $J_{\mathrm{std}}$ on $\R^{2n}$ and a loop of symmetric asymptotic matrices $S_\infty(t) \,$, which is the setting in \cite{Oancea_Bourgeois_MorseBott}, the asymptotic operator $\Asf_\infty := J_{\mathrm{std}} \, \partial_t + S_\infty $ (which, up to sign, corresponds to our operator $\Asfxzerobar$) is $L^2$-selfadjoint but not $H^1$-selfadjoint. Indeed,
\begin{align*}
\langle \Asf_\infty X_1 , \, X_2 \rangle_{H^1} - \langle X_1 , \Asf_\infty \, X_2 \rangle_{H^1} &= \langle \dot S_\infty \, X_1, \, \partial_t X_2 \rangle_{L^2} - \langle  \partial_t X_1, \, \dot S_\infty \, X_2 \rangle_{L^2} \\
&= \langle - \ddot{S}_\infty \, X_1 - 2 \, \dot{S}_\infty \, \partial_t X_1 , \, X_2 \rangle_{L^2}
\end{align*}
in general does not vanish for all $X_1 , X_2 \in H^2 \,$. It is therefore unclear to us how the analog of \eqref{eq: exponential decay - L^2-estimate adjoint} for the $H^1$-adjoint should be accomplished.
\end{remark}

\subsection{Proof of Proposition \ref{prop: exponential decay - localized version}}
\label{subsec: Exponential Decay - Diagonal Operator}

\noindent We now prove Proposition \ref{prop: exponential decay - localized version} by imitating the proof of Proposition \ref{prop: exponential decay - L^2 estimate}, replacing $Z$ by the vector $(Z,\partial_s Z, \partial_s^2 Z)$ and the operators $\Asf_x$ by operators on $(L^2)^{\oplus 3} \,$. This trick seems to have its origins in the analysis of the asymptotic behavior of pseudoholomorphic curves by Hofer-Wysocki-Zehnder, \cf \cite{HWZ_IV}. It was pointed out and explained to me in great detail by Richard Siefring, for which I would like to express my sincere gratitude. 

Contrary to the original trick however, we have not fixed a single Floer cylinder but instead investigate all asymptotic cylinders whose loops lie close to a fixed ``asymptotic loop''. For this reason, the terms we want to bound should not depend on $s$-derivatives $\partial_s Z$ of a cylinder $Z : [s_0,\infty) \rightarrow \Cinfty(\bbS^1,\R^{2n})$ but only on the loop $Z(s) \,$. The general philosophy of the proof below therefore is to replace all $s$-derivatives by a loop-dependent expression via the abstract Floer equation \eqref{eq: abstract Floer eq}.
\\

\noindent As before, we denote by $D_\bullet \Asf$ the differential of the smooth map $\Asf = \Asf^{(1)} : H^1(\bbS^1,\bbS^1\times \R^{2n-1}) \rightarrow \mathcal{L}(H^1,L^2) \,$, by $D^2_\bullet \Asf$ its second derivative and so on. By the chain rule, for all $ k \leq \ell$ and $x \in H^\ell(\bbS^1,\bbS^1\times \R^{2n-1}) \,$, the diagram
\begin{equation*}
\begin{tikzcd}[row sep=tiny]
    H^\ell \ar[r, "D_x \Asf^{(\ell)}"] \ar[dd, hookrightarrow] & \mathcal{L}(H^\ell, H^{\ell - 1}) \ar[dr, start anchor=east, end anchor= north west]   &   \\ & &  \mathcal{L}(H^\ell, H^{k-1}) \\
    H^k \ar[r,"D_x \Asf^{(k)}"] & \mathcal{L}(H^k, H^{k-1}) \ar[ur, start anchor=east, end anchor=south west] & 
\end{tikzcd}    
\end{equation*}
commutes, where the unlabeled arrows are the obvious linear maps. An analogous statement holds for higher order derivatives.
For this reason we usually omit the superscript when working with the derivatives of the operators $\Asf_x \,$.
As last notational preparation, we introduce
\[
\Lhat^2 := L^2 \oplus L^2 \oplus L^2 \quad \text{ and } \Hhat^k := H^k \oplus H^k \oplus H^k 
\]
and endow them with the direct sum inner products respectively, where $L^2$ and $H^k$ carry the inner products from before (defined in \eqref{eq: L^2 and H^k inner product}).

\begin{proof}[Proof of Proposition \ref{prop: exponential decay - localized version}]
We consider the neighborhood $\Ucalxzerobar$ of $\xzerobar$ either as subset of $\Cinfty(\bbS^1,\R^{2n}) $ or of $\Cinfty(\bbS^1,\bbS^1\times\R^{2n-1}) \,$, whatever is more convenient, which is justified by Remark \ref{rmk: C^0 close to constant loop in S^1 x R^(2n-d)}.
We will make $\Ucalxzerobar$ smaller in the course of the proof but keep writing $\Ucalxzerobar \,$. 
The proof of the proposition is subdivided into two parts.

\textbf{Part I}: 
Let $Z : [s_0,\infty) \rightarrow \Cinfty(\bbS^1,\R^{2n})$ be a smooth cylinder with image contained in $\Ucal(\xzerobar)$ and satisfying the abstract Floer equation $\partial_s Z = \Asf_Z \, Z \,$. Deriving the abstract Floer equation twice, we obtain
\begin{align}
\label{eq: abstract Floer eq - higher derivatives}
\left(\begin{matrix}
    \partial_s Z \\ \partial_s^2 Z \\ \partial_s^3 Z
\end{matrix} \right)  = \left(\begin{matrix}
\Asf_Z & 0 & 0 \\
\partial_s \Asf_Z & \Asf_Z & 0 \\
\partial_s^2 \Asf_Z & 2 \, \partial_s \Asf_Z & \Asf_Z
\end{matrix}\right) \, \left( \begin{matrix}
    Z \\ \partial_s Z \\ \partial_s^2 Z
\end{matrix} \right) \quad .
\end{align}
Using the abstract Floer equation, we eliminate the $s$-derivatives:
\begin{align}
    \partial_s \Asf_Z &= D_{Z} \Asf \, [\partial_s Z] = D_Z \Asf \, [\Asf_Z \,Z] \label{eq: exponetial decay - partial_s A_z}\\
    \partial_s^2 \Asf_Z &= \partial_s \big( D_Z \Asf \, [\Asf_Z \,Z] \big) 
    \notag\\
    &= (\partial_s D_Z \Asf) \, [\Asf_Z \, Z] + D_Z \Asf \, [\partial_s ( \Asf_Z \, Z)]  \notag\\
    &= D^2_Z \Asf \, [\Asf_Z \, Z , \, \Asf_Z \, Z] + D_Z \Asf \, [D_Z \Asf \, [\Asf_Z \, Z] \, Z] + D_Z \Asf \, [\Asf_Z^2 \, Z] \,\, . \label{eq: exponential decay - partial_s^2 A_Z}
\end{align}
The last expression depends only on the value of $Z(s)$ in the loopspace. Introducing the loop-dependent operator $\mathsf{B}_x \in \mathcal{L}(H^1,L^2) \,$, for $x \in \Ucalxzerobar \,$, by
\begin{equation}
\label{eq: operator B_x}
  \mathsf{B}_x := D^2_x \Asf \, [\Asf_x \, x , \, \Asf_x \, x] + D_x \Asf \, [D_x \Asf \, [\Asf_x \, x] \, x] + D_x \Asf \, [\Asf_x^2 \, x]  \,\, ,
\end{equation}
we therefore have $\partial_s^2 \Asf_Z (s) = \mathsf{B}_{Z(s)} \,$. Similarly, we define the loop-dependent operator $\Asfhat_x \in \mathcal{L}(\Hhat^1,\Lhat^2) \,$, for every $x \in \Ucalxzerobar \,$, by
\begin{equation}
\label{eq: operator Ahat_x}
    \Asfhat_x := \left( \begin{matrix}
        \Asf_x & 0 & 0 \\
        D_x \Asf \, [\Asf_x \, x] & \Asf_x & 0 \\
        \mathsf{B}_x & 2 \, D_x \Asf \, [\Asf_x \, x] & \Asf_x
    \end{matrix} \right) \,\, .
\end{equation}
We have defined $\Asfhat_x$ in such a way that, for a cylinder $Z$ satisfying $\partial_s Z = \Asf_Z Z \,$, it holds
\begin{align}
\label{eq: Asfhat_Z = derivatives A_z}
       \Asfhat_{Z(s)} =  \left(\begin{matrix}
\Asf_{Z(s)} & 0 & 0 \\
\partial_s \Asf_Z (s) & \Asf_{Z(s)} & 0 \\
\partial_s^2 \Asf_Z (s) & 2 \, \partial_s \Asf_Z (s) & \Asf_{Z(s)}
\end{matrix}\right) \,\, .
\end{align}
Abbreviating
\[
\Zhat(s) := (Z(s), \,\partial_s Z (s), \,\partial_s^2 Z (s)) \in \Cinfty(\bbS^1,\R^{2n})^{\oplus 3} = \bigcap_{k \geq 0} \Hhat^k \,\, ,
\]
we can now rewrite \eqref{eq: abstract Floer eq - higher derivatives} very succinctly as
\begin{equation}
\label{eq: abstract Floer eq - diagonal version}
\partial_s \Zhat (s) = \Asfhat_{Z(s)} \, \Zhat(s) \qquad \Forall s \geq s_0 \,\, .
\end{equation}
Since $\xzerobar \in \ker(\Asfxzerobar) \,$, the operator $\Asfhat_{\xzerobar}$ at $\xzerobar$ is
\begin{equation*}
    \Asfhat_{\xzerobar} = \mathrm{diag}(\Asfxzerobar, \Asfxzerobar, \Asfxzerobar) = \left(\begin{matrix}
        \Asfxzerobar & 0 & 0 \\
        0 & \Asfxzerobar & 0 \\
        0 & 0 & \Asfxzerobar
    \end{matrix} \right) \,\, ,
\end{equation*}
that is $\Asfhat_{\xzerobar}$ is a diagonal operator of $\Asfxzerobar \,$. Therefore, the diagonal operators
\begin{align*}
   \Psfhat = \Psfhat^{(k)} := \mathrm{diag}(\Psf^{(k)},\Psf^{(k)},\Psf^{(k)})  \quad \text{and} \quad \Qsfhat = \Qsfhat^{(k)} := \mathrm{diag}(\Qsf^{(k)},\Qsf^{(k)}, \Qsf^{(k)}) 
\end{align*}
are the projections with respect to the $\Lhat^2$- and $\Hhat^k$-orthogonal splitting
\[
\Hhat^k = \ker (\Asfhat_{\xzerobar}) \oplus \Asfhat_{\xzerobar}(\Hhat^{k+1}) = \ker(\Asfxzerobar)^{\oplus 3} \oplus \Asfxzerobar(H^{k+1})^{\oplus 3} \,\, .
\]
Since $\Asfxzerobar$ is $\langle\,\cdot\comma\cdot\,\rangle_{L^2}$-selfadjoint by \ref{it: operators A_x - Asfxzerobar = Hessian}, the diagonal operator $\Asfhat_{\xzerobar}$ is selfadjoint for
\[
\langle\,\cdot\comma\cdot\,\rangle_{\Lhat^2} = \langle\,\cdot\comma\cdot\,\rangle_{L^2} \oplus \langle\,\cdot\comma\cdot\,\rangle_{L^2} \oplus \langle\,\cdot\comma\cdot\,\rangle_{L^2} \,\, .
\]
\begin{claim}
$\Asfhat_Z \, \Qsfhat = \Asfhat_Z$ for every smooth cylinder $Z : [s_0,\infty) \rightarrow \Cinfty(\bbS^1,\R^{2n})$ with image contained in $\Ucalxzerobar$ and satisfying $\partial_s Z = \Asf_Z \, Z \,$.
\end{claim}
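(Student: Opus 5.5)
The plan is to show the equivalent statement $\Asfhat_Z \, \Psfhat = 0$. Indeed, $\Qsfhat = \mathrm{id} - \Psfhat$ on each $\Hhat^k$, so $\Asfhat_Z \, \Qsfhat = \Asfhat_Z - \Asfhat_Z \, \Psfhat$. Let $X = (X_1,X_2,X_3) \in \Hhat^1$. Then $\Psfhat X = (\Psf X_1, \Psf X_2, \Psf X_3)$, and each component lies in $\ker(\Asfxzerobar)$. By \ref{it: operators A_x - kernel}, this kernel consists of constant loops $Y$ with $z''(Y) \equiv 0$. It therefore suffices to show that every block entry of $\Asfhat_{Z(s)}$ in \eqref{eq: operator Ahat_x} annihilates every such $Y$.

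\textbf{Key observation.} Fix a constant loop $Y$ with $z''(Y)\equiv 0$. The formula \eqref{eq: operator A_x} gives
\[
\Asf_x Y = -J_t(\shift{+}(x)) \big( \partial_t Y + S(\shift{+}(x)) \cdot z''(Y) \big) = 0
\]
for \emph{every} loop $x$, not only for $x = \xzerobar$. Consider the evaluation map $x \mapsto \Asf_x Y$, which is the composition of the smooth map $\Asf$ from Lemma \ref{lem: x mapsto A_x smooth} with the bounded linear map $\mathcal{L}(H^1,L^2) \to L^2$, $T \mapsto TY$. This map is identically zero, so all its derivatives vanish:
\[
(D_x\Asf[w])\,Y = 0, \qquad (D^2_x \Asf[w_1,w_2])\,Y = 0 \qquad \text{for all } x, w, w_1, w_2 .
\]

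\textbf{Checking the entries.} With this, each entry of $\Asfhat_x$ can be checked directly. First, $\Asf_x Y = 0$, as shown above. Second, $D_x\Asf[\Asf_x x]\,Y = 0$, taking $w = \Asf_x x$. Third, every summand of $\mathsf{B}_x$ in \eqref{eq: operator B_x} is of the form $D^2_x\Asf[\cdot,\cdot]$ or $D_x\Asf[w]$ for suitable loops $w$. In particular, in $D_x \Asf[D_x\Asf[\Asf_x x]\,x]$ the outer operator is $D_x\Asf[w]$ with $w = D_x\Asf[\Asf_x x]\,x$. Hence $\mathsf{B}_x Y = 0$ as well. Applying this componentwise to $\Psfhat X$ gives $\Asfhat_{Z(s)}\Psfhat X = 0$ for every $s$.

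\textbf{Alternative route and main subtlety.} One could instead use \eqref{eq: Asfhat_Z = derivatives A_z} and differentiate the identity $\Asf_{Z(s)} Y \equiv 0$ in $s$ to obtain $(\partial_s^j \Asf_Z)(s)\,Y = 0$. This is where the hypothesis that $Z$ solves the abstract Floer equation enters. The only mild care needed in either route is regularity: the derivatives must be taken in the correct operator spaces (via the commuting diagram for $D_x\Asf^{(k)}$), and the map $T \mapsto TY$ must be continuous linear so that it commutes with differentiation. I expect no step to be a genuine obstacle.
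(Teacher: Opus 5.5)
Your main argument is correct, but it takes a different route from the paper. Your ``alternative route'' is the paper's proof.

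\textbf{The paper's route.} It starts from Fact 5, $\Asf_{Z(s)}\Qsf = \Asf_{Z(s)}$, and differentiates it twice in $s$, using that $\Qsf$ is a fixed bounded operator. This gives $(\partial_s\Asf_Z)\Qsf = \partial_s\Asf_Z$ and $(\partial_s^2\Asf_Z)\Qsf = \partial_s^2\Asf_Z$. The claim is then read off from the matrix representation \eqref{eq: Asfhat_Z = derivatives A_z}. The Floer equation is needed exactly to identify $\Asfhat_{Z(s)}$ with that matrix of $s$-derivatives.

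\textbf{Your route.} You differentiate the identity $\Asf_x Y = 0$ in the loop variable $x$, for a fixed constant $Y$ with $z''(Y)\equiv 0$. This identity holds for every $x$ by Fact 2, so you get $(D_x\Asf[w])\,Y = 0$ and $(D^2_x\Asf[w_1,w_2])\,Y = 0$. You then check each entry of the defining formula \eqref{eq: operator Ahat_x} directly.

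\textbf{What each buys.} Your route proves the stronger statement $\Asfhat_x\Qsfhat = \Asfhat_x$ for every $x\in\Ucalxzerobar$, with no cylinder and no Floer equation involved. This fits the proof's stated philosophy of replacing $s$-derivatives by loop-dependent operators. Its only cost is the observation that derivatives of the smooth map $\Asf$ commute with the bounded linear evaluation $T\mapsto TY$, which you handle correctly. The paper's version is a two-line computation once \eqref{eq: Asfhat_Z = derivatives A_z} is available.
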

\begin{proofClaim}
    It holds $ \Asf_Z \, \Qsf = \Asf_Z$ by \ref{it: operators A_x - A Q = A}. Derive this twice to obtain
    \begin{align*}
       (\partial_s \Asf_Z) \, \Qsf = \partial_s \Asf_Z \quad \text{and} \quad  (\partial_s^2 \Asf_Z) \, \Qsf = \partial_s^2 \Asf_Z
    \end{align*}
    and therefore
    \begin{equation*}
    \Asfhat_Z \, \Qsfhat  = \left(\begin{matrix}
\Asf_Z & 0 & 0 \\
\partial_s \Asf_Z & \Asf_Z & 0 \\
\partial_s^2 \Asf_Z & 2 \, \partial_s \Asf_Z & \Asf_Z
\end{matrix}\right) \, \left(\begin{matrix}
        \Qsf & 0 &  0\\
        0 & \Qsf & 0 \\
        0 & 0 & \Qsf
    \end{matrix} \right) =  \left(\begin{matrix}
\Asf_Z & 0 & 0 \\
\partial_s \Asf_Z & \Asf_Z & 0 \\
\partial_s^2 \Asf_Z & 2 \, \partial_s \Asf_Z & \Asf_Z
\end{matrix}\right) = \Asfhat_Z \,\, .
    \end{equation*}
\end{proofClaim}
\noindent For given cylinder $Z : [s_0,\infty) \rightarrow \Cinfty(\bbS^1,\R^{2n}) $ satisfying the abstract Floer equation, as before, set $\Zhat = (Z,\partial_s Z, \partial_s^2 Z)$ and define
\begin{equation*}
   f = f_Z : [s_0,\infty) \rightarrow [0,\infty) \comma \quad f(s) := \frac{1}{2} \, \lVert \Qsfhat \, \Zhat (s) \rVert_{\Lhat^2}^2 \,\, .
\end{equation*}
Due to \eqref{eq: abstract Floer eq - diagonal version}, using also that $\Asfhat_Z \, \Qsfhat = \Asfhat_Z$ by the claim and that $\Qsfhat$ is an $\Lhat$-orthogonal projection, the computation in \eqref{eq: exponential decay - L^2-estimate f''} now goes through verbatim. This yields
\begin{align}
    f_Z''(s) &= \lVert  \Qsfhat \, \Asfhat_Z \, \Qsfhat \, \Zhat \rVert^2_{\Lhat^2} + \big\langle \Qsfhat \, \Zhat , \, (\partial_s \Asfhat_Z) \, \Qsfhat \, \Zhat \big\rangle_{\Lhat^2} \notag\\
    &\hspace*{3mm}+ \big\langle (\Asfhat_Z^* - \Asfhat_Z) \, \Qsfhat \, \Zhat , \, \Asfhat_Z \, \Qsfhat \, \Zhat \big\rangle_{\Lhat^2} + \lVert \Asfhat_Z \, \Qsfhat \, \Zhat \rVert_{\Lhat^2}^2 \,\, . \label{eq: exponential decay - H^2-estimate f'' equality}
\end{align}
Here $\Asfhat_x^* \in \mathcal{L}(\Hhat^1,\Lhat^2)$ denotes the $\Lhat^2$-adjoint of $\Asfhat_x \,$, for every loop $x \in \Ucalxzerobar \,$. The adjoint of $\Asfhat_x$ exists and lies in $\mathcal{L}(\Hhat^1,\Lhat^2)$ because it is a matrix of partial differential operators of order $1$.\footnote{
To see this, stare at \eqref{eq: operator Ahat_x} and recall the proof of Lemma \ref{lem: x mapsto A_x smooth} to compute the derivatives $D_x \Asf \, [\Asf_x x] \,$.
}
Notice moreover that the map $s \mapsto \Asfhat_{Z(s)} \in \mathcal{L}(\Hhat^1,\Lhat^2)$ is smooth. This follows from its representation in \eqref{eq: Asfhat_Z = derivatives A_z} and from $s \mapsto \Asf_{Z(s)}$ being smooth by Lemma \ref{lem: x mapsto A_x smooth} and Lemma \ref{lem: smooth cylinder}.
\\

\noindent We are now in a position completely analogous to the one in the proof of Proposition \ref{prop: exponential decay - L^2 estimate} and we continue by ensuring the analogs of \eqref{eq: exponential decay - L^2 estimate invertibility}, \eqref{eq: exponential decay - L^2-estimate adjoint} and \eqref{eq: exponential decay - L^2-estimate derivative A_Z Z}.

\begin{claim}
  We can choose $\Ucalxzerobar$ smaller and constants $c > 0$ and $\eps \in (0,\frac{c}{2})$ so that the following hold.
  \begin{enumerate}[label=(\arabic*)]
      \item\label{it: exponential decay - H^2-estimate invertibility}
      For all $x \in \Ucalxzerobar$ it holds
      \begin{equation}
      \label{eq: exponential decay - H^2-estimate invertibility}
      \lVert \Asfhat_x \, \Qsfhat \, \widehat{X} \rVert_{\Lhat^2}^2 \geq \lVert \Qsfhat \, \Asfhat_x \, \Qsfhat \, \widehat{X} \rVert_{\Lhat^2}^2 \geq (c-\eps) \, \lVert \Qsfhat \, \widehat{X} \rVert_{\Hhat^1}^2 \qquad \Forall \widehat{X} \in \Hhat^1 \,\, .
      \end{equation}
      \item\label{it: exponential decay - H^2-estimate adjoint} For all $x \in \Ucalxzerobar$ it holds
       \begin{equation}
    \label{eq: exponential decay - H^2-estimate adjoint}
    \lVert \Asfhat_x \rVert_{\mathcal{L}(\Hhat^1,\Lhat^2)} \, \lVert \Asfhat^*_x - \Asfhat_x \rVert_{\mathcal{L}(\Hhat^1,\Lhat^2)} \leq \eps   \,\, .
    \end{equation}
    \item\label{it: exponential decay - H^2-estimate derivative Ahat_Z Zhat}
    Every smooth cylinder $Z : [s_0,\infty) \rightarrow \Cinfty(\bbS^1,\R^{2n})$ with image in $\Ucalxzerobar$ satisfies
    \begin{equation}
    \label{eq: exponential decay - H^2-estimate derivative Ahat_Z Zhat}
        \partial_s Z = \Asf_Z \, Z \quad \Longrightarrow \quad \big\lVert (\partial_s \Asfhat_{Z(s)}) (s_1) \big\rVert_{\mathcal{L}(\Hhat^1,\Lhat^2)} < \eps \quad \Forall s_1 \geq s_0 \,\, .
    \end{equation}
  \end{enumerate}
\end{claim}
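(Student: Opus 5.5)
Every item is a smallness or perturbation statement about the loop-dependent operator $\Asfhat_x$ near $\xzerobar$. The plan is to reuse the three steps of the proof of Proposition \ref{prop: exponential decay - L^2 estimate} verbatim, with $\Asf$ replaced by $\Asfhat$. The one new ingredient is continuity in $x$: we need the map $x \mapsto \Asfhat_x \in \mathcal{L}(\Hhat^1,\Lhat^2)$, its adjoint, and the loop-dependent expression for $\partial_s \Asfhat_Z$ to be continuous from $\Ucalxzerobar$ with the $\Cinfty$-topology. Each entry of \eqref{eq: operator Ahat_x} is built from $\Asf_x$, $D_x\Asf$ and $D^2_x\Asf$, evaluated on vectors such as $\Asf_x x$ and $\Asf_x^2 x$. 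By Lemma \ref{lem: x mapsto A_x smooth} (applied with $\ell$ large) and continuity of $x \mapsto \Asf_x^{(k)} x \in H^{k-1}$, all of these depend continuously on $x \in \Cinfty$. So $x\mapsto\Asfhat_x$ is continuous, and at $\xzerobar$ it equals $\mathrm{diag}(\Asfxzerobar,\Asfxzerobar,\Asfxzerobar)$, since $\Asf_{\xzerobar}\xzerobar=0$ kills every off-diagonal entry.

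For \ref{it: exponential decay - H^2-estimate invertibility}, set $c := c(0)$ as in \ref{it: operators A_x - Q Asfxzerobar invertible}. The restriction of $\Qsfhat\,\Asfhat_{\xzerobar}$ to $\Asfxzerobar(H^2)^{\oplus3}\to\Asfxzerobar(H^1)^{\oplus3}$ is diagonal and invertible, with the same inverse norm as in the scalar case. Fix any $\eps\in(0,c/2)$. Invertibility is an open condition and inversion is continuous, so we can shrink $\Ucalxzerobar$ until $\lVert(\Qsfhat\,\Asfhat_x|)^{-1}\rVert^{-2}\ge c-\eps$. This gives the second inequality; the first is \eqref{eq: Qsf orthogononal projection} applied componentwise.

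For \ref{it: exponential decay - H^2-estimate adjoint}, we need continuity of $x\mapsto\Asfhat_x^*\in\mathcal{L}(\Hhat^1,\Lhat^2)$. The entries of $\Asfhat_x$ are first-order differential operators whose coefficients are $H^k$-functions depending continuously on $x$; for instance $D_x\Asf[\xi]X=-D J(\shift{+}x)[\xi](\partial_tX+S z''X)-J\,DS[\xi]z''X$. So Proposition \ref{app prop: adjoint continuous} applies entrywise, and the adjoint of a block matrix is the transposed matrix of adjoints. At $\xzerobar$ the product $\lVert\Asfhat\rVert\,\lVert\Asfhat^*-\Asfhat\rVert$ vanishes by selfadjointness of $\Asfxzerobar$, so shrinking $\Ucalxzerobar$ gives the bound $\le\eps$.

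For \ref{it: exponential decay - H^2-estimate derivative Ahat_Z Zhat}, we differentiate \eqref{eq: Asfhat_Z = derivatives A_z} in $s$. The resulting entries are $\partial_s\Asf_Z$, $\partial_s^2\Asf_Z$ and $\partial_s^3\Asf_Z$. Continuing the computation of \eqref{eq: exponential decay - partial_s^2 A_Z} once more with the abstract Floer equation, $\partial_s^3\Asf_Z$ is a loop-dependent expression $\mathsf{C}_{Z(s)}$ built from $D^j_x\Asf$ for $j\le3$ and $\Asf_x^i x$ for $i\le3$. This defines a map $\widehat\Theta(x)$, the operator norm of the matrix of these loop-dependent expressions, which is continuous on $\Ucalxzerobar$. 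Every term contains at least one factor $\Asf_x x$, $\Asf_x^2x$ or $\Asf_x^3 x$, each of which vanishes at $\xzerobar$. Hence $\widehat\Theta(\xzerobar)=0$, and shrinking gives $\widehat\Theta<\eps$; the argument is then as in step \ref{it: exponential decay - L^2 estimate derivative A_Z Z} of Proposition \ref{prop: exponential decay - L^2 estimate}.

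The main obstacle is the regularity bookkeeping in the last step. $\mathsf{C}_x$ involves $D^3_x\Asf$ evaluated on $\Asf_x^3x$, which requires $x\in H^4$, and we need continuity into $\mathcal{L}(H^1,L^2)$. I would handle this by using Lemma \ref{lem: x mapsto A_x smooth} with $\ell\ge4$ and $k=1$, together with the commuting diagram above that lets us drop superscripts. Since the $\Cinfty$-topology is finer than every $H^\ell$-topology, continuity on $\Ucalxzerobar$ then follows.
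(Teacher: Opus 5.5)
Your proposal is correct and follows the paper's proof essentially step for step. It uses the same constant $c=c(0)$, coming from the invertible diagonal operator $\Qsfhat\,\Asfhat_{\xzerobar}$, together with openness of invertibility for (1); it applies Proposition \ref{app prop: adjoint continuous} entrywise and uses selfadjointness of $\Asfhat_{\xzerobar}$ for (2); and for (3) it takes the loop-dependent matrix with entries $D_x\Asf[\Asf_x x]$, $\mathsf{B}_x$, $\mathsf{C}_x$, whose norm is continuous and vanishes at $\xzerobar$. Your explicit continuity check for $x\mapsto\Asfhat_x$ and your Sobolev bookkeeping (needing $x\in H^4$ so that $\Asf_x^3x\in H^1$) only spell out what the paper uses tacitly.
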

\begin{proofClaim}
   We will successively choose $\Ucalxzerobar$ smaller.
   \\
   \noindent We start by ensuring \ref{it: exponential decay - H^2-estimate invertibility}. This follows as in the proof of \eqref{eq: exponential decay - L^2 estimate invertibility}. Just notice that 
   \[
   \Qsfhat \, \Asfhat_{\xzerobar} = \mathrm{diag}(\Qsf \, \Asfxzerobar) \, : \,\,\Asfhat_{\xzerobar}(\Hhat^2) \rightarrow \Asfhat_{\xzerobar}(\Hhat^1)
   \]
   is invertible. Now define $c^{-1/2}$ to be the operator norm of its inverse and choose $0 < \eps < \frac{c}{2}$ arbitrary.
    \\
   \noindent Next \ref{it: exponential decay - H^2-estimate adjoint} follows similar to \eqref{eq: exponential decay - L^2-estimate adjoint}. Indeed, $\Asfhat_{\xzerobar}$ is $\Lhat^2$-selfadjoint and the assignment $\Ucalxzerobar \rightarrow \mathcal{L}(\Hhat^1,\Lhat^2) \comma x \mapsto \Asfhat_x^* \,$, is continuous. To see the latter, it suffices to show that the adjoints of the matrix components of $\Asfhat_x\,$, given in \eqref{eq: operator Ahat_x}, depend continuously on $x$. To this end, observe that each component is a first-order partial differential operator with coefficient functions varying continuously in the $\Cinfty(\bbS^1)$-topology if $x$ varies in the $\Cinfty(\bbS^1,\R^{2n})$-topology. Now apply Proposition \ref{app prop: adjoint continuous} in the appendix.
    \\ 
   \noindent Lastly, we deal with \ref{it: exponential decay - H^2-estimate derivative Ahat_Z Zhat}. This is the cumbersome part because we have to eliminate $s$-derivatives again. For a smooth cylinder $Z$ satisfying the abstract Floer equation \eqref{eq: abstract Floer eq}, we have
   \begin{align*}
    \partial_s \Asfhat_Z = \partial_s \left(\begin{matrix}
 \Asf_Z & 0 & 0 \\
\partial_s \Asf_Z &  \Asf_Z & 0 \\
\partial_s^2 \Asf_Z & 2 \, \partial_s \Asf_Z & \Asf_Z
\end{matrix}\right)  
=  
\left(\begin{matrix}
D_Z \Asf \, [\Asf_Z \, Z] & 0 & 0 \\
\mathsf{B}_Z & D_Z \Asf \, [\Asf_Z \, Z] & 0 \\
\partial_s \mathsf{B}_Z & 2 \, \mathsf{B}_Z & D_Z \Asf \, [\Asf_Z \, Z]
\end{matrix}\right) 
   \end{align*}
by \eqref{eq: exponetial decay - partial_s A_z}, \eqref{eq: exponential decay - partial_s^2 A_Z}, \eqref{eq: operator B_x} and \eqref{eq: Asfhat_Z = derivatives A_z}.
Repeatedly invoking the abstract Floer equation to eliminate $s$-derivatives, one can see (see Appendix \ref{app subsec: operator C_x}) that $\partial_s \mathsf{B}_Z (s) = \mathsf{C}_{Z(s)} \,$, where $\mathsf{C} : \Ucalxzerobar \rightarrow \mathcal{L}(H^1,L^2) \comma x \mapsto \mathsf{C}_x \,$, is the continuous map defined by the following intimidating formula:
\begin{align}
    \mathsf{C}_x &:= D_x^3  \Asf \, [\Asf_x x , \, \Asf_x x , \, \Asf_x x] + 3 \, D_x^2  \Asf \, [\Asf_x x , \, \Asf_x^2 x] + 3 \, D_x^2  \Asf \, [\Asf_x x , \, D_x  \Asf \, [\Asf_x x] \, x] \notag\\
    &\hspace*{3mm}+ 2 \, D_x \Asf \, [D_x \Asf \, [\Asf_x x] \, \Asf_x x] +   D_x \Asf \, \left[ D_x \Asf \,\left[ D_x \Asf \, [\Asf_x x] \, x \right] \, x \right] +  D_x \Asf \, [D_x \Asf \, [\Asf_x^2 \, x] \, x]\notag\\
    &\hspace*{3mm}+  D_x \Asf \, \left[ D_x^2 \Asf \, [\Asf_x x , \, \Asf_x x] \, x 
\right] + D_x \Asf \, [\Asf_x \, D_x \Asf \, [\Asf_x  x] \, x ] + D_x \Asf \, [\Asf_x^3 \, x] \,\, .   \label{eq: operator C_x}
\end{align}
The only relevant property of $\mathsf{C} \,$, beside its continuity, is that its evaluation at $\xzerobar$ is $\mathsf{C}_{\xzerobar} = 0 \in \mathcal{L}(H^1,L^2)$ because every summand contains an application of a linear operator to the zero loop $\xzerobar\,$. For the same reason, also $\mathsf{B}_{\xzerobar} = 0 \,$. Consequently, the map
\begin{align*}
  \Theta :  \Ucalxzerobar \rightarrow [0,\infty) \comma \quad \Theta(x) := \left\lVert \left( \begin{matrix}
      D_x \Asf \, [\Asf_x \, x] &  0 & 0 \\
      \mathsf{B}_x &  D_x \Asf \, [\Asf_x \, x] & 0 \\
     \mathsf{C}_x  & 2 \, \mathsf{B}_x &  D_x \Asf \, [\Asf_x \, x]
  \end{matrix} \right) \right\rVert_{\mathcal{L}(\Hhat^1,\Lhat^2)}
\end{align*}
is continuous and vanishes at $\xzerobar \,$. We can therefore choose $\Ucalxzerobar$ smaller, so that $\Theta(x) < \eps$ for every $x \in \Ucalxzerobar \,$. If $Z$ is an arbitrary cylinder satisfying the abstract Floer equation \eqref{eq: abstract Floer eq} and having image in $\Ucalxzerobar \,$, then the above calculation shows
\[
\lVert (\partial_s \Asfhat_Z)(s) \rVert_{\mathcal{L}(\Hhat^1,\Lhat^2)} = \Theta(Z(s)) < \eps \,\, ,
\]
so that we have established \eqref{eq: exponential decay - H^2-estimate derivative Ahat_Z Zhat}.
\end{proofClaim}

\noindent Choose $\Ucalxzerobar$ and constants $c >0$ and $0 < \eps < c/2$ as in the previous claim and define
\[
B := \sqrt{c - 2 \eps} > 0 \,\, .
\]
Define moreover a continuous map $\mathsf{E} : \Ucalxzerobar \rightarrow \Lhat^2$ by
\[
\mathsf{E}(x) := \big(x , \, \Asf_x x , \, D_x \Asf \, [\Asf_x x] \, x + \Asf_x^2 \, x\big) \in \Cinfty(\bbS^1,\R^{2n})^{\oplus 3} \,\, .
\]
Observe that
\begin{equation}
\label{eq: Qhat E(x) = 0 for x constant z''(x)=0}
\Qsfhat \, \mathsf{E}(x) = 0 \quad \text{if } x \text{ is a constant loop with } z''(x) \equiv 0 
\end{equation}
because for every constant loop $x$ with $z''(x) \equiv 0$ we have $x \in \ker \Asf_x \cap \ker \Asfxzerobar$ by \ref{it: operators A_x - kernel} and hence $\mathsf{E}(x) = (x,0,0) \in \ker \Qsfhat \,$.

\begin{claim}
Let $Z : [s_0,\infty) \rightarrow \Cinfty(\bbS^1,\R^{2n})$ be a smooth cylinder with image in $\Ucalxzerobar \,$ and satisfying the abstract Floer equation \eqref{eq: abstract Floer eq}. Define $\Zhat = (Z,\partial_s Z , \partial_s^2 Z) \,$.
\begin{enumerate}[label=(\alph*)]
    \item\label{it: exponential decay estimate - Zhat = E(Z)}
    It holds $\Zhat(s) = \mathsf{E}(Z(s))$ for every $s \geq s_0 \,$.
    \item\label{it: exponential decay estimate - maximum principle Zhat} Suppose additionally that the limit $Z(\infty) := \lim_{s \rightarrow \infty} Z(s) \in \Cinfty(\bbS^1,\R^{2n})$ (in the $\Cinfty$-topology) exists and that
\begin{align*}
    Z(\infty) \text{ is a constant loop with } z'' \circ Z(\infty) \equiv 0 \,\, .
\end{align*}
Then 
\begin{equation}
\label{eq: exponential decay - Lhat^2 estimate Qsfhat Zhat}
    \lVert \Qsfhat \, \Zhat (s) \rVert_{\Lhat^2}^2 \leq  \lVert \Qsfhat \, \mathsf{E}(Z(s_0)) \rVert_{\Lhat^2}^2 \, e^{-2B(s-s_0)} \qquad \Forall s \geq s_0 \,\, .
\end{equation}
\end{enumerate}
\end{claim}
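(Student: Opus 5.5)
Part (a) is a direct computation. The first component of $\Zhat(s)$ is $Z(s)=\mathsf{E}(Z(s))_1$. The second is $\partial_s Z(s)=\Asf_{Z(s)}Z(s)$, by the abstract Floer equation \eqref{eq: abstract Floer eq}. For the third, I differentiate the abstract Floer equation once more, using the product rule and the smoothness of $s\mapsto \Asf_{Z(s)}$ (Lemma \ref{lem: x mapsto A_x smooth} together with Lemma \ref{lem: smooth cylinder}):
\[
\partial_s^2 Z=(\partial_s\Asf_Z)\,Z+\Asf_Z\,\partial_s Z .
\]
By \eqref{eq: exponetial decay - partial_s A_z} the first term equals $D_Z\Asf\,[\Asf_Z Z]\,Z$. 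The second equals $\Asf_Z^2 Z$ after substituting the Floer equation again. This is exactly the third component of $\mathsf{E}(Z(s))$.

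For part (b) I will copy the maximum-principle argument of Proposition \ref{prop: exponential decay - L^2 estimate}, applied to $f=f_Z=\frac12\lVert\Qsfhat\,\Zhat\rVert_{\Lhat^2}^2$. The starting point is the identity \eqref{eq: exponential decay - H^2-estimate f'' equality}, which I take as given. Since $Z(s)\in\Ucalxzerobar$ for all $s$, the first and fourth terms are bounded below via \eqref{eq: exponential decay - H^2-estimate invertibility} by $2(c-\eps)\lVert\Qsfhat\Zhat\rVert_{\Hhat^1}^2$. The second term is bounded in absolute value by $\eps\lVert\Qsfhat\Zhat\rVert_{\Hhat^1}^2$ via \eqref{eq: exponential decay - H^2-estimate derivative Ahat_Z Zhat}, using Cauchy--Schwarz and $\lVert\cdot\rVert_{\Lhat^2}\le\lVert\cdot\rVert_{\Hhat^1}$. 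The third term is bounded in the same way via \eqref{eq: exponential decay - H^2-estimate adjoint}. Together this gives
\[
f''\ge (2c-4\eps)\lVert\Qsfhat\Zhat\rVert_{\Hhat^1}^2\ge (2c-4\eps)\lVert\Qsfhat\Zhat\rVert_{\Lhat^2}^2=4B^2 f ,
\]
with $B=\sqrt{c-2\eps}$. Here $f$ is $C^2$ because $\Zhat$ is a smooth cylinder and $\Qsfhat$ is bounded linear.

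To apply Lemma \ref{lem: maximum principle} with $\delta=2B$, I also need $f(\bar s_n)\to0$ along some sequence $\bar s_n\to\infty$. I will get this from part (a) and continuity: $\Zhat(s)=\mathsf{E}(Z(s))$, and $Z(s)\to Z(\infty)$ in $\Cinfty$. Since $\mathsf{E}:\Ucalxzerobar\to\Lhat^2$ is continuous, $\Qsfhat\,\mathsf{E}(Z(s))\to\Qsfhat\,\mathsf{E}(Z(\infty))$, and this limit vanishes by \eqref{eq: Qhat E(x) = 0 for x constant z''(x)=0}. One point to check is that $Z(\infty)$ lies in the domain $\Ucalxzerobar$. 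This is part of hypothesis \ref{it: exponential decay - properties Z limit}. If only closure membership were available, I would use instead that $\mathsf{E}$ is given by the same formula, continuous on a slightly larger neighbourhood. So $f(s)\to0$ as $s\to\infty$.

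The maximum principle then gives $f(s)\le f(s_0)e^{-2B(s-s_0)}$. By part (a), $f(s_0)=\frac12\lVert\Qsfhat\,\mathsf{E}(Z(s_0))\rVert_{\Lhat^2}^2$, which is \eqref{eq: exponential decay - Lhat^2 estimate Qsfhat Zhat}. The only delicate step is this limit argument; everything else is bookkeeping already set up in the preceding claims.
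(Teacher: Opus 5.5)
Your proposal is correct and follows the paper's proof essentially verbatim. Part (a) comes from differentiating the abstract Floer equation and using \eqref{eq: exponetial decay - partial_s A_z}; part (b) comes from bounding the four terms of \eqref{eq: exponential decay - H^2-estimate f'' equality} to get $f''\geq 4B^2 f$, showing $f\to 0$ via $\Qsfhat\,\mathsf{E}(Z(\infty))=0$, and applying Lemma \ref{lem: maximum principle} with $\delta=2B$. Your side remark that $\mathsf{E}$ is continuous on a larger domain is useful, because the claim as stated does not assume $Z(\infty)\in\Ucalxzerobar$; the paper passes over this point silently.
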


\begin{proofClaim}
    Part \ref{it: exponential decay estimate - Zhat = E(Z)} follows directly from \eqref{eq: abstract Floer eq - higher derivatives} and \eqref{eq: exponetial decay - partial_s A_z}. Now we show part \ref{it: exponential decay estimate - maximum principle Zhat}.
    Define the smooth function $f : [s_0,\infty) \rightarrow [0,\infty)$ by
 \[
 f(s) := \frac{1}{2} \, \lVert \Qsfhat \, \Zhat (s) \rVert_{\Lhat}^2 = \frac{1}{2} \, \lVert \Qsfhat \, \mathsf{E} (Z(s)) \rVert_{\Lhat}^2 \,\, .
 \]
    We have seen in \eqref{eq: exponential decay - H^2-estimate f'' equality} that
    \begin{align*}
    f''(s) &= \lVert  \Qsfhat \, \Asfhat_Z \, \Qsfhat \, \Zhat \rVert^2_{\Lhat^2} + \big\langle \Qsfhat \, \Zhat , \, (\partial_s \Asfhat_Z) \, \Qsfhat \, \Zhat \big\rangle_{\Lhat^2} \notag\\
    &\hspace*{3mm}+ \big\langle (\Asfhat_Z^* - \Asfhat_Z) \, \Qsfhat \, \Zhat , \, \Asfhat_Z \, \Qsfhat \, \Zhat \big\rangle_{\Lhat^2} + \lVert \Asfhat_Z \, \Qsfhat \, \Zhat \rVert_{\Lhat^2}^2 \,\, .
\end{align*}
    Completely analogous to the proof of Proposition \ref{prop: exponential decay - L^2 estimate}, we can estimate the first and fourth summand from below by \eqref{eq: exponential decay - H^2-estimate invertibility} and the second and third summand from above by \eqref{eq: exponential decay - H^2-estimate derivative Ahat_Z Zhat} and \eqref{eq: exponential decay - H^2-estimate adjoint}, yielding
    \[
    f''(s) \geq (2 c- 4 \eps) \, \lVert \Qsfhat \, \Zhat(s) \rVert_{\Hhat^1}^2 \geq (2c- 4 \eps) \, \lVert \Qsfhat \, \Zhat (s)\rVert_{\Lhat^2}^2 = 4 B^2 \, f(s) \,\, .
    \]
    The asymptotic condition on $Z$ and \eqref{eq: Qhat E(x) = 0 for x constant z''(x)=0} now imply 
    \[
    \lim_{s \rightarrow \infty} \Qsfhat \, \mathsf{E} (Z(s)) = \Qsfhat \, \mathsf{E} (Z(\infty)) = 0
    \]
    and therefore $f(s) \overset{s \rightarrow \infty}{\longrightarrow} 0 \,$. The maximum principle (Lemma \ref{lem: maximum principle}) now gives the claimed inequality \eqref{eq: exponential decay - Lhat^2 estimate Qsfhat Zhat}.
\end{proofClaim}

\textbf{Part II}: 
Define
\[
\Xi_1 : \Ucalxzerobar \rightarrow [0,\infty) \comma \quad  \Xi_1(x) := \lVert \Qsfhat \, \mathsf{E}(x) \rVert_{\Lhat^2} \,\, .
\]
Note that $\Xi_1$ is continuous in the $\Cinfty$-topology and that it vanishes on constant loops $x$ with $z''(x) \equiv 0$ by \eqref{eq: Qhat E(x) = 0 for x constant z''(x)=0}. For a smooth cylinder $Z : [s_0,\infty) \rightarrow \Cinfty(\bbS^1,\R^{2n})$ 
satisfying assumptions \ref{it: exponential decay - properties Z image in nbhd}\textbf{--}\ref{it: exponential decay - properties Z limit} in Proposition \ref{prop: exponential decay - localized version}, writing out \eqref{eq: exponential decay - Lhat^2 estimate Qsfhat Zhat} we have
\begin{align}
\label{eq: exponential decay - Lhat^2 estimate Qsfhat Zhat rewritten}
\lVert \Qsf \, Z (s) \rVert_{L^2}^2 + \lVert \partial_s \, \Qsf  Z (s) \rVert_{L^2}^2 + \lVert \partial_s^2 \, \Qsf  Z (s) \rVert_{L^2}^2 \leq \Xi_1(Z(s_0))^2 \, e^{-2B(s-s_0)} \quad \Forall s \geq s_0 \,\, .
\end{align}
 In the second part we now transform this estimate into an estimate for the $H^2$-norm of $Z(s) $ by repeatedly invoking the Floer equation.\footnote{
How this can be accomplished was again explained to me by Richard Siefring.
}
That is, for a given cylinder $Z$ as above, we must find exponential decay estimates for the $L^2$-norm of
\begin{equation}
\label{eq: partial_t Z = partial_t Q Z}
\partial_t ( \Qsf \, Z) = \partial_t Z \quad \text{and} \quad \partial_t^2 (\Qsf \, Z ) =  \partial_t^2 Z \,\, ,
\end{equation}
where we have made use of \ref{it: operators A_x - partial_t Q = partial_t}. For convenience, we work with the shift $u := \shift{+}(Z) \,$, so that $Z = \Bar{u} = \shift{-}(u) \,$. Of course, more precisely we should write $u(s,t) = \shift{+}(Z(s))(t) \,$. Notice that $u$ is a solution of the classical Floer equation \eqref{eq: Floer eq - Floer illustration} since $Z$ is a solution of the abstract Floer equation $\partial_s Z = \Asf_Z Z \,$.
\\

\noindent We have $\Psf \, \Asfxzerobar = 0$ by \ref{it: operators A_x - Q Asfxzerobar = Asfxzerobar} and moreover $z'' ( \Qsf  X) = z''(X)$ for every $X \in L^2 $ because $z'' (\Psf  X) \equiv 0$ as $\Psf  X \in \ker \Asfxzerobar \,$. Using this and the definition of the operators $\Asf_x$ in \eqref{eq: operator A_x} and \eqref{eq: Asf_xzerobar}, we compute
\begin{align}
    \partial_s \, \Psf  Z &= \Psf \, \partial_s Z = \Psf \, \Asf_Z \, Z = \Psf (\Asf_Z - \Asfxzerobar) \, Z \notag\\
    &= \Psf \Big( (J_0(t) - J_t(u) ) \, \partial_t \Bar{u} + (J_0(t) \, S_0(t) - J_t(u) \, S(u)) \cdot z''(\Qsf \, Z) \Big) \,\, . \label{eq: partial_s P Z}
\end{align}
Rearranging \eqref{eq: Floer eq - shift(u) hybrid version} yields
\begin{align}
    \partial_t \Bar{u} &= J_t(u) \, \partial_s \Bar{u} - S(u) \, z''(\Bar{u}) \notag\\
    &= J_t(\shift{+}(\Bar{u})) \, (\partial_s (\Qsf  Z) + \partial_s (\Psf Z)) - S(\shift{+}(\Bar{u})) \, z''(\Qsf Z) \,\, . \label{eq: partial_t Bar(u)}
\end{align}
Now insert \eqref{eq: partial_t Bar(u)} into \eqref{eq: partial_s P Z} and rearrange so that the terms $\Qsf Z \comma \partial_s \Qsf Z$ and $\partial_s \Psf Z$ are collected. This can be written as
\begin{align}
\label{eq: partial_s P Z = C^1_Z(QZ) + C^2_Z(partial_s QZ)}
\partial_s \, \Psf Z - \Psf\Big( (J_0(t) - J_t(\shift{+}(\Bar{u}))  ) \cdot J_t(\shift{+}(\Bar{u})) \cdot \partial_s (\Psf Z) \Big)  = \mathsf{C}^1_Z (\Qsf Z) + \mathsf{C}^2_Z (\partial_s \, \Qsf Z) \,\, ,
\end{align}
where the argument of $\Psf$ on the left-hand side is a loop, so $t$ is not fixed therein, and where $\mathsf{C}^1_x \comma \mathsf{C}^2_x \in \mathcal{L}(L^2,L^2)$ are loop-dependent operators such that the maps $\mathsf{C}^j: \, \Ucalxzerobar \rightarrow \mathcal{L}(L^2,L^2) \comma x \mapsto \mathsf{C}^j_x$ are continuous for $j=1,2 \,$. The precise formulae for $\mathsf{C}^1_x$ and $\mathsf{C}^2_x$ will be irrelevant subsequently but, to allow the reader to double-check, we nonetheless state them in \eqref{app eq: operator C^1_x} and \eqref{app eq: operator C^2_x} in the appendix.
By continuity, choosing $\Ucalxzerobar$ smaller if necessary, we may assume that the operator norms of $\mathsf{C}^1_x$ and $\mathsf{C}^2_x$ are bounded by some constant $c_1 > 0$ uniformly for all $x \in \Ucalxzerobar \,$.  
We can express the left-hand side of \eqref{eq: partial_s P Z = C^1_Z(QZ) + C^2_Z(partial_s QZ)} in terms of a loop-depending operator as well: For $x \in \Ucalxzerobar$ define $\mathsf{D}_x \in \mathcal{L}(L^2,L^2)$ by
\begin{align}
\label{eq: operator D_x}
  \mathsf{D}_x (X) := X - \mathsf{P}\Big( (J_0(t) - J_t(\shift{+}(x))  ) \, J_t(\shift{+}(x)) \cdot X \Big) \qquad \Forall X \in L^2 \,\, .
\end{align}
Then the left-hand side of \eqref{eq: partial_s P Z = C^1_Z(QZ) + C^2_Z(partial_s QZ)} is just $\mathsf{D}_{Z(s)} (\partial_s (\Psf  Z)(s)) \,$.
 Clearly 
 \[
 \mathsf{D}: \, \Ucalxzerobar \rightarrow \mathcal{L}(L^2,L^2) \comma \quad x \mapsto \mathsf{D}_x \,\, ,
 \]
 is continuous and moreover its evaluation at $\xzerobar$ is the identity, $\mathsf{D}_{\xzerobar} = \mathrm{id}_{L^2} \,$, because
\[
J_0(t) - J_t(\shift{+}(\xzerobar)) = J_0(t) - J_0(t) = 0 \,\, .
\]
It follows that, for any fixed choice of constant $c_2 > 1 \,$, we can assume, after making $\Ucalxzerobar$ smaller if necessary, that for all $x \in \Ucalxzerobar$ the operator $\mathsf{D}_x$ is invertible with
\begin{equation}
\label{eq: operator D_x invertibility}
\lVert X \rVert_{L^2} \leq c_2 \, \lVert \mathsf{D}_x  X \rVert_{L^2}  \qquad \Forall X \in L^2 \,\, .
\end{equation}
Since $Z$ has image in $\Ucalxzerobar \,$, for every $s \geq s_0$ (suppressing $s$ in the notation below)
\begin{align}
     \lVert \partial_s  \,  \Psf Z  \rVert_{L^2} &\leq c_2 \, \lVert \mathsf{D}_Z \, \partial_s \, \Psf Z \rVert_{L^2} \overset{\text{\eqref{eq: partial_s P Z = C^1_Z(QZ) + C^2_Z(partial_s QZ)}}}{=} c_2 \, \lVert \mathsf{C}_Z^1(\Qsf Z ) + \mathsf{C}^2_Z (\partial_s \, \Qsf Z ) \rVert_{L^2}  \notag\\
    &\leq c_2 \, \big(\lVert \mathsf{C}_Z^1(\Qsf Z) \rVert_{L^2} + \lVert \mathsf{C}_Z^2(\partial_s \, \Qsf Z) \rVert_{L^2} \big) \notag\\
    &\leq  c_1  c_2 \, \big( \lVert \Qsf Z \rVert_{L^2} + \lVert \partial_s \, \Qsf Z \rVert_{L^2} \big) \notag\\
    &\overset{\text{\eqref{eq: exponential decay - Lhat^2 estimate Qsfhat Zhat rewritten}}}{\leq} 2  c_1  c_2 \, \Xi_1(Z(s_0)) \, e^{- B(s-s_0)}  \,\, .\label{eq: partial_s P Z exponential decay}
\end{align}
We now have exponential decay estimates for $\Qsf Z$ and $\partial_s \, \Qsf Z$ by \eqref{eq: exponential decay - Lhat^2 estimate Qsfhat Zhat rewritten} and for $\partial_s \, \Psf Z$ by \eqref{eq: partial_s P Z exponential decay}.
Returning to \eqref{eq: partial_t Bar(u)}, this gives an exponential decay estimate for $\partial_t \Bar{u}  \,$. In detail, we can bound the Euclidean operator norms of the matrices $J_t(\shift{+}(x)(t))$ and $S(\shift{+}(x)(t))$ uniformly for every $t \in \bbS^1$ and every loop $x \in \Ucalxzerobar$ sufficiently close to the constant loop at the origin $\xzerobar \,$. Using that $\lVert \,\cdot \,\rVert_{L^2}$ is equivalent to the Euclidean $L^2$-norm, we estimate
\begin{align}
\label{eq: partial_t Bar(u) exponential decay}  
\lVert \partial_t \Bar{u} \rVert_{L^2} \overset{\text{\eqref{eq: partial_t Bar(u)}}}{\leq} c_3 \, \big( \lVert \Qsf Z \rVert_{L^2} + \lVert \partial_s \, \Qsf Z \rVert_{L^2} + \lVert \Psf Z \rVert_{L^2} \big) \leq c_4 \, \Xi_1(Z(s_0)) \, e^{-B(s-s_0)} \,\, ,
\end{align}
with constant $c_3 >0$ just depending on the equivalence of $L^2$-norms and the uniform bound on the operator norm of the matrices $ J_t(\shift{+}(x))$ and $S(\shift{+}(x)) $ and with $c_4 = c_4(c_1,c_2,c_3) >0$ a combinatorial constant.
\\

\noindent We emphasize that the constants $c_1,\ldots , c_4 >0$ and the size of the neighborhood $\Ucalxzerobar$ have been chosen independently of the cylinder $Z$. We could have defined the maps $\mathsf{C}^1 \comma \mathsf{C}^2$ and $\mathsf{D}$ before fixing the cylinder $Z$ and make the appropriate choices. (For the sake of exposition we decided
against this.) So \eqref{eq: partial_t Bar(u) exponential decay} is an exponential decay estimate for $\partial_t Z = \partial_t \Bar{u} $ of the desired form.
\\

\noindent The exponential decay estimate for $\partial_t^2 Z = \partial_t^2 \Bar{u}$ now works completely analogously. Derive \eqref{eq: partial_t Bar(u)} in $s$-direction to obtain
\begin{align}
\partial_s \partial_t \Bar{u}  &=  J_t(u) \, \big( \partial_s^2(\Qsf Z) + \partial_s^2(\Psf Z) \big) - S(u) \, z''(\partial_s \, \Qsf Z) \notag\\
&\hspace*{3mm}+ (\partial_s J_t(u)) \, \big( \partial_s (\Qsf Z) + \partial_s (\Psf Z) \big) - (\partial_s S(u)) \, z''(\Qsf Z) \,\, .  \label{eq: partial_s partial_t Bar(u)}
\end{align}
As before, we can use the abstract Floer equation to replace the $s$-derivatives $\partial_s (J_t(u))$ and $\partial_s (S(u))$ by an expression depending only on the loop $Z(s)\,$. We carry this out exemplarily for $\partial_s (S(u))$ in Subsection \ref{app subsec: Eliminate s-derivatives via Floer eq} in the appendix.
In particular, after choosing $\Ucalxzerobar$ smaller if necessary and using that $Z(s) \in \Ucalxzerobar \,$, we can assume that the matrix norms of $\partial_s (J_t(u))$ and $\partial_s(S(u))$ are uniformly bounded by a constant independent of the cylinder $Z$. 
\\
Similarly, deriving \eqref{eq: partial_s P Z} in $s$-direction and applying the Floer equation to replace $s$-derivatives of $J_t(u)$ respectively $S(u) \,$, will give on the right-hand side an expression in which every summand contains one of $\Qsf Z \comma \partial_s \Qsf Z \comma \partial_t \Bar{u}$ or $\partial_s \partial_t \Bar{u} $ and otherwise depends just on the loop $Z(s)$. Substituting $\partial_s \partial_t \Bar{u}$ in this formula with the right-hand side in \eqref{eq: partial_s partial_t Bar(u)} and collecting the terms $\partial_s^2 \Psf Z \,$, we can write
\begin{align*}
    \mathsf{D}_Z (\partial_s^2 \Psf Z) &= \partial_s^2 \Psf Z - \Psf\Big( (J_0(t) - J_t(u)  ) \, J_t(u) \cdot (\partial_s^2 \Psf Z) \Big) \\
    &= \mathsf{C}^3_Z(\Qsf Z) + \mathsf{C}^4_Z(\partial_s \, \Qsf Z) +  \mathsf{C}^5_Z(\partial_s^2 \, \Qsf Z) + \mathsf{C}^6_Z (\partial_s \, \Psf Z) + \mathsf{C}^7_Z (\partial_t \Bar{u})
\end{align*}
with certain loop-dependent operators $\mathsf{C}^3_x , \ldots , \mathsf{C}^7_x \in \mathcal{L}(L^2,L^2) \,$, so that the maps $\mathsf{C}^j : \Ucalxzerobar \rightarrow \mathcal{L}(L^2,L^2) \comma x \mapsto \mathsf{C}^j_x \,$, are continuous respectively. The operator $\mathsf{D}_x$ is the one defined in \eqref{eq: operator D_x} above.
Keeping \eqref{eq: operator D_x invertibility} in mind and the already established exponential decay estimates in \eqref{eq: exponential decay - Lhat^2 estimate Qsfhat Zhat rewritten}, \eqref{eq: partial_s P Z exponential decay} and \eqref{eq: partial_t Bar(u) exponential decay}, we can proceed similar to \eqref{eq: partial_s P Z exponential decay} and estimate
\begin{align*}
    \lVert \partial_s^2 \, \Psf Z \rVert_{L^2} &\leq c_5 \, \big( \lVert  \Qsf Z \rVert_{L^2} + \lVert  \partial_s \, \Qsf Z \rVert_{L^2} +  \lVert  \partial_s^2 \, \Qsf Z \rVert_{L^2} + \lVert  \partial_s \, \Psf Z \rVert_{L^2} + \lVert  \partial_t \Bar{u} \rVert_{L^2} \big) \\
    &\leq c_6 \, \Xi_1(Z(s_0)) \, e^{-B(s-s_0)} \,\, ,
\end{align*}
with constants $c_5, c_6 >0$ independent of the cylinder $Z$. We therefore have an exponential decay estimate for $\partial_s^2 \, \Psf Z \,$. In view of \eqref{eq: partial_s partial_t Bar(u)} and because the matrix norms of $\partial_s(J_t(u))$ and $\partial_s(S(u))$ have been bounded by a constant independent of the cylinder $Z$, we now also have an exponential decay estimate
\[
\lVert \partial_s \partial_t \Bar{u} \rVert_{L^2} \leq c_7 \, \Xi_1(Z(s_0)) \, e^{-B(s-s_0)} \qquad \Forall s \geq s_0 \,\, .
\]
Finally, we derive \eqref{eq: partial_t Bar(u)} in $t$-direction
\begin{align*}
    \partial_t^2 \Bar{u} &= J_t(u) \, \partial_s \partial_t \Bar{u} - S(u) \, z''(\partial_t  \Bar{u}) + \partial_t ( J_t(u)) \, \partial_s \Bar{u} - \partial_t ( S(u)) \, z''(\Bar{u}) \\
    &= J_t(\shift{+}(\Bar{u})) \, \partial_s \partial_t \Bar{u} - S(\shift{+}(\Bar{u})) \, z''(\partial_t \Bar{u}) \\
    &\hspace*{3mm}+  \partial_t ( J_t \circ \shift{+}(\Bar{u})) \,\, \big(\partial_s(\Qsf Z) + \partial_s(\Psf Z) \big) - \partial_t ( S \circ \shift{+}(\Bar{u})) \,\, z''(\Qsf Z) \,\, .
\end{align*}
The matrix norms of $J_t ( \shift{+}(x))  $ and $  S ( \shift{+}(x)) $ and its $t$-derivatives can be uniformly bounded for every $t \in \bbS^1$ and every loop $x \in \Ucalxzerobar \,$, by making $\Ucalxzerobar$ smaller if necessary. Every of the above summands contains one of 
\[
\partial_s \partial_t \Bar{u} \comma \,\, \partial_t \Bar{u} \comma \,\, \Qsf Z \comma \,\, \partial_s(\Qsf Z) \comma \,\, \partial_s(\Psf Z) 
\]
and, up to a constant independent of $Z$, we can bound the $L^2$-norm of each of them by $\Xi_1(Z(s_0)) \, e^{-B(s-s_0)} \,$. Hence 
\begin{equation}
\label{eq: partial_t^2 Bar(u) exponential decay}
\lVert \partial_t^2 \Bar{u} \rVert_{L^2} \leq  c_8 \, \Xi_1(Z(s_0)) \, e^{-B(s-s_0)} \,\, ,
\end{equation}
which is the desired exponential decay estimate for $\partial_t^2 Z = \partial_t^2 \Bar{u} \,$. All in all, combining \eqref{eq: exponential decay - Lhat^2 estimate Qsfhat Zhat rewritten}, \eqref{eq: partial_t Bar(u) exponential decay}, \eqref{eq: partial_t^2 Bar(u) exponential decay} and \eqref{eq: partial_t Z = partial_t Q Z}, for a suitable combinatorial constant $c_9 = c_9(c_1,\ldots , c_8)>0$ independent of the cylinder $Z$, we have
\begin{align*}
    \lVert \Qsf \, Z(s) \rVert_{H^2} \leq c_9 \, \Xi_1(Z(s_0)) \, e^{-B(s-s_0)} \qquad \Forall s \geq s_0 \,\, .
\end{align*}
Thus the continuous function $\Xi := c_9 \, \Xi_1 : \Ucalxzerobar \rightarrow [0,\infty) \,$, which vanishes at constant loops $x$ with $z''(x) \equiv 0$ since $\Xi_1$ does, has the desired property claimed in Proposition \ref{prop: exponential decay - localized version}. This finishes the second part and the proof of the proposition.
\end{proof}

\noindent We have successfully proven Proposition \ref{prop: exponential decay - localized version} and therefore also Theorem \ref{mthm: Floer - exponential decay}.

\begin{remark}
\label{rmk: diagonal operators larger}
\begin{enumerate}[label=(\roman*)]
    \item Considering the larger vector $(Z,\partial_s Z , \ldots , \partial_s^k Z) \,$, one analogously gets an exponential decay estimate for the $H^k$-norm of $\Qsf \, Z$ and thus, as in the proof of Theorem \ref{mthm: Floer - exponential decay}, also exponential decay estimates for $\partial_s u , \ldots , \, \partial_s \partial_t^{k-2} u \,$.
    Using the Floer equation, from this one obtains for every $k,\ell \geq 0$ an exponential decay estimate for $\partial_s^{k + 1} \partial_t^{\ell} u \,$. Here the higher-order partial derivatives of $u$ are understood in the coordinates of Lemma \ref{lem: tubular nbhd around limit loop}.
    \item The exponential coefficient $B > 0$ only depends on the operator $\Asfxzerobar \,$.
\end{enumerate}
\end{remark}

\appendix

\section{Supplements to the Proof of Theorem \ref{mthm: Floer - exponential decay}}
\label{app sec: supplements proof exponential decay}

\subsection{$\Asfxzerobar$ is the covariant Hessian}
\label{app subsec: Asfxzerobar = covariant Hessian}

We show what we claimed in \ref{it: operators A_x - Asfxzerobar = Hessian}, namely that the operator $\Asfxzerobar$ is the covariant Hessian of $\actionH$ at the loop $x_0 \,$. A similar argument (for the Rabinowitz action functional) appears in \cite[Lemma 30]{FauckThesis}.
First we recall Hadamard's lemma.

\begin{lemma}[Hadamard's lemma]
\label{app lem: Hadamard's lemma}
Let $f : \R^m \rightarrow \R^k$ be a smooth function. For $i=1,\ldots , m$ define $g_i : \R^m \rightarrow \R^k$ by
\[
g_i(y) := \int_0^1 \frac{\partial f}{\partial y_i} (r y) \, dr \,\, .
\]
Then $f(y) = f(0) + \sum_{i=1}^m y_i \, g_i(y) \,$.
\end{lemma}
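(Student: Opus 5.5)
The identity follows from the fundamental theorem of calculus applied along the ray from the origin to $y$. Fix $y \in \R^m$ and consider the auxiliary smooth function
\[
\varphi : [0,1] \rightarrow \R^k \comma \quad \varphi(r) := f(r y) \,\, .
\]
It satisfies $\varphi(0) = f(0)$ and $\varphi(1) = f(y)$. Since $f$ is smooth, $\varphi$ is $C^1$. By the chain rule its derivative is
\[
\varphi'(r) = \sum_{i=1}^m y_i \, \frac{\partial f}{\partial y_i}(r y) \,\, .
\]

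We then integrate componentwise over $[0,1]$:
\[
f(y) - f(0) = \int_0^1 \varphi'(r) \, dr = \sum_{i=1}^m y_i \int_0^1 \frac{\partial f}{\partial y_i}(r y) \, dr = \sum_{i=1}^m y_i \, g_i(y) \,\, .
\]
Pulling the constant $y_i$ out of the integral is legitimate because it does not depend on $r$. This is exactly the claimed formula.

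For the application in the paper, where $S$ is required to be smooth, we also check that each $g_i$ is smooth. The integrand $(r,y) \mapsto \partial_{y_i} f(r y)$ is smooth on $[0,1] \times \R^m$. All of its $y$-derivatives are continuous, hence bounded on $[0,1] \times K$ for every compact $K \subseteq \R^m$. Therefore differentiation under the integral sign is permitted to every order, and $g_i$ is smooth.

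There is no genuine obstacle here; the only point requiring care is this justification of smoothness via differentiation under the integral sign.

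In the paper the lemma is used with $f$ replaced by the smooth function $(\vartheta,z',z'') \mapsto \partial_\vartheta - X_H$, with the expansion carried out in the $z''$-variables only and with $(\vartheta,z')$ treated as parameters. The same argument applies verbatim to the function $r \mapsto F(\vartheta, z', r z'')$. This function vanishes at $r=0$ because $\partial_\vartheta = X_H$ on $\{z''=0\}$, and this gives the matrix $S$.
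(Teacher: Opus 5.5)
Your proof is correct. It is the standard argument: apply the fundamental theorem of calculus to $r \mapsto f(ry)$ on $[0,1]$. The paper only recalls the lemma without proof, and your additional check that the $g_i$ are smooth, together with your remark on treating $(\vartheta,z')$ as parameters, matches how the lemma is used in the parametrized remark that follows it.
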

\begin{remark}
\label{app rmk: Hadamard's lemma}
\begin{enumerate}[label=(\roman*)]
\item\label{app it: Hadamard's lemma derivative} Observe that the differential at the origin is $d_0 f \, [v] = \sum_{i=1}^m v_i \, g_i(0) \,$.
\item\label{app it: Hadamard's lemma parametrized} Let $M$ be a smooth manifold and $f : M \times \R^m \rightarrow \R^k$ be a smooth function with $f(p,0)=0$ for every $p \in M \,$. Applying Hadamard's lemma fiberwise, we can write $f(p,y) = \sum_{i=1}^m y_i \, g_i(p,y)$ with smooth functions $g_i : M \times \R^m \rightarrow \R^k \,$. The differential of $f$ at $(p,0)$ is $
d_{(p,0)} f \, [u,v] = \sum_{i=1}^m v_i \, g_i(p,0) \,$.
\end{enumerate}
\end{remark}

\begin{lemma}
\label{app lem: Asf_xzerobar = covariant Hessian}
In the fixed coordinates $(\vartheta,z)$, the covariant Hessian at the loop $x_0$
\[
\nabla^2 \actionH(x_0) : \,\, H^1(\bbS^1,x_0^*TW) \rightarrow L^2(\bbS^1,x_0^*TW) 
\]
equals the operator $\Asfxzerobar$ in \eqref{eq: Asf_xzerobar}.
\end{lemma}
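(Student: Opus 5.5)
The plan is to compute the covariant Hessian at the zero $x_0$ of the $L^2$-gradient directly in the coordinates $(\vartheta,z)$ of Lemma \ref{lem: tubular nbhd around limit loop}. Since $x_0$ is a zero of $\nabla^{g_J}\actionH$, the covariant Hessian there does not depend on the choice of connection. It equals the ordinary linearization of the map
\[
x \mapsto \nabla^{g_J}\actionH(x) = -J_t(x)\,(\dot x - X_H(x))
\]
at $x_0$, read in any local trivialization of $TW$ along loops near $x_0$. Indeed, the connection terms are multiplied by $\nabla^{g_J}\actionH(x_0)=0$ and therefore drop out. So I will use the coordinate trivialization given by $(\vartheta,z)$, lifted to $\R^{2n}$ as in Remark \ref{rmk: C^0 close to constant loop in S^1 x R^(2n-d)}, and differentiate along the path $x_r := \shift{+}(rX)$ for $X\in H^1$, i.e.\ $x_r(t) = x_0(t) + rX(t)$ in coordinates.

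In coordinates I will write $\dot x - X_H(x) = \partial_t(\shift{-}x) + \partial_\vartheta - X_H(x)$. By the Hadamard representation fixed before \eqref{eq: Floer eq - shift(u) hybrid version}, this equals
\[
\partial_t(\shift{-}x) + S(x)\cdot z''(x).
\]
Hence $\nabla^{g_J}\actionH(x) = \Asf_{\shift{-}x}\,(\shift{-}x)$, which is the identity underlying \eqref{eq: Floer eq - operator A}. Along $x_r$ we have $\shift{-}x_r = rX$, so
\[
\nabla^{g_J}\actionH(x_r) = -J_t(x_r)\big(r\,\partial_t X + S(x_r)\, r\, z''(X)\big),
\]
using $z''(x_0)=0$ and the linearity of $z''$. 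Differentiating at $r=0$, the term in which $\frac{d}{dr}$ hits $J_t(x_r)$ or $S(x_r)$ carries a factor $r$ and vanishes. What remains is $-J_0(t)\big(\partial_t X + S_0(t)\, z''(X)\big)$, which is exactly \eqref{eq: Asf_xzerobar}. Remark \ref{app rmk: Hadamard's lemma} \ref{app it: Hadamard's lemma parametrized} can be cited to justify that only the first-order part $S(\cdot,0)\,z''$ matters.

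The main point needing care is the justification that the covariant Hessian equals the coordinate derivative. I would argue that for any connection $\nabla$ on $TW$,
\[
\nabla_r\big(\nabla^{g_J}\actionH(x_r)\big)\big|_{r=0} = \frac{d}{dr}\big(\ldots\big)\big|_{r=0} + \Gamma(x_0)\big[X,\ \nabla^{g_J}\actionH(x_0)\big].
\]
The last term vanishes because $\nabla^{g_J}\actionH(x_0)=0$. I would also check that the identification of $H^1(\bbS^1,x_0^*TW)$ with $H^1=H^1(\bbS^1,\R^{2n})$ via $d(\vartheta,z)$ is the one used to define $\Asfxzerobar$, and that the $L^2$-metric $(g_J)_{x_0}$ matches \eqref{eq: L^2 and H^k inner product}. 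Once these are in place, the self-adjointness stated in \ref{it: operators A_x - Asfxzerobar = Hessian} follows from the symmetry of the Hessian of $\actionH$ at a critical point.
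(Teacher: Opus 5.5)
Your proposal is correct and follows essentially the paper's argument. Both identify the Hessian at the zero $x_0$ with the ordinary derivative of $x \mapsto -J_t(x)(\dot x - X_H(x))$ in the trivialization induced by $(\vartheta,z)$, and then use the Hadamard factorization $\partial_\vartheta - X_H = S\cdot z''$. The only cosmetic difference is the path you differentiate along: you use the straight path $x_r = \shift{+}(rX)$ together with the identity $\nabla^{g_J}\actionH(x) = \Asf_{\shift{-}x}(\shift{-}x)$, whereas the paper takes an arbitrary path through $x_0$ and invokes $-d_{x_0(t)}X_H = S_0(t)\cdot z''$ from the parametrized Hadamard lemma.
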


\begin{proof}
The $g_J$-gradient $ \nabla^{g_J} \actionH$ is a section of the Banach bundle
\begin{equation*}
\bigsqcup_{x \in H^1(\bbS^1,W)} L^2(\bbS^1,x^*TW) \longrightarrow H^1(\bbS^1,W) \,\, .
\end{equation*}
The coordinates $(\vartheta,z) : U \overset{\sim}{\rightarrow} \bbS^1 \times \R^{2n-1} $ induce a local trivialization of this bundle over $H^1(\bbS^1,U) \,$.
The covariant derivative $\nabla^2 \actionH (x_0)$ is equal to the vertical differential of $\nabla^{g_J} \actionH$ at its zero $x_0 \,$, which, in the trivialization determined by $(\vartheta,z) \,$, is just the ordinary derivative $D_{x_0} \nabla^{g_J} \actionH$ at $x_0$ of the principal part of $\nabla^{g_J} \actionH $ in this trivialization. We show that $D_{x_0} \nabla^{g_J} \actionH$ is equal to $\Asfxzerobar \in \mathcal{L}(H^1,L^2) \,$.
To this end, let $(-\eps,\eps) \rightarrow H^1(\bbS^1,U) \comma a \mapsto x_a \,$, be an arbitrary path of loops through $x_0$ with derivative $X := \frac{\partial x_a}{\partial a}(0,\,\cdot\,)$ at $a=0 \,$. Then
\begin{align*}
    D_{x_0} \nabla^{g_J} \actionH \, [X] &= \frac{d}{da}\Big|_{a=0} \,\nabla^{g_J} \actionH(x_a) =  \frac{d}{da}\Big|_{a=0} \Big( - J_t(x_a) \, (\partial_t x_a - X_H(x_a)) \Big) \\
    &= - J_t(x_0) \, (\partial_t X - d_{x_0} X_H \, [X] ) \,\, ,
\end{align*}
where $d X_H$ denotes the differential of the vector-valued function $X_H : U \rightarrow \R^{2n} \,$. For the last equality we used that $\partial_t x_0 - X_H(x_0) = 0 \,$, so that no derivative of $J$ appears. Now recall that, since $\partial_{\vartheta} - X_H$ vanishes on $\{z''=0\} \,$, we could apply Hadamard's lemma in the $z''$-coordinate and write
\[
\partial_\vartheta - X_H(\vartheta,z',z'') = S(\vartheta,z,z'') \cdot z'' \,\, .
\]
By Remark \ref{app rmk: Hadamard's lemma} \ref{app it: Hadamard's lemma parametrized}, the differential of $\partial_\vartheta - X_H$ at the point $x_0(t)$ is 
\begin{equation}
\label{app eq: d X_H = S_0 z''}
-d_{x_0(t)} X_H = S(x_0(t)) \cdot z'' = S_0 (t) \cdot z'' \,\, ,
\end{equation}
where $z'' : \R^{2n} \rightarrow \R^{2n-d}$ designates the projection to the last components. Hence
\[
D_{x_0} \nabla^{g_J} \actionH \, [X] = - J_0(t) \, (\partial_t X + S_0(t) \cdot z''(X)) = \Asfxzerobar \, X \,\, .
\]
\end{proof}

\begin{lemma}
\label{app lem: kernel Asf_xzerobar}
Under assumption \ref{it: MB}, the kernel of $\Asfxzerobar$ consists precisely of the constant loops $X$ with $z''(X) \equiv 0 \,$.
\end{lemma}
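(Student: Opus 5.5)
The idea is to write an arbitrary element of $\ker \Asfxzerobar$ explicitly as the linearized Hamiltonian flow along $x_0$. Then the Morse--Bott condition in \ref{it: MB}, transported to the coordinates of Lemma \ref{lem: tubular nbhd around limit loop}, pins down the kernel.

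First, the easy inclusion. A constant loop $X$ with $z''(X)\equiv 0$ satisfies $\partial_t X = 0$ and $S_0\cdot z''(X)=0$, so $\Asfxzerobar X = 0$ by \eqref{eq: Asf_xzerobar}. For the converse, let $X \in H^1$ with $\Asfxzerobar X=0$. Since $J_0(t)$ is invertible, this means
\[
\partial_t X = -S_0(t)\, z''(X) = d_{x_0(t)}X_H\,[X],
\]
where the second equality is \eqref{app eq: d X_H = S_0 z''}. By elliptic or ODE regularity $X$ is smooth, and it solves the linearized Hamiltonian ODE along $x_0$. Hence $X(t) = d_{x_0(0)}\phi_H^t\,[X(0)]$ in the coordinate chart. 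Note that $x_0(t)=\phi_H^t(x_0(0))$ stays in $U$, and the coordinates trivialize $TW$ there. One-periodicity $X(1)=X(0)$ gives $X(0) \in \ker(d_{p}\phi_H^1-\id)$ with $p=x_0(0)$, and by \ref{it: MB} we conclude $X(0)\in T_pN$.

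Next, we show $X$ is constant. On $N\cap U=\{z''=0\}$ the flow is $\phi_H^t(\vartheta,z',0)=(\vartheta+t,z',0)$, since $X_H=\partial_\vartheta$ there. This set is flow-invariant, and $\partial_\vartheta$ is exactly the vector field. Differentiating along $N$ shows that $d_p\phi_H^t$ restricted to $T_pN$ is the identity in the coordinates $(\vartheta,z')$: the coordinate expression of the flow on $N$ is a translation, so its derivative is the identity. Therefore $X(t)=X(0)\in T_pN=\R^d\times\{0\}$ for all $t$. So $X$ is constant with $z''(X)\equiv0$.

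The main obstacle is the bookkeeping in the middle step. The linearized flow $d\phi_H^t$ is identified with the ODE solution only after trivializing $TW$ along the orbit by the chart. One must check that $d_{x_0(0)}\phi_H^1 - \id$ computed in the chart agrees with the intrinsic one; this holds because $x_0$ is closed, so the chart trivialization at $t=0$ and $t=1$ coincide. One must also check that $T_pN$ corresponds to $\{z''=0\}$, which Lemma \ref{lem: tubular nbhd around limit loop} provides. The dimension count $d$ then follows immediately.
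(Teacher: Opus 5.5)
Your proposal is correct and follows the paper's proof essentially step for step. You rewrite $\Asfxzerobar X=0$ as the linearized Hamiltonian ODE along $x_0$, use $X(1)=X(0)$ and \ref{it: MB} to get $X(0)\in T_{x_0(0)}N$, and conclude that $X$ is constant from $d_{x_0(0)}\phi_H^t|_{TN}=\id$; your extra remarks (regularity of $X$, the global trivialization of $TU$ by the coordinates, and the flow on $N\cap U=\{z''=0\}$ being a translation in $\vartheta$) only make explicit what the paper leaves implicit.
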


\begin{proof}
That the constant loops $X$ with $z''(X) \equiv 0$ lie in the kernel is clear. Conversely, suppose $X$ is in the kernel, so
\[
0 = - J_0(t) \, (\partial_t X + S_0(t) \cdot z''(X)) \,\, .
\]
This implies, using that $J_0(t)$ is invertible and \eqref{app eq: d X_H = S_0 z''}, that $\partial_t X (t) = d_{x_0 (t)} X_H \cdot X(t) \,$. In other words, $X$ is a solution of the linearized differential equation along $x_0 \,$, so that
\begin{equation}
\label{app eq: linearized solution X for ker Asf_xzerobar}
X(t) = d_{x_0(0)} \phi_{X_H}^{t} \cdot X(0)  \,\, .
\end{equation}
From this and assumption \ref{it: MB}, we infer $X(0)=X(1) \in T_{x_0(0)} N \,$. Now $X$ is constant since $d_{x_0(0)} \phi_{X_H}^t|_{TN} = \mathrm{id}$ and by \eqref{app eq: linearized solution X for ker Asf_xzerobar} again.
\end{proof}

\subsection{Adjoints depends continuously on base loop}
\label{app subsec: adjoints continuous}

In this subsection we show that the adjoints $\Asf_{x}^*$ and $\Asfhat^{*}_x$ depend continuously on the loop $x$. This will follow from the more general Proposition \ref{app prop: adjoint continuous} below.
\\

\noindent Usually, the domain of the adjoint of an unbounded operator is defined to be the maximal possible domain. We deviate from this and for this reason specify what we mean by the adjoint.

\begin{definition}[Adjoint]
\label{app def: adjoint}
Given a Hilbert space $(H, \langle\,\cdot\comma \cdot\,\rangle_H)$ and a dense linear subspace $D \subseteq H \,$. Let $A : D \rightarrow H$ be an unbounded linear operator. There exists at most one unbounded linear operator $A^* : D \rightarrow H$ satisfying
\[
\langle A x , y \rangle_H  = \langle x , A^* y \rangle_H \qquad \Forall x,y \in D \,\, .
\]
If existent, this operator $A^*$ is called the \textit{$\langle \,\cdot\comma \cdot\,\rangle_H$-adjoint of $A$}.
\end{definition}

\begin{remark}
\label{app rmk: adjoint} 
\begin{enumerate}[label=(\roman*)]
    \item\label{app it: adjoint criterion} $A : D \rightarrow H$ has an adjoint if and only if for every $y \in D$ there exists a constant $c_y \geq 0$ with $|\langle Ax,y \rangle_H| \leq c_y \, |x|_H$ for all $x \in D \,$.
    \item\label{app it: adjoint isometry} Suppose $A : D \rightarrow H$ has an adjoint $A^* \,$. Let $\phi : H \rightarrow H'$ be an isometry between the Hilbert spaces $H$ and $H'$ and set $D' := \phi(D) \subseteq H' \,$. Then $\phi \, A \, \phi^{-1} : D' \rightarrow H'$ has the $H'$-adjoint $(\phi \, A \, \phi^{-1})^* = \phi \, A^* \, \phi^{-1} \,$. 
\end{enumerate}

\end{remark}

\noindent Consider the trivial bundle $E := \bbS^1 \times \R^m \rightarrow \bbS^1$ over the circle. Every first-order linear partial differential operator (short: PDO) $A : \Cinfty(E) \rightarrow \Cinfty(E)$ can be written uniquely as
\begin{equation}
\label{app eq: PDO on trivial bundle}
A X = R(t) \cdot \partial_t X + S(t) \cdot X \qquad \Forall X \in \Cinfty(\bbS^1,\R^m) \,\, ,
\end{equation}
where $R , S\in \Cinfty(\bbS^1,\R^{m\times m})$ are matrix-valued functions. Now let $g= (g_t)_{t \in \bbS^1}$ be an arbitrary smooth bundle metric on $E$. It defines an $L^2$-inner product on the $L^2$-sections of $E$ as usual via
\[
\langle X_1, X_2 \rangle_{L^2}^{g} := \int_0^1 g_t(X_1(t), X_2(t)) \, dt \,\, .
\]
We abbreviate $H^1 = H^1(\bbS^1,\R^m)$ and $L^2 = L^2(\bbS^1,\R^m) \,$. Because the bundle base $\bbS^1$ is compact, the norm induced by the above $L^2$-inner product is equivalent to the standard (Euclidean) $L^2$-norm. The PDO $A$ induces a map $A \in \mathcal{L}(H^1,L^2) \,$, which we also consider as unbounded linear operator on $L^2 $ with dense domain $H^1 \subseteq L^2 \,$.
We denote by $A^*$ its $\langle \,\cdot\comma \cdot\,\rangle_{L^2}^g$-adjoint. This exists and coincides with the formal adjoint of a PDO, described in \cite[\S10.1.3]{Nicolaescu_Geometry_of_Manifolds}. So $A^*$ is itself a first-order PDO and therefore $A^* \in \mathcal{L}(H^1,L^2) \,$. 

\begin{proposition}
\label{app prop: adjoint continuous}
For every $R, S \in \Cinfty(\bbS^1,\R^{m \times m})$ define the PDO $A_{(R,S)} : \Cinfty (E) \rightarrow \Cinfty(E)$ by \eqref{app eq: PDO on trivial bundle}. Then the map
\[
\Cinfty(\bbS^1,\R^{m \times m}) \times \Cinfty(\bbS^1,\R^{m \times m}) \rightarrow \mathcal{L}(H^1,L^2) \comma \quad (R,S) \mapsto (A_{(R,S)})^*
\]
is continuous, where each factor in the domain carries the $\Cinfty$-topology.
\end{proposition}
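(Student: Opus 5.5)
We will compute the formal adjoint of $A_{(R,S)}$ explicitly with respect to $\langle\,\cdot\comma\cdot\,\rangle^g_{L^2}$ and read off that its coefficient matrices depend continuously on $(R,S)$. Continuity then follows because the coefficient-to-operator map is continuous.

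The first step is to write the bundle metric as $g_t(v,w) = v^T G(t)\, w$ with $G \in \Cinfty(\bbS^1,\R^{m\times m})$ symmetric and positive definite, so that $G^{-1}$ is also smooth. Take $X_1,X_2 \in H^1$. Integrating by parts on the closed circle, where there are no boundary terms, gives
\[
\langle A X_1, X_2\rangle^g_{L^2} = \int_0^1 (R\,\partial_t X_1 + S X_1)^T G\, X_2 \, dt = \int_0^1 X_1^T\big(-\partial_t(R^T G X_2) + S^T G X_2\big)\, dt .
\]
Writing the integrand as $X_1^T G\,(\ldots)$, we get
\[
A^*_{(R,S)} X = -G^{-1}R^T G\, \partial_t X + G^{-1}\big(S^T G - \partial_t(R^T G)\big) X .
\]
This operator maps $H^1$ boundedly into $L^2$, and the identity above holds for all $X_1,X_2\in H^1$. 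By the uniqueness in Definition \ref{app def: adjoint}, it is therefore the adjoint. So $A^*_{(R,S)} = A_{(R^*,S^*)}$ with
\[
R^* = -G^{-1}R^TG, \qquad S^* = G^{-1}S^TG - G^{-1}\partial_t(R^TG).
\]

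Next we check continuity in two pieces. The map $(R,S)\mapsto(R^*,S^*)$ is continuous from $\Cinfty\times\Cinfty$ to $C^0\times C^0$. It only involves multiplication by the fixed smooth matrices $G$ and $G^{-1}$, transposition, and one $t$-derivative. Differentiation is continuous in the $\Cinfty$-topology, which controls the $C^1$-norm. Separately, the linear map $(R',S')\mapsto A_{(R',S')}$ from $C^0\times C^0$ to $\mathcal{L}(H^1,L^2)$ is bounded:
\[
\lVert R'\partial_tX + S'X\rVert_{L^2} \le C\big(\lVert R'\rVert_{C^0} + \lVert S'\rVert_{C^0}\big)\lVert X\rVert_{H^1}.
\]
Here $C$ comes only from the equivalence of $\lVert\,\cdot\,\rVert^g_{L^2}$ with the Euclidean $L^2$-norm. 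Composing the two continuous maps gives the proposition.

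The only step needing care is the integration by parts for $H^1$ rather than smooth sections, that is, justifying $\int_0^1 \partial_t(X_1^T R^T G X_2)\,dt = 0$. This holds because $X_1^TR^TGX_2$ is a product of $H^1$ functions on $\bbS^1$. Such a product is absolutely continuous and periodic, so the integral of its derivative vanishes. Alternatively, one can prove the identity for smooth sections and pass to $H^1$ by density, since both sides are continuous on $H^1\times H^1$. I do not expect any serious obstacle.
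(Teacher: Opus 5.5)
Your proof is correct. It differs from the paper's only in how you handle the non-Euclidean metric.

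The paper proceeds in two steps. It first integrates by parts for the Euclidean bundle metric, which gives $-R^{\mathrm{T}}\partial_t + (S^{\mathrm{T}} - \partial_t R^{\mathrm{T}})$. It then reduces the general case to this one: it chooses a $g$-orthonormal frame $\Phi$ (Gram--Schmidt), uses that conjugating by the isometry $\Phi_*$ commutes with taking adjoints (Remark~\ref{app rmk: adjoint}\,\ref{app it: adjoint isometry}), and checks that $\Phi_* A_{(R,S)} (\Phi^{-1})_* = A_{(\widetilde R,\widetilde S)}$ with coefficients depending continuously on $(R,S)$.

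You instead carry the Gram matrix $G$ through the integration by parts and get the $g$-adjoint in one computation. Equivalently, your formula says $A^{*,g} = G^{-1}A^{*,\mathrm{euc}}G$, because $\langle X_1, X_2\rangle^g_{L^2} = \langle X_1, G X_2\rangle^{\mathrm{euc}}_{L^2}$.

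What each route buys:
\begin{itemize}
\item Yours is shorter and avoids the orthonormal frame and the isometry remark altogether.
\item You also make explicit two points the paper leaves implicit: the integration by parts for $H^1$ rather than smooth sections, and the bound $\lVert A_{(R',S')}\rVert_{\mathcal{L}(H^1,L^2)} \le C(\lVert R'\rVert_{C^0}+\lVert S'\rVert_{C^0})$ for the coefficient-to-operator map.
\item The paper's route is more modular, since it reuses the Euclidean computation verbatim.
\end{itemize}

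Both arguments actually give continuity already for the $C^1\times C^0$ topology on $(R,S)$, as noted in Remark~\ref{app rmk: adjoint continuous}. In both, the only derivative that enters is one $t$-derivative of $R$ times a fixed smooth matrix ($G$ for you, $\Phi^{-1}$ in the paper).
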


\begin{remark}
\label{app rmk: adjoint continuous}
As the proof will show, the above map is even continuous if we endow the domain with the $(C^1 \times C^0)$-topology
\end{remark}

\begin{proof}[Proof of Proposition \ref{app prop: adjoint continuous}] The proof has two steps.

\textit{Step 1:} We first consider the special case in which $g$ is the Euclidean bundle metric, that is $g_t = \langle \,\cdot\comma \cdot\,\rangle_{\mathrm{euc}}$ for every $t \in \bbS^1 \,$. Integration by parts immediately shows that, in this case, the adjoint is given by
\[
(A_{(R,S)})^* X = - R^{\mathrm{T}} \cdot \partial_t X + ( S^{\mathrm{T}} - \partial_t R^{\mathrm{T}}) \cdot X \,\, ,
\]
where $R(t)^{\mathrm{T}} , S(t)^{\mathrm{T}}$ are the matrix transposes respectively.
This shows that, if $(R,S)$ varies in the $(C^1 \times C^0)$-topology, the coefficient functions of $(A_{(R,S)})^*$ vary continuously in the $(C^1 \times C^0)$-topology, so that $(A_{(R,S)})^*$ varies continuously in $\mathcal{L}(H^1,L^2) \,$.

\textit{Step 2:} Now we deal with the general case. Choose a global $g$-orthonormal frame $e_1,\ldots , e_m \in \Cinfty(\bbS^1,\R^{m})$ of $E=\bbS^1 \times \R^{m} \,$, which exists by Gram-Schmidt. The frame induces a global orthogonal trivialization $\Phi : (E, g) \overset{\sim}{\rightarrow} (E, \langle \,\cdot\comma\cdot\,\rangle_{\mathrm{euc}}) \,$. The pushforward via $\Phi$ gives an isometry of Hilbert spaces
\[
\Phi_* : \,\, (L^2, \, \langle\,\cdot\comma \cdot\, \rangle_{L^2}^g ) \overset{\sim}{\longrightarrow} (L^2, \, \langle\,\cdot\comma\cdot\,\rangle_{L^2}^{\mathrm{euc}}) \comma  \quad (\Phi_*(X)) (t) := \Phi_t \cdot X(t) \,\, ,
\]
which preserves regularity, $\Phi_*(H^1) = H^1 $ and $\Phi_*|_{H^1} \in \mathcal{L}(H^1,H^1) \,$.
Using Step 1 and Remark \ref{app rmk: adjoint} \ref{app it: adjoint isometry}, it therefore suffices to show that the coefficient functions of the PDO $\Phi_* A_{(R,S)} \, (\Phi^{-1})_*$ vary continuously in the $(C^1\times C^0)$-topology if $(R,S)$ varies in the $(C^1 \times C^0)$-topology. And this is indeed the case, as a straightforward calculation shows that $\Phi_* \, A_{(R,S)} \, (\Phi^{-1})_* = A_{(\widetilde{R},\widetilde{S})} \,$, where
\begin{align*}
    \widetilde{R}(t) := \Phi_t \cdot R(t) \cdot \Phi_t^{-1}
     \quad \text{and} \quad 
    \widetilde{S}(t) :=  \Phi_t \cdot S(t) \cdot \Phi_t^{-1} + \Phi_t \cdot R(t) \cdot \partial_t (\Phi_t^{-1})(t)  \,\, .
\end{align*}
\end{proof}

\subsection{Equality $\partial_s \mathsf{B}_Z = \mathsf{C}_Z$}
\label{app subsec: operator C_x}

We show that $\partial_s \mathsf{B}_{Z}(s) = \mathsf{C}_{Z(s)}$ for every $s \,$, where $\mathsf{B}_x$ and $\mathsf{C}_x$ were defined in \eqref{eq: operator B_x} and \eqref{eq: operator C_x} respectively. We will only use the abstract Floer equation $\partial_s Z = \Asf_Z \, Z $ and the usual rules for differentiation.
\begin{align*}
    \partial_s \mathsf{B}_Z = \partial_s \Big(\underbrace{ D^2_Z \Asf \, [\Asf_Z Z , \, \Asf_Z Z]}_{=: \mathrm{I}(s)}  \Big) + \partial_s \Big( \underbrace{ D_Z \Asf \, [D_Z \Asf \, [\Asf_Z \, Z] \, Z] }_{=: \mathrm{II}(s)} \Big)+ \partial_s \Big( \underbrace{D_Z \Asf \, [\Asf_Z^2 \, Z] }_{=: \mathrm{III}(s)} \Big) \,\, .
\end{align*}
For auxiliary purposes, we first compute separately
\begin{align}
\partial_s (\Asf_Z \, Z) &= D_Z \Asf \, [\Asf_Z \, Z] \cdot Z + \Asf_Z^2 \, Z  \label{app eq: partial_s (Asf_Z Z)} \\
\partial_s (\Asf_Z^2 \, Z) &=  \Asf_Z  \big( D_Z \Asf \, [\Asf_Z \, Z] \cdot Z + \Asf_Z^2 \, Z \big) + (\partial_s \Asf_Z) \, \Asf_Z Z \notag\\
&\hspace*{-2mm}\overset{\text{\eqref{eq: exponetial decay - partial_s A_z}}}{=} \Asf_Z  \big( D_Z \Asf \, [\Asf_Z \, Z] \cdot Z + \Asf_Z^2 \, Z \big) + D_Z \Asf \, [\Asf_Z Z] \cdot \Asf_Z Z  \,\, . 
\label{app eq: partial_s (Asf^2_Z Z)}
\end{align}
We now derive $\mathrm{I}$ with the help of \eqref{app eq: partial_s (Asf_Z Z)}:
\begin{align*}
    \partial_s \mathrm{I} &= D_Z^3 \Asf \, [\Asf_Z Z , \, \Asf_Z Z , \, \Asf_Z Z] + 2 \, D^2_Z \Asf \, [ \partial_s(\Asf_Z Z) , \, \Asf_Z Z] \\
    &= D_Z^3 \Asf \, [\Asf_Z Z , \, \Asf_Z Z , \, \Asf_Z Z] + 2 \, D_Z^2 \Asf \, [D_Z \Asf \, [\Asf_Z Z] \cdot Z  , \, \Asf_Z Z] + 2 \, D_Z^2 \Asf \, [A_Z^2  Z , \, \Asf_Z Z] \,\, .
\end{align*}
Using \eqref{app eq: partial_s (Asf_Z Z)} again, we next derive $\mathrm{II} $
\begin{align*}
    \partial_s \mathrm{II} &= (\partial_s D_Z \Asf) \, [D_Z \Asf \, [\Asf_Z Z] \cdot Z] + D_Z \Asf \, \Big[ D_Z \Asf \, [\Asf_Z Z]  \cdot \partial_s Z \Big] + D_Z \Asf \, \Big[ \partial_s \Big(D_Z \Asf \, [\Asf_Z Z]  \Big) \cdot Z \Big]   \\
    &= D_Z^2 \Asf \, \Big[\Asf_Z Z , \,  D_Z \Asf \, [\Asf_Z Z] \cdot Z \Big] +   D_Z \Asf \, \Big[ D_Z \Asf \, [\Asf_Z Z]  \cdot \Asf_Z Z \Big] \\
    &\hspace*{4mm}+ D_Z \Asf \, \Big[ D_Z^2 \Asf \, [\Asf_Z Z, \, \Asf_Z Z] \cdot Z\Big] +   D_Z \Asf \, \Big[ D_Z \Asf \, \Big[ D_Z \Asf \, [\Asf_Z Z] \cdot Z + \Asf_Z^2  Z \Big]   \cdot Z\Big] 
\end{align*}
and finally, using \eqref{app eq: partial_s (Asf^2_Z Z)}, also compute
\begin{align*}
    \partial_s \mathrm{III} &= (\partial_s D_Z \Asf) \, [\Asf_Z^2 \, Z] + D_Z \Asf \, [\partial_s (\Asf_Z^2 \, Z)] \\
    &= D_Z^2 \Asf \, [\Asf_Z Z , \, \Asf_Z^2  Z] + D_Z \Asf \, \Big[ \Asf_Z  \big( D_Z \Asf \, [\Asf_Z Z] \cdot Z \big) + \Asf_Z^3 Z + D_Z \Asf \, [\Asf_Z Z] \cdot \Asf_Z Z \Big] \,\, .
\end{align*}
Thus $\mathsf{C}_{Z(s)} = \partial_s \mathrm{I}(s) + \partial_s \mathrm{II}(s) + \partial_s \mathrm{III}(s) $ in view of the definition of $\mathsf{C}$ in \eqref{eq: operator C_x}.

\subsection{The operators $\mathsf{C}^1_x$ and $\mathsf{C}^2_x$}
\label{app subsec: operators C^1 and C^2}

\noindent We give explicit formulae for the loop-depending operators $\mathsf{C}^1_x \comma \mathsf{C}^2_x \in \mathcal{L}(L^2,L^2)$ appearing in \eqref{eq: partial_s P Z = C^1_Z(QZ) + C^2_Z(partial_s QZ)}. For $x \in \Ucalxzerobar$ and $X \in L^2$ set
\begin{align}
\mathsf{C}_x^1 (X) &:= \Psf \Big( J_0(t) \cdot \big(S_0(t) - S(\shift{+}(x)) \big) \cdot z''( X) \Big)
\label{app eq: operator C^1_x} \\
\mathsf{C}_x^2 (X) &:= \Psf \Big( \big(J_0(t) - J_t(\shift{+}(x))\big) \cdot J_t(\shift{+}(x)) \cdot X \Big) \,\, .
\label{app eq: operator C^2_x}
\end{align}
Here the arguments of $\Psf$ are loops in $L^2$ themselves.
Clearly, for $j=1,2 \,$, the map 
\[
\Ucalxzerobar \rightarrow \mathcal{L}(L^2,L^2) \comma \quad x \mapsto \mathsf{C}^j_x \,\, ,
\] 
is continuous, where as usual $\Ucalxzerobar \subseteq \Cinfty(\bbS^1,\R^{2n})$ carries the $\Cinfty$-topology.

\subsection{Eliminating $s$-derivatives via the Floer equation}
\label{app subsec: Eliminate s-derivatives via Floer eq}

We show how to transform $\partial_s (S(u))$ into a term just depending on the loop $\shift{-}(u_s) = Z(s)$ by using the abstract Floer equation \eqref{eq: abstract Floer eq}. The $s$-derivative in $\partial_s (J_t(u))$ can be eliminated analogously. Recall that $\partial_s u =  \partial_s \Bar{u} = \partial_s Z \,$. We compute
\begin{align*}
    \partial_s (S \circ u) \, (s,t) &= d S_{u(s,t)} \cdot \partial_s u (s,t)= (dS)_{\shift{+}(Z(s))(t)} \cdot (\partial_s Z (s))(t) \\
    &= (dS)_{\shift{+}(Z(s))(t)} \cdot (\Asf_{Z(s)}  Z(s))(t)
\end{align*}
so that
\[
\partial_s (S \circ u) (s,\,\cdot\,) = (dS)_{\shift{+}(Z(s))} \cdot (\Asf_{Z(s)}  Z(s)) \,\, .
\]
The loop on the right-hand side is the evaluation at $Z(s)$ under the continuous map
\begin{align*}
   \Cinfty(\bbS^1,\R^{2n}) \supseteq \Ucalxzerobar \rightarrow \Cinfty(\bbS^1, \R^{2n \times (2n-d)}) \comma \quad x \mapsto (dS)_{\shift{+}(x)} \cdot (\Asf_{x} \, x) \,\, .
\end{align*}
By continuity, we can choose $\Ucalxzerobar$ smaller, so that we have a uniform $C^0(\bbS^1,\R^{2n \times (2n-d)})$-norm bound for $(dS)_{\shift{+}(x)} \cdot (\Asf_x x)$ for all $x \in \Ucalxzerobar \,$.

\printbibliography

\end{document}